\definecolor{gray}{gray}{0}
\numberwithin{equation}{chapter}
\theoremstyle{plain}
\newtheorem{theorem}{Theorem}[chapter]
\newtheorem{proposition}[theorem]{Proposition}
\newtheorem{corollary}[theorem]{Corollary}
\newtheorem{conjecture}[theorem]{Conjecture}
\theoremstyle{definition}
\theoremstyle{remark}
\newtheorem{remark}[theorem]{Remark}
\newtheorem{example}[theorem]{Example}
\newtheorem{problem}[theorem]{Problem}
\numberwithin{equation}{chapter}
\DeclareMathAlphabet{\mathpzc}{OT1}{pzc}{m}{it}
\newcommand{\cA}{\mathcal{A}}
 \newcommand{\cL}{\mathcal{L}}
 \newcommand{\cN}{\mathcal{N}}
 \newcommand{\cP}{\mathcal{P}}
 \newcommand{\cX}{\mathcal{X}}
 \newcommand{\cZ}{\mathcal{Z}}
 \newcommand{\sC}{\mathscr{C}}
 \newcommand{\sL}{\mathscr{L}}
\newcommand{\inn}{{\mathsf{inn}}}
\newcommand{\bound}{{\mathsf{bound}}}
\newcommand{\corr}{{\mathsf{corr}}}
\newcommand{\D}{{\mathsf{D}}}
\newcommand{\MW}{{\mathsf{MW}}}
\newcommand{\N}{{\mathsf{N}}}
\newcommand{\R}{{\mathsf{R}}}
\renewcommand{\S}{{\mathsf{S}}}
\newcommand{\T}{{\mathsf{T}}}
\newcommand{\trans}{{\mathsf{trans}}}
\newcommand{\W}{{{\mathsf{W}}}}
\newcommand{\w}{{\mathsf{w}}}
\newcommand{\const}{{\mathsf{const}}}
\newcommand{\dist}{{{\mathsf{dist}}}}
\newcommand{\eff}{{{\mathsf{eff}}}}
\newcommand{\new}{{{\mathsf{new}}}}
\newcommand{\old}{{{\mathsf{old}}}}
\newcommand{\reg}{{{\mathsf{reg}}}}
\newcommand{\bR}{{\mathbb{R}}}
\newcommand{\bZ}{{\mathbb{Z}}}
\newcommand{\fz}{{\mathfrak{z}}}
\def\1{\boldsymbol {|}}
\newcommand{\boldgamma}{{\boldsymbol{\gamma}}}
\newcommand{\Def}{\mathrel{\mathop:}=}
\newcommand{\WF}{\operatorname{WF}}
\newcommand{\degen}{{\operatorname{deg}}}     
\newcommand{\mes}{\operatorname{mes}}
\renewcommand{\Re}{\operatorname{Re}}       
\newcommand{\sign}{\operatorname{sign}}
\newcommand{\supp}{\operatorname{supp}}
\newenvironment{claim}[1][{\textup{(\theequation)}}]{\refstepcounter{equation}\vglue10pt
\begin{trivlist}
\item[{\hskip\labelsep#1}]}{\vglue10pt\end{trivlist}}
\newenvironment{claim*}[1][{}]{\vglue10pt
\begin{trivlist}
\item[{\hskip\labelsep#1}]}{\vglue10pt\end{trivlist}}
\newenvironment{phantomequation}[1][]{\refstepcounter{equation}}{}
\newcounter{note}
\DeclareTextCommand{\textinfty}{PU}{\9042\036}
\DeclareTextCommand{\textge}{PU}{\9042\145}
\DeclareTextCommand{\textle}{PU}{\9042\144}
\DeclareTextCommand{\texthat}{PD1}{\136}
\begin{document}
\title{Local Spectral Asymptotics for  $2D$-Schr\"{o}dinger Operator with Strong Magnetic Field near Boundary}
\author{Victor Ivrii}

\maketitle
{\abstract%
We consider 2-dimensional Schr\"odinger operator with the non-degenerating magnetic field in the domain with the boundary and under certain non-degeneracy assumptions we derive  spectral asymptotics with the remainder estimate better than $O(h^{-1})$, up to $O(\mu^{-1}h^{-1})$ and the principal part 
$\asymp h^{-2}$ where $h\ll 1$ is Planck constant and $\mu \gg 1$ is the intensity of the magnetic field; $\mu h \le 1$.

We also consider generalized Schr\"odinger-Pauli operator in the same framework albeit with $\mu h\ge 1$ and derive  spectral asymptotics with the remainder estimate up to $O(1)$ and with the principal part $\asymp \mu h^{-1}$, or, under certain special circumstances with the principal part $\asymp \mu^{\frac{1}{2}} h^{-\frac{1}{2}}$
\endabstract}

\setcounter{section}{-1}
\section{Introduction}
\label{sect-15-0}

Our goal is to derive spectral asymptotics of 2-dimensional Schr\"odinger operator 
\begin{equation}
A=\sum_{j,k} P_jg^{jk}P_k+V,\qquad \text{with\ \ } P_j=hD_j-\mu V_j
\label{13-1-1}
\end{equation}
near the boundary
where
\begin{claim}\label{13-1-2}
$g^{jk}=g^{kj}$, $V_j$ and $V$ are real-valued functions, $ h\in (0,1]$, $\mu \in [1,\infty )$.
\end{claim}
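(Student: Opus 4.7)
The labelled display (13-1-2) is not in fact a proposition or claim in the logical sense; it is the paper's convention for listing standing hypotheses on the coefficients of the operator $A$. There is therefore nothing to \emph{prove}: symmetry $g^{jk}=g^{kj}$ is imposed, reality of $V_j$ and $V$ is imposed, and the ranges $h\in(0,1]$, $\mu\in[1,\infty)$ are imposed. My ``proof proposal'' can only be an orientation toward how these standing hypotheses will be used and what they formally buy us.

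The first observation is that (13-1-2) is exactly what is needed to make $A$ \emph{formally symmetric} on $C_0^\infty$: expanding
\[
A=\sum_{j,k}\bigl(hD_j-\mu V_j\bigr)g^{jk}\bigl(hD_k-\mu V_k\bigr)+V,
\]
the reality of $V_j,V$ and the symmetry of $g^{jk}$ give $\langle Au,v\rangle=\langle u,Av\rangle$ after integration by parts, modulo boundary terms that the author will later kill by imposing a boundary condition (Dirichlet, Neumann, or Robin). The reality hypotheses also ensure that the magnetic field tensor $F_{jk}=\partial_j V_k-\partial_k V_j$ is a real $2$-form, so that in the $2D$ setting $F:=\partial_1 V_2-\partial_2 V_1$ is a real scalar function whose non-degeneracy can sensibly be assumed in subsequent conditions.

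To turn (13-1-2) into an honest statement of essential self-adjointness and a well-defined spectral problem, one would have to supplement it with: uniform positive definiteness of $(g^{jk})$, smoothness of $g^{jk},V_j,V$ up to the boundary, semi-boundedness of the resulting quadratic form, and a choice of self-adjoint boundary condition. Given those, the Friedrichs extension from the symmetric quadratic form and standard elliptic regularity would deliver the functional calculus in which $\tr\,\varphi(A)$ and the spectral projectors $E(\tau)$ --- the actual objects of the asymptotics promised in the abstract --- are defined.

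The only ``obstacle'' is therefore interpretive rather than technical: one must read (13-1-2) not as a theorem but as the minimal framework inside which the later non-degeneracy conditions (on $F$, on $V$, and on their joint behaviour near $\partial X$) will be stated, and inside which the remainder estimates $O(\mu^{-1}h^{-1})$ and $O(1)$ announced in the abstract will be proved.
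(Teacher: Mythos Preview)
Your reading is correct: in the paper, (\ref{13-1-2}) is not a proposition but a displayed list of standing hypotheses on the data $g^{jk},V_j,V,h,\mu$ introduced immediately after the definition of $A$ in (\ref{13-1-1}); the paper offers no proof because none is called for. Your orientation remarks (formal symmetry of $A$, reality of $F_{jk}$, the need for later positivity and boundary conditions to get a self-adjoint realization) are accurate and match how the paper proceeds, so there is nothing further to add.
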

and in $B(0,1)\subset X$ the following conditions are fulfilled:
\begin{phantomequation}\label{13-1-3}\end{phantomequation}
\begin{equation}
|D^\alpha g^{jk}|\le c,\quad |D^\alpha V_j|\le c,\quad |D^\alpha V|\le c
\quad \qquad \forall \alpha :|\alpha |\le K,
\tag*{$\textup{(\ref*{13-1-3})}_{1-3}$}
\end{equation}
\begin{equation}
\epsilon _0\le \sum_{j,k} g^{jk}\eta _j\eta _k\cdot |\eta |^{-2}\le c
\quad \forall \eta \in \bR^d\setminus 0 \quad \forall x \in B(0,1)
\label{13-1-4}
\end{equation}

So we basically want  to generalize results of Chapter~\ref{book_new-sect-13}~\cite{futurebook}\footnote{\label{foot-15-0} This article is a rather small part of the huge project to write a book and is just Chapter~15 consisting entirely of newly researched results. Chapter~\ref{book_new-sect-13} corresponds to Chapter~6 of its predecessor V.~Ivrii~\cite{Ivr1}. External references by default are to  \cite{futurebook}.} $d=2$ to the case of $\supp \psi \cap\partial X\ne\emptyset$. We assume that condition (\ref{13-2-1}) is fulfilled: $F\asymp 1$.

However it is not a simple generalization as propagation near the boundary is completely different from one inside of the domain. While classical dynamics inside is a normal speed cyclotron movement combined with a slow (with the speed $O(\mu^{-1})$) magnetic drift, it is not the case near boundary: when cyclotron hits the boundary it reflects from it and we arrive to a normal speed (with the speed $O(1)$) hop movement along the boundary.

The really difficult part is that hop movement is not separated from cyclotron plus magnetic drift movement: first, as we move away from the boundary the former is replaced by the latter; second, during some hop the hop movement can be torn away from the boundary and become cyclotron plus magnetic drift movement and v.v.: cyclotron plus magnetic drift movement can collide with the boundary and become hop movement.

The main goal is to investigate the generic case as $\nabla VF^{-1}\ne 0$ and 
$\nabla_{\partial X} VF^{-1}= 0\implies \nabla^2_{\partial X} VF^{-1}\ne 0$ on $\partial X$.

\section*{Plan of the article}

Section~\ref{sect-15-1} is preliminary: first, we consider a classical dynamics, described above, in details. Then we consider a model operator in the half-plane and derive precise formula for it.

In section~\ref{sect-15-2} we consider a weak magnetic field case $\mu h \ll 1$ when classical dynamics defines everything. Then hop movement breaks periodic cyclotron movement which allows us to prove that the contribution of this zone to the remainder is $O(\mu^{-1}h^{-1})$ where factor $\mu^{-1}$ is the width of the boundary zone and $1$ is the time for which we typically follow classical dynamics. Recall that the contribution of inner zone to the remainder (under appropriate non-degeneracy conditions) also is $O(\mu^{-1}h^{-1})$ albeit there factor $\mu^{-1}$ comes from time $\asymp \mu$ for which we typically follow a classical dynamics. Sure, there is a transitional zone between boundary and inner zones but as magnetic field is not strong, it is very thin.

In section~\ref{sect-15-3} we study a strong magnetic field case. In subsections~\ref{sect-15-3-1}--\ref{sect-15-3-3} we establish Tauberian remainder estimates under different non-degeneracy assumptions. 

In subsection~\ref{sect-15-3-4} we study propagation of singularities in the transitional zone and find that under Dirichlet boundary condition we are able to prove better results than under Neumann boundary condition. This difference is not technical as it is the first manifestation of the fact that as magnetic field grows stronger the classical dynamics loses its value as predictor of the propagation and in the case of Neumann boundary condition some singularities propagate along the boundary in the direction opposite to hops; this is not related to a magnetic drift but rather to a different behavior of the eigenvalues of the model operator.

In subsection~\ref{sect-15-3-5} in the case of the strong magnetic field we pass from estimates to calculations and derive our final results.

In section~\ref{sect-15-4} we consider the cases of  superstrong magnetic field ($\mu h\asymp 1$ and $\mu h \gg 1$ respectively; in the latter case we need to take potential $-\fz \mu hF +V$ rather than $V$ to prevent pushing bottom of the spectrum to high. Here $\fz \ge 1$ under Dirichlet boundary condition but could be smaller under Neumann boundary condition; then spectral asymptotics are concentrated near boundary). As we know from Chapter~\ref{book_new-sect-13} \cite{futurebook} cyclotrons are no more observable due to uncertainty principle but magnetic drift preserves its sense. The same remains true near boundary: we do not see hops but we observe propagation along the boundary and it can be torn away from the boundary and became magnetic drift and v.v.

Finally, in section~\ref{sect-15-5} we consider generalizations: first, we get rid off condition $V\asymp 1$ and then we get rid off  condition $F\asymp 1$. In the latter case as we know from Chapter~\ref{book_new-sect-14} \cite{futurebook} our remainder estimate \emph{must\/} deteriorate to $O(\mu^{-\frac{1}{2}}h^{-1})$.

Appendix~\ref{sect-15-A} is devoted to auxiliary $1$-dimensional harmonic oscillator $-\partial_x^2 +x^2$ considered on $(-\infty,\eta]$ with Dirichlet or Neumann boundary condition as $x=\eta$. We study its eigenvalues $\lambda_{\D,n}(\eta)$ and $\lambda_{\N,n}(\eta)$ reproducing results of B.~Helffer, C.~Bolley and M.~Dauge. These properties are crucial for our formulae and for analysis of section~\ref{sect-15-4}.

\chapter{Preliminary discussion}
\label{sect-15-1}

\section{Inner and boundary zones}
\label{sect-15-1-1}
Let us consider $2$-dimensional Magnetic Schr\"odinger operator near the boundary. Let as usual $\gamma(x)=\frac{1}{2}\dist(x, \partial X)$ and consider disk (i.e. $2$-dimensional ball) $B(y,\gamma(y))$. Scaling it  $x\mapsto (x-y)\gamma^{-1}$ to the unit disk we get
\begin{multline}
h\mapsto h_\new=h\gamma^{-1},\quad \mu \mapsto \mu_\new=\mu \gamma,\quad 
\nu \mapsto \nu_\new=\nu\gamma,\\
\bar{\nu}\mapsto \bar{\nu}_\new=\bar{\nu}\gamma
\label{15-1-1}
\end{multline}
where recall $\nu$ and $\bar{\nu}$ are introduced in (\ref{book_new-13-3-11}) and \ref{book_new-13-2-77} \cite{futurebook}. We assume that  originally $\nu=\bar{\nu}=1$. Then according to Chapter~\ref{book_new-sect-13} \cite{futurebook} under reasonable conditions contribution of $B(y,\gamma(y))$ to the remainder $\R^\MW_{\infty}$ does not exceed 
\begin{equation}
C\nu_\new \mu_\new^{-1}h_\new^{-1}=C\mu^{-1}h^{-1}\gamma
\label{15-1-2}
\end{equation}
as $\gamma\ge \mu h$ and dividing by $\gamma^2$ and integrating we conclude that the contribution of the \emph{inner zone\/}\index{inner zone}\index{zone!inner} 
\begin{equation}
X_\inn\Def \{ x, \gamma(x)\ge \bar{\gamma}\Def C_0\mu^{-1}\}
\label{15-1-3}
\end{equation}
to $\R^\MW_\infty$ does not exceed 
\begin{equation}
C\mu^{-1}h^{-1}\int \gamma^{-1}\,dx\asymp C\mu^{-1}h^{-1}\log \mu
\label{15-1-4}
\end{equation}
as $\mu\le h^{-\frac{1}{2}}$.

\begin{remark}\label{rem-15-1-1}
We \emph{must\/} take $C_0$ large enough in (\ref{15-1-2}) to ensure that 
$\mu_\new\ge \mu_0$ where  $\mu_0$ is large enough constant.
\end{remark}

One can get rid off logarithmic factor in (\ref{15-1-4}) using Seeley' approach (R.~Seeley, \cite{S1,S2}; see also section~\ref{book_new-sect-7-4})~\cite{futurebook}. One also should consider case $\mu \ge h^{-\frac{1}{2}}$ but then zone $\{\mu h \ge \gamma \ge C_0\mu^{-1}\}$ appears and its contribution to remainder is estimated differently, as 
\begin{equation}
C\int \gamma(x)^{-1}T(x)^{-1}\, dx.
\label{15-1-5}
\end{equation}
We will postpone this until real proofs. 

In the \emph{boundary zone\/}\index{boundary zone}\index{zone!boundary} 
\begin{equation}
X_\bound \Def \{ x, \gamma(x)\le 2\bar{\gamma}\}
\label{15-1-6}
\end{equation}
one can apply standard Weyl remainder estimate after rescaling. Then contribution of $B(y,\bar{\gamma})$ to $\R^\W$ does not exceed $Ch_\new^{-1}=Ch^{-1}\bar{\gamma}$ and the total contribution of $X_\bound$ to the remainder does not exceed 
$Ch^{-1}\bar{\gamma} \times \bar{\gamma}^{-1}=Ch^{-1}$. This is much worse than the contribution of the inner zone we derived and needs to be fixed. To do it we need to extend dynamics from time  $T\asymp \bar{\gamma}\asymp \mu^{-1}$ ($T\asymp 1$ after rescaling) to 
$T\asymp 1$ ($T\asymp \mu$ after rescaling) or even more in the strong, very strong and superstrong magnetic field cases.

\section{Classical dynamics near the boundary}
\label{sect-15-1-2}
To understand the role of the boundary, consider classical dynamics. Let us consider first a half-plane $X=\bR^2_+=\{x,\ x_1>0\}$, $g^{jk}=\updelta_{jk}$, $F=1$ and $V=0$. Let us write the model operator  in the form
\begin{equation}
\bar{A}\Def h^2D_1^2+ (hD_2-\mu x_1)^2,
\label{15-1-7}
\end{equation}
as $F_{12}\Def \partial _2V_1-\partial _1V_2=-1$ according to (\ref{book_new-13-1-7})~\cite{futurebook}.

Then we have a Hamiltonian circular trajectory 
\begin{multline}
x_1=\mu^{-1}\bar{\xi}_2+ a\mu^{-1}\cos 2\mu t, \qquad 
x_2=\bar{x}_2-a\mu^{-1}\sin 2\mu t,\\
\xi_1=-a\sin2\mu t,\qquad \xi_2=\bar{\xi}_2
\label{15-1-8}
\end{multline}
where $a=\tau^{\frac{1}{2}}$ and $\tau$ is an energy level. So we got circular counter-clock-wise trajectories of the radius $\tau^{\frac{1}{2}}$ centered at $\bar{x}$ with $\bar{x}_1=\mu^{-1}\bar{\xi}_2$ and depending on $\bar{\xi}_2$ these trajectories behave differently:

\medskip\noindent
(a) As $\bar{\xi}_2\ge \tau^{\frac{1}{2}}$ trajectory does not intersect $\partial X$ or just touches it and remains circular. 

\medskip\noindent
(b) As $\bar{\xi}_2< \tau^{\frac{1}{2}}$ trajectory reflects from $\partial X$ and we get a ``hop''-movement: 
\begin{figure}[h]
\includegraphics[scale=1.15]{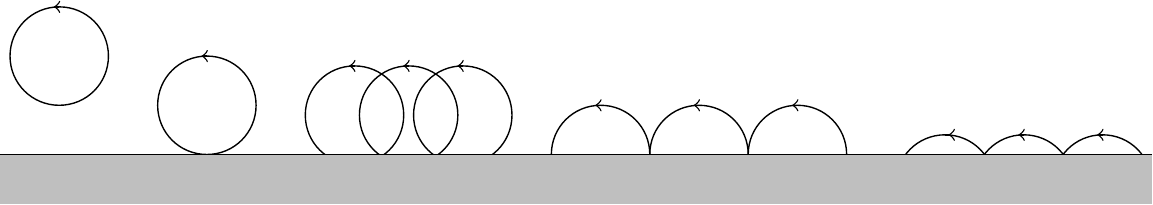}
\caption{\label{fig-hop} Different classical trajectories in half-plane for model operator}
\end{figure}

\medskip\noindent
(c) As $\bar{\xi}_2\searrow -\tau^{\frac{1}{2}}$ trajectory stays closer and closer to $\partial X$ and becomes a kind of gliding ray in the limit.  And we are interested only at zone   $\{\xi_2\ge - \tau^{\frac{1}{2}}\}$. 

\medskip
So, trajectories described in (b) are not periodic even for the model operator. 

\begin{figure}[h!]
\centering
\subfloat[$\xi_2>0$]{\includegraphics[width=0.4\linewidth]{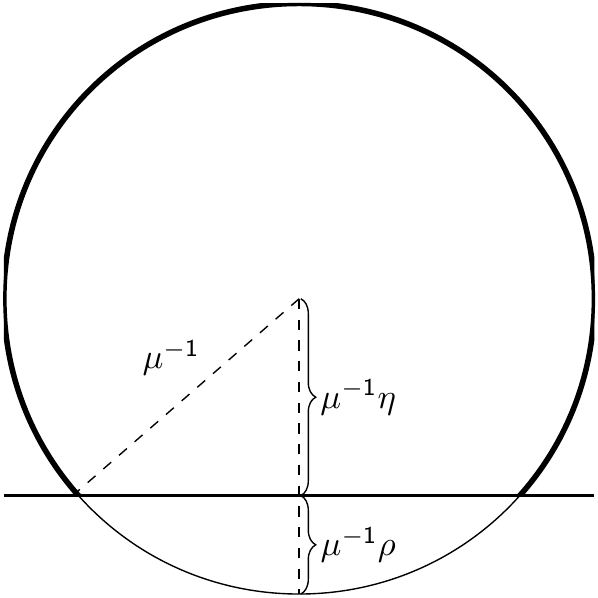}} \qquad
\subfloat[$\xi_2<0$]{\includegraphics[width=0.4\linewidth]{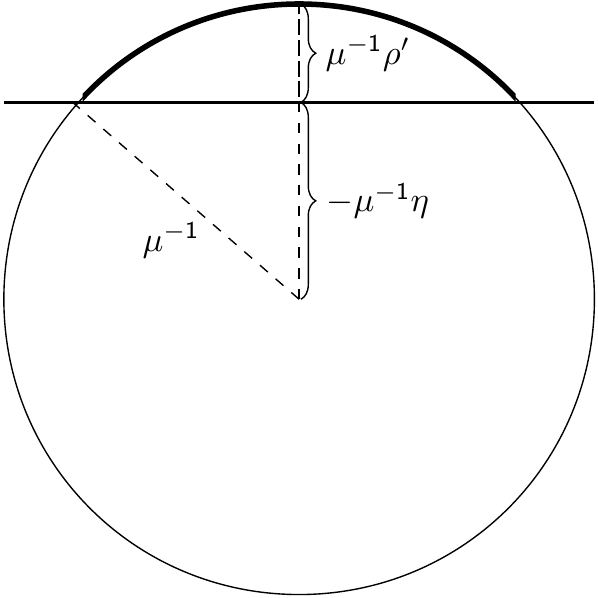}}
\caption{\label{fig-hop2} Calculating the length of the hop; $\rho=1-\eta$, $\rho'=1+\eta$}
\end{figure}

One can calculate easily that
\begin{claim}\label{15-1-9}
The \emph{length of the hop\/} (i.e. the distance between hop's start and end points) is $2a\mu^{-1}(1-\eta^2)^{\frac{1}{2}}$ with $\eta=\xi_2/a$ while the length of the arc is $2a\mu^{-1}(\pi -\arccos \eta )$ and thus the time of the hop is  $\mu^{-1}(\pi -\arccos \eta )$. 
\end{claim}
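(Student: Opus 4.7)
The plan is to read everything directly off the explicit circular parametrization (\ref{15-1-8}). First, I would determine the two times at which the trajectory meets the wall $\{x_1=0\}$: setting $x_1=\mu^{-1}\bar{\xi}_2 + a\mu^{-1}\cos 2\mu t = 0$ gives $\cos 2\mu t = -\eta$, whose two solutions in one period are $2\mu t = \pm(\pi-\arccos\eta)$. Call these times $t_\pm$. Between $t_-$ and $t_+$ the trajectory stays in $\{x_1>0\}$ because $\cos 2\mu t > -\eta$ there; moreover $\xi_1(t_-) = -a\sin 2\mu t_- = a\sqrt{1-\eta^2} > 0$, so the circle is indeed moving into the half-plane just after $t_-$, which is the post-reflection branch and hence the correct ``hop''.

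Next, to obtain the length of the hop (i.e.\ the chord between its two endpoints), I would note that both endpoints lie on the wall $\{x_1=0\}$, so it suffices to subtract the $x_2$-coordinates evaluated at $t_\pm$:
\[
x_2(t_\pm) = \bar{x}_2 \mp a\mu^{-1}\sin(\pi-\arccos\eta) = \bar{x}_2 \mp a\mu^{-1}\sqrt{1-\eta^2},
\]
yielding distance $2a\mu^{-1}\sqrt{1-\eta^2}$, exactly as claimed.

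For the arc length and hop time, I would observe that $2\mu t$ is the angular coordinate on the circle of radius $a\mu^{-1}$ centered at $(\mu^{-1}\bar{\xi}_2,\bar{x}_2)$. The angular sweep from $t_-$ to $t_+$ (passing through $t=0$) is $2(\pi-\arccos\eta)$. Multiplying by the radius $a\mu^{-1}$ gives arc length $2a\mu^{-1}(\pi-\arccos\eta)$; dividing the angular sweep by the angular speed $2\mu$ gives elapsed time $\mu^{-1}(\pi-\arccos\eta)$.

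There is no substantive obstacle: everything falls out of (\ref{15-1-8}) by inspection. The only delicate point is the sign/branch convention—one must verify that the arc picked out between $t_-$ and $t_+$ (through $t=0$) is the one interior to $\{x_1>0\}$ rather than its complement, and that it corresponds to motion launched outward from the wall at $t_-$ and returning to it at $t_+$. Checking $x_1(0)=\mu^{-1}(\bar{\xi}_2+a)>0$ (which holds in case (b), where $\bar{\xi}_2>-a$) together with $\xi_1(t_-)>0$, as above, pins this down unambiguously.
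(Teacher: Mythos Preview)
Your proposal is correct and is essentially what the paper does: the paper states the claim as something one ``can calculate easily'' from Figure~\ref{fig-hop2}, i.e., by reading the chord, arc, and angle off the circle in (\ref{15-1-8}); you carry out exactly this computation explicitly. Your verification that the arc through $t=0$ lies in $\{x_1>0\}$ and that $\xi_1(t_-)>0$ is a welcome addition that the paper leaves to the figure.
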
\index{hop}\index{hop!length}

Therefore 

\begin{claim}\label{15-1-10} 
As $|\eta|<1$ \emph{the average hop-speed\/} along $x_2$ is $\frac{\updelta x_2}{\updelta t}=-2av(\eta)$ with\index{hop!speed}
\begin{equation}
v(\eta)\Def \frac{(1-\eta^2)^{\frac{1}{2}}}{\pi-\arccos (\eta)}
\label{15-1-11}
\end{equation}
\end{claim}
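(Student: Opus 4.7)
The plan is to compute both the time of the hop and the net $x_2$-displacement directly from the explicit trajectory (\ref{15-1-8}) and then take their ratio. Since (\ref{15-1-9}) already gives the time of the hop as $\Delta t = \mu^{-1}(\pi - \arccos\eta)$, the average speed $\Delta x_2/\Delta t$ reduces to computing $\Delta x_2$ over a single reflection-to-reflection arc.

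First, I would locate the two consecutive times $t_1<t_2$ at which the particle sits on $\partial X=\{x_1=0\}$. From (\ref{15-1-8}) the equation $x_1(t)=0$ gives $\cos(2\mu t)=-\eta$, so one may take
\[
2\mu t_1=-(\pi-\arccos\eta),\qquad 2\mu t_2=\pi-\arccos\eta,
\]
and a check at $t=0$ gives $x_1=a\mu^{-1}(1+\eta)>0$, confirming that $(t_1,t_2)$ is the interval on which the trajectory lies inside the domain. Substituting into $x_2(t)=\bar{x}_2 - a\mu^{-1}\sin(2\mu t)$ and using $\sin(\pi-\arccos\eta)=\sqrt{1-\eta^2}$ yields
\[
\Delta x_2 = x_2(t_2)-x_2(t_1) = -2a\mu^{-1}(1-\eta^2)^{1/2},
\]
which simultaneously re-derives the chord-length formula of (\ref{15-1-9}).

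Dividing by $\Delta t$, the factors of $\mu^{-1}$ cancel and one obtains $\Delta x_2/\Delta t = -2a\,v(\eta)$ with $v(\eta)$ as in (\ref{15-1-11}). There is no real obstacle here: the only things to keep track of are the sign, which encodes the counter-clockwise orientation of the cyclotron and is what produces the minus sign in $-2av(\eta)$, and the implicit restriction $|\eta|<1$, which is needed precisely so that $\arccos\eta\in(0,\pi)$ and both the length and the time of the hop are strictly positive—consistent with the degenerations at $\eta=1$ (tangent circle) and $\eta\searrow -1$ (gliding ray) described before the statement.
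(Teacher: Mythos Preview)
Your argument is correct and is essentially the paper's own approach: the paper states (\ref{15-1-9}) as a direct computation from the explicit trajectory (\ref{15-1-8}) and then derives (\ref{15-1-10}) by the word ``Therefore'', i.e.\ by dividing the hop displacement by the hop time. You have simply filled in the explicit choice of $t_1,t_2$ and verified the signs, which the paper leaves to the reader.
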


\begin{wrapfigure}[4]{l}[4.pt]{3truecm}
\includegraphics[scale=1]{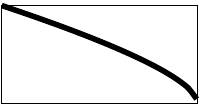}
\end{wrapfigure}
\noindent
This is a plot of $v(\eta)$. One can see easily that $v(\eta)$ is defined on $(-1,1)$ where it decays from $0$ to $1$;   $\eta=1$ is a threshold between circular and hop-movement and $\eta=-1$ corresponds to gliding rays. 
\index{hop!movement}

\medskip
As we cannot get remainder estimate better than $O(\mu h^{-1})$ for model operator we need to consider a perturbation by a potential: 
\begin{equation}
A\Def h^2D_1^2+ (hD_2-\mu x_1)^2+V(x);
\label{15-1-12}
\end{equation}
a classical dynamics for a general operator (\ref{13-1-1}) in dimension $d=2$ will be not different in our assumptions.

Then one should use the same classification as before with
\begin{equation}
\eta \Def \frac{\xi_2}{W_0(x)^{\frac{1}{2}}},\qquad W= (\tau -V(x))F(x)^{-1}, \ W_0=W|_{\partial X}
\label{15-1-13}
\end{equation}
where \emph{so far\/} $F(x)=1$. 

Actually the last statement is not completely true in the \emph{transitional zone\/}\index{zone!transitional} 
(where now we consider zones in $(x,\xi)$-space)
\begin{gather}
\cX_\trans= \bigl\{(x,\xi):\ |\xi_2 - W_0(x)^{\frac{1}{2}}|\le  2\bar{\rho}\bigr\}
\label{15-1-14}
\intertext{or equivalently $\{|\eta - 1|\le C_0\bar{\rho}\}$ where \emph{so far}} 
\bar{\rho}=C_0\mu^{-1}
\label{15-1-15}
\end{gather}
but later it may be increased due to uncertainty principle.

So, let us introduce an \emph{inner zone\/}\index{zone!inner}
\begin{equation}
\cX_\inn= \bigl\{(x,\xi):\ \xi_2 - W_0(x)^{\frac{1}{2}}\ge \bar{\rho}\bigr\}
\label{15-1-16}
\end{equation}
and a \emph{boundary zone\/}\index{zone!boundary}
\begin{equation}
\cX_\bound= \bigl\{(x,\xi):\ \xi_2 - W_0(x)^{\frac{1}{2}}\le -\bar{\rho}\bigr\}.
\label{15-1-17}
\end{equation}
There is no need to consider \emph{gliding zone\/}\index{zone!gliding} 
\begin{equation}
\cX_{\mathsf{glid}}= \bigl\{(x,\xi):\ \xi_2 + W_0(x)^{\frac{1}{2}}\le \bar{\rho}\bigr\}
\label{15-1-18}
\end{equation}
separately from $\cX_\bound$.

Recall that inside of the domain potential causes magnetic drift
\begin{phantomequation}\label{15-1-19}\end{phantomequation}
\begin{equation}
\frac{d\ }{dt}x_1=\mu^{-1}\partial_{x_2}W,\qquad 
\frac{d\ }{dt}x_2=-\mu^{-1}\partial_{x_1}W.
\tag*{$\textup{(\ref*{15-1-19})}_{1,2}$}\label{15-1-19-*}
\end{equation}

Let us analyze what happens near the boundary. Note first that

\begin{claim}\label{15-1-20}
Billiards do not branch as $\mu\ge \mu_0$ where $\mu_0$ is large enough.
\end{claim}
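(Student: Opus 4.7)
The plan is to work in Fermi coordinates $(x_1,x_2)$ near $\partial X$ --- with $x_1=\dist(x,\partial X)$ and $x_2$ an arc-length parameter along $\partial X$ --- and in a gauge where $V_1=0$ and $V_2(0,x_2)=0$, so that $\partial_{x_1}(\mu V_2)|_{x_1=0}=\pm\mu F$. A branching of the generalized reflected Hamiltonian flow can happen \emph{only} at a tangential contact, i.e.\ at a phase-space point on the energy surface $\{a=\tau\}$ with $x_1=0$ and $\xi_1=0$; at any transverse collision the specular law $\xi_1\mapsto -\xi_1$ gives a single outgoing trajectory, and away from $\partial X$ the flow is the smooth Hamiltonian flow of $a$. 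So the whole matter reduces to ruling out a gliding branch at tangential points.

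The key step is the second-order expansion of $x_1(t)$ at a contact. Since $\dot x_1=\partial_{\xi_1}a=2\xi_1$ vanishes there, one has to compute $\ddot x_1=2\dot\xi_1=-2\partial_{x_1}a$. Setting $\xi'_2\Def \xi_2-\mu V_2$ and differentiating $a=\xi_1^2+g(x)(\xi'_2)^2+V+(\text{cross terms vanishing at }\xi_1=0)$ in the chosen gauge yields
\[
\ddot x_1 \;=\; 4\,\mu\,F(0,x_2)\,\xi'_2 \,+\, O(1),
\]
with the $O(1)$ uniform on $B(0,1)$ thanks to the a priori bounds (\ref{13-1-3}). At the tangential contact $\xi'_2=\xi_2$ (because $V_2$ vanishes at $x_1=0$), and the convention $\eta=1$ for an orbit grazing the boundary from the inner zone pins $\xi'_2=+W_0^{\frac12}$, with $W_0$ as in (\ref{15-1-13}). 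Under the standing assumptions $F\asymp 1$ and $W_0\ge \epsilon_0$ on the support of the spectral cut-off, this gives $\ddot x_1 \ge c_0\mu-C_1$ for some positive constants $c_0,C_1$, so choosing $\mu_0\Def 2C_1/c_0$ we obtain $\ddot x_1>0$ strictly for every $\mu\ge \mu_0$.

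A strictly positive $\ddot x_1$ at the contact means that $x_1(t)$ has a non-degenerate local minimum there, so the trajectory separates from $\partial X$ at second order in time and continues uniquely as a circular inner-zone arc --- there is no gliding branch available to split off. Combined with the already-established uniqueness of specular reflection at transverse incidence, this yields single-valuedness of the billiard flow and proves (\ref{15-1-20}).

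The main technical obstacle I anticipate is the clean bookkeeping of the non-leading terms: one must verify that in Fermi coordinates, and in the chosen gauge, the full symbol $a$ really does decompose as $\xi_1^2+g(x)(\xi'_2)^2+V(x)$ plus terms that either vanish at $\xi_1=0$ or are bounded uniformly in $\mu$, so that the $\partial_{x_1}$-derivative at a tangent point extracts only the announced leading $4\mu F\xi'_2$ and genuinely $O(1)$ remainders. This is a routine but careful expansion using the gauge, the a priori bounds (\ref{13-1-3}), and the fact that the spectral cut-off keeps $W_0$ bounded away from zero --- without this last ingredient the argument would degenerate at $\eta=1$ points where the inner zone meets the gliding regime.
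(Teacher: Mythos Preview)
Your approach coincides with the paper's: both reduce (\ref{15-1-20}) to the sign-definiteness of $\ddot x_1$ at tangential contacts, which is precisely the content of (\ref{15-1-21}). Your explicit computation $\ddot x_1 = 4\mu F(0,x_2)\,\xi_2 + O(1)$ at a point with $x_1=\xi_1=0$ is correct and more detailed than what the paper actually writes out.

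There is, however, a gap. On the energy shell $\{a=\tau\}$ the condition $x_1=\xi_1=0$ forces $\xi_2=\pm W_0^{1/2}$, so there are \emph{two} tangential contacts: $\eta=+1$ (transitional) and $\eta=-1$ (gliding). You treat only $\eta=+1$, where $\ddot x_1>0$ and the free trajectory re-enters $X$. At $\eta=-1$ your own formula gives $\ddot x_1 = -4\mu F\,W_0^{1/2}+O(1)<0$ for $\mu\ge\mu_0$: this is the strictly gliding case, where the free bicharacteristic would exit $X$ and the unique generalized bicharacteristic continuation is the gliding ray along $\partial X$. You have not said this, and without it a gliding ray spawning a free branch into the interior is not excluded; this is exactly the ``strongly concave in the gliding zone'' half of (\ref{15-1-21}). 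The fix is one line: since $|\xi_2|=W_0^{1/2}\ge\epsilon_0^{1/2}$ at \emph{every} tangent point, replace your inequality by $|\ddot x_1|\ge c_0\mu-C_1$, and conclude that every tangential contact is either strictly diffractive or strictly gliding, hence non-branching.
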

Really,  one can prove easily that
\begin{claim}\label{15-1-21}
With respect to Hamiltonian trajectories $\partial X$ is strongly  concave in the gliding zone and strongly convex in the transitional zone (and domain $X$ has opposite property) as $\mu\ge \mu_0$.
\end{claim}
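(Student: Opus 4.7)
The plan is to reduce both assertions to a single Poisson-bracket computation at the glancing set. First, I would straighten $\partial X$ locally so that it becomes $\{x_1=0\}$ with $X=\{x_1>0\}$, choose the tangential coordinate $x_2$ so that $g^{12}|_{\partial X}\equiv 0$, and pick a magnetic gauge in which $V_1\equiv 0$ in a boundary strip and $V_2\equiv 0$ on $\partial X$. These normalizations are compatible with the bounds $|D^\alpha g^{jk}|,|D^\alpha V_j|\le c$ (the gauge function is $O(1)$ and smooth) and with the hypothesis $F\asymp 1$; in this gauge $\partial_{x_1}V_2|_{\partial X}=\pm F$.

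On the energy surface $H=\tau$ for $H=g^{jk}(\xi_j-\mu V_j)(\xi_k-\mu V_k)+V$, the glancing set is cut out by $x_1=0$, $\{H,x_1\}=2g^{11}\xi_1=0$, and $g^{22}\xi_2^2+V=\tau$. Its two connected components are $\xi_2=\pm(W_0F/g^{22})^{1/2}$, which by the definition of $\eta$ in (\ref{15-1-13}) are exactly the thresholds $\eta=+1$ and $\eta=-1$ separating the transitional and gliding zones. I would then compute the second bracket $\ddot x_1=\{H,\{H,x_1\}\}$. After using $\xi_1=0$, $V_2|_{\partial X}=0$ and $g^{12}|_{\partial X}=0$ this reduces to $-2g^{11}\partial_{x_1}H$, and substituting $\partial_{x_1}V_2|_{\partial X}=F$ one obtains
\[
 \ddot x_1 \;=\; 4\mu\,F\,g^{11}g^{22}\,\xi_2 \;-\; 2g^{11}\bigl(\partial_{x_1}g^{22}\cdot\xi_2^2+\partial_{x_1}V\bigr).
\]
The leading term is linear in $\mu$; with $F\asymp 1$ and $g^{11},g^{22}\asymp 1$ by (\ref{13-1-4}) it is of size $\asymp\mu|\xi_2|$ with the sign of $\xi_2$, while the remainder is $O(1)$ under the regularity assumptions. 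Choosing $\mu_0$ large enough that this leading term dominates, $\ddot x_1>0$ and $\asymp\mu$ throughout $\cX_\trans$ (where $\xi_2>0$, the zone being too narrow for a sign change since $\bar\rho=C_0\mu^{-1}\ll W_0^{1/2}$) and $\ddot x_1<0$ and $\asymp\mu$ throughout $\cX_{\mathsf{glid}}$ (where $\xi_2<0$). By the standard classification of glancing points this is strong diffractivity---convexity of $\partial X$---in the transitional zone and strong gliding---concavity of $\partial X$---in the gliding zone. The complementary assertion for $X$ follows by symmetry: passing to $\bR^2\setminus X$ reverses the sign of the defining function $x_1$ and hence of $\ddot x_1$.

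The main obstacle is bookkeeping rather than conceptual: one must verify that the straightening of $\partial X$, the orthogonalization of the metric, and the magnetic gauge fixing can all be carried out simultaneously without injecting spurious positive powers of $\mu$ into the Hamiltonian or losing uniform control of the derivative bounds on $g^{jk}$ and $V_j$. This is the case because the gauge and coordinate changes are $\mu$-independent constructions determined entirely by $(g,V_1,V_2)$, and the only place $\mu$ enters the leading term of $\ddot x_1$ is through the gauge-invariant combination $\mu\,\partial_{x_1}V_2=\mu F$, which is genuinely $\asymp\mu$ by hypothesis. Once this is confirmed, the strong convexity/concavity dichotomy is an immediate consequence of the explicit formula above, and the non-branching claim (\ref{15-1-20}) follows from it since a tangential trajectory in the transitional zone is forced back into $X$ by $\ddot x_1\asymp\mu>0$ rather than splitting off a gliding branch.
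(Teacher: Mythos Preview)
Your approach is correct and is precisely the computation the paper has in mind; the paper states (\ref{15-1-21}) as something ``one can prove easily'' and does not write out a proof, but the second-bracket calculation $\ddot x_1=\{H,\{H,x_1\}\}=4\mu F g^{11}g^{22}\xi_2+O(1)$ at the glancing set is the canonical way to verify it. In the paper's own normalization \ref{15-2-3-*}--(\ref{15-2-5}) (which already packages your three reductions: $g^{11}=F$, $g^{12}=0$, and $V_2=x_1+O(x_1^2)$) the formula reads $\ddot x_1=4F^2\mu\xi_2+O(1)$, whose sign is that of $\xi_2=\pm W_0^{1/2}$ at $\eta=\pm1$, giving the claimed dichotomy once $\mu\ge\mu_0$.
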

Recall that (according to Figure~\ref{fig-hop2}) $\rho=1-\eta$, $\rho'=1+\eta$, $\eta =\xi_2W_0^{-1}$. Then along Hamiltonian trajectories 
\begin{multline*}
\frac{d\rho' }{dt} =  \frac{d\ }{dt}\bigl(\xi_2 W_0^{-\frac{1}{2}}\bigr)=
W_{x_2}W_0^{-\frac{1}{2}}-
\frac{1}{2}\xi_2 W_0^{-\frac{3}{2}}W_{0x_2}\frac{dx_2}{dt} \equiv\\
W_{0x_2}W_0^{-\frac{1}{2}}\Bigl(1-\frac{1}{2}\xi_2W_0^{-1}\frac{dx_2}{dt}\Bigr)\qquad
\mod O\bigl(\mu^{-1}\rho'\bigr)
\end{multline*}
as $x_1=O(\mu^{-1}\rho')$ and therefore for one hop
\begin{gather*}
\frac{\updelta\rho' }{\updelta x_2} \equiv
W_{0x_2}W_0^{-\frac{1}{2}}\Bigl(1-\frac{1}{2}\xi_2W_0^{-1}
\frac{\updelta x_2}{\updelta t}\Bigr)\\
\intertext{with $W_0,W_{0x_2},\xi_2$ calculated in the middle of it}
\equiv 
W_{0x_2}W_0^{-\frac{1}{2}}\Bigl(1+\eta v(\eta)\Bigr)v(\eta)^{-1}\equiv \frac{2}{3} \rho' W_{0x_2}W_0^{-\frac{1}{2}}
\quad
\mod O\bigl(\rho'(\mu^{-1}+\rho')\bigr)
\end{gather*}
as $\eta=\rho'-1$ and $v(\eta)=1-\frac{1}{3}\rho'\mod \rho^{\prime\,2}$.

Therefore 
\begin{claim}\label{15-1-22}
Along trajectories of the length $\le 1$  \ $\rho' \exp(-\frac{4}{3}W_0^{\frac{1}{2}})$ remains constant modulo $O\bigl(\rho'(\mu^{-1}+\rho')\bigr)$.
\end{claim}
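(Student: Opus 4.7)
The plan is to interpret the claim as an approximate adiabatic conservation law, obtained by telescoping the per--hop estimate derived just above. The key algebraic observation is pointwise: since $\frac{2}{3}W_{0x_2}W_0^{-\frac{1}{2}}=\frac{d\ }{dx_2}\bigl(\frac{4}{3}W_0^{\frac{1}{2}}\bigr)$, the formula $\updelta \rho'/\updelta x_2\equiv \frac{2}{3}\rho' W_{0x_2}W_0^{-\frac{1}{2}}$ is a finite--difference form of $d\log\rho'=d\bigl(\frac{4}{3}W_0^{\frac{1}{2}}\bigr)$. This singles out $F\Def \rho'\exp\bigl(-\frac{4}{3}W_0^{\frac{1}{2}}\bigr)$ as the candidate invariant.

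Next I would fix a trajectory of total length $\le 1$ in the hop regime and enumerate its successive reflection abscissae $x_2^{(0)},x_2^{(1)},\ldots,x_2^{(N)}$. By (\ref{15-1-9}) each step satisfies $|x_2^{(k+1)}-x_2^{(k)}|\asymp \mu^{-1}v(\eta_k)$, so $N=O(\mu)$ and $\sum_k |x_2^{(k+1)}-x_2^{(k)}|\le 1$. The per--hop estimate reads
\begin{equation*}
\updelta \log\rho'= \frac{2}{3}W_{0x_2}W_0^{-\frac{1}{2}}\,\updelta x_2 + O\bigl((\mu^{-1}+\rho')\,\updelta x_2\bigr),
\end{equation*}
equivalently $\updelta \log F=O\bigl((\mu^{-1}+\rho')\,\updelta x_2\bigr)$. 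Telescoping across the trajectory gives $|\log F(T)-\log F(0)|=O(\mu^{-1}+\rho')$, and exponentiating yields $|F(T)-F(0)|=O\bigl(\rho'(\mu^{-1}+\rho')\bigr)$, which is the assertion.

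The main obstacle is that $\rho'$ itself evolves along the trajectory, so the $\rho'$ appearing in the error bound must be interpreted uniformly. I would handle this by a bootstrap: from the main term $\frac{2}{3}\rho' W_{0x_2}W_0^{-\frac{1}{2}}$ one sees that, to leading order, $\rho'$ varies like $e^{\frac{4}{3}W_0^{\frac{1}{2}}}$, and since $W_0$ is bounded on $B(0,1)$ the ratio $\rho'/\rho'(0)$ stays in a bounded range as long as the accumulated error is $o(1)$. Feeding this back, the cumulative error is $O\bigl(\rho'(0)(\mu^{-1}+\rho'(0))\bigr)$, which is automatically $o(1)$ in the relevant regime, thus closing the bootstrap.

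A secondary bookkeeping issue is that the per--hop formula evaluates $W_0,W_{0x_2},\xi_2$ in the middle of the hop while the telescoped $\frac{4}{3}W_0^{\frac{1}{2}}$ is most naturally read at the reflection points; the midpoint--to--endpoint discrepancy is $O(\mu^{-1})$ and contributes $O(\mu^{-1}\rho')$ per hop, which telescopes into an error already absorbed by the claimed remainder. Finally one must handle trajectory segments where a hop detaches into an inner cyclotron plus drift arc or vice versa via \ref{15-1-19-*}; matching occurs in the transitional zone $\rho'\lesssim \bar{\rho}=C_0\mu^{-1}$, where any possible mismatch of $F$ is automatically $O(\rho'\mu^{-1})$ and lies within the declared error.
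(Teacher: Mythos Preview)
Your proposal is correct and is exactly the natural telescoping argument that the paper's ``Therefore'' leaves implicit: the paper derives the per--hop relation $\updelta\rho'/\updelta x_2\equiv \tfrac{2}{3}\rho' W_{0x_2}W_0^{-1/2}$ modulo $O(\rho'(\mu^{-1}+\rho'))$ and states (\ref{15-1-22}) as an immediate consequence, and you have supplied that consequence by recognizing $\tfrac{2}{3}W_{0x_2}W_0^{-1/2}=\partial_{x_2}\bigl(\tfrac{4}{3}W_0^{1/2}\bigr)$ and summing $\updelta\log\bigl(\rho' e^{-\frac{4}{3}W_0^{1/2}}\bigr)$ over the $O(\mu)$ hops. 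Your final paragraph on detachment is superfluous here (that phenomenon concerns small $\rho$, not small $\rho'$, and is the subject of the subsequent Remark~\ref{rem-15-1-2}), but it does no harm.
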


\begin{remark}\label{rem-15-1-2}
The similar statement would be completely wrong for $\rho$ because as $\rho\approx 0$ $\frac{\updelta\rho}{\updelta t}\approx -W_{0x_2}W_0^{-\frac{1}{2}}$ and therefore as $W_{0x_2}<0$ hop-trajectories\footnote{\label{foot-15-1} In the positive time direction (and negative $x_2$-direction).} will be torn out of the boundary and begin magnetic drift movement. Meanwhile as $W_{0x_2}>0$ trajectories drifting in the inner zone may collide with the boundary and begin hop-movement\footref{foot-15-1}. 

In other words hops move away from the boundary (to the boundary) in the direction along the boundary, in which $W_0$ decreases (increases).
\end{remark}

\begin{example}\label{ex-15-1-3}
Meanwhile $W_{x_1}$ has more subtle effect. As hop-speed is larger than $C_0\mu^{-1}$ (i.e. in $\cX_\bound$) magnetic drift with respect to $x_2$ has no qualitative effect. However there are no hops in $\cX_\inn$. Therefore as $W_{x_1}\asymp 1$ we have two rather different cases:

\medskip\noindent
(i) $W_{x_1}>0$. Then according to $\textup{(\ref{15-1-19})}_2$ magnetic drift is to the left, in the same direction as hops. Then all dynamics is to the left\footref{foot-15-1}. In particular as $W_{x_2}>0$ hop-trajectories are torn from the boundary and begin drift movement (see figure~\ref{fig-traj}(a)) while as $W_{x_2}<0$ drift-trajectories collide with the boundary and begin hop-movement (see figure~\ref{fig-traj}(b)). 

\medskip\noindent
(ii)  $W_{x_1}<0$. Then according to $\textup{(\ref{15-1-19})}_2$ magnetic drift is to the right, in the opposite direction to the hops. So direction of dynamics (with respect to $x_2$) in $\cX_\inn$ is opposite to the hop-movement.  In particular as $W_{x_2}\le -\epsilon_0$ hop-trajectories are torn from the boundary and begin drift movement (see figure~\ref{fig-traj}(c))  while as 
$W_{x_2}\ge \epsilon_0$ drift-trajectories collide with the boundary and begin hop-movement (see figure~\ref{fig-traj}(d)). 
\begin{figure}[h!]
\centering
\subfloat[$W_{x_1}>0,W_{x_2}>0$]{\includegraphics[width=0.23\linewidth]{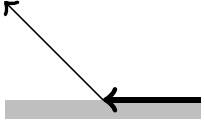}} \;
\subfloat[$W_{x_1}>0,W_{x_2}<0$]{\includegraphics[width=0.23\linewidth]{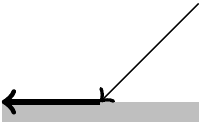}} \;
\subfloat[$W_{x_1}<0,W_{x_2}>0$]{\includegraphics[width=0.23\linewidth]{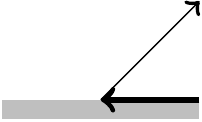}} \;
\subfloat[$W_{x_1}<0,W_{x_2}<0$]{\includegraphics[width=0.23\linewidth]{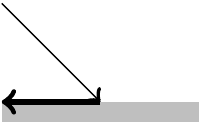}} 
\caption{\label{fig-traj} To example~\ref{ex-15-1-3}. Bold lines show hop movement and thin lines show drift movement which is along level lines of $W$.}
\end{figure}

This is consistent with the fact that drift trajectories are level curves of $W$. 
\end{example}

\begin{example}\label{ex-15-1-4}
Assume now that $W_{x_2}$ vanishes at some point but $W_{x_2x_2}\ne 0$. Then repeating analysis of the previous example we arrive to the following four pictures:
\begin{figure}[h!]
\centering
\subfloat[$W_{x_1}>0,W_{x_2x_2}>0$]{\includegraphics[width=0.25\linewidth]{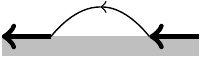}}\quad
\subfloat[$W_{x_1}>0,W_{x_2x_2}<0$]{\includegraphics[width=0.25\linewidth]{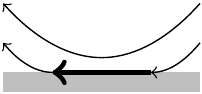}}\\ 
\subfloat[$W_{x_1}<0,W_{x_2x_2}>0$]{\includegraphics[width=0.25\linewidth]{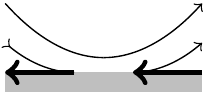}}\quad
\subfloat[$W_{x_1}<0,W_{x_2x_2}<0$]{\includegraphics[width=0.25\linewidth]{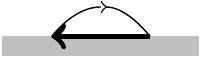}} 
\caption{\label{fig-traj3} To example~\ref{ex-15-1-4}. Bold lines show hop movement and thin lines show drift movement which is along level lines of $W$.}
\end{figure}

Again this is consistent with the fact that drift trajectories are level curves of $W$: in cases (a), (d), (b)--(c) point $\bar{x}$ is a local minimum,  maximum 
and minimax respectively.
\end{example}
However the following observation basically remains true:
\begin{claim}\label{15-1-23}
The  speed of magnetic drift is $O(\mu^{-1})$ (the typical speed is $\asymp \mu^{-1}$) while the speed of hop-movement is $O(1)$ (and the typical speed is $\asymp 1$).
\end{claim}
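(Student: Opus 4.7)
The plan is to treat the two regimes separately and then argue that they exhaust the relevant phase space dynamics in $\cX_\inn$ and $\cX_\bound\setminus\cX_\trans$ respectively.

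First I would handle the magnetic drift. The formulas $\textup{(\ref{15-1-19})}_{1,2}$ give
\begin{equation*}
\Bigl|\frac{dx_1}{dt}\Bigr|+\Bigl|\frac{dx_2}{dt}\Bigr|\le \mu^{-1}|\nabla W|,
\end{equation*}
and since $V$ and $g^{jk}$ have bounded derivatives by \textup{(\ref{13-1-3})}$_{1-3}$, $F\asymp 1$ by assumption, and $W=(\tau-V)F^{-1}$, we have $|\nabla W|\le C$, giving the upper bound $O(\mu^{-1})$. Under the genericity hypothesis $\nabla W\ne 0$ made in the paper, $|\nabla W|\asymp 1$ on the relevant set and we get the matching lower bound $\asymp\mu^{-1}$. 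This part is routine and essentially already proven by writing down the Hamiltonian flow in the reduced picture where the fast cyclotron rotation is averaged out.

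Next I would handle the hop-movement. Combining claim \textup{(\ref{15-1-9})} for the model operator with claim \textup{(\ref{15-1-10})}, the average displacement per hop is $2a\mu^{-1}(1-\eta^2)^{1/2}$ and the time per hop is $\mu^{-1}(\pi-\arccos\eta)$, so the average speed along the boundary equals $2a\,v(\eta)$ with $v(\eta)=(1-\eta^2)^{1/2}/(\pi-\arccos\eta)$ and $a=W_0^{1/2}\asymp 1$. The function $v$ is smooth on $(-1,1)$ with $v(\eta)\to 0$ as $\eta\to -1$ and $v(\eta)\to 1$ as $\eta\to 1$; crucially, on any interval $\{|\eta|\le 1-\delta\}$ with $\delta>0$ fixed, $v(\eta)\asymp 1$. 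Therefore on the part of $\cX_\bound$ lying outside both the transitional zone ($\eta\approx 1$) and the gliding zone ($\eta\approx -1$), the hop-speed is $\asymp 1$, and it is trivially $O(1)$ everywhere by $|v|\le 1$. The normal component of the motion is also $O(1)$ since the trajectory oscillates between $x_1=0$ and $x_1\asymp a\mu^{-1}(1+\eta)\le 2a\mu^{-1}$ in time $\mu^{-1}(\pi-\arccos\eta)$. For the general operator \textup{(\ref{13-1-1})}, the same bounds go through with $O(\mu^{-1})$ error, using \textup{(\ref{15-1-22})} which ensures $\rho'$ stays stable along each hop, so the local computation for the model operator transfers after rescaling.

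The main obstacle I would expect is the honest treatment of the transitional and gliding zones, where $v(\eta)$ is no longer bounded away from $0$ or where my decomposition into discrete hops breaks down. In the transitional zone $\eta\approx 1$ the hop length collapses to $O(\mu^{-1})$ and one must compare with cyclotron radius, so I would only claim the upper bound $O(1)$ there rather than the matching lower bound; the lower bound $\asymp 1$ is then asserted only on the complement of a neighborhood of $\{|\eta|=1\}$ of width $\bar\rho=C_0\mu^{-1}$, consistent with the definitions \textup{(\ref{15-1-16})}--\textup{(\ref{15-1-18})}. This is the qualitative content of claim \textup{(\ref{15-1-23})} and matches the later remark that the hop-speed becomes small precisely where the Helffer--Bolley--Dauge eigenvalue analysis of the appendix is needed.
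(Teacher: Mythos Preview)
Your approach is essentially the one the paper intends: claim \textup{(\ref{15-1-23})} is stated in the paper as an observation, not proved separately, and the justification is exactly the combination of the drift equations $\textup{(\ref{15-1-19})}_{1,2}$ with the hop-speed formula \textup{(\ref{15-1-10})}--\textup{(\ref{15-1-11})} that you invoke.

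There is, however, a slip in your endpoint analysis of $v(\eta)$. Writing $\eta=\cos\theta$ with $\theta\in(0,\pi)$ gives $v(\eta)=\sin\theta/(\pi-\theta)$, so $v(\eta)\to 0$ as $\eta\to 1$ (transitional zone) and $v(\eta)\to 1$ as $\eta\to -1$ (gliding zone), the reverse of what you wrote. This matters for your later exclusion: the hop-speed stays $\asymp 1$ throughout the gliding zone and degenerates only near $\eta=1$, so you need only exclude the transitional zone $\cX_\trans$, not the gliding zone, to get the lower bound $\asymp 1$. With that correction your argument is complete and matches the paper.
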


\section{Spectrum of the model operator}
\label{sect-15-1-3}
Consider model operator (\ref{15-1-7}) as $x_1>0$ with the Dirichlet or Neumann boundary condition $u|_{x_1=0}$ or $D_1u|_{x_1=0}$. Making $h$-Fourier transform we arrive to $1$-dimensional operator 
\begin{gather}
 h^2 D_1^2+ (\xi_2-\mu x_1)^2 
\label{15-1-24}\\
\intertext{which after transformations $x_1\mapsto \mu x_1$ or 
$x_1\mapsto  \hslash^{-\frac{1}{2}}x_1$ with $\hslash\Def  \mu ^{-1}h$ becomes}
\hbar^2 D_1^2 + (\xi_2-x_1)^2\qquad \text{with\ \ }\hbar=\mu h
\label{15-1-25}\\
\shortintertext{or}
L(\eta)\Def \hbar \bigl(D_1^2 + (\eta -x_1)^2\bigr)\qquad \text{with\ \ }
\eta = \hbar ^{-\frac{1}{2}}\xi_2\label{15-1-26}
\end{gather}
at $\bR^+\ni x_1$ respectively, again with the Dirichlet or Neumann boundary condition at $x_1=0$. As in the previous chapter one needs to distinguish between $\hbar=\mu h$ and $\hslash=\mu^{-1}h$.

Obviously 
\begin{claim}\label{15-1-27}
Each of these operators has a simple discrete spectrum, let 
$0\le \lambda_{*,0}(\eta)<\lambda_{*,1}(\eta) <\lambda_{*,2}(\eta)<\dots $ 
be eigenvalues of operator $L_*(\eta)$ defined by (\ref{15-1-26}) as $\hbar=1$ where $*$ means either $\D$ or $\N$ and as usual $\D$ and $\N$ denote Dirichlet and Neumann respectively.
\end{claim}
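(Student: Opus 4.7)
The plan is to establish three properties of $L_*(\eta)$ in sequence: self-adjointness with non-negativity, discreteness of the spectrum, and simplicity of every eigenvalue. Together these immediately yield the strict ordering $0 \le \lambda_{*,0}(\eta) < \lambda_{*,1}(\eta) < \cdots$ with accumulation only at $+\infty$.

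For the first step I would work with the quadratic form
\begin{equation*}
q_*[u] = \int_0^\infty \bigl(|u'(x_1)|^2 + (x_1-\eta)^2 |u(x_1)|^2\bigr)\,dx_1
\end{equation*}
on its natural form domain: the closure of smooth compactly supported functions on $\bR^+$ satisfying $u(0)=0$ in the Dirichlet case, and with no condition at the origin in the Neumann case. This form is manifestly non-negative, densely defined and closed, so the Friedrichs construction produces a unique self-adjoint operator $L_*(\eta) \ge 0$ associated with it, acting as $D_1^2 + (x_1-\eta)^2$ on its operator domain.

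Discreteness then follows from the fact that the confining potential $(x_1-\eta)^2 \to +\infty$ as $x_1 \to +\infty$. A standard Rellich-type argument (any $q_*$-bounded sequence is uniformly $H^1$-controlled on $[0,R]$ and uniformly small beyond $R$, with $R\to\infty$) shows that the form domain embeds compactly into $L^2(\bR^+)$. Hence $(L_*(\eta)+1)^{-1}$ is compact and the spectrum is a sequence of eigenvalues of finite multiplicity tending to $+\infty$.

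The simplicity of each eigenvalue is a half-line ODE observation. The equation $-u'' + (x_1-\eta)^2 u = \lambda u$ has a two-dimensional solution space; the boundary condition at $x_1=0$ cuts it down to a one-dimensional subspace, and at $+\infty$ only one linearly independent solution decays (asymptotically like $e^{-x_1^2/2}$ times a polynomial factor, by comparison with the parabolic cylinder / Weber equation). The $L^2$ requirement therefore makes each eigenspace at most one-dimensional; self-adjointness promotes this to algebraic simplicity, giving the strict inequalities $\lambda_{*,n}(\eta) < \lambda_{*,n+1}(\eta)$. The only mildly technical point I anticipate is the characterisation of the uniquely decaying solution at $+\infty$, which is classical for Weber's equation and requires no new technique; the finer dependence of $\lambda_{*,n}(\eta)$ on $\eta$ is exactly what is pursued in Appendix~\ref{sect-15-A}.
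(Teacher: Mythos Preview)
Your proposal is correct and follows the standard textbook route. The paper itself offers no proof: the claim is prefaced by ``Obviously'' and is treated as an immediate consequence of the harmonic-oscillator-with-boundary structure, with finer properties deferred to Appendix~\ref{sect-15-A}. Your three-step argument (Friedrichs form, compact resolvent from the confining potential, half-line ODE simplicity) is exactly the justification the paper omits as routine.
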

Further
\begin{claim}\label{15-1-28}
Let $\upsilon_{*,j}(x_1,\eta)$ be real-valued orthonormal eigenfunctions of operator $L_*(\eta)$ (as $\hbar=1$) corresponding to eigenvalues $\lambda_{*,j}(\eta)$.
\end{claim}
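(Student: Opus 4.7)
The plan is to combine the simplicity of the spectrum from (\ref{15-1-27}) with the fact that the differential expression $D_1^2+(\eta-x_1)^2$ has real coefficients and that both Dirichlet and Neumann boundary conditions at $x_1=0$ are preserved by complex conjugation. Fix $\eta\in\bR$ and $*\in\{\D,\N\}$. If $\phi$ is any $L^2$-normalized eigenfunction of $L_*(\eta)$ for the eigenvalue $\lambda_{*,j}(\eta)$, then $\bar\phi$ is also a normalized eigenfunction for the same eigenvalue, so by simplicity $\bar\phi=e^{-2i\theta}\phi$ for some real $\theta$; setting $\upsilon_{*,j}:=e^{i\theta}\phi$ yields a real-valued normalized eigenfunction. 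Orthogonality across distinct $j$ is automatic from self-adjointness of $L_*(\eta)$ together with distinctness of the eigenvalues.

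For this argument the hypotheses of (\ref{15-1-27}) must be in place. I would first realize $L_*(\eta)$ as the Friedrichs extension on the appropriate form core ($C_0^\infty((0,\infty))$ for Dirichlet, $C_0^\infty([0,\infty))$ for Neumann), obtaining a self-adjoint operator. Discreteness follows from the form bound $L_*(\eta)\ge c(D_1^2+x_1^2)-C_\eta$ and the standard compact-resolvent argument for one-dimensional Schr\"odinger operators with a confining potential. Simplicity is the classical Sturm-Liouville observation: since $(\eta-x_1)^2\to+\infty$, the ODE is in Weyl's limit-point case at $+\infty$, so at most a one-parameter family of solutions is square-integrable near $+\infty$, and the single boundary condition at $x_1=0$ further cuts the eigenspace down to dimension one.

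The main subtlety is not the real-valued orthonormalization at a fixed $\eta$, which is forced by the argument above, but the smooth (in fact real-analytic) dependence of $\upsilon_{*,j}(\cdot,\eta)$ on $\eta$ that subsequent sections will use implicitly. Here I would appeal to Kato's analytic perturbation theory applied to the holomorphic family $\eta\mapsto L_*(\eta)$: real-analyticity of each simple eigenvalue $\lambda_{*,j}(\eta)$ gives real-analyticity of the associated spectral projector, from which a real-analytic family of real-valued normalized eigenfunctions is extracted by imposing a sign convention on the leading nonzero boundary datum at $x_1=0$ (for instance $\upsilon_{*,j}(0,\eta)>0$ in the Neumann case and $\partial_{x_1}\upsilon_{*,j}(0,\eta)>0$ in the Dirichlet case). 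The full list of qualitative and quantitative properties of $\lambda_{*,j}$ and $\upsilon_{*,j}$ (monotonicity, asymptotics as $\eta\to\pm\infty$, absence of level crossings) is precisely what Appendix \ref{sect-15-A} is designed to establish, reproducing Helffer--Bolley--Dauge, and I would defer these to that appendix rather than re-prove them here.
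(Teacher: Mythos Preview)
Your argument is correct, but note that in the paper (\ref{15-1-28}) is not a statement to be proved: it is simply the \emph{definition} of the notation $\upsilon_{*,j}(x_1,\eta)$, relying on the simplicity of the spectrum already asserted in (\ref{15-1-27}). The paper offers no proof here because none is needed beyond (\ref{15-1-27}); the existence of a real-valued orthonormal choice is taken as evident for a real Sturm--Liouville problem, and the finer analytic properties (smooth dependence on $\eta$, monotonicity, asymptotics) are deferred entirely to Appendix~\ref{sect-15-A}, exactly as you anticipate in your final paragraph.

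What you have written is thus not a competing proof but a careful unpacking of the implicit content of (\ref{15-1-27})--(\ref{15-1-28}): the Friedrichs realization, compactness of the resolvent via the confining potential, limit-point simplicity, the $\bar\phi=e^{-2i\theta}\phi$ trick for reality, and Kato analytic perturbation theory for smooth $\eta$-dependence. All of this is standard and correct, and your suggested sign normalization via the boundary datum is a clean way to fix the remaining $\pm1$ ambiguity globally in $\eta$. There is no gap; you have simply supplied justification for what the paper treats as routine.
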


We will analyze them in details later (see Appendix~\ref{sect-15-A}), so far let us notice only that $\lambda_{*,j}(\eta)\to +\infty$ as $\eta\to -\infty$ and as $\lambda_{*,j}(\eta)$ are analytic we conclude that 

\begin{proposition}\label{prop-15-1-5}
(i) Spectrum of model operator \textup{(\ref{15-1-7})} in $\{x_1>0\}$ with Dirichlet or Neumann boundary conditions is absolutely continuous and occupies $[\hbar \cdot \inf _\eta \lambda_{*,0}(\eta),+\infty)$;

\medskip\noindent
(ii) Schwartz kernel of its spectral projector is
\begin{multline}
e(x,y,\tau,\mu, h)=\\[2pt]
(2\pi )^{-1} \hslash^{-1} \sum _{j\ge 0}\int 
e^{i\hslash^{-\frac{1}{2}} (x_2-y_2)\eta} 
\uptheta\bigl(\tau -\hbar \lambda_{*,j}(\eta)\bigr)
\upsilon_{*,j}(\hslash^{-\frac{1}{2}}x_1,\eta ) 
\upsilon_{*,j}(\hslash^{-\frac{1}{2}}y_1,\eta )\,d\eta.
\label{15-1-29}
\end{multline} 
\end{proposition}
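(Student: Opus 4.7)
The plan is a direct-integral decomposition via partial $h$-Fourier transform in $x_2$, followed by the standard spectral decomposition of the resulting one-parameter family of $1$-dimensional Schr\"odinger operators on a half-line.

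\textbf{Reduction to fibres.} Since the coefficients of $\bar A$ in (\ref{15-1-7}) do not depend on $x_2$, partial $h$-Fourier transform in $x_2$ conjugates $\bar A$ to the direct integral $\int^{\oplus}_{\bR}\bar A(\xi_2)\,d\xi_2$ with fibres $\bar A(\xi_2)=h^2D_1^2+(\xi_2-\mu x_1)^2$ on $L^2(\bR^+_{x_1})$ with the prescribed boundary condition at $x_1=0$. The unitary rescaling $(Uf)(y_1)=\hslash^{1/4}f(\hslash^{1/2}y_1)$ then transforms $\bar A(\xi_2)$ into $\hbar L_*(\eta)$ with $\eta=\hbar^{-1/2}\xi_2$. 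By (\ref{15-1-27})--(\ref{15-1-28}) this has simple discrete spectrum $\{\hbar\lambda_{*,j}(\eta):j\ge 0\}$, and pulling back yields orthonormal eigenfunctions of $\bar A(\xi_2)$ given by $\tilde\upsilon_j(x_1)=\hslash^{-1/4}\upsilon_{*,j}(\hslash^{-1/2}x_1,\eta)$.

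\textbf{Part (i).} The spectrum of the direct integral equals the closure of $\hbar\bigcup_{j\ge 0}\{\lambda_{*,j}(\eta):\eta\in\bR\}$. Each $\lambda_{*,j}$ is real-analytic in $\eta$, satisfies $\lambda_{*,j}(\eta)\to+\infty$ as $\eta\to-\infty$ (the potential well has been pushed outside $\bR^+$, so the quadratic form of $D_1^2+(\eta-x_1)^2$ is bounded below by $\eta^2$), and stays bounded as $\eta\to+\infty$ (converging to the full-line harmonic oscillator level $2j+1$). Hence $\eta\mapsto\lambda_{*,0}(\eta)$ attains its infimum and its range covers $[\inf_\eta\lambda_{*,0},+\infty)$, which already forces $\Spec\bar A=[\hbar\inf_\eta\lambda_{*,0},+\infty)$. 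Absolute continuity follows from the standard direct-integral criterion: each $\lambda_{*,j}$ is non-constant real-analytic and so contributes a purely absolutely continuous band.

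\textbf{Part (ii).} Functional calculus in each fibre gives
\[
e_{\bar A(\xi_2)}(x_1,y_1,\tau)=\sum_{j\ge 0}\uptheta\bigl(\tau-\hbar\lambda_{*,j}(\eta)\bigr)\,\tilde\upsilon_j(x_1)\tilde\upsilon_j(y_1),
\]
and the inverse partial $h$-Fourier transform in $x_2$ yields
\[
e(x,y,\tau,\mu,h)=(2\pi h)^{-1}\int_{\bR}e^{i(x_2-y_2)\xi_2/h}\,e_{\bar A(\xi_2)}(x_1,y_1,\tau)\,d\xi_2.
\]
Changing variable to $\eta=\hbar^{-1/2}\xi_2$ and using the identities $\hbar^{1/2}\hslash^{-1/2}=\mu$ and $\hbar^{1/2}/h=\hslash^{-1/2}$ converts the prefactor to $(2\pi\hslash)^{-1}$ and the phase to $i\hslash^{-1/2}(x_2-y_2)\eta$, reproducing (\ref{15-1-29}) exactly. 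The whole argument is essentially routine; the only items requiring care are the bookkeeping of the normalization factors $\hslash^{\pm 1/4}$ so that $\tilde\upsilon_j$ stays orthonormal and the combined prefactor collapses to $(2\pi\hslash)^{-1}$, and the two asymptotic inputs on $\lambda_{*,0}$ (both deferred to Appendix~\ref{sect-15-A}) needed to guarantee that its infimum is attained. No genuine obstacle arises: discreteness of each fibre spectrum is immediate from the confining potential plus half-line boundary condition, and analyticity of $\eta\mapsto\lambda_{*,j}(\eta)$ follows from Kato--Rellich analytic perturbation theory applied to the type-A family $L_*(\eta)$.
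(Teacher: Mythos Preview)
Your approach is exactly the one the paper sketches immediately before the proposition (Fourier transform in $x_2$, rescale to $\hbar L_*(\eta)$, read off spectrum and kernel from the direct-integral decomposition); the paper does not supply a separate proof block, so your write-up is essentially a fleshed-out version of (\ref{15-1-24})--(\ref{15-1-28}).

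One small inaccuracy to fix in part~(i): you assert that $\eta\mapsto\lambda_{*,0}(\eta)$ \emph{attains} its infimum and that its range is $[\inf_\eta\lambda_{*,0},+\infty)$. This is true for Neumann (Proposition~\ref{prop-15-A-3}) but false for Dirichlet: by Proposition~\ref{prop-15-A-1}(i), $\lambda_{\D,0}(\eta)$ is strictly decreasing with $\lambda_{\D,0}(\eta)\searrow 1$ as $\eta\to+\infty$, so the infimum $1$ is not attained and the range of $\lambda_{\D,0}$ is the open interval $(1,+\infty)$. The conclusion $\Spec\bar A=[\hbar\inf_\eta\lambda_{*,0},+\infty)$ is nonetheless correct, because the spectrum of a direct integral is the \emph{essential range} (hence closed): for any $\epsilon>0$ the set $\{\eta:\hbar\lambda_{\D,0}(\eta)<\hbar+\epsilon\}$ contains a half-line and so has infinite measure, putting $\hbar$ in the spectrum. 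Just replace ``attains its infimum'' by this closure/essential-range argument and the proof is clean.
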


Setting $x=y$, subtracting $h^{-2}\cN^\MW(\tau,\hbar)$ with $\cN^\MW(\tau,\hbar)$ defined by $\textup{(\ref{book_new-13-1-9})}_2$~\cite{futurebook}:
\begin{multline}
\cN^\MW(\tau ,\mu h)=\\
\#\{j\in\bZ^+:(2j+1)\mu hF+V\le\tau \}\cdot(2\pi )^{-1}\sqrt g(\mu h)F,
\label{13-1-9-2}
\end{multline}
and integrating with respect to $x_1$ we arrive to the following \emph{formal\/} (at least for now) equality
\begin{multline}
\int_0^\infty\Bigl(e(x,x,\tau,\mu, h)-h^{-2}\cN^\MW(\tau,\hbar)\Bigr)\,dx_1=
\\[2pt]
(2\pi)^{-1} \mu h^{-1} \int_0^\infty \sum_{j\ge 0}\Bigl( 
\int \uptheta\bigl(\tau -\hbar \lambda_{*,j}(\eta)\bigr)
\upsilon_{*,j}^2 (\hslash^{-\frac{1}{2}}x_1,\eta )\,d\eta -\\ 
\uptheta \bigl(\tau -(2j+1)\hbar\bigr)\Bigr)\, dx_1=
h^{-1}\cN^\MW_{*,\bound}(\tau, \mu h)
\label{15-1-30}
\end{multline}
with
\begin{multline}
\cN^\MW_{*,\bound}(\tau, \hbar)\Def\\
(2\pi)^{-1} \int_0^\infty \sum_{j\ge 0}\Bigl( 
\int \uptheta\bigl(\tau -\hbar \lambda_{*,j}(\eta)\bigr)
\upsilon_{*,j}^2 (x_1,\eta)\,d\eta - 
\uptheta \bigl(\tau -(2j+1)\hbar\bigr)\Bigr)\hbar^{\frac{1}{2}}\, dx_1
\label{15-1-31}
\end{multline}
where in the last transition we rescaled $x_1$. 

Recall that $\bar{\lambda}_j=(2j+1)$ are eigenvalues of operator (\ref{15-1-26}) on the whole line $\bR\ni x_1$ as $\hbar=1$.

The following properties of $\lambda_{*,j}(\eta)$ are useful to know

In virtue of proposition~\ref{prop-15-A-1}
\begin{phantomequation}\label{15-1-32}\end{phantomequation}
$\tau > (2j+1)\hbar \iff \tau > \lambda_{\D j}(\eta)$ as $0<\eta $ is large enough and thus 
\begin{equation}
\uptheta\bigl(\tau -\lambda_{\D j}(\eta)\hbar\bigr)=
\uptheta\bigl(\tau -(2j+1)\hbar\bigr)\impliedby 0< \eta \gg 1
\tag*{$\textup{(\ref*{15-1-32})}_\D$}\label{15-1-32-D}
\end{equation}
However $\tau \ge (2j+1)\hbar \iff \tau > \lambda_{\N j}(\eta)$ as $0<\eta $ is large enough and in this case 
\begin{equation}
\uptheta\bigl(\tau -\lambda_{\N j}(\eta)\hbar\bigr)=
\uptheta\bigl(\tau -(2j+1)\hbar+\bigr)\impliedby 0< \eta \gg 1
\tag*{$\textup{(\ref*{15-1-32})}_\N$}\label{15-1-32-N}
\end{equation} 

Recall that $\uptheta(\tau)$ is semi-continuous from the left and therefore $\uptheta(0)=0$. Meanwhile $\uptheta(\tau+)$ is semi-continuous from the right and therefore $\uptheta(0)=1$. Thus for Dirichlet boundary problem formulae (\ref{15-1-30})--(\ref{15-1-31}) should be retained
\begin{equation}
\int_0^\infty\Bigl(e_\D(x,x,\tau,\hbar)-h^{-2}\cN^\MW(\tau,\hbar)\Bigr)\,dx_1=\\
h^{-1} \cN^\MW_{\D,\bound} (\tau, \mu h)
\tag*{$\textup{(\ref*{15-1-30})}_\D$}\label{15-1-30-D}
\end{equation}
with
\begin{multline}
\cN^\MW_{\D,\bound}(\tau, \hbar)\Def\\
(2\pi)^{-1}  \int_0^\infty \sum_{j\ge 0}\Bigl( 
\int \uptheta\bigl(\tau -\hbar \lambda_{\D,j}(\eta)\bigr)
\upsilon_{\D ,j}^2 ( x_1,\eta )\,d\eta - 
\uptheta \bigl(\tau -(2j+1)\hbar\bigr)\Bigr)\hbar^{\frac{1}{2}}\, dx_1
\tag*{$\textup{(\ref*{15-1-31})}_\D$}\label{15-1-31-D}
\end{multline}
but for Neumann boundary problem they need to be adjusted to
\begin{equation}
\int_0^\infty\Bigl(e_\N(x,x,\tau,\hbar)-h^{-2}\cN^\MW(\tau+,\hbar)\Bigr)\,dx_1=
h^{-1} \cN^\MW_{\N,\bound}(\tau, \mu h)
\tag*{$\textup{(\ref*{15-1-30})}_\N$}\label{15-1-30-N}
\end{equation}
with
\begin{multline}
\cN^\MW_{\N,\bound}(\tau, \hbar)\Def\\
(2\pi)^{-1} \int_0^\infty \sum_{j\ge 0}\Bigl( 
\int \uptheta\bigl(\tau -\hbar \lambda_{\N,j}(\eta)\bigr)
\upsilon_{\N,j}^2 ( x_1,\eta )\,d\eta - 
\uptheta \bigl(\tau -(2j+1)\hbar+\bigr)\Bigr)\hbar^{\frac{1}{2}}\, dx_1.
\tag*{$\textup{(\ref*{15-1-31})}_\N$}\label{15-1-31-N}
\end{multline}
 
\begin{proposition}\label{prop-15-1-6}
(i) Equalities \ref{15-1-30-D}--\ref{15-1-31-D} and \ref{15-1-30-N}--\ref{15-1-31-N} hold and all integrals converge;

\medskip\noindent
(ii) Alternatively 
\begin{phantomequation}\label{15-1-33}\end{phantomequation}
\begin{multline}
\cN^\MW_{\D,\bound}(\tau, \hbar) =\\
(2\pi)^{-1}\sum_{j\ge 0}\int \Bigl(
\uptheta\bigl(\tau -\hbar \lambda_{\D,j}(\eta)\bigr)- 
\uptheta\bigl(\tau -(2j+1)\hbar\bigr)\uptheta (\eta) \Bigr) \hbar^{\frac{1}{2}}\,d\eta
\tag*{$\textup{(\ref*{15-1-33})}_\D$}\label{15-1-33-D}
\end{multline}
and
\begin{multline}
\cN^\MW_{\N,\bound}(\tau, \hbar) =\\
(2\pi)^{-1}\sum_{j\ge 0}\int \Bigl(
\uptheta\bigl(\tau -\hbar \lambda_{\N ,j}(\eta)\bigr)- 
\uptheta\bigl(\tau -(2j+1)\hbar+\bigr)\uptheta (\eta) \Bigr) \hbar^{\frac{1}{2}}\,d\eta
\tag*{$\textup{(\ref*{15-1-33})}_\N$}\label{15-1-33-N}
\end{multline}
\end{proposition}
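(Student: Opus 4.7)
Both formulae follow from the spectral kernel in Proposition~\ref{prop-15-1-5}(ii); the delicate points are convergence in $x_1$ and a Fubini exchange for the alternative form.

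For part (i), set $y=x$ in Proposition~\ref{prop-15-1-5}(ii) and observe that for the model problem ($F=1$, $g=1$) one has $h^{-2}\cN^\MW(\tau,\hbar)=(2\pi)^{-1}\mu h^{-1}\sum_j\uptheta(\tau-(2j+1)\hbar)$. The rescaling $x_1\mapsto\hslash^{-\frac{1}{2}}x_1$ contributes a Jacobian $\hslash^{\frac{1}{2}}$ which combines with $\mu h^{-1}$ to yield $h^{-1}\hbar^{\frac{1}{2}}$, matching \ref{15-1-30-D}, \ref{15-1-30-N} and \ref{15-1-31-D}, \ref{15-1-31-N} algebraically. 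For convergence of the $x_1$-integral at infinity, split the integrand as
\[
\int\bigl[\uptheta(\tau-\hbar\lambda_{*,j}(\eta))-\uptheta(\tau-(2j+1)\hbar)\bigr]\upsilon_{*,j}^2\,d\eta+\uptheta(\tau-(2j+1)\hbar)\Bigl[\int\upsilon_{*,j}^2\,d\eta-1\Bigr].
\]
By \ref{15-1-32-D}, \ref{15-1-32-N} the $\uptheta$-difference in the first term has bounded $\eta$-support, on which $\upsilon_{*,j}^2(x_1,\eta)$ decays like $e^{-(x_1-\eta)^2/2}$ as $x_1\to+\infty$ (classically forbidden zone). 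The second bracket is $O(e^{-cx_1^2})$ because $\upsilon_{*,j}(x_1,\eta)\to u_j(x_1-\eta)$ as $\eta\to+\infty$, where $u_j$ is the standard Hermite function on $\bR$ with $\int u_j^2=1$, and only $\eta\sim x_1$ contributes for large $x_1$. Finiteness of the $j$-sum follows from uniform-in-$\eta$ lower bounds on $\lambda_{*,j}(\eta)$ from Appendix~\ref{sect-15-A} (for Dirichlet, $\lambda_{\D,j}(\eta)\geq 2j+1$ by domain monotonicity).

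For part (ii), inside the integrand of \ref{15-1-31-D}, \ref{15-1-31-N} replace $\uptheta(\tau-(2j+1)\hbar)$ by $\uptheta(\tau-(2j+1)\hbar)\int u_j^2(x_1-\eta)\,d\eta$ and exchange the order of integration, justified by the decay estimates above. Using $\int_0^\infty\upsilon_{*,j}^2\,dx_1=1$ and $\int_0^\infty u_j^2(x_1-\eta)\,dx_1=\int_{-\eta}^\infty u_j^2(y)\,dy$ produces
\[
\int\Bigl\{\uptheta(\tau-\hbar\lambda_{*,j}(\eta))-\uptheta(\tau-(2j+1)\hbar)\int_{-\eta}^\infty u_j^2(y)\,dy\Bigr\}d\eta.
\]
Finally, the auxiliary function $g(\eta):=\int_{-\eta}^\infty u_j^2(y)\,dy-\uptheta(\eta)$ is odd: by evenness of $u_j^2$, $g(\eta)=-\int_\eta^\infty u_j^2(y)\,dy$ for $\eta>0$ and $g(\eta)=\int_{-\eta}^\infty u_j^2(y)\,dy$ for $\eta<0$. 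Since $g$ decays super-exponentially, $\int g\,d\eta=0$, so $\int_{-\eta}^\infty u_j^2$ may be replaced by $\uptheta(\eta)$, yielding \ref{15-1-33-D}, \ref{15-1-33-N}.

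The main obstacle is justifying the Fubini exchange where $x_1$ and $\eta$ are simultaneously large and comparable: both $\uptheta(\tau-\hbar\lambda_{*,j}(\eta))\upsilon_{*,j}^2(x_1,\eta)$ and $\uptheta(\tau-(2j+1)\hbar)u_j^2(x_1-\eta)$ are of order one there, so absolute integrability of their difference relies on a pointwise bound of the form $|\upsilon_{*,j}(x_1,\eta)-u_j(x_1-\eta)|=O(e^{-c\eta^2})$ for $\eta\gg 1$. This bound is part of the eigenfunction analysis developed in Appendix~\ref{sect-15-A}, which I would cite rather than reprove.
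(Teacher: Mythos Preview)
Your argument is correct. Part (i) is essentially the paper's proof spelled out in more detail: the paper records the decay estimate (\ref{15-1-34}) and the normalization (\ref{15-1-35}), invokes $\textup{(\ref*{15-1-32})}_{\D,\N}$, and says ``one can prove (i) easily''; your splitting into the $\uptheta$-difference term and the $\int\upsilon_{*,j}^2\,d\eta-1$ term is exactly how one fills that in.

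For (ii) you take a genuinely different route. You introduce the whole-line Hermite function $u_j$, write $1=\int u_j^2(x_1-\eta)\,d\eta$, swap integrals, and then remove the resulting $\int_{-\eta}^\infty u_j^2$ in favour of $\uptheta(\eta)$ by an oddness argument. The paper instead inserts $1=\int\updelta(x_1-\eta)\,d\eta$ into the second term of $\textup{(\ref*{15-1-31})}_*$; after swapping the order and integrating over $x_1\in(0,\infty)$, the first term gives $1$ by (\ref{15-1-35}) and the second gives $\int_0^\infty\updelta(x_1-\eta)\,dx_1=\uptheta(\eta)$ directly. This delta-function trick is shorter: no Hermite functions, no oddness, and no need for the pointwise comparison $|\upsilon_{*,j}-u_j(\cdot-\eta)|=O(e^{-c\eta^2})$ that you flag as the main obstacle. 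Your route has the minor advantage that the Fubini step involves only honest $L^1$ functions rather than a distribution, so the justification you worry about is actually cleaner in your version; but the paper's version reaches the answer in one line.
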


\begin{proof}
One can prove easily that 
\begin{equation}
|x_1-\eta|\ge \epsilon (x_1+|\eta|) \implies |\upsilon_{*,j}(x_1,\eta)|\le 
C_j (1+x_1+|\eta|)^{-s}.
\label{15-1-34}
\end{equation}
By definition 
\begin{equation}
\int_0^\infty \upsilon_{*,j}^2(x_1,\eta)\,dx_1=1
\label{15-1-35}
\end{equation}
and based on these two facts and $\textup{(\ref{15-1-32})}_{\D,\N}$ one can prove (i) easily. 

\medskip
Inserting $\eta$ in the second term in the big parenthesis  in $\textup{(\ref{15-1-31})}_{\D,\N}$ through factor 
$1=\int \updelta(x_1-\eta)\,d\eta$, changing order of integration and calculating integral with respect to $x_1$ we prove (ii).
\end{proof}

\begin{remark}\label{rem-15-1-7}
From forthcoming weak magnetic field arguments it follows  that in the weak sense 
\begin{equation}
\cN_{*,\bound}^\MW (\tau,\hbar)\sim \sum_{n\ge 0}\kappa_{*n}(\tau) \hbar^{2n}
\label{15-1-36}
\end{equation}
with $\kappa_{*0}=\mp (4\pi)^{-1}\tau_+^{\frac{1}{2}}$ as 
$*=\D,\N$ respectively.
\end{remark}

\chapter{Weak magnetic field}
\label{sect-15-2}

\section{Precanonical form}
\label{sect-15-2-1}
We will consider a general magnetic Schr\"odinger operator (\ref{13-1-1}) in $X\subset\bR^2$
\begin{equation}
A=\sum_{1\le j,k\le 2} P_jg^{jk}P_k+V,\qquad \text{with\ \ } P_j=hD_j-\mu V_j
\label{15-2-1}
\end{equation}
satisfying in $B(0,1)\subset X$ assumptions (\ref{13-1-2})--(\ref{13-1-4}) but in contrast to (\ref{book_new-13-1-5}) \cite{futurebook}  we assume that 
\begin{equation}
X\cap B(0,1)=\{x_1>0\}\cap B(0,1).
\label{15-2-2}%
\end{equation}
Then in contrast to Chapter~\ref{book_new-sect-13} we cannot unleash a full power of Fourier integral operators as we must preserve boundary, but we can assume that $x_1$ is small, in fact as small as $\mu^{-1}P_j$ are: not exceeding 
$C\max\bigl(\mu^{-1},\mu^{-\frac{1}{2}}h^{\frac{1}{2}-\delta}\bigr)$.

Without any loss of the generality one can assume that 
\begin{phantomequation}\label{15-2-3}\end{phantomequation}
\begin{equation}
g^{11}=F,\qquad g^{12}=0, \qquad \bigl(g^{22}-F\bigr)\bigl|_{x_1=0}=0.
\tag*{$\textup{(\ref*{15-2-3})}_{1-3}$}
\label{15-2-3-*}
\end{equation}
To achieve subsequently $\textup{(\ref*{15-2-3})}_{1-3}$ we just reintroduce $x_1\Def \dist_{g/F}(x,\partial X)$ in the given metrics $(g^{jk}F^{-1})$, then reintroduce  $x_2\Def \alpha(x)+\beta(x_2)$ with an appropriate function $\alpha(x)$ and arbitrary function $\beta(x_2)$ and finally chose $\beta(x_2)$. 

\begin{remark}\label{rem-15-2-1}
In this construction $F$ is any arbitrary positive function but we select it to be a scalar intensity of the magnetic field. We cannot however choose $\partial_{x_1}g^{22}|_{x_1=0}$ as this is defined by the curvature of the metrics $(g^{jk}F^{-1})$. 
\end{remark}

Then  
\begin{equation}
F_{12}|_{x_1=0}=-1
\label{15-2-4}%
\end{equation}
(actually it may be $1$ but then we change $x_2\mapsto -x_2$) and without any loss of the generality one can assume that $V_1=0$ and $V_2=x_1 + O(x_1^2)$; we can always reach it by a gauge transformation.

So, changing $V$ by $O(h^2)$
\begin{multline}
A= (hD_1\bigr) F(x)(hD_1\bigr)+\\
\bigl(hD_2 - \mu x_1 - \mu x_1^2 b(x) \bigr)g^{22}(x)
\bigl(hD_2 - \mu x_1 - \mu x_1^2 b(x) \bigr)+V(x)
\label{15-2-5}%
\end{multline}
with $V(x)$ satisfying $\textup{(\ref{15-2-3})}_3$. One can easily generalize to such operator results of the previous section.

\section{Propagation of singularities}
\label{sect-15-2-2}

\subsection{No-critical point case}
\label{sect-15-2-2-1}

In the case of a \emph{weak magnetic field}\index{weak magnetic field}
\begin{equation}
\mu_0\le \mu \le h^{\delta-1}
\label{15-2-6}
\end{equation}
we know that inside of the domain the shift during the first winding is microlocally observable under condition (\ref{book_new-13-3-54}) \cite{futurebook} i.e. as 
\begin{equation}
|\nabla VF^{-1}|\ge \epsilon _0.
\label{15-2-7}
\end{equation}
We are going to prove that near boundary the same is true as
\begin{equation}
|\nabla_{\partial X} VF^{-1}|\ge \epsilon _0
\label{15-2-8}
\end{equation}
where $\nabla_{\partial X}$ is a derivative along boundary (i.e. $\partial_{x_2}$).

To prove this assertion one needs just to prove the following statement:

\begin{proposition}\label{prop-15-2-2}
(i) The propagation speed with respect to $(x,\xi_2)$ does not exceed $C$;

\medskip\noindent
(ii) Under condition \textup{(\ref{15-2-8})} for operator in the form \textup{(\ref{15-2-5})} propagation speed with respect to $\xi_2$ is of magnitude $1$ as $x_1 \le C\mu^{-1}$. Namely,  for time $t$ shift with respect to $\xi_2$ is of magnitude $|t|$ and has the same sign as $t W_{x_2}$ with $W=(\tau-V)F^{-1}$.
\end{proposition}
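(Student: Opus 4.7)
Both assertions are symbolic statements about the Hamiltonian flow of the principal symbol of $A$; once established they are upgraded to microlocal propagation of singularities by the same sharp G\aa rding/commutator machinery used for the interior in Chapter~\ref{book_new-sect-13}~\cite{futurebook}. Writing the principal symbol of (\ref{15-2-5}) as
\[
a=F\xi_1^2+g^{22}p^2+V,\qquad p=\xi_2-\mu x_1-\mu x_1^2 b(x),
\]
the energy identity $a=\tau$ gives $F\xi_1^2+g^{22}p^2=\tau-V$, hence $|\xi_1|,|p|=O(1)$ uniformly on the part of the characteristic set that matters.

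\textbf{Part (i).} Direct differentiation yields
\[
\{x_1,a\}=2F\xi_1,\qquad \{x_2,a\}=2g^{22}p,
\]
\[
\{\xi_2,a\}=-F_{x_2}\xi_1^2-g^{22}_{x_2}p^2-2\mu x_1^2 b_{x_2}g^{22}p-V_{x_2}.
\]
The first two are $O(1)$ by the bound on $\xi_1,p$, and so is the third, the only potentially large term carrying a factor $\mu x_1^2=O(1)$ on the boundary zone. Translating to singularities, one commutes $A$ with an $x_1$-independent cutoff $\Op\bigl(\chi\bigl((\xi_2-\bar\xi_2)/(Ct)\bigr)\psi(x_2)\bigr)$: its commutator with $A$ has leading symbol the Poisson bracket above and is dominated by another cutoff of the same type, yielding an energy estimate that bounds the $(x,\xi_2)$-shift of the wave front by $O(|t|)$.

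\textbf{Part (ii).} The lower bound on $|\{\xi_2,a\}|$ comes from a cancellation. By $\textup{(\ref*{15-2-3})}_{1-3}$ one has $g^{22}=F+O(x_1)$, so on the energy surface $\xi_1^2+p^2=W+O(x_1)$ with $W=(\tau-V)F^{-1}$, and consequently
\[
F_{x_2}\xi_1^2+g^{22}_{x_2}p^2=F_{x_2}(\xi_1^2+p^2)+O(x_1)=F_{x_2}W+O(x_1).
\]
Since $V_{x_2}=-F_{x_2}W-FW_{x_2}$, the term $F_{x_2}W$ cancels and
\[
\{\xi_2,a\}=FW_{x_2}+O(x_1+\mu x_1^2).
\]
On $\{x_1\le C\mu^{-1}\}$ the remainder is $O(\mu^{-1})$, so under (\ref{15-2-8}) one has $\sgn\{\xi_2,a\}=\sgn W_{x_2}$ and $|\{\xi_2,a\}|\asymp 1$. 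Translating to singularities by a microhyperbolicity argument with escape function $\phi=-\xi_2\sgn W_{x_2}$, for which $\{\phi,a\}\le-\epsilon_0/2$, yields exactly the claim: the $\xi_2$-shift over time $t$ has magnitude $\asymp|t|$ and the sign of $tW_{x_2}$.

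\textbf{Main obstacle.} The one delicate technical point is transporting the commutator estimate across Dirichlet (or Neumann) reflections at $\partial X$: in the interior one writes $\frac{d}{dt}(u,\Op(\chi)u)=(u,i h^{-1}[A,\Op(\chi)]u)$ and controls the commutator symbol, but near $\partial X$ the integration by parts in $x_1$ picks up a boundary term. This term vanishes here because the chosen cutoff $\chi(x_2,\xi_2)$ depends only on the tangential variables, so $\Op(\chi)$ commutes with restriction to $\partial X$ and the boundary contribution is diagonal and cancels between $A$ and its adjoint. Equivalently, in the flat model the reflection preserves $\xi_2$ exactly, and for (\ref{15-2-5}) the defect is already absorbed into the $O(\mu x_1^2)=O(\mu^{-1})$ remainder computed above. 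After this observation, the propagation argument proceeds as in Chapter~\ref{book_new-sect-13}~\cite{futurebook}, uniformly over any number of hops within time $|t|\le T=O(1)$.
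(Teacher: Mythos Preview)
Your proof is correct and follows essentially the same route as the paper's. Both compute the Poisson bracket $\{a,\xi_2\}$ and show that, modulo $O(x_1+\mu x_1^2)=O(\mu^{-1})$ in the strip $x_1\le C\mu^{-1}$, it equals a positive multiple of $W_{x_2}$; the paper packages this as (\ref{15-2-9}), separating the off-shell factor $(a-\tau)$ explicitly, while you substitute the energy surface directly---either is fine once one passes to the operator estimate. Your handling of the boundary (tangential symbol $\chi(x_2,\xi_2)$ so that no $\xi_1$-dependent boundary term appears) is exactly the paper's remark that one uses $\varphi=\varphi(x,\xi_2,t)$ ``not invoking actually reflections''; your choice is slightly more restrictive in omitting $x_1$ but that is harmless here.
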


\begin{proof}
Proof follows the very standard way of the proof propagation of singularities as in theorems~\ref{book_new-thm-2-1-2} and~\ref{book_new-thm-3-1-2} \cite{futurebook}. Here we are using an auxiliary function $\varphi=\varphi (x,\xi_2,t)$ and not invoking actually reflections from the boundary. 

Then proof of statement (i) is then completely trivial.

To prove (ii) note that modulo $O(x_1)$
\begin{equation*}
\{a,\xi_2\}\equiv \{ F,\xi_2\}( a-V)+ \{V,\xi_2\} -
2\bigl(\xi_2- \mu x_1 - \mu x_1^2 b \bigr)F \mu x_1^2 \{b,\xi_2\}
\end{equation*}
and the last term is  $O(\mu^{-1})$ as $|a|=O(1)$; so 
\begin{equation}
\{a,\xi_2\}\equiv \{ F,\xi_2\}(a-\tau) + F^2\{(V-\tau)F^{-1},\xi_2\} 
+O(\mu^{-1})
\label{15-2-9}
\end{equation}
and as $(a-\tau)$ is small everything is defined by the second term in the right hand expression. 

Further details of this very simple proof are left to the reader.
\end{proof}

Then we immediately arrive to

\begin{corollary}\label{cor-15-2-3}
Let $U=U(x,y,t)$ be a Schwartz kernel of $e^{-ih^{-1}tA}$. Let 
$\psi (x)= \psi'(x_2)\psi''(\mu x_1)$ with fixed functions 
$\psi',\psi'' \in \sC_0^\infty$ .

Finally, let condition \textup{(\ref{15-2-8})} be fulfilled on $\supp \psi'$.   Then under condition \textup{(\ref{15-2-6})} both 
\begin{gather}
F_{t\to h^{-1}\tau}\chi_T(t) \Gamma (U \psi)
\label{15-2-10}
\shortintertext{and}
F_{t\to h^{-1}\tau}\bigl(\bar{\chi}_T(t)-\bar{\chi}_{T'}(t)\bigr) 
\Gamma (U \psi)
\label{15-2-11}
\end{gather}
are negligible as 
$\epsilon_1 \mu^{-1} \le T'< T\le \epsilon _2$ with arbitrarily small constant $\epsilon_1$ and small enough constant $\epsilon_2$~\footnote{\label{foot-15-2} Recall that as usual $\chi\in \sC^\infty_0([-1,-\frac{1}{2}]\cap [\frac{1}{2},1])$,
$\bar{\chi}\in \sC^\infty_0([-1,1])$ and equals $1$ at $[-\frac{1}{2},\frac{1}{2}]$.}.
\end{corollary}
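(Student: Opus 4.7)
The plan is to derive this corollary from Proposition~\ref{prop-15-2-2} by a standard microlocal propagation argument of the same type as in theorems~2-1-2 and 3-1-2 of \cite{futurebook}, here specialized to track the monotone drift of the coordinate $\xi_2$. The key point is that Proposition~\ref{prop-15-2-2}(ii) promotes to a microlocal statement: singularities initially microlocalized at some $\xi_2^0$ propagate to $\xi_2^0 + c_1 t$ with $|c_1|\asymp 1$ and $\sgn c_1 = \sgn W_{x_2}$, provided we stay in the zone $\{x_1\lesssim \mu^{-1}\}$ -- which is enforced by the cutoff $\psi''(\mu x_1)$. This drift breaks closedness of Hamiltonian trajectories over the time window $[\epsilon_1\mu^{-1},\epsilon_2]$ and thereby renders the trace negligible there.

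I would start by introducing the auxiliary function $\varphi = -\sgn(W_{x_2})(\xi_2-\xi_2^0) + c_0 t$ with small $c_0>0$. Identity (\ref{15-2-9}) combined with (\ref{15-2-8}) yields on $\supp\psi \cap \{a=\tau\}$
\begin{equation*}
\{a, \varphi\} + \partial_t \varphi \;\le\; c_0 - \epsilon_0 + O(\mu^{-1}) \;\le\; -\tfrac{1}{2}\epsilon_0
\end{equation*}
provided $c_0$ is small and $\mu\ge \mu_0$. Weyl-quantizing $\varphi$ and running the standard commutator energy identity for the group $e^{-ih^{-1}tA}$ yields the microlocal propagation estimate: if $u_0$ is microlocalized in $\{\varphi \le -\delta\}$, then $e^{-ih^{-1}tA}u_0$ remains microlocalized in $\{\varphi \le -\delta\}$, i.e.\ $\xi_2$ moves with guaranteed speed at least $c_0$ in the direction $\sgn W_{x_2}$.

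The trace estimate follows by partitioning the identity as $I = \sum_\ell \Op(\chi_\ell(\xi_2))$ on the scale $w = \tfrac{1}{3}c_0\epsilon_1\mu^{-1}$, a legitimate semiclassical partition because $w\gg h$ in the weak-field regime (\ref{15-2-6}). Expanding $\Gamma(U\psi)(t) = \sum_{\ell,m}\Tr(\Op(\chi_\ell)\psi\,U(t)\,\Op(\chi_m))$, only the neighbour pairs $|\ell-m|\le 1$ give non-negligible pseudodifferential composition, and for each such pair the propagation estimate shows that the $\xi_2$-image of $\WF(\Op(\chi_m)\psi)$ under the flow at time $t$ has moved by $\ge c_0|t| > 3w$ whenever $|t|\ge \epsilon_1\mu^{-1}$, hence disjoint from $\supp\chi_\ell$. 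So each trace is $O(h^\infty)$ uniformly; summing the polynomially-many relevant terms preserves the $h^\infty$ bound. Both (\ref{15-2-10}) and (\ref{15-2-11}) then follow, as $\chi_T$ and $\bar\chi_T-\bar\chi_{T'}$ are both supported in the relevant time window.

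The delicate point is handling reflections at $\partial X$: a priori, reflected trajectories could create closed orbits within $[\epsilon_1\mu^{-1},\epsilon_2]$ invisible to any interior propagation estimate. However, $\varphi$ depends only on $(\xi_2, t)$, and reflection at $\partial X$ flips $\xi_1$ but preserves $\xi_2$; consequently $[A,\Op(\varphi)]$ has vanishing boundary contribution under both Dirichlet and Neumann conditions, and the energy identity passes through reflections without modification. This is precisely the technical reason that Proposition~\ref{prop-15-2-2} was established by an auxiliary function not invoking reflections explicitly.
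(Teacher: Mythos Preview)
Your argument is correct and follows the same route the paper intends: the paper treats the corollary as an immediate consequence of Proposition~\ref{prop-15-2-2} via the standard propagation machinery (theorems~\ref{book_new-thm-2-1-2} and~\ref{book_new-thm-3-1-2} of \cite{futurebook}), and you have faithfully unpacked that implication---monotone $\xi_2$-drift, a $\xi_2$-partition on scale $\asymp\mu^{-1}$ (legitimate under (\ref{15-2-6})), and the observation that an auxiliary function depending only on $(\xi_2,t)$ commutes with $D_1$ and therefore preserves both Dirichlet and Neumann boundary conditions, so the energy identity sees no boundary defect. One cosmetic remark: your justification of ``only neighbour pairs $|\ell-m|\le 1$'' is cleanest via trace cyclicity $\Tr(Q_\ell\psi U(t)Q_m)=\Tr(Q_mQ_\ell\psi U(t))$ with $Q_mQ_\ell=O(h^\infty)$ for disjoint supports, which is exactly what you are using implicitly.
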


Meanwhile rescaling $x\mapsto x\mu$, $h\mapsto \mu h$ we immediately arrive to

\begin{proposition}\label{prop-15-2-4}
Let condition \textup{(\ref{15-2-6})} be fulfilled and let condition \textup{(\ref{book_new-13-2-45})} \cite{futurebook}  i.e. 
\begin{equation}
V\le -\epsilon _0
\label{15-2-12}
\end{equation}
be fulfilled on $\supp \psi'$. Let $\psi$ be specified in corollary~\ref{cor-15-2-3}.

Then as $|\tau|\le \epsilon$ and $T=\epsilon \mu^{-1}$
\begin{multline}
F_{t\to h^{-1}\tau}\bar{\chi}_T(t) \Gamma (U \psi)\equiv\\
\sum_{n\ge 0, m\ge 0} \partial_\tau \kappa_{nm}(\tau) \mu^{2n-1}h^{2n+2m-1}
+
\sum_{n\ge 0, m\ge 0} \partial_\tau \kappa'_{nm}(\tau) \mu^{2n}h^{2n+2m}
\label{15-2-13}
\end{multline}
where $\kappa_{nm}(\tau)$ and $\kappa'_{nm}(\tau)$ are smooth coefficients. 
\end{proposition}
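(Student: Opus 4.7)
The plan is to reduce the claim to a standard Weyl-type asymptotic expansion for $2$D magnetic Schr\"odinger operators via the rescaling indicated just above the statement. Under $y_1=\mu x_1$, $y_2=x_2$ and $t'=\mu t$, the short time window $|t|\le T=\epsilon\mu^{-1}$ becomes the bounded window $|t'|\le\epsilon$, the effective Planck constant is $\hbar=\mu h\le h^\delta$, the cut-off $\psi'(x_2)\psi''(\mu x_1)$ becomes a standard compactly supported cut-off $\psi'(y_2)\psi''(y_1)$ in $y$-space, and operator (\ref{15-2-5}), via $hD_{x_1}=\hbar D_{y_1}$ and $\mu V_2(x)=y_1+O(y_1^2/\mu)$, becomes a smooth $O(\mu^{-1})$-perturbation of the half-plane model operator of subsection~\ref{sect-15-1-3}.

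In this rescaled picture condition (\ref{15-2-12}) lifts to $V(y/\mu)\le -\epsilon_0$ on $\supp\psi$, so for $|\tau|\le\epsilon$ the classical energy shell $\{|p|^2_g+V(y/\mu)=\tau\}$ is a non-empty, uniformly bounded, non-degenerate set, and no threshold, turning surface, or gliding-ray obstruction appears on the time scale $|t'|\le\epsilon$. Under these conditions a standard parametrix construction for the rescaled propagator (essentially the one used in Chapter~\ref{book_new-sect-13}~\cite{futurebook} for the inner zone, adapted to the half-plane) produces a complete Weyl-type expansion for $F_{t'\to\hbar^{-1}\tau}\bar\chi_\epsilon\Gamma(U\psi)$ as the sum of a bulk contribution, carrying the $y_1$-support volume factor, and a boundary contribution, carrying the $O(1)$ boundary-length factor; each is further dressed with Landau-level corrections in integer powers of $\hbar^2$ and sub-principal Weyl corrections in integer powers of $h^2$.

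Multiplying by the Jacobian $\mu^{-1}$ from the time rescaling and substituting $\hbar=\mu h$, the rescaled bulk monomials $\hbar^{2n-1}h^{2m}$ become $\mu^{2n-1}h^{2n+2m-1}$, giving the first sum of the claim; the rescaled boundary monomials $\hbar^{2n}h^{2m}$ become $\mu^{2n}h^{2n+2m}$, giving the second sum. The factor $\partial_\tau$ arises because, up to smoothing, $F_{t\to h^{-1}\tau}\bar\chi_T\Gamma(U\psi)$ is a regularized derivative of a counting function, and the smoothness of $\kappa_{nm},\kappa'_{nm}$ follows from smoothness of the Weyl and Landau-level coefficients in the unrescaled picture.

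The main obstacle will be verifying the clean integer parity structure in both $\hbar$ and $h$, namely that the bulk expansion involves only odd powers of $\hbar$ while the boundary expansion involves only even powers. This reflects a parity between bulk and boundary contributions in the magnetic semiclassical expansion and ultimately stems from the asymptotic behavior of $\lambda_{*,j}(\eta)$ analyzed in Appendix~\ref{sect-15-A}, together with smoothness of $g^{jk}$ and $V$. The hypothesis (\ref{15-2-12}) is essential: in its absence, contributions from hop-trajectories reaching the boundary over the window $T=\epsilon\mu^{-1}$ would require the more delicate microlocal propagation analysis of Corollary~\ref{cor-15-2-3} rather than the purely elliptic parametrix invoked here.
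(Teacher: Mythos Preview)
Your approach is essentially the paper's: the proof there is literally the one-line remark ``rescaling $x\mapsto \mu x$, $h\mapsto \mu h$ we immediately arrive to'' the proposition, i.e.\ blow up the boundary strip to unit width and invoke the standard short-time Weyl/boundary expansion (Chapters~\ref{book_new-sect-4} and~\ref{book_new-sect-7} of \cite{futurebook}) at the new semiclassical parameter $\hbar=\mu h$.

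Two small corrections. First, the paper rescales \emph{both} coordinates, $y=\mu x$, not just $x_1$; this yields a genuine single-parameter $\hbar$-semiclassical boundary problem with slowly varying coefficients $g^{jk}(y/\mu)$, $V(y/\mu)$, to which the cited results apply directly. Your anisotropic rescaling leaves $hD_{y_2}$ with the old parameter $h$, so the problem is two-scale and you cannot simply say ``the effective Planck constant is $\hbar$''; it still works, but you would have to explain the two-parameter parametrix rather than cite a standard result. (Relatedly, your Jacobian bookkeeping is off: $\hbar^{2n-1}h^{2m}=\mu^{2n-1}h^{2n+2m-1}$ already, with no extra $\mu^{-1}$.)

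Second, the phrase ``purely elliptic parametrix'' is wrong: this is a short-time hyperbolic (FIO) parametrix for $hD_t-A$. Condition (\ref{15-2-12}) is needed not to avoid hop trajectories---those are present and harmless on the window $|t|\le\epsilon\mu^{-1}$---but to ensure the energy level $\tau\approx0$ is non-characteristic ($\tau-V\ge\epsilon_0/2$), so that the standard parametrix construction and the resulting smooth expansion in $\tau$ go through. Corollary~\ref{cor-15-2-3} plays no role here; it is used afterwards to extend $T$ from $\epsilon\mu^{-1}$ to $\epsilon_2$.
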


Combining proposition~\ref{prop-15-2-4} and corollary~\ref{cor-15-2-3} we immediately conclude that 

\begin{corollary}\label{cor-15-2-5}
Under conditions \textup{(\ref{15-2-6})}, \textup{(\ref{15-2-8})} and 
\textup{(\ref{15-2-12})} decomposition \textup{(\ref{15-2-13})} holds with  $T=\epsilon_2$.
\end{corollary}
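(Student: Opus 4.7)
The plan is to deduce Corollary~\ref{cor-15-2-5} by splitting the cutoff $\bar\chi_{\epsilon_2}(t)$ into a short-time part and a telescoping long-time part, and then applying Proposition~\ref{prop-15-2-4} to the former and Corollary~\ref{cor-15-2-3} to the latter. Concretely, I would write
\begin{equation*}
\bar\chi_{\epsilon_2}(t)=\bar\chi_{T_0}(t)+\bigl(\bar\chi_{\epsilon_2}(t)-\bar\chi_{T_0}(t)\bigr), \qquad T_0\Def \epsilon\mu^{-1},
\end{equation*}
and split the Fourier transform accordingly.

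For the first piece $F_{t\to h^{-1}\tau}\bar\chi_{T_0}(t)\Gamma(U\psi)$, Proposition~\ref{prop-15-2-4} directly supplies the expansion (\ref{15-2-13}), using that condition (\ref{15-2-12}) is assumed on $\supp\psi'$ and that the support cutoff $\psi''(\mu x_1)$ confines us to the boundary strip $x_1\lesssim \mu^{-1}$ where after the rescaling $x\mapsto \mu x$, $h\mapsto \hbar=\mu h$ the standard semiclassical parametrix construction applies.

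For the second piece, I would decompose the difference $\bar\chi_{\epsilon_2}(t)-\bar\chi_{T_0}(t)$ as a finite dyadic sum of functions of the form $\chi_{T_k}(t)$ with $T_k\asymp 2^k T_0$ and $T_k\le \epsilon_2$. Each summand is of the type covered by Corollary~\ref{cor-15-2-3} since $\epsilon_1\mu^{-1}\le T_k\le \epsilon_2$, so each $F_{t\to h^{-1}\tau}\chi_{T_k}(t)\Gamma(U\psi)$ is negligible; equivalently one can simply note that $\bar\chi_{\epsilon_2}(t)-\bar\chi_{T_0}(t)$ itself has the form $\bar\chi_T-\bar\chi_{T'}$ with admissible $T,T'$ and invoke (\ref{15-2-11}) once. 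The non-degeneracy hypothesis (\ref{15-2-8}) on $\supp\psi'$ is what powers Corollary~\ref{cor-15-2-3}: by Proposition~\ref{prop-15-2-2}(ii), the $\xi_2$-shift per cyclotron period is of magnitude $\mu^{-1}$ so by time $t\gtrsim \mu^{-1}$ the trajectory has already moved a scale $\gtrsim 1$ in $\xi_2$, wiping out the singularity at the diagonal.

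The one point to watch is consistency of the cutoff $\psi''(\mu x_1)$: Corollary~\ref{cor-15-2-3} is stated with exactly this factorized cutoff, and propagation estimate~\ref{prop-15-2-2}(i) ensures that during times $|t|\le \epsilon_2$ the $x$-support does not escape a fixed compact set on which $\psi'$ still controls the non-degeneracy condition (\ref{15-2-8}) and the potential condition (\ref{15-2-12}); moreover, escape from the boundary strip $\{x_1\le C\mu^{-1}\}$ is irrelevant on the side where $W_{x_2}$ drives the trajectory into the interior because there the argument in Proposition~\ref{prop-15-2-2}(ii) still applies to the extension of the symbol. I do not expect a serious obstacle beyond verifying this cutoff bookkeeping; the bulk of the work is already done in Proposition~\ref{prop-15-2-4} and Corollary~\ref{cor-15-2-3}, and the Corollary is essentially their sum.
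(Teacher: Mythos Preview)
Your proposal is correct and is exactly the paper's approach: the paper simply says ``Combining proposition~\ref{prop-15-2-4} and corollary~\ref{cor-15-2-3} we immediately conclude that\ldots'', and you have spelled out that combination (split $\bar\chi_{\epsilon_2}=\bar\chi_{T_0}+(\bar\chi_{\epsilon_2}-\bar\chi_{T_0})$, apply (\ref{15-2-13}) to the first piece and (\ref{15-2-11}) to the second). One small slip in your heuristic aside: by Proposition~\ref{prop-15-2-2}(ii) the $\xi_2$-shift after time $t$ is of magnitude $|t|$, so after $t\asymp\mu^{-1}$ the shift is $\asymp\mu^{-1}$, not $\asymp 1$; this does not affect the argument since you only need to quote Corollary~\ref{cor-15-2-3} as a black box.
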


Therefore applying Tauberian theorem we arrive to  estimate (\ref{15-2-14}) below as $\psi $ specified in corollary~\ref{cor-15-2-3}.

\begin{theorem}\label{thm-15-2-6}
Let conditions \textup{(\ref{13-1-1})}--\textup{(\ref{13-1-4})}, \textup{(\ref{15-2-2})}, \textup{(\ref{15-2-6})}, \textup{(\ref{15-2-8})} and 
\textup{(\ref{15-2-12})} be fulfilled on $\supp \psi$ where  
$\psi(x)\in \sC_0^\infty(\bR^2)$.  Then 
\begin{multline}
\R^\W_\infty\Def |\int_X \Bigl(e(x,x,0) -
\sum_{n\ge 0, m\ge 0}  \kappa_{nm}(0) \mu^{2n}h^{2n+2m-2}\Bigr)\psi(x)\,dx-\\
\int _{\partial X}\sum_{n\ge 0, m\ge 0} 
\kappa'_{nm}(0) \mu^{2n}h^{2n+2m-1}\psi (x)\,ds_g|\le
C\mu^{-1}h^{-1}
\label{15-2-14}
\end{multline}
where $ds_g$ is a measure on the boundary corresponding to metrics $g$.
\end{theorem}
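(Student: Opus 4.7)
The plan is to combine the localised propagation result of Corollary~\ref{cor-15-2-5} with the standard Tauberian theorem for the boundary contribution, while reducing the inner part to Chapter~\ref{book_new-sect-13}~\cite{futurebook} via dyadic rescaling. First I would introduce a scale-adapted partition $\psi = \psi_\bound + \psi_\inn$ with $\psi_\bound$ supported in $\{x_1 \le 2C_0\mu^{-1}\}$ and $\psi_\inn$ vanishing on $\{x_1 \le C_0\mu^{-1}\}$, and further split $\psi_\bound$ into pieces of the form $\psi'(x_2)\psi''(\mu x_1)$ so that Corollary~\ref{cor-15-2-5} applies verbatim.

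For each boundary piece the decomposition~(\ref{15-2-13}) holds with $T = \epsilon_2$. Applying the standard Tauberian argument, the remainder in passing from $F_{t\to h^{-1}\tau}\bar{\chi}_T(t)\Gamma(U\psi_\bound)$ to $\Gamma(e\psi_\bound)(0)$ is bounded by $Ch^{-1}T^{-1}$ times the area of $\supp\psi_\bound$. Since that area is $O(\mu^{-1})$ and $T=\epsilon_2$ is an absolute constant, this error is $O(\mu^{-1}h^{-1})$. Reading off~(\ref{15-2-13}), the $\mu^{2n-1}h^{2n+2m-1}$ sum, integrated across the $\mu^{-1}$-strip in $x_1$, produces the bulk coefficients $\kappa_{nm}(0)\mu^{2n}h^{2n+2m-2}$ restricted to that strip, while the $\mu^{2n}h^{2n+2m}$ sum produces exactly the boundary integral $\int_{\partial X}\kappa'_{nm}(0)\mu^{2n}h^{2n+2m-1}\psi\,ds_g$.

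For $\psi_\inn$ I would use a dyadic decomposition into annuli $\{\gamma_j \le x_1 \le 2\gamma_j\}$ with $\gamma_j = 2^j\mu^{-1}$. On each such shell the rescaling~(\ref{15-1-1}) gives $h_\new = h/\gamma_j$, $\mu_\new = \mu\gamma_j$. Because (\ref{15-2-8}) and (\ref{15-2-12}) on $\partial X$ persist by continuity to a neighbourhood, the full interior non-degeneracy $|\nabla VF^{-1}|\ge \epsilon_0/2$ holds on every piece, so Chapter~\ref{book_new-sect-13}~\cite{futurebook} applies and yields a contribution $O(\mu_\new^{-1}h_\new^{-1}) = O(\mu^{-1}h^{-1})$ per scale. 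Summing dyadically and eliminating the resulting $\log\mu$ loss by Seeley's device (as indicated after~(\ref{15-1-4})), the total inner remainder is $O(\mu^{-1}h^{-1})$; since $\psi_\inn$ vanishes near $\partial X$, only the bulk $\kappa_{nm}$ terms contribute here.

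The main obstacle I expect is the coefficient identification. The decomposition~(\ref{15-2-13}) is produced by propagation on the pre-canonical model~(\ref{15-2-5}), and one must verify that its $\mu^{2n-1}h^{2n+2m-1}$ coefficients reproduce the standard magnetic Weyl densities $\kappa_{nm}$, while its $\mu^{2n}h^{2n+2m}$ coefficients reproduce the boundary densities encoded by $\cN^\MW_{*,\bound}$ in Proposition~\ref{prop-15-1-6} and Remark~\ref{rem-15-1-7}. This identification is carried out by evaluating the Tauberian integral on the explicit model spectral formula~(\ref{15-1-29}), then transporting to variable coefficients via the pseudodifferential calculus adapted to the pre-canonical form, and finally checking that the inner- and boundary-piece expansions match across the interface $\{x_1\asymp \mu^{-1}\}$ so that the partition of unity does not introduce spurious terms.
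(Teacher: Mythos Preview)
Your boundary-zone treatment via Corollary~\ref{cor-15-2-5} and Tauberian matches the paper. Your inner-zone treatment is genuinely different: you follow the preliminary dyadic sketch of Section~\ref{sect-15-1-1} (rescaling + Chapter~\ref{book_new-sect-13} + Seeley), whereas the paper's actual proof extends the $\xi_2$-propagation of Proposition~\ref{prop-15-2-2} into the inner zone as well. The paper chooses the time direction in which $\xi_2$ increases, so that trajectories drift \emph{away} from $\partial X$ and never re-encounter the boundary; this yields $T^* \asymp \mu$ directly, and the $\mu^{-1}$ factor in the remainder now comes from $T^{*\,-1}$ rather than from the width of the zone (cf.\ (\ref{15-2-47}) and the remark after it). The only technical obstacle is the uncertainty principle for cut-offs that must be scaled in $x_1$; the paper dissolves this by a coordinate change $x_{1\,\new}=x_1+bx_1^2$ killing the bad $\mu x_1^2 b(x)$ term in (\ref{15-2-5}), after which one need not scale in $x_2$ and the uncertainty constraint reduces exactly to $\mu\le h^{\delta-1}$, i.e.\ (\ref{15-2-6}).

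Your route can be made to work but carries more baggage. First, Seeley's device is a real additional step that you only cite. Second, the per-ball estimate (\ref{15-1-2}) is stated for $\gamma\ge \mu h$, so as soon as $\mu\ge h^{-1/2}$ there is an intermediate layer $\{C_0\mu^{-1}\le x_1\le \mu h\}$ where you must invoke the separate mechanism (\ref{15-1-5}) instead; you do not address this. The paper's direct $\xi_2$-propagation argument bypasses both the logarithm and the extra case in one stroke, covering the full range $\mu\le h^{\delta-1}$ uniformly. Your concern about coefficient identification in the last paragraph is legitimate but orthogonal to this distinction; the paper's proof does not dwell on it either, taking (\ref{15-2-13}) as already producing the correct densities.
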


\begin{proof}
So, as $\psi =\psi'(x_2)\psi''(\mu x_1)$ estimate (\ref{15-2-14}) is proven; thus contribution of zone $\{x,\ x_1\le 2C_0\mu^{-1}\}$ to the remainder is $O(\mu^{-1}h^{-1})$. This is rather a broad zone as $C_0$ is arbitrarily large and in zone  $\{x,\ x_1\ge C_0\mu\}$ analysis is almost as if there was no boundary. We need to use the time direction in which $\xi_2$ increases and therefore trajectories drift inside of $X$. Still it is not quite as without boundary as we are forced to use cut-off functions which are scaled with respect to $x_1$.

However, this is technicality and we overcome it in the following way: first note that such scaling functions are admissible as long as microlocal uncertainty principle  $\mu^{-1}\times \mu^{-2}\ge \mu^{-1}h^{1-\delta}$ holds as we reduced our operator to microlocal canonical form inside of domain. Here the first factor $\mu^{-1}$ is due to the scale. So, as $\mu \le h^{\delta-\frac{1}{2}}$ we can appeal to theory of Chapter~\ref{book_new-sect-13} \cite{futurebook}.

However we can change variables $x_{1\new}= x_1+b x_1^2$ so that 
\begin{equation*}
\xi_1= \bigl(1+O(x_1)\bigl)\xi_{1\new},\quad
\xi_2-\mu x_1-\mu bx_1^2= \xi_{2\new} -\mu x_{1\new} +O(x_1^2)\xi_1
\end{equation*}
and then bad term does not appear at all; in old coordinates it would amount to replacing $\xi_2$ by $\xi_2 + O(x_1^2\xi_1)$.

This proves that in the inner zone we can take $T\asymp \mu$ in the correct time direction and most importantly, we do not scale with respect to $x_2$ so microlocal uncertainty principle would be $1\times \mu^{-2}\ge \mu^{-1}h^{1-\delta}$ i.e. in our frames $\mu \le h^{\delta-1}$. Formula (\ref{15-2-14}) is proven.
\end{proof}

Finally we need to pass to magnetic Weyl formula:

\begin{theorem}\label{thm-15-2-7}
In frames of theorem~\ref{thm-15-2-6}
\begin{multline}
\R^\MW\Def 
|\int_X \Bigl( e(x,x,0)-h^{-2}\cN^\MW (x,0, \mu h)\Bigr)\psi(x)\,dx -\\ h^{-1}\int_{\partial X} \cN_{*,\bound}^\MW(x,0, \mu h )\,ds_g|\le 
C\mu^{-1}h^{-1}
\label{15-2-15}
\end{multline}
where $\cN^\MW_{*,\bound}$ is introduced by \ref{15-1-31-D} or  \ref{15-1-31-N} for Dirichlet or Neumann boundary condition respectively with 
$\hbar=\mu h F(x)$ and $\tau$ replaced by $-V(x)$.
\end{theorem}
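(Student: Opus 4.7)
The plan is to identify the two polynomial expansions in theorem~\ref{thm-15-2-6} with the asymptotic expansions of $h^{-2}\cN^\MW(x,0,\mu h)$ and $h^{-1}\cN^\MW_{*,\bound}(x,0,\mu h)$ in $\mu h$; once identified, (15-2-15) follows from (15-2-14) modulo the target remainder.

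\emph{Frozen-model reduction.} Use a partition of unity in $x_2$ and the local reduction (15-2-3)--(15-2-5); on each patch centered at $\bar x \in \partial X$ the operator $A$ differs from the model operator $\bar A$ of form (15-1-7) with $F=F(\bar x)$, $V=V(\bar x)$ by symbols of size $O(|x-\bar x|)$. For $\bar A$, proposition~\ref{prop-15-1-6} provides the exact identity
\begin{equation*}
\int_0^\infty \bigl(\bar e(x,x,0) - h^{-2}\cN^\MW(\bar x, 0, \mu h)\bigr)\,dx_1 = h^{-1}\cN^\MW_{*,\bound}(\bar x, 0, \mu h).
\end{equation*}
Applying theorem~\ref{thm-15-2-6} to $\bar A$ gives the same left hand side (after multiplying by a cutoff $\chi(x_2)$ and integrating, using translational invariance of $\bar e$ in $x_2$) as the polynomial expansion (15-2-14) with frozen coefficients $\bar\kappa_{nm},\bar\kappa'_{nm}$, modulo $O(\mu^{-1}h^{-1})$. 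Matching the bulk ($dx$) and boundary ($ds_g$) densities separately, and using linear independence of the monomials $\mu^{2n}h^{2n+2m-2}$ (resp.\ $\mu^{2n}h^{2n+2m-1}$) in $(\mu,h)$, forces the asymptotic identifications
\begin{equation*}
h^{-2}\cN^\MW(\bar x, 0, \mu h) \sim \sum_{n,m}\bar\kappa_{nm}\mu^{2n}h^{2n+2m-2},\quad h^{-1}\cN^\MW_{*,\bound}(\bar x, 0, \mu h) \sim \sum_{n,m}\bar\kappa'_{nm}\mu^{2n}h^{2n+2m-1}.
\end{equation*}

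\emph{Unfreezing and summing patches.} The coefficients $\kappa_{nm}(x), \kappa'_{nm}(x)$ are local polynomial invariants of finitely many Taylor coefficients of the symbol of $A$ at $x$, computed by stationary phase as in the derivation of (15-2-13). By smoothness of $g^{jk}, V_j, V$ and continuous dependence of $F(x), V(x)$ along $\partial X$, the same identifications transfer from $\bar\kappa_{nm},\bar\kappa'_{nm}$ to $\kappa_{nm}(x), \kappa'_{nm}(x)$ modulo errors of higher order, absorbed into $O(\mu^{-1}h^{-1})$ after refining the partition. Summing over patches recovers (15-2-15) from (15-2-14).

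\emph{Main obstacle.} The subtle step is the asymptotic identification in the frozen case, which requires $\cN^\MW_{*,\bound}(\tau, \hbar)$ to admit a genuine $\hbar$-asymptotic expansion of the form anticipated in remark~\ref{rem-15-1-7}. This rests on the analytic dependence of the eigenvalue branches $\lambda_{*,j}(\eta)$ on $\eta$ (appendix~\ref{sect-15-A}) and on the decay estimates (15-1-34) providing uniformity in $j$, which together ensure that the sum-integral in (15-1-31) behaves like a smooth function of $\hbar$ for small $\hbar$.
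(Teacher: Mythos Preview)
Your freeze-and-match strategy has a genuine gap at the pivotal step. You invoke theorem~\ref{thm-15-2-6} for the frozen model operator $\bar A$, but that theorem rests on condition (\ref{15-2-8}), $|\nabla_{\partial X}VF^{-1}|\ge\epsilon_0$, which fails identically once $V,F$ are constants. Without (\ref{15-2-8}) the Tauberian machinery behind theorem~\ref{thm-15-2-6} collapses: for $\bar A$ the inner trajectories are exactly periodic and no extension from $T\asymp\mu^{-1}$ to $T\asymp 1$ is available, so you cannot conclude that the exact model answer of proposition~\ref{prop-15-1-6} agrees with the polynomial expansion modulo $O(\mu^{-1}h^{-1})$. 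A related issue is that $\cN^\MW(x,0,\mu h)$ is a step function of $\mu h$ with no pointwise expansion; any identification of it with a power series can only hold after integration in $x_2$, and it is precisely (\ref{15-2-8}) on the \emph{variable-coefficient} operator that makes such averaging legitimate. Your ``Main obstacle'' paragraph worries about $\cN^\MW_{*,\bound}$ but not about $\cN^\MW$, which is where the real difficulty sits.

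The paper argues differently and avoids applying the theorem to a degenerate model. It analyses the successive-approximation origin of the coefficients $\kappa_{nm},\kappa'_{nm}$ directly: each differentiation of $\psi,g^{jk},V$ (or second differentiation of $V_j$) in the scheme costs a factor $h$ rather than $\mu h$; one such differentiation alone contributes zero in the final answer, so at least two are forced, giving an $O(h^2)$ suppression and hence a total contribution $O(1)\le C\mu^{-1}h^{-1}$. Thus modulo the target remainder only the terms with no differentiation of $g^{jk},V,\psi$ and a single differentiation of $V_j$ survive, which is exactly the model operator $A_y$ at each $y$; those terms are by construction $h^{-2}\cN^\MW$ and $h^{-1}\cN^\MW_{*,\bound}$. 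Finally, microhyperbolicity in $x_2$ for the \emph{original} operator (where (\ref{15-2-8}) genuinely holds) justifies replacing $\cN^\MW$ by its formal $\mu h$-expansion after the $x_2$-integration, closing the passage from (\ref{15-2-14}) to (\ref{15-2-15}).
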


\begin{proof}
To pass from (\ref{15-2-14}) to (\ref{15-2-15}) let us recall that coefficients 
are obtained by successive approximation method and notice that as $\psi$ is fixed (non-scaled) function running successive approximation method to derive (\ref{15-2-14}) leads to an extra factor $h$ rather than $\mu h$ if we differentiate $\psi$ or $g^{jk}$ or $V$ or differentiate twice $V_j$; however exactly one such differentiation leads to $0$ in the final calculations, so we conclude that at least $2$ ``losing'' differentiations must be there, extra factor is $O(h^2)$ and the term is $O(1)$. 

Thus to derive $\R^\W_\infty$ modulo $O(1)$ we should not differentiate $g^{jk},V,\psi$ at all and to differentiate $V_j$ only once. However it means exactly considering model operator $A=A_y$ at each point $y$.

The microhyperbolicity with respect to $x_2$ implies that with integration (with the scale $1$) over $x_2$ we can replace $\cN^\MW$ by its decomposition with respect to powers of $\hbar=\mu h$ which justifies the transition from (\ref{15-2-14}) to (\ref{15-2-15}). 
\end{proof}

\subsection{Analysis in the boundary zone}
\label{sect-15-2-2-2}
In what follows ``formula'' and ``remainder''  mean \emph{Tauberian formula with 
$T=\epsilon \mu^{-1}$\/} and the corresponding \emph{Tauberian remainder\/} $\R^\T$ until we will pass to Weyl and magnetic Weyl formula and remainder. 

To cover the case  when condition (\ref{15-2-8}) is violated (with an extreme case $W\Def VF^{-1}|_{\partial X}=\const$) let us consider first boundary zone $\cX_\bound$ defined by (\ref{15-1-17}) where we chose a small parameter $\bar{\rho}$ later. To do so let us consider a stripe
\begin{equation}
\cX_{\bound,\rho}=\cX_\bound\cap \{\rho\le - \xi_2 + \bigl(VF^{-1}\bigr)^{\frac{1}{2}}\le 2\rho\}.
\label{15-2-16}
\end{equation}
Then the length of the hop is $\asymp \mu^{-1}\rho^{\frac{1}{2}}$ with the possible perturbation $O(\mu^{-2})$ due to the magnetic drift  (so $\rho \gg \mu^{-2}$ would suffice, but we request $\rho \ge \mu^{-1}$ anyway).

Now, weak magnetic field approach would mean that uncertainty principle is fulfilled after the first hop:
$\rho \times \mu^{-1}\rho^{\frac{1}{2}} \ge h^{1-\delta}$ or equivalently 
$\rho \ge \mu ^{\frac{2}{3}}h^{\frac{2}{3}-\delta}$. Combining with restriction $\rho \ge \mu^{-1}$ we arrive to
\begin{equation}
\rho \ge \bar{\rho}\Def 
\max\bigl(C_0\mu^{-1},\mu ^{\frac{2}{3}}h^{\frac{2}{3}-\delta}\bigr)
\label{15-2-17}
\end{equation}
and our goal is to prove that under this the contribution of $\cX_\bound$ to the remainder does not exceed $C\mu^{-1}h^{-1}$.

To achieve this goal let note first that 
\begin{equation}
\int _{ \cX_{\bound,\rho}\cap \{x_2=\const\}} dx_1 d\xi :da \le C \rho
\label{15-2-18}
\end{equation}
which enables us to estimate 
\begin{equation}
|F_{t\to h^{-1}\tau}\bar{\chi}_T(t)\Gamma (u\,^t\!Q_y \psi)|\le 
C\mu^{-1}h^{-1}\rho \gamma
\label{15-2-19}
\end{equation}
as $|\tau|\le \epsilon$, $Q=Q(x_2,hD_2)\in \S_{h,\rho,\gamma}(\bR)$ and with the symbol supported in $\cX_{\bound,\rho}\cap \{|x_2-\bar{x}_2|\le \gamma\}$, 
$\rho \ge C\max(\gamma,\mu^{-1})$, $\psi=\psi''(\mu x_1)$ and 
$T=T_*=\epsilon \mu^{-1}$.

\begin{remark}\label{rem-15-2-8}
Note that factor $\mu^{-1}$ appears here and in all similar estimates because width $\asymp\rho$ with respect to $\xi_2$ matches to the width $\asymp\mu^{-1}\rho$ with respect to $x_1$ on energy levels below $c$.
\end{remark}

What we need is to investigate propagation until time $T=T^*(\rho)$ and prove that
\begin{gather}
|F_{t\to h^{-1}\tau}\bigl(\bar{\chi}_{T^*}(t)-\bar{\chi}_{T_*}(t)\bigr)
\Gamma (u\,^t\!Q_y\psi)|\le C'h^s
\label{15-2-20}\\
\shortintertext{with}
\int T^{*\,-1}(\rho)\,d\rho\le C
\label{15-2-21}
\end{gather}
then as the main part is given by Tauberian formula with $T=T_*$, remainder does not exceed 
\begin{equation*}
C\mu^{-1}h^{-1}\int T^{*\,-1}(\rho)\,d\rho dx_2\le C\mu^{-1}h^{-1}.
\end{equation*}
In what follows $T^*(\rho)=\rho^{1-\delta'}$  with arbitrarily small exponent $\delta'>0$.

To prove (\ref{15-2-20}) with $T^*(\rho)=\rho^{1-\delta'}$ note first that 
\begin{claim}\label{15-2-22}
If $|\partial_{x_2} VF^{-1}|\le c\rho^{\delta'}$ then propagation in \emph{any\/} time direction remains in the zone
\begin{equation}
\bigl\{\frac{1}{2}\rho\le -\xi_2 + 
(VF^{-1})^{\frac{1}{2}}\le \rho'\bigr\} \cap 
\bigl\{|x_2-\bar{x}_2|\le 2\gamma\bigr\}
\label{15-2-23}
\end{equation}
for time $T=T^*(\rho)$ 
\end{claim}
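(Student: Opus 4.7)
The plan is to establish (\ref{15-2-22}) by a positive-commutator / energy estimate with an escape function adapted to the stripe $\cX_{\bound,\rho}$, in the style of the proof of Proposition~\ref{prop-15-2-2}. Introduce the defining function
\begin{equation*}
\sigma(x,\xi) \Def -\xi_2 + (VF^{-1})^{\frac{1}{2}}(x);
\end{equation*}
the stripe is $\{\rho\le \sigma\le 2\rho\}$, and I must confine $\sigma$ to $[\frac{1}{2}\rho,\rho']$ and $x_2$ to $[\bar x_2-2\gamma,\bar x_2+2\gamma]$ over $|t|\le T^*=\rho^{1-\delta'}$. For the $\sigma$-confinement, formula (\ref{15-2-9}) yields on the characteristic set $\{a=\tau\}$
\begin{equation*}
|\{a,\xi_2\}|\le CF^2|\partial_{x_2}(VF^{-1})| + O(\mu^{-1}) \le Cc\rho^{\delta'},
\end{equation*}
the $O(\mu^{-1})$ error being subdominant since $\rho\ge\bar\rho\ge C_0\mu^{-1}$ by (\ref{15-2-17}). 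Combining with $|\dot x_2|\le C$ from Proposition~\ref{prop-15-2-2}(i) and the smoothness of $(VF^{-1})^{\frac{1}{2}}$ gives $|\dot\sigma|\le Cc\rho^{\delta'}$; integrating over $|t|\le T^*$ produces a total shift $\le Cc\rho$, which for $c$ chosen small keeps $\sigma \in [\frac{1}{2}\rho,\rho']$.

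The $x_2$-confinement is the delicate part: the crude bound $|\dot x_2|\le C$ from Proposition~\ref{prop-15-2-2}(i) would give only a shift $CT^*=C\rho^{1-\delta'}$, which exceeds the admissible $\gamma\le\rho/C$. One must instead invoke the refined hop-speed calculation of (\ref{15-1-10})--(\ref{15-1-11}): on the stripe $\sigma\sim\rho$ one has $1-\eta\sim\rho/W_0^{\frac{1}{2}}$, so $v(\eta)\sim\sqrt{1-\eta}\sim\sqrt\rho$ as $\eta\to 1$, and the averaged $x_2$-speed is $\sim\sqrt\rho$ rather than $O(1)$. The $x_2$-shift over $|t|\le T^*$ is then at most $C\rho^{\frac{3}{2}-\delta'}\le\gamma$, provided $\delta'\le\frac{1}{2}$ and $\rho$ respects the scale hierarchy $\rho\ge C\max(\gamma,\mu^{-1})$.

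The classical bounds above are then promoted to microlocal propagation via a standard escape-function argument: I would construct $\varphi(x,\xi_2,t)$ supported in the enlarged stripe such that $\{a,\varphi\}+h^{-1}\partial_t\varphi$ has the appropriate sign modulo lower-order error terms, and apply Gårding's inequality as in the proofs of theorems~\ref{book_new-thm-2-1-2} and~\ref{book_new-thm-3-1-2}~\cite{futurebook}. The uncertainty principle (\ref{15-2-17}) ensures that symbols at scale $(\gamma,\rho)$ in $(x_2,\xi_2)$ are admissible pseudodifferential objects. The main obstacle is precisely the rigorous transport of the $\sqrt\rho$-averaged hop speed from classical to wavefront dynamics: one must check that the within-hop oscillations of $x_2$ do not spoil the escape-function estimate, which comes down to careful tracking of the error terms in the commutator $[a,\varphi^\w]$ against the hop-period $\sim\mu^{-1}$. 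Granted this, (\ref{15-2-22}) follows and feeds into the subsequent Tauberian estimate (\ref{15-2-20}).
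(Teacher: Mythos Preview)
Your classical reasoning is correct and is essentially what the claim requires. The $\sigma$-confinement via (\ref{15-2-9}) yields $|\dot\sigma|\le Cc\rho^{\delta'}$ and hence a total shift $\le Cc\rho$ over $|t|\le T^*=\rho^{1-\delta'}$, pinning $\sigma$ to $[\tfrac12\rho,3\rho]$; your $x_2$-excursion bound $O(\rho^{3/2-\delta'})$ from the averaged hop speed $v(\eta)\asymp\rho^{1/2}$ is also right (supplemented by the within-hop oscillation $O(\mu^{-1})$, absorbed since $\mu^{-1}\le\bar\rho\le\rho$). With $\gamma$ chosen of this size one has $\gamma\le\epsilon\rho$ for $\delta'<\tfrac12$, and the claim holds for the classical billiard flow $\Psi_t$. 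This is exactly the level at which the paper uses (\ref{15-2-22})--(\ref{15-2-24}): as confinement statements about $\Psi_t$, stated without proof and then combined with the subsequent propagation machinery.

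Where your write-up diverges from the paper is the microlocal promotion. The paper does \emph{not} run a positive-commutator argument here; instead it defers the passage from $\Psi_t$ to singularities to Proposition~\ref{prop-15-2-9} and Corollary~\ref{cor-15-2-10}, which show that wavefront sets track $\Psi_t$ by rescaling each near-glancing hop to a standard transversal reflection (the ``squeezed propagation'' of Figure~\ref{fig-prop1}). That rescaling route sidesteps the averaged-speed obstruction you identify. Your direct escape-function approach can be made to work, but not with a symbol in $(x_2,\xi_2)$ alone, since $\dot x_2=O(1)$ pointwise; one is forced to introduce a $\xi_1$-corrected symbol such as $\mu x_2-\xi_1$ (cf.\ (\ref{15-3-70})--(\ref{15-3-71})) and then control its jumps at reflections---this is precisely what the paper does later in section~\ref{sect-15-3-4}, but only under Dirichlet data and with considerably more effort. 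For the weak-field setting of section~\ref{sect-15-2}, the rescaling approach of Proposition~\ref{prop-15-2-9} is both simpler and indifferent to the boundary condition, which is why it is preferred.
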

and 
\begin{claim}\label{15-2-24}
If $|\partial_{x_2} VF^{-1}|\ge \tfrac{1}{2}c\rho^{\delta'}$ then propagation in \emph{an appropriate\/}\footnote{\label{foot-15-3} In which $(VF^{-1})(0,x_2)$ decays; as rotation is counter-clock-wise we take $\pm t>0$ as $\pm  (VF^{-1})(0,x_2)>0$.} time direction remains in (\ref{15-2-23}) for time $T=T^*(\rho)$;
\end{claim}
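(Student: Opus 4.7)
The plan is to establish both claims~(\ref{15-2-22}) and~(\ref{15-2-24}) by the standard positive-commutator / microhyperbolicity argument (compare theorem~\ref{book_new-thm-2-1-2} and theorem~\ref{book_new-thm-3-1-2} in~\cite{futurebook}), using as test symbol a smoothed indicator of the zone~(\ref{15-2-23}). The classical input is already in place: by~(\ref{15-1-22}), modulo an error of order $\mu^{-1}\rho$ per hop, the ``radial'' coordinate $r\Def -\xi_2+(VF^{-1})^{\frac{1}{2}}$ evolves at averaged rate $\asymp \partial_{x_2}(VF^{-1})\cdot W_0^{-\frac{1}{2}}$ per unit time along a hop-trajectory, while proposition~\ref{prop-15-2-2}(i) gives $\dot x_2=O(1)$ uniformly. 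The derivation for both claims then reduces to integrating this ODE-like behavior and quantizing.

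For claim~(\ref{15-2-22}), the smallness hypothesis $|\partial_{x_2}(VF^{-1})|\le c\rho^{\delta'}$ yields a total drift of $r$ bounded by $Cc\,\rho^{1-\delta'}\cdot\rho^{\delta'}=Cc\rho$ over $|t|\le T^*(\rho)=\rho^{1-\delta'}$; with $c$ small and $\rho'\ge 2\rho$ (which we may arrange by enlarging the band in~(\ref{15-2-23})), this keeps $r\in[\rho/2,\rho']$ in \emph{either} time direction. Meanwhile $x_2$ shifts by $O(\rho^{1-\delta'})$, which stays within $2\gamma$ provided $\gamma\ge\rho^{1-\delta'}$ (otherwise the statement becomes vacuous on the symbol's support). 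The accumulated error from the $O(\mu^{-1}\rho)$ correction in~(\ref{15-1-22}) amounts to $O(\rho^{2-\delta'})$, which is dominated by $c\rho$ since $\rho\ll 1$.

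For claim~(\ref{15-2-24}), I select the time direction prescribed in footnote~\footref{foot-15-3} so that $(VF^{-1})(0,x_2)$ decreases along the trajectory; by remark~\ref{rem-15-1-2} this is precisely the direction in which hops are \emph{torn away\/} from the boundary, i.e.\ $r$ grows monotonically at rate $\ge \tfrac14 c\rho^{\delta'}$. Monotonicity alone guarantees $r\ge\rho/2$ for $0\le t\le T^*(\rho)$, while the growth is bounded by $C\rho^{1-\delta'}\le\rho'$ for any fixed $\rho'\asymp 1$; the $x_2$-coordinate is controlled as in the previous case. The classical confinement is then promoted to a microlocal propagation statement by the standard sharp-G\aa{}rding / positive-commutator argument applied to the smoothed characteristic function of~(\ref{15-2-23}).

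The main obstacle — and the reason both the lower bound $\rho\ge C_0\mu^{-1}$ and the upper cap $T^*(\rho)=\rho^{1-\delta'}$ appear — is two-fold. First, the per-hop error $O(\mu^{-1}\rho)$ in~(\ref{15-1-22}) must not overwhelm the signal: accumulated over $\mu T^*(\rho)$ hops it totals $O(\rho^{2-\delta'})$, which requires $\rho\ll 1$ and, crucially, $\rho\gg\mu^{-1}$. Second, the cutoff symbols used in the microlocal step are of scale $\rho$ in $\xi_2$ and scale $\mu^{-1}\rho^{\frac{1}{2}}$ in $x_1$ (cf.\ remark~\ref{rem-15-2-8}), so the quantization is legitimate only under the uncertainty principle $\rho\cdot\mu^{-1}\rho^{\frac{1}{2}}\ge h^{1-\delta}$ — exactly what is built into~(\ref{15-2-17}). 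The analysis cannot be pushed much past $T^*(\rho)=\rho^{1-\delta'}$ without sharpening this uncertainty input, which is why the better-than-Tauberian estimate will have to be supplied by the bound~(\ref{15-2-21}).
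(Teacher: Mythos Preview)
Your approach is essentially what the paper intends: control the classical drift of the ``radial'' variable from the computations of section~\ref{sect-15-1-2}, then quantize via the standard positive-commutator scheme (proposition~\ref{prop-15-2-2} already supplies the $\xi_2$-microhyperbolicity). A few points need cleaning up. First, $r$ should be $-\xi_2+W_0^{1/2}$ with $W_0=(-VF^{-1})\bigr|_{\partial X}$; as written, $(VF^{-1})^{1/2}$ is imaginary. Second, assertion~(\ref{15-1-22}) concerns $\rho'=1+\eta$ (the gliding-side invariant), not $r$; the relevant drift is the formula quoted in remark~\ref{rem-15-1-2}, $\delta\rho/\delta t\approx -W_{0x_2}W_0^{-1/2}$, or the direct computation from proposition~\ref{prop-15-2-2}(ii). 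Third---and this is where your justification goes backwards---in the footnote-prescribed direction $r$ does grow, but this is the direction in which trajectories move \emph{deeper into the hop zone}, away from the inner/transitional threshold, which is exactly what keeps the lower bound $r\ge\rho/2$ safe; ``torn out of the boundary'' in remark~\ref{rem-15-1-2} means the opposite (hop $\to$ drift, $\rho\searrow 0$; compare example~\ref{ex-15-1-3}(i)). Finally, the paper takes $\rho'=\rho^{1-\delta''}$ rather than $\rho'\asymp 1$, and since the hop speed in $\cX_{\bound,\rho}$ is $\asymp r^{1/2}$ (not $O(1)$), the $x_2$-shift over time $T^*(\rho)$ is only $O(\rho^{3/2-\delta'})\ll\rho$, so the $\gamma$-containment in (\ref{15-2-23}) is immediate with $\gamma\le\epsilon\rho$.
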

in both cases $\gamma =C\mu^{-1}T^*(\rho')\le \epsilon\rho$ for sure where 
in the former case  $\rho' =3\rho$ and in the latter case $\rho'=\rho^{1-\delta''}$.

After assertions (\ref{15-2-22}) and (\ref{15-2-24}) are proven, we need to prove that singularities really propagate. Figure~\ref{fig-hop2}(a) shows that as $VF^{-1}=1$ and the movement is strictly circular the length of the hop is exactly 
$2\mu^{-1}\bigl(1- (1-\rho)^2\bigr)^{\frac{1}{2}}$ while in general case it will be of the same magnitude $\mu^{-1}\rho^{\frac{1}{2}}$ as $\rho\ll 1$. But obviously in the most critical zone movement in $x_2$ is not monotone.
However for sure $x_2$ increases with each hop as we bound ourselves with $x_1\le \epsilon' \mu^{-1}\rho$ with small enough constant $\epsilon'$.

The main problem are trajectories which are almost tangent to the boundary. There are two kind of them: with $\rho'\ll 1$ and with $\rho \ll 1$. Trajectories of of the first kind are not actually difficult (see proof of proposition~\ref{prop-15-2-11}). So we start from trajectories of the second kind. 

\begin{proposition}\label{prop-15-2-9}
Consider zone  $\cZ\Def \{\xi_2 \ge - \epsilon \}$. Consider point $(\bar{x},\bar{\xi}_2)\in \cZ$ and its $(\boldgamma,\sigma)$-vicinity $\Omega$ with $\boldgamma =(\gamma_1,\gamma_2)$ and\,\footnote{\label{foot-15-4} After rescaling $x\mapsto \mu x$.}
\begin{align}
&\gamma_1=\sigma = \hbar^{\frac{1}{2}-\delta},
&&\text{as\ \ } \bar{x}_1\ge  \hbar^{\frac{1}{2}-\delta},\label{15-2-25}\\
&\gamma_1  = \sigma =
\bigl(\hbar \gamma^{-\frac{3}{2}}\bigr)^{\frac{1}{2}-\delta}\gamma,
&&\text{as\ \ } \bar{x}_1\asymp \gamma \le  \hbar^{\frac{1}{2}-\delta},
\label{15-2-26}\\
& \gamma_2= \hbar^{\frac{1}{3}-\delta} \label{15-2-27}
\end{align}
and operator $Q=Q(x,hD_2)\in \S_{h,\gamma_1,\gamma_2,\sigma}$ with symbol supported in $\Omega$.

Further, consider point $(\bar{y},\bar{\eta}_2)\in \cZ$ and its $(\boldgamma,\sigma)$-vicinity $\Omega'$ with $\boldgamma$ reintroduced for this point according to \textup{(\ref{15-2-25})}--\textup{(\ref{15-2-27})} and also operator $Q'=Q'(x,hD_2)\in \S_{h,\gamma_1,\gamma_2,\sigma}$ with symbol supported in $\Omega'$. 

Let $T\asymp \mu^{-1}$ and 
\begin{multline}
|x_1-y_1|\gamma_1^{-1}+|x_2-y_2|\gamma_2^{-1}+|\xi_2-\eta_2|\sigma^{-1}\ge \epsilon\\
\forall x\in \Psi_{t,\tau}(\Omega), \ y\in \Omega'\qquad\forall t\in \supp \chi_T
\label{15-2-28}
\end{multline}
where $\Psi_{t,\tau}$ is a Hamiltonian flow with reflections and $\xi_1$ is defined so that $a(x,\xi)=\tau$. Then 
\begin{equation}
F_{t\to h^{-1}\tau}\chi_T(t) Q'U \,^t\!Q'
\label{15-2-29}
\end{equation}
is negligible.
\end{proposition}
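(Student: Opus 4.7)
The statement is the standard ``non-propagation'' estimate for singularities of $U$ away from the billiard flow $\Psi_{t,\tau}$, adapted to the anisotropic scales $(\gamma_1,\gamma_2,\sigma)$ near the boundary. I would follow the classical commutator scheme (as in theorems~\ref{book_new-thm-2-1-2} and \ref{book_new-thm-3-1-2}~\cite{futurebook}), with two new ingredients: an honest verification of the microlocal uncertainty principle in the degenerate regime $\bar x_1\asymp \gamma\le \hbar^{\frac12-\delta}$, and a propagation function $\varphi$ that correctly matches across the boundary reflection.

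First, I would rescale $x\mapsto \mu x$, $h\mapsto \hbar=\mu h$, $T\mapsto \mu T=O(1)$, so that one studies an $\hbar$-pseudodifferential operator on the half-plane with magnetic field of order $1$. In these coordinates one has to check that $\gamma_1\sigma\gtrsim \hbar^{1-\delta}$ on the rescaled vicinities, which guarantees that the symbol classes $\S_{h,\gamma_1,\gamma_2,\sigma}$ obey an honest symbolic calculus. In the regime $\bar x_1\ge \hbar^{\frac12-\delta}$ one has $\gamma_1\sigma=\hbar^{1-2\delta}$; in the boundary regime, writing $\bar\rho\Def \hbar\gamma^{-\frac32}$, one has $\gamma_1\sigma=\bar\rho^{1-2\delta}\gamma^2$, which still dominates $\hbar^{1-\delta}$ thanks to the constraint $\gamma\le \hbar^{\frac12-\delta}$; the tangential scale $\gamma_2=\hbar^{\frac13-\delta}$ is likewise admissible.

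Next, I would analyze $\Psi_{t,\tau}$ on $|t|\le O(1)$ (rescaled). By claim~(\ref{15-1-20}) the billiard does not branch for $\mu\ge \mu_0$, so $\Psi_{t,\tau}$ is well defined and smooth away from the grazing set. The tangential hop speed and drift speed (claim~(\ref{15-1-23}), after rescaling) are both $O(1)$, so $\Psi_{t,\tau}(\Omega)$ is a controlled distortion of $\Omega$ of diameter $O(1)$ in each variable, and the assumption (\ref{15-2-28}) says precisely that this image stays separated from $\Omega'$ at the scales $(\boldgamma,\sigma)$. I would then build a cut-off symbol $\varphi_t=\varphi(x,\xi_2,t)\in \S_{h,\gamma_1,\gamma_2,\sigma}$ that equals $1$ on an $\epsilon/2$-neighborhood of $\Psi_{t,\tau}(\supp q)$ and vanishes on an $\epsilon/4$-neighborhood of $\Omega'$, with $\xi_1$ eliminated through $a(x,\xi)=\tau$, and with the incoming and reflected branches glued by the boundary reflection $(x_1,\xi_1)\mapsto (x_1,-\xi_1)$. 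By construction $\partial_t\varphi_t+\{a,\varphi_t\}$ is negligible in the appropriate symbol class; applying the standard Gr\aa ding/commutator identity to $(Q')^*\Op_\hbar(\varphi_t) \,U\,{}^tQ$ and iterating yields the bound $O(h^s)$ after Fourier transform against $\chi_T(t)$.

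The main obstacle is the construction of $\varphi_t$ through the reflection in the degenerate regime $\bar x_1\asymp \gamma\le\hbar^{\frac12-\delta}$: there the normal scale $\gamma_1$ shrinks with $\gamma$, the billiard becomes close to gliding, and the differential of the reflection map is nearly singular in the tangential direction. I expect to control this by exploiting that, for the relevant $\rho'\not\ll 1$, the tangential hop displacement $\asymp \gamma$ matches the chosen $\gamma_1$, so one full hop plus reflection moves $\varphi_t$ by exactly one box, and by appealing to the change of variable $x_{1,\new}=x_1+bx_1^2$ (used later in the proof of theorem~\ref{thm-15-2-6}) to remove the symbolically dangerous term $\mu x_1^2\xi_1$ before microlocalizing. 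The remaining estimates are then routine.
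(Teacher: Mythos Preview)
Your plan has a real gap precisely where you flag the ``main obstacle''. The paper does \emph{not} try to build a single $\varphi_t$ that survives the near-tangent reflection; instead it decomposes the trajectory into three regimes and rescales separately in each. The crucial device, which you are missing, is the anisotropic blow-up
\[
x_1\mapsto x_1\gamma^{-1},\qquad x_2\mapsto x_2\gamma^{-1/2},\qquad \hbar\mapsto \hbar'=\hbar\gamma^{-3/2},
\]
applied in the zone $x_1\asymp\gamma\le\hbar^{1/2-\delta}$. This rescaling (i) produces exactly the scales (\ref{15-2-26})--(\ref{15-2-27}) as the standard $\hbar'^{1/2-\delta}$ boxes in the new coordinates, and (ii) when applied at the bottom scale $\gamma=\rho$, turns the incidence angle $\asymp\rho^{1/2}$ into an angle disjoint from $0$, so the reflection becomes \emph{transversal} and the usual propagation-with-reflection theory of section~\ref{book_new-sect-3-5}~\cite{futurebook} applies directly. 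One then passes between the scales $\gamma\in[\rho,\hbar^{1/2-\delta}]$ by the long-range propagation of section~\ref{book_new-sect-2-4}~\cite{futurebook}, needing only $O(1)$ jumps since each step covers $\gamma^{1+\delta'}\le x_1\le \gamma^{1-\delta'}$.

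By contrast, your two proposed fixes do not address the difficulty. First, the tangential hop displacement is $\asymp\rho^{1/2}$ (after $x\mapsto\mu x$), not $\asymp\gamma$; it matches neither $\gamma_1$ nor $\gamma_2$ at the reflection scale, so ``one hop moves $\varphi_t$ by exactly one box'' is false. Second, the substitution $x_{1,\new}=x_1+bx_1^2$ used in theorem~\ref{thm-15-2-6} removes a specific lower-order term in the operator far from the boundary and has nothing to do with the singularity of the billiard reflection map near tangency. Without the anisotropic rescaling you would effectively have to redo a glancing-point propagation argument from scratch, which is much harder than what the paper does and is not covered by the routine commutator scheme you invoke.
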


\begin{proof}
As we assume $\bar{\xi}_2\ge -\epsilon_0$ we do not need to consider ``short and low'' hops as on figure~\ref{fig-hop2}(b) and the time of the hop is 
$\asymp \mu^{-1}$.

Obviously as $\mu \le h^{-\delta}$ one can apply results of sections~\ref{book_new-sect-2-4} and~\ref{book_new-sect-3-5} \cite{futurebook} and justify our final conclusion.

Let us consider larger $\mu$. To do so we need to understand how small vicinity of Hamiltonian billiard flow with reflections we must take to contain propagation. 

To do so consider propagation in the different zones, step by step, and we do consider $\sigma,\gamma_1,\gamma_2$ not necessarily defined by (\ref{15-2-25})--(\ref{15-2-27}).

\medskip\noindent
(a) First of all, as $x_1\ge \gamma \ge \hbar^{\frac{1}{2}-\delta}$ one can take any $\gamma_j\ge \gamma$ and $\rho_j\ge \gamma$. So, as 
$x_1\ge \hbar^{\frac{1}{2}-\delta}$ we can take 
$\gamma_1= \hbar^{\frac{1}{2}-\delta}$ and other scales different but larger.
 Thus,

\begin{claim}\label{15-2-30}
Let $\bar{x}_1\ge \gamma \ge \hbar^{\frac{1}{2}-\delta}$ and let $\gamma_j\ge \gamma$, $\sigma\ge \gamma$. Let $t$ be such that $\Psi_{t'} (\supp Q)$ does not intersect $\gamma$-vicinity of $\partial X$ as $0<\pm t'<\pm t$ (if $\pm t>0$).

Then (\ref{15-2-29}) is negligible, provided $\Psi_{t,\tau} (\supp Q)$ does not intersect $(\boldgamma,\sigma)$-vicinity of $\supp Q'$.
\end{claim}

We refer to this as \emph{inner propagation\/} (see figure~\ref{fig-prop1}(a)).

\begin{figure}[h!]
\centering
\subfloat[Inner propagation]{\includegraphics[width=0.4\linewidth]{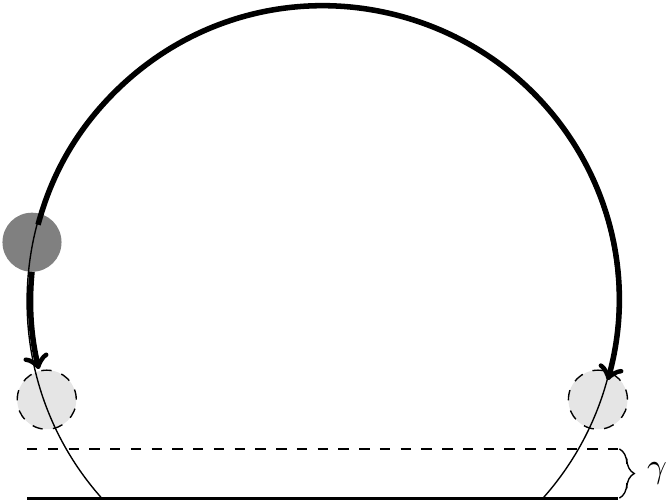}} \qquad
\subfloat[Squeezed inner propagation]{\includegraphics[width=0.4\linewidth]{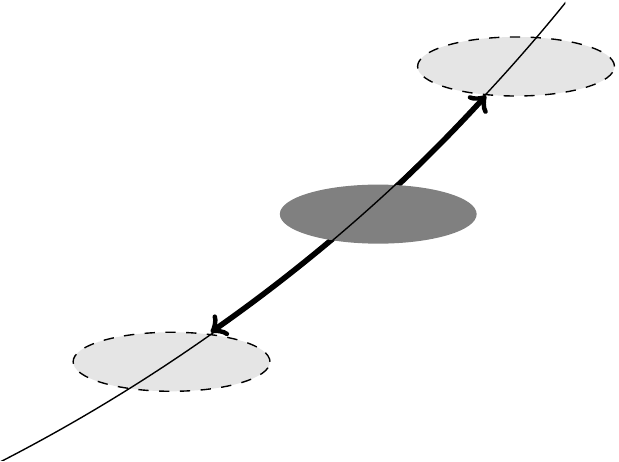}}
\\[5pt]
\subfloat[Squeezed propagation with reflection]{\includegraphics[width=0.6\linewidth]{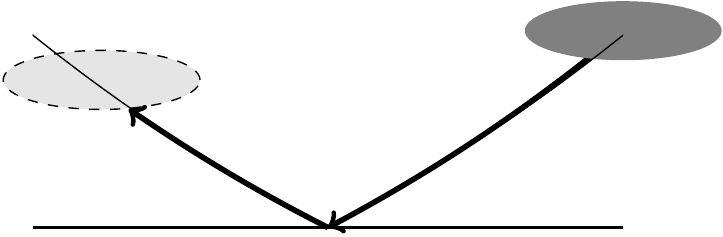}}
\caption{\label{fig-prop1}Inner, squeezed inner (and similar mirror symmetric), squeezed with reflection propagations}
\end{figure}

\medskip\noindent
(b) Consider now zone $C\rho \le x_1\asymp \gamma \le \hbar^{\frac{1}{2}-\delta}$. Let us scale $x_1\mapsto x_1\gamma^{-1}$, $x_2\mapsto x_2\gamma^{-\frac{1}{2}}$ and $\hbar \mapsto \hbar'\Def \hbar\gamma^{-\frac{3}{2}}$. As after original rescaling $x\mapsto \mu x$ we had that derivatives of all coefficients were less than $C\mu^{-1}$ we conclude that after this rescaling they are less then $C\mu^{-1}\gamma^{\frac{1}{2}}$ and after division by $\gamma $ they are still bounded. 

More precisely: we recalled that ``up to perturbation'' operator was 
\begin{equation}
\bar{A}=(\hbar D_1)^2 - (\hbar D_2- x_1)^2 -1
\label{15-2-31}
\end{equation}
with $| \xi_2-1 |\le \rho$; so in the zone in question rescaling as described is justified. And therefore respectively we need to scale $\xi_1\mapsto \xi_1\gamma^{-\frac{1}{2}}$ and 
$\xi_2-1 \mapsto \xi_2 \gamma^{-1}$. 

Now we have rather regular situation and can take 
$\hbar^{\prime\, \frac{1}{2}-\delta}$ vicinity. In other words, we can take 
\begin{gather}
\gamma_1 ,\sigma \ge \hbar^{\prime\, \frac{1}{2}-\delta} \gamma = 
\bigl(\hbar \gamma^{-\frac{3}{2}}\bigr)^{\frac{1}{2}-\delta}\gamma,
\label{15-2-32}\\
\gamma_2  \ge \hbar^{\prime\, \frac{1}{2}-\delta} \gamma ^{\frac{1}{2}}= 
\bigl(\hbar \gamma^{-\frac{3}{2}}\bigr)^{\frac{1}{2}-\delta}
\gamma ^{\frac{1}{2}}
\label{15-2-33}
\end{gather}
and we can replace these right hand expressions by $\hbar^{\frac{1}{2}-\delta}\gamma^{\frac{1}{4}}$ and $\hbar^{\frac{1}{2}-\delta}\gamma^{-\frac{1}{4}}$ respectively and the latter does not exceed its value as $\gamma =\bar{\rho}$ and it is $\hbar^{\frac{1}{3}-\delta}$. This is \emph{squeezed inner propagation\/} (see figure~\ref{fig-prop1}(b)). 

Furthermore, we can apply long-range propagation of section~\ref{book_new-sect-2-4} \cite{futurebook}  and replace condition $x_1\asymp \gamma$ by a weaker condition $\gamma^{1+\delta'}\le x_1\le \gamma^{1-\delta'}$ with $\delta' >0$ small enough. But then we need no more than $C(\delta')$ jumps to reach from $\gamma \asymp \hbar^{\frac{1}{2}-\delta}$ to $\gamma\asymp \hbar^{{2}{3}-\delta}$ or inversely. 

\medskip\noindent
(c) Finally as $x_1\le C\rho$ we can apply the same scaling with $\gamma=\rho$ and note that after rescaling trajectories meet the boundary under angle disjoint from $0$. So we have a standard reflection situation. Scaling back we arrive to the same conclusion as in (b). This is \emph{squeezed propagation with reflection\/} (see figure~\ref{fig-prop1}(c)).
\end{proof}

Repeating $N\asymp\mu T$ times we arrive to

\begin{corollary}\label{cor-15-2-10}
Consider $\mu^{-1}\le T \le \epsilon \rho$. Then conclusion of proposition~\ref{prop-15-2-9} remains true if we redefine 
$\sigma\Def \sigma_\old N$, $\gamma_j \Def \gamma_{j,\old} N$ with 
$N\asymp \mu T$ number of rotations.
\end{corollary}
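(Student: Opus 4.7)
The plan is to iterate Proposition~\ref{prop-15-2-9} over the $N\asymp \mu T$ successive hops (or cyclotron loops with reflection) that fit inside the window $[0,T]$. Concretely, I would partition $[0,T]$ into subintervals of length $\asymp \mu^{-1}$, pick intermediate phase-space points $z_k=\Psi_{t_k,\tau}(z_0)$ along the billiard trajectory through a reference $z_0\in \supp Q$, and insert pseudodifferential cutoffs $Q_k\in \S_{h,\gamma_{1,\old},\gamma_{2,\old},\sigma_\old}$ microlocalized in the single-hop vicinity of $z_k$. A telescoping identity for the propagator, modulo negligible commutator terms, then reduces the conclusion to $N$ applications of the single-step bound already established in Proposition~\ref{prop-15-2-9}.

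At each step the accumulated uncertainty in $(x,\xi_2)$ grows additively: the Hamiltonian flow with reflections is Lipschitz of norm $O(1)$ over a time window of size $\mu^{-1}$ on the scales in play, so a perturbation of initial data by $(\boldgamma_\old,\sigma_\old)$ produces an endpoint perturbation of the same order. After $N$ iterations the union of intermediate vicinities is contained in a tube of scale $(N\boldgamma_\old,N\sigma_\old)$ about $\Psi_{t,\tau}(\supp Q)$, which is exactly the rescaled separation condition \textup{(\ref{15-2-28})} in the statement. Negligibility of each intermediate term follows from non-intersection of adjacent cutoffs via the single-hop application.

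The main obstacle is to verify that the microlocal uncertainty principle is preserved under this enlargement across all three regimes (inner, squeezed inner, and squeezed with reflection) identified in the proof of Proposition~\ref{prop-15-2-9}. This is precisely where the hypothesis $T\le \epsilon \rho$ enters: it forces $N\le \epsilon \mu\rho$, so that the enlarged scales $N\gamma_{j,\old}$ and $N\sigma_\old$ remain much smaller than $\rho$. Consequently the iterated propagation never leaves the stripe $\cX_{\bound,\rho}$, never strays into the transitional zone, and the single-hop estimates, saturated at sizes of order $\hbar^{\frac{1}{2}-\delta}\gamma^{\pm\frac{1}{4}}$, remain valid with room to spare.

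Finally, transitions between the three regimes over the $N$ hops are handled exactly as in the proof of Proposition~\ref{prop-15-2-9}: one passes through the interfaces $x_1\asymp \hbar^{\frac{1}{2}-\delta}$ and $x_1\asymp \hbar^{\frac{2}{3}-\delta}$ via the long-range propagation tools recalled there, and the total number of interface crossings is bounded by a constant multiple of $N$. This does not affect the final scale factor $N$ in the statement, completing the iteration.
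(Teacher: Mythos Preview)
Your proposal is correct and follows exactly the approach the paper takes: the paper simply writes ``Repeating $N\asymp\mu T$ times we arrive to'' before stating the corollary, and what you have written is a careful unpacking of that iteration. Your observation that $T\le\epsilon\rho$ forces $N\le\epsilon\mu\rho$, keeping the enlarged scales below $\rho$ and the dynamics inside $\cX_{\bound,\rho}$, is the right way to see why the iteration closes.
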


Now we can prove our main claim

\begin{proposition}\label{prop-15-2-11}
Estimate \textup{(\ref{15-2-20})} holds with $T_*=\epsilon \mu^{-1}$ and $T^*(\rho)=\rho^{1-\delta'}$.
 \end{proposition}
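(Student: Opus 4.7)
The plan is to reduce \textup{(\ref{15-2-20})} to a geometric separation statement for the Hamiltonian billiard flow and then invoke Corollary~\ref{cor-15-2-10}. After pairing $Q$ against itself under the trace $\Gamma$, it suffices to show that, for every $T_*\le|t|\le T^*(\rho)$, the set $\Psi_{t,\tau}(\supp Q)$ is disjoint from the $(\boldgamma,\sigma)$-vicinity of $\supp Q$, with $\gamma_1,\gamma_2,\sigma$ as in \textup{(\ref{15-2-25})}--\textup{(\ref{15-2-27})} inflated by the factor $N\asymp\mu T$ dictated by Corollary~\ref{cor-15-2-10}. The argument will split along the dichotomy isolated in $\textup{(\ref{15-2-22})}$--$\textup{(\ref{15-2-24})}$.

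In the \emph{non-critical} regime $|\partial_{x_2}(VF^{-1})|\ge\tfrac{1}{2}c\rho^{\delta'}$, the computation \textup{(\ref{15-2-9})} already used in Proposition~\ref{prop-15-2-2}(ii) gives $\{a,\xi_2\}\asymp\rho^{\delta'}$ in the distinguished time direction of \textup{(\ref{15-2-24})}; integrating produces a monotone shift $|\updelta\xi_2|\asymp T\rho^{\delta'}$ which exceeds the enlarged scale $\sigma N$ as soon as $T\gtrsim\rho^{1-2\delta'}$, and in particular throughout $[T_*,T^*(\rho)]$ once $\delta'$ is chosen small. Claim \textup{(\ref{15-2-24})} ensures that during the same time window the trajectory stays in the stripe \textup{(\ref{15-2-23})}, so the pointwise bound $|\{a,\xi_2\}|\asymp\rho^{\delta'}$ is not lost.

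In the \emph{critical} regime $|\partial_{x_2}(VF^{-1})|\le c\rho^{\delta'}$ the flow is a small perturbation of the model billiard \textup{(\ref{15-1-8})}. By \textup{(\ref{15-2-22})} the trajectory remains inside the stripe \textup{(\ref{15-2-23})}, so by \textup{(\ref{15-1-9})} each hop has length $|\updelta x_2|\asymp\mu^{-1}\rho^{\frac{1}{2}}$. The localization $x_1\le\epsilon'\mu^{-1}\rho$ mentioned in the paragraph after Figure~\ref{fig-hop2} guarantees that this increment has a definite sign per hop, so after $N\asymp\mu T$ hops the net $x_2$-shift is $\asymp T\rho^{\frac{1}{2}}$. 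A direct comparison shows this dominates $\gamma_2\cdot N$ provided $\rho\ge\bar\rho$ as in \textup{(\ref{15-2-17})} and $\delta$ is small, producing the required separation.

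Granted the geometric separation, Corollary~\ref{cor-15-2-10} with $N\asymp\mu T$ renders $F_{t\to h^{-1}\tau}\chi_T(t)\Gamma(u\,^t\!Q_y\psi)$ negligible for each $T$ in a dyadic subdivision of $[T_*,T^*(\rho)]$, and summation yields \textup{(\ref{15-2-20})}. The main obstacle I anticipate is the near-tangential ``short/low'' hop geometry $\rho'\ll 1$ of Figure~\ref{fig-hop2}(b), where the per-hop $x_2$-increment degenerates to $\mu^{-1}(\rho\rho')^{\frac{1}{2}}$ and the fixed-sign argument becomes delicate; I would treat this by an additional microlocal partition in $\rho'$, exploiting that the hop time shrinks to $\mu^{-1}\rho'^{\frac{1}{2}}$ by \textup{(\ref{15-1-9})} so that the number of hops in a given window grows and the accumulated shift still overcomes the vicinity $\gamma_2 N$ under \textup{(\ref{15-2-17})}, which in turn makes \textup{(\ref{15-2-21})} with $T^*(\rho)=\rho^{1-\delta'}$ trivially satisfied after the outer $\rho$-integration.
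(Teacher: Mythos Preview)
Your overall architecture---reduce to geometric separation of $\Psi_{t,\tau}(\supp Q)$ from $\supp Q$ and invoke Corollary~\ref{cor-15-2-10}, splitting on the size of $\ell=|\partial_{x_2}(VF^{-1})|$---matches the paper's. But two points of execution differ, and one of them is a genuine gap.

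\emph{The near-gliding zone $\rho'\ll 1$.} The paper does not attack this by a further $\rho'$-partition and hop-counting as you propose. Instead it observes (claim~\textup{(\ref{15-2-34})}) that as soon as $\bar{\xi}_2\le-\epsilon_0$ one has $ih^{-1}[A,x_2]=hD_2-\mu x_1\le-\epsilon$, so both the operator and the boundary problem are $x_2$-microhyperbolic; the standard propagation results of Chapter~\ref{book_new-sect-3} then give the required negligibility directly, without any reference to hops. Your proposed $\rho'$-partition might be salvageable, but it is considerably more delicate than this one-line microhyperbolicity observation, and the per-hop sign control you need degenerates precisely where you need it most.

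\emph{The non-critical branch $\ell\ge\tfrac12 c\rho^{\delta'}$.} Here your argument has a gap. You propose to separate via the $\xi_2$-shift $|\updelta\xi_2|\asymp\ell T\ge\rho^{\delta'}T$. But the $\xi_2$-scale of $Q$ is $\rho$, so separation via $\xi_2$ requires $\rho^{\delta'}T\gtrsim\rho$, i.e.\ $T\gtrsim\rho^{1-\delta'}=T^*(\rho)$; this gives nothing for $T$ strictly inside $[T_*,T^*(\rho)]$. (Your claimed threshold $T\gtrsim\rho^{1-2\delta'}$ would follow only if the relevant $\xi_2$-scale were $\sigma N$ rather than $\rho$, but $Q\in\S_{h,\rho,\gamma}$ fixes the scale at $\rho$.) The paper instead keeps the \emph{hop-based} $x_2$-shift as the separation mechanism throughout; the role of the dichotomy is only to guarantee the trajectory stays in a zone where that mechanism remains valid. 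When $\ell<\rho^{\delta'}$, the $\xi_2$-drift over time $\rho^{1-\delta'}$ is $O(\rho)$, so the trajectory remains in the $\rho$-stripe \textup{(\ref{15-2-23})} and the per-hop $x_2$-shift $\asymp\mu^{-1}\rho^{1/2}$ accumulates to $\asymp T\rho^{1/2}$, which beats $\gamma+\gamma_2 N$ for all $T\ge T_*$ precisely under $\rho\ge\bar\rho$. When $\ell\ge\rho^{\delta'}$, the paper simply selects the time direction in which $\rho$ \emph{increases}; then the trajectory moves deeper into $\cX_\bound$ (never toward the transitional zone), the hop-speed $\asymp\rho^{1/2}$ only grows, and the same $x_2$-shift argument runs for the full window $[T_*,\rho^{1-\delta'}]$. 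The uncertainty check $\rho\ell\ge h^{1-\delta}$ is immediate in both branches.

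So: replace your $\xi_2$-shift separation in the non-critical branch by the time-direction choice plus the hop-based $x_2$-shift, and replace your $\rho'$-partition by the microhyperbolicity observation~\textup{(\ref{15-2-34})}; with those two changes your proof coincides with the paper's.
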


\begin{proof} We consider a pseudo-differential partition of unity. Let $Q'=Q'(x_2,hD_2)\in \S_{h,\rho,\gamma}$ with symbol supported in 
$\epsilon (\gamma, \rho)$-vicinity of $(\bar{x}_2,\bar{\xi_2})$ 
and $\gamma =\mu^{-1}\rho^{\frac{1}{2}}$ satisfy \textup{(\ref{15-2-17})}. 

\medskip\noindent
(i) Consider first $\bar{\xi}_2\le -\epsilon_0$.  

Note first that 
\begin{claim}\label{15-2-34}
As $\xi_2\le -\epsilon $ both operator and boundary value problem are $\xi_2$-microhyperbolic as $x_1\ge 0$.
\end{claim}

Really, $ih^{-1}[A,x_2]= hD_2-\mu x_1\le -\epsilon$.

\medskip

Then due to results of chapter~\ref{book_new-sect-3} \cite{futurebook} $U\,^t\!Q_y$ is negligible as 
$\pm t>0$ and  $\pm (x_2- \bar{x}_2+\epsilon\gamma ) <\pm \epsilon_0  t$, which implies (\ref{15-2-20}).

\medskip\noindent
(ii) Consider now $\bar{\xi}_2\ge -\epsilon_0$.  Note that $\Psi_t$ after exactly one turn moves $x_2$ to the left by at least $\epsilon \rho^{\frac{1}{2}}$; we assume that $\rho \ge C_0\mu^{-1}$ to counter possible perturbations. Further, as $\rho\ge (\mu h)^{\frac{2}{3}-\delta}$ it exceeds $\gamma_2$ expansion due to uncertainty principle. This justifies conclusion of the proposition as $T^*=\epsilon_0 \rho$ as propagation speed with respect to $\xi_2$, $x_2$ does not exceed $C_0$.

\medskip\noindent
(iii) To increase $T^*(\rho)$ notice that we can take $T^*(\rho)=\rho^{1-\delta'}$ unless $\ell\ge \rho^{\delta'}$ where 
\begin{equation}
\ell \Def  \epsilon_0|\nabla_{\partial X} VF^{-1}|
\label{15-2-35}
\end{equation}
as the speed of propagation with respect to $\rho$ does not exceed $\ell$ and dynamics remains in the same $(\ell,\rho)$-element with respect to $(x_2,\xi_2)$. 

Meanwhile as $\ell \ge \rho^{\delta'}$ we can take time direction in which $\rho$ increases and then take $T^*(\rho)=\rho^{1-\delta'}$ anyway.

Uncertainty principle $\rho\ell\ge h^{1-\delta}$ is obviously satisfied in both cases.
\end{proof}

\begin{remark}\label{rem-15-2-12}
Surely one can improve arguments of (iii) and we will do it studying strong magnetic field. However at this moment this leads to no improvements in the remainder estimate.
\end{remark}

Then immediately we arrive to

\begin{corollary}\label{cor-15-2-13}
Contribution of zone $\cX_\bound$ defined by \textup{(\ref{15-1-17})}, \textup{(\ref{15-2-17})}  to the remainder does not exceed $C\mu^{-1}h^{-1}$.
\end{corollary}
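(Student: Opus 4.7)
The plan is to integrate the per-strip microlocal estimates of Proposition~\ref{prop-15-2-11} over the dyadic scales in $\xi_2$, exactly as sketched in the paragraph surrounding (\ref{15-2-16})--(\ref{15-2-21}).

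First, I would fix a pseudodifferential partition of unity $I=\sum_{\rho,\bar x_2}Q_{\rho,\bar x_2}$ in $\cX_\bound$, where $\rho$ runs over dyadic scales in $[\bar\rho,c_0]$ with $\bar\rho$ from (\ref{15-2-17}), and for each $\rho$ the index $\bar x_2$ enumerates $x_2$-intervals of length $\gamma\Def\mu^{-1}\rho^{\frac{1}{2}}$, the typical hop length at scale $\rho$. Each symbol $q_{\rho,\bar x_2}$ belongs to $\S_{h,\rho,\gamma}$ and is supported in $\cX_{\bound,\rho}\cap\{|x_2-\bar x_2|\le\gamma\}$; the microlocal uncertainty principle $\rho\gamma=\mu^{-1}\rho^{\frac{3}{2}}\ge h^{1-\delta}$ is exactly what the lower bound (\ref{15-2-17}) on $\bar\rho$ secures.

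Second, for each element I apply the Tauberian formula with time $T_*=\epsilon\mu^{-1}$. Bound (\ref{15-2-19}) controls the short-time FIO by $C\mu^{-1}h^{-1}\rho\gamma$, and Proposition~\ref{prop-15-2-11} shows that the intermediate-time piece (\ref{15-2-20}) between $T_*$ and $T^*(\rho)=\rho^{1-\delta'}$ is negligible. Assembling the resulting per-element Tauberian estimates over all elements of the partition of unity and summing over the dyadic strips then reproduces
\[
\R^\T_{\cX_\bound}\le C\mu^{-1}h^{-1}\int_{\bar\rho}^{c_0}T^*(\rho)^{-1}\,d\rho\,dx_2 \le C\mu^{-1}h^{-1},
\]
uniformly in $h,\mu$, since $T^*(\rho)^{-1}=\rho^{-(1-\delta')}$ is integrable down to $\rho=0$ and the $x_2$-integration is over the compact support of $\psi'$.

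The substance of the argument has already been done in Propositions~\ref{prop-15-2-9} and \ref{prop-15-2-11}; the delicate step there is the long-time propagation bound (\ref{15-2-20}) in the sub-zone where $\bar\xi_2$ is near $0$ and hops become almost tangent to $\partial X$, handled by steering the time direction toward increasing $\rho$ when $\ell\ge\rho^{\delta'}$ and by confining propagation to the slab (\ref{15-2-23}) otherwise. Granting these two propositions, the present corollary is a partition-of-unity summation and requires no further input.
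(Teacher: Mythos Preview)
Your argument is correct and follows exactly the paper's approach: the paper's own proof is the three-line computation \textup{(\ref{15-2-36})}, which records the per-strip contribution $C\mu^{-1}h^{-1}\rho\times T^{*\,-1}\asymp C\mu^{-1}h^{-1}\rho^{\delta'}$ and then sums dyadically (i.e.\ integrates against $\rho^{-1}\,d\rho$) to get $O(\mu^{-1}h^{-1})$. Your version simply makes the partition-of-unity bookkeeping and the $x_2$-summation explicit; after collapsing the $\gamma$-sum over $x_2$ your displayed integral $\int T^*(\rho)^{-1}\,d\rho$ coincides with the paper's $\int\rho^{\delta'}\,\rho^{-1}\,d\rho$.
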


\begin{proof}
Contribution of $\cX_{\bound,\rho}$ to the remainder does not exceed
\begin{equation}
C\mu^{-1}\rho h^{-1}\times T^{*,-1}\asymp C\mu^{-1}h^{-1}\rho^{\delta'}
\label{15-2-36}
\end{equation}
and summation over $\rho\ge \bar{\rho}$ (i.e. integration with respect to $\rho^{-1}d\rho$) results in $O(\mu^{-1}h^{-1})$.
\end{proof}

\subsection{Analysis in the transitional zone}
\label{sect-15-2-2-3}

Consider now \emph{transitional zone\/} defined by (\ref{15-1-14}). Obviously we get a rather rough estimate

\begin{proposition}\label{prop-15-2-14}
Contribution of the transitional zone \textup{(\ref{15-1-14})} to the remainder does not exceed 
\begin{equation}
C\bar{\rho}h^{-1}\asymp C\mu^{-1}h^{-1}+ C(\mu h)^{\frac{2}{3}}h^{-1-\delta}.
\label{15-2-37}
\end{equation}
In particular, as $\mu \le h^{-\frac{2}{5}+\delta}$ it does not exceed $C\mu^{-1}h^{-1}$.
\end{proposition}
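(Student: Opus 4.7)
The plan is to reduce this to a direct application of the Tauberian estimate at the short time $T = T_* = \epsilon\mu^{-1}$, using the basic bound (\ref{15-2-19}) with $\rho = \bar{\rho}$ and $\gamma = 1$, and to observe that no improvement via longer propagation is needed (or available) in the transitional layer itself.

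First I would insert a pseudo-differential partition of unity in $\xi_2$ with scale $\bar{\rho}$, localizing to the strip $\cX_\trans$. Because $\bar{\rho}\ge C_0\mu^{-1}$, the uncertainty principle $\bar{\rho}\cdot 1\ge h^{1-\delta}$ is comfortably satisfied (since the conjugate variable $x_2$ is only localized at scale $1$), so such cutoffs $Q=Q(x_2,hD_2)\in \S_{h,\bar{\rho},1}$ are admissible. Since on the energy shell $\xi_2\approx \mu x_1+\text{lower order}$, the $\xi_2$-strip of width $\bar{\rho}$ pairs with an $x_1$-strip of width $\mu^{-1}\bar{\rho}$, which is controlled by the spatial cutoff $\psi=\psi''(\mu\bar{\rho}^{-1}x_1)$; this is exactly the matching referred to in Remark~\ref{rem-15-2-8}.

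Next I would apply estimate (\ref{15-2-19}) directly with $\rho=\bar{\rho}$, $\gamma=1$, $T=T_*=\epsilon\mu^{-1}$, obtaining
\begin{equation*}
|F_{t\to h^{-1}\tau}\bar{\chi}_{T_*}(t)\Gamma(u\,{}^t\!Q\psi)| \le C\mu^{-1}h^{-1}\bar{\rho}.
\end{equation*}
The standard Tauberian theorem converts this into a remainder bound of size $(\mu^{-1}h^{-1}\bar{\rho})/T_* = C\bar{\rho}h^{-1}$ after summing the partition over $x_2$ (an $O(1)$ range). This yields the first estimate in (\ref{15-2-37}). Substituting the definition (\ref{15-2-17}) of $\bar{\rho}=\max(C_0\mu^{-1},\mu^{2/3}h^{2/3-\delta})$ gives the explicit form $C\mu^{-1}h^{-1}+C(\mu h)^{2/3}h^{-1-\delta}$.

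For the last assertion, the condition $\mu\le h^{-2/5+\delta}$ is precisely what makes $\mu^{2/3}h^{2/3-\delta}\le \mu^{-1}$, so $\bar{\rho}=C_0\mu^{-1}$ and the bound collapses to $C\mu^{-1}h^{-1}$. The only subtle point is justifying (\ref{15-2-19}) in the transitional strip; but since that estimate was derived without any non-degeneracy hypothesis on $W_0$, it applies here verbatim — the whole point of the proposition is that we are deliberately not trying to use longer propagation times $T^*(\rho)$ in $\cX_\trans$, where the hop-versus-cyclotron separation breaks down, so there is no real obstacle beyond bookkeeping.
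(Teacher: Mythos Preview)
Your argument is essentially the one the paper has in mind: the paper does not give a proof but introduces the statement with ``Obviously we get a rather rough estimate'', and your reduction to the phase--space volume bound \textup{(\ref{15-2-19})} at $T=T_*=\epsilon\mu^{-1}$ followed by the Tauberian theorem is exactly that rough estimate.

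Two small points worth tightening. First, your spatial cutoff $\psi''(\mu\bar{\rho}^{-1}x_1)$ at scale $\mu^{-1}\bar{\rho}$ is too narrow: in $\cX_\trans$ the trajectories (nearly tangent circles) still sweep $x_1$ up to $\asymp\mu^{-1}$, not $\mu^{-1}\bar{\rho}$; the factor $\mu^{-1}$ in \textup{(\ref{15-2-19})} comes from the Liouville measure $dx_1\,d\xi_1:da\asymp\mu^{-1}$ on each cyclotron circle, not from a thin $x_1$-strip (this is what Remark~\ref{rem-15-2-8} actually says). Second, the constraint $\rho\ge C\gamma$ recorded under \textup{(\ref{15-2-19})} means you cannot literally take $\gamma=1$; take instead $\gamma\asymp\bar{\rho}$ and sum $O(\bar{\rho}^{-1})$ elements in $x_2$, which gives the same total $C\bar{\rho}h^{-1}$. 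Neither point affects the conclusion.
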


Now we want to improve this estimate under some non-degeneracy condition invoking $VF^{-1}|_{\partial X}$.

Let us introduce $\ell$-admissible partition with $\ell$ defined by (\ref{15-2-35}).

\begin{proposition}\label{prop-15-2-15} On $\ell$-element with
\begin{equation}
\ell \ge \bar{\ell}\Def \max\bigl(C\mu^{-1}, \mu^{\frac{1}{2}}h^{\frac{1}{2}-\delta}\bigl)
\label{15-2-38}
\end{equation}
expressions \textup{(\ref{15-2-10})} and \textup{(\ref{15-2-11})} are negligible as 
$T_*\le T\le T^*$ with $T_*=\epsilon \mu^{-1}$ and 
$T^*(\ell)=\ell^{1-\delta'}$.
\end{proposition}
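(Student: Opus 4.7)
The strategy mirrors Proposition~\ref{prop-15-2-11} but with the roles of $x_2$ and $\xi_2$ interchanged: instead of partitioning the transitional zone in $\xi_2$ at scale $\rho$, I partition in $x_2$ at scale $\ell$, so that on each element $W_{0x_2}$ has definite sign and magnitude $\asymp \ell$. By Proposition~\ref{prop-15-2-2}(ii), the propagation speed with respect to $\xi_2$ is then of magnitude $\ell$ with sign equal to $\sgn W_{0x_2}$, which is the microhyperbolicity I will exploit.

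First, I introduce an $\ell$-admissible pseudo-differential cutoff $Q = Q(x_2,hD_2) \in \S_{h,\ell,\bar\rho}$ supported in an $(\ell,\bar\rho)$-vicinity of a reference point in $\cX_\trans$; the $\xi_2$-scale is $\bar\rho$ defined by (\ref{15-2-17}). The microlocal uncertainty principle requires $\ell \cdot \bar\rho \gtrsim h^{1-\delta}$; with $\bar\rho \ge C\mu^{-1}$ and $\ell \ge \mu^{1/2} h^{1/2-\delta}$ one checks that (\ref{15-2-38}) is precisely what is needed for this to hold in both the regimes $\bar\rho \asymp \mu^{-1}$ and $\bar\rho \asymp (\mu h)^{2/3-\delta}$.

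Next, I select the time direction adapted to $\sgn W_{0x_2}$, using Remark~\ref{rem-15-1-2} and claim~(\ref{15-1-22}) to guarantee that $\xi_2$ drifts monotonically at rate $\asymp \ell$ both during hops and during any transitions to circular motion, while the $x_2$-displacement stays $O(|t|)$. For $T$ in the range $[T_*,T^*]$ with $T^*(\ell) = \ell^{1-\delta'}$, the accumulated shift in $\xi_2$ is $\asymp \ell^{2-\delta'}$, which exceeds $\bar\rho$ modulo the microlocal scale in the regime (\ref{15-2-38}); simultaneously $T^* \le \ell$ keeps the $x_2$-displacement inside the $\ell$-element so that $W_{0x_2}$ never changes sign along the trajectory. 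This is enough to put $\Psi_{t,\tau}(\supp Q)$ outside a $(\ell,\bar\rho)$-vicinity of $\supp Q$ itself for all such $t$, and standard propagation of singularities (as in sections~\ref{book_new-sect-2-4} and~\ref{book_new-sect-3-5} \cite{futurebook}, together with the squeezed-with-reflection propagation of Proposition~\ref{prop-15-2-9} and its iteration in Corollary~\ref{cor-15-2-10}) yields the desired negligibility.

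The main obstacle is that within $\cX_\trans$ the classical dynamics is genuinely mixed: trajectories may detach from the boundary into pure circular-plus-drift motion or collide with it (Example~\ref{ex-15-1-3}), and the hop speed $v(\eta)$ degenerates as $\eta \to 1$. To handle this uniformly, I rescale $x \mapsto \mu x$ so that $\hbar = \mu h$ becomes the effective Planck constant, and then apply the tiered scaling argument of Proposition~\ref{prop-15-2-9}(a)--(c) locally in $x_1$ so that both the genuinely inner pieces and the pieces interacting with the boundary fall into regimes where the long-range propagation results of Chapter~\ref{book_new-sect-2} \cite{futurebook} apply. The fact that we only iterate $N \asymp \mu T^* = \mu \ell^{1-\delta'}$ times keeps the microlocal losses under $h^{-\delta}$, which is absorbed into $\delta'$.
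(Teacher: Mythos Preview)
Your overall strategy---partition in $x_2$ at scale $\ell$ and exploit the $\xi_2$-drift of magnitude $\asymp\ell$ from Proposition~\ref{prop-15-2-2}(ii)---is the same as the paper's, but two steps in your execution are incorrect.

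\textbf{Origin of the threshold~(\ref{15-2-38}).} You derive it from the admissibility condition $\ell\bar\rho\gtrsim h^{1-\delta}$ for the cutoff $Q$, but this is not where it comes from (and that inequality does \emph{not} yield $\ell\ge\mu^{1/2}h^{1/2-\delta}$). The decisive constraint is observability of the $\xi_2$-shift already at the lower endpoint $T=T_*=\epsilon\mu^{-1}$: the shift there is $\asymp\ell T_*\asymp\ell\mu^{-1}$, and for it to be microlocally detectable with $x_2$-localization at scale $\ell$ one needs $(\ell T_*)\cdot\ell\ge h^{1-\delta}$, i.e.\ $\ell^2\ge\mu h^{1-\delta}$---which is exactly~(\ref{15-2-38}). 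Your comparison of the shift with $\bar\rho$ only works at $T=T^*$; at $T=T_*$ one has $\ell\mu^{-1}\ll\bar\rho$ in general, so fixing the $\xi_2$-scale of $Q$ at $\bar\rho$ is too coarse to see the shift. (The paper does check $\bar\rho\,\bar\ell\ge h^{1-\delta}$, but only as a subsidiary condition at the end.)

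\textbf{The upper endpoint $T^*$.} You write ``$T^*\le\ell$ keeps the $x_2$-displacement inside the $\ell$-element'' using the trivial $O(|t|)$ bound on the $x_2$-speed. But for $0<\ell<1$ and $\delta'>0$ one has $T^*=\ell^{1-\delta'}>\ell$, so the inequality goes the wrong way and the trajectory would leave the element. The paper closes this gap by invoking Proposition~\ref{prop-15-2-17}: in $\cX_\trans$ the average $x_2$-speed is only $O(\bar\rho^{1/2})$, so the $x_2$-displacement is $\le C\mu^{-1}+C\bar\rho^{1/2}T^*$, and since $\rho\le\bar\rho$ in the transitional zone one gets $\bar\rho^{1/2}\ell^{1-\delta'}\le\epsilon\ell$. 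Without that refined speed bound you cannot justify $T^*=\ell^{1-\delta'}$.
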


\begin{proof}
As in subsection \ref{sect-15-2-2-1} consider shift with respect to $\xi_2$ and it will be $\asymp\ell T$. So, uncertainty principle requests $\ell T\times \ell \ge h^{1-\delta}$ which is exactly our restriction to $\ell$ as 
$T=\epsilon \mu^{-1}$. Meanwhile $x_1\le C_0\mu^{-1} + C\mu^{-1}\ell T$ and it remains less than $C\mu^{-1}$ as $T\le 1$. On the other hand shift with respect to $x_2$ does not exceed $C\mu^{-1} + C\rho^{\frac{1}{2}}T$ in virtue of proposition \ref{prop-15-2-17} below and it remains less than $\epsilon \ell$ as $T=\epsilon \ell ^{1-\delta'}$ unless 
$\rho \ge \ell^{\delta'}$ which is impossible in the transitional zone.

We surely need to keep $\rho \ell \ge h^{1-\delta}$ but one can check easily that $\bar{\rho}\bar{\ell}\ge \mu^{-\frac{1}{2}}h^{\frac{1}{2}-\delta}\ge h^{1-\delta}$.
\end{proof}

\begin{corollary}\label{cor-15-2-16}
(i) Contribution of the part of the transitional zone \textup{(\ref{15-1-14})} 
where \textup{(\ref{15-2-38})} is fulfilled to the remainder does not exceed $C\mu^{-1}h^{-1}$;

\medskip\noindent
(ii) Contribution of the  transitional zone \textup{(\ref{15-1-14})} 
to the remainder does not exceed
\begin{multline}
C\mu^{-1}h^{-1} + \\
C(\mu h)^{\frac{2}{3}}h^{-1-\delta}
\mes_{\partial X} \bigl( \bigl\{x\in \partial X, \ 
|\nabla_{\partial X} V|\le C(\mu h)^{\frac{1}{2}-\delta}\bigr\}\bigr) h^{-1-\delta},
\label{15-2-39}
\end{multline}
(iii) In particular, it does not exceed
\begin{equation}
C\mu^{-1}h^{-1} + C(\mu h)^{\frac{7}{6}}h^{-1-\delta},
\label{15-2-40}
\end{equation}
provided\begin{phantomequation}\label{15-2-41}\end{phantomequation}
\begin{equation}
|\nabla_{\partial X}VF^{-1}|\le \epsilon \implies \pm 
\nabla^2_{\partial X}VF^{-1} \ge \epsilon
\tag*{$\textup{(\ref*{15-2-41})}^\pm$}\label{15-2-41-pm}
\end{equation}
and expression \textup{(\ref{15-2-41})} does not exceed $C\mu^{-1}h^{-1}$ as
$\mu \le h^{-\frac{7}{13}+\delta}$.
\end{corollary}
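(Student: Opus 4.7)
The plan is to combine Proposition~\ref{prop-15-2-15} on the ``regular'' part of $\cX_\trans$ with the rough bound of Proposition~\ref{prop-15-2-14} on the ``degenerate'' complement, using an $\ell$-admissible partition in $x_2$ with $\ell(x_2)=\epsilon_0|\nabla_{\partial X}(VF^{-1})|$. First I would split the transitional zone (\ref{15-1-14}) along $x_2$ into the regular part $\{\ell \ge \bar\ell\}$ and its complement $\{\ell < \bar\ell\}$, with $\bar\ell$ from (\ref{15-2-38}).

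For (i), on each $\ell$-element of the regular part Proposition~\ref{prop-15-2-15} lets me replace $T_* = \epsilon\mu^{-1}$ by $T^*(\ell) = \ell^{1-\delta'}$. By the standard phase-space bookkeeping of (\ref{15-2-19}) and (\ref{15-2-36}) (the factor $\mu^{-1}$ from the $x_1$-width $\mu^{-1}\bar\rho$ of the transitional zone, the factor $\ell$ from the $x_2$-length of the element), the Tauberian contribution of a single element is
\begin{equation*}
C\mu^{-1}h^{-1}\bar\rho\,\ell\cdot T^*(\ell)^{-1}=C\mu^{-1}h^{-1}\bar\rho\,\ell^{\delta'},
\end{equation*}
and since the partition satisfies $\sum \ell \le 1$, summation yields at most $C\mu^{-1}h^{-1}\bar\rho\,\bar\ell^{\delta'-1}$. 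The crucial step is then to verify, in each of the two cases defining $\bar\rho$ and $\bar\ell$, that $\bar\rho\le\bar\ell^{1-\delta'}$: when $\bar\rho=\bar\ell=C_0\mu^{-1}$ this is immediate, while when $\bar\rho\asymp\mu^{2/3}h^{2/3-\delta}$ and $\bar\ell\asymp\mu^{1/2}h^{1/2-\delta}$ the ratio $\bar\rho/\bar\ell^{1-\delta'}\asymp (\mu h)^{1/6}$ is $\le 1$ by the weak-field assumption $\mu h\le 1$. This arithmetic (together with the companion check $\bar\rho\bar\ell\ge h^{1-\delta}$ already supplied at the end of Proposition~\ref{prop-15-2-15}) is what I expect to be the main obstacle; once in place, it immediately delivers (i).

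For (ii), on $\{\ell<\bar\ell\}$ I invoke the rough per-unit-$x_2$ bound $C\bar\rho h^{-1}$ of Proposition~\ref{prop-15-2-14} and multiply by $\mes_{\partial X}\bigl(\{|\nabla_{\partial X}(VF^{-1})|\le\bar\ell\}\bigr)$. Since $\bar\ell\le C(\mu h)^{\frac12-\delta}$ and $\bar\rho h^{-1}\le C(\mu h)^{\frac23}h^{-1-\delta}$ in the decisive case (the alternative $\bar\rho=C_0\mu^{-1}$ being absorbed into the $C\mu^{-1}h^{-1}$ term), this produces (\ref{15-2-39}). For (iii), the hypothesis \ref{15-2-41-pm} forces the non-degenerate Hessian behaviour $\mes_{\partial X}\bigl(\{|\nabla_{\partial X}(VF^{-1})|\le s\}\bigr)\le Cs$ by Morse-type counting; inserting $s=C(\mu h)^{\frac12-\delta}$ into (\ref{15-2-39}) yields the second term of (\ref{15-2-40}), and the elementary inequality $(\mu h)^{7/6}h^{-1-\delta}\le C\mu^{-1}h^{-1}$ rearranges to $\mu^{13/6}\le h^{\delta-7/6}$, i.e. $\mu\le h^{-7/13+\delta'}$, as claimed.
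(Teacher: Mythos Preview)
Your proposal is correct and follows precisely the route the paper has in mind: the corollary is stated immediately after Proposition~\ref{prop-15-2-15} with no explicit proof, because it is obtained exactly as you describe---apply Proposition~\ref{prop-15-2-15} on the $\ell$-admissible elements with $\ell\ge\bar\ell$, apply Proposition~\ref{prop-15-2-14} on the complement, and read off (iii) from the non-degenerate critical point bound under \ref{15-2-41-pm}. One small simplification: for the ``crucial step'' in (i) you need only observe $\bar\rho\le C\bar\ell$ in all regimes (which you essentially check), and then since $\bar\ell\le1$ and $0<1-\delta'<1$ one has $\bar\ell^{1-\delta'}\ge\bar\ell\ge\bar\rho/C$ automatically, so the case analysis can be shortened.
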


While either sign in \ref{15-2-41-pm} assures remainder estimate (\ref{15-2-40}), in the future we will need to distinguish between different signs in this condition as dynamics will be different (see example~\ref{ex-15-1-4}).

We finish this subsubsection by 

\begin{proposition}\label{prop-15-2-17}
An ``average'' propagation speed with respect to $x_2$ in $\cX_\trans$ does not exceed $C\bar{\rho}^{\frac{1}{2}}$.
\end{proposition}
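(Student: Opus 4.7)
The plan is to reduce the claim to the model operator analysis in subsection~\ref{sect-15-1-2} and control the error introduced by a genuine (non-model) operator by perturbation. Working in the precanonical form \textup{(\ref{15-2-5})}, in the boundary/transitional zone after rescaling $x\mapsto \mu x$ the operator differs from the model $\bar A$ of \textup{(\ref{15-1-7})} by terms whose Hamiltonian contribution to $\dot x_2$ is bounded by $C\mu^{-1}$. Consequently the ``average'' speed in $x_2$ over one hop (or one cyclotron revolution) differs from the model value by $O(\mu^{-1})=O(\bar\rho)$.

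For the model operator the computation in \textup{(\ref{15-1-9})}--\textup{(\ref{15-1-11})} gives the average hop speed $\updelta x_2/\updelta t = -2av(\eta)$ with $v(\eta)=(1-\eta^{2})^{1/2}/(\pi-\arccos\eta)$ and $\eta=\xi_2 W_0^{-1/2}$. In the transitional zone \textup{(\ref{15-1-14})} we have $|\eta-1|\le C\bar\rho$, so on the hop side ($\eta\le 1$) Taylor expansion of $v$ near $\eta=1$ yields $(1-\eta^{2})^{1/2}\le C(1-\eta)_+^{1/2}\le C\bar\rho^{1/2}$ and $\pi-\arccos\eta\ge \pi/2$, whence $v(\eta)\le C\bar\rho^{1/2}$ and the average speed is at most $C\bar\rho^{1/2}$.

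On the inner side of the transitional zone ($\eta\ge 1$) the trajectory is strictly circular (or only grazing touches the boundary), so the oscillatory cyclotron part averages to zero over one revolution and the net displacement in $x_2$ comes solely from the magnetic drift \ref{15-1-19-*}, whose speed is $O(\mu^{-1})=O(\bar\rho)\le C\bar\rho^{1/2}$. Trajectories exactly at $\eta=1\pm O(\bar\rho)$ that switch between the two regimes are handled by noting that even a single hop in this subzone has length $\asymp\mu^{-1}\bar\rho^{1/2}$ and duration $\asymp \mu^{-1}$ by \textup{(\ref{15-1-9})}, so its contribution to the average speed respects the same bound $C\bar\rho^{1/2}$.

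Combining the three cases with the $O(\bar\rho)$ perturbation correction from the model reduction yields the required estimate. The main technical point is the first one: one must verify that the lower-order terms in \textup{(\ref{15-2-5})} (involving $x_1^{2}b(x)$ and $g^{22}-F$, both vanishing at $x_1=0$) only produce $O(\mu^{-1})$ changes to $\dot x_2$ on trajectories of length $O(\mu^{-1})$; this is where the precanonical form $\textup{(\ref*{15-2-3})}_{1-3}$ pays off, since it suppresses the potentially dangerous terms to one higher order in $x_1$.
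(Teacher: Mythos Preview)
Your classical computation is correct and captures the dynamical reason the bound holds: in $\cX_\trans$ one has $|\eta-1|\le C\bar\rho$, so the hop speed $2av(\eta)$ is $O(\bar\rho^{1/2})$, while on the inner side the drift is $O(\mu^{-1})\le C\bar\rho$. That is the right picture.

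However, the statement is not about classical billiards but about semiclassical \emph{propagation of singularities} of $U$; this is precisely how it is invoked in the proof of Proposition~\ref{prop-15-2-15}, where ``shift with respect to $x_2$'' refers to where the wavefront can reach. Your argument never leaves the classical level: you do not explain how the Hamiltonian trajectory bound is converted into a microlocal localization statement for $U$. In the interior this passage is routine, but here trajectories are nearly tangent to $\partial X$ and graze it repeatedly, so one cannot simply quote standard propagation results; the boxes one would like to use (of size $\asymp\bar\rho^{1/2}$ in $x_2$) must be shown to be compatible with the uncertainty principle and with the reflection geometry near tangency. Your final paragraph asserts that the precanonical form controls the perturbation of $\dot x_2$, which is true classically, but this does not by itself yield the semiclassical containment.

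The paper takes a different route: it does not recompute $v(\eta)$ at all. Instead it reruns the scaling machinery of Proposition~\ref{prop-15-2-9} and Corollary~\ref{cor-15-2-10} (inner propagation, squeezed inner propagation, squeezed propagation with reflection) directly in $\cX_\trans$. The only new wrinkle is that after the rescaling $x_1\mapsto x_1\gamma^{-1}$, $x_2\mapsto x_2\gamma^{-1/2}$ with $\gamma=\bar\rho$, trajectories no longer meet $\partial X$ transversally; the paper handles this by replacing the $\hbar'^{\,1/2-\delta}$-vicinity used in the reflection step by an $\epsilon$-vicinity and appealing to the propagation results of section~\ref{book_new-sect-3-4} in~\cite{futurebook}, which cover the tangent/gliding regime. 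The bound $C\bar\rho^{1/2}$ then emerges automatically from the $\gamma^{1/2}$-scaling in $x_2$, without any explicit analysis of $v(\eta)$.

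In short: your argument supplies the classical input but skips the microlocal upgrade, which in this near-tangent geometry is the actual content of the proposition. To make your approach complete you would need to wrap the trajectory estimate in a propagation-of-singularities argument robust enough to survive grazing reflections --- essentially rebuilding what Proposition~\ref{prop-15-2-9} already provides.
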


\begin{proof}
Proof coincides with one of proposition~\ref{prop-15-2-9} and corollary~\ref{cor-15-2-10}.

Surely trajectories in the squeezed reflection zone are not transversal to boundary anymore after rescaling but we do not need it as instead of $\hbar^{\prime\,\frac{1}{2}-\delta}$-vicinity we can take $\epsilon$-vicinity and appeal f.e. to results of section~\ref{book_new-sect-3-4} \cite{futurebook}.
\begin{figure}[h]
\centering
{\includegraphics[width=0.6\linewidth]{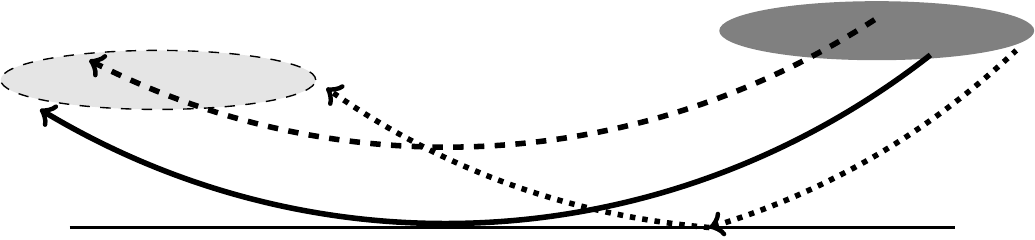}}
\caption{\label{fig-prop2}Squeezed propagation with tangency, reflections or miss the boundary}
\end{figure}
\end{proof}

\subsection{Analysis in the inner zone}
\label{sect-15-2-2-4}

Consider now \emph{inner zone\/} defined by (\ref{15-1-16}). As we know, in this zone we must assume some non-degeneracy condition to get remainder estimate better than $O(\mu h^{-1})$. In Chapter~\ref{book_new-sect-13} \cite{futurebook} we used few of such conditions rendering different remainder estimates; all of them boiled down to the remainder estimate $O(\mu^{-1}h^{-1})$ under different restrictions to $\mu$. 

Consider first propagation of singularities.

\begin{proposition}\label{prop-15-2-18}
Let  $Q\in \S_{h,\rho_1,\rho_2,\gamma_1,\gamma_2}$ and let $\Omega$ be $(\gamma_1,\gamma_2,\rho_1,\rho_2)$-vicinity (with respect to $(x,\xi)$) of its symbol. Then long as trajectories started from $\Omega$ remain in $\cX_\inn$ defined by \textup{(\ref{15-1-16})}, \textup{(\ref{15-2-17})} 
\begin{equation}
U \,^t\!Q_y \equiv U^0 \,^t\!Q_y
\label{15-2-42}
\end{equation}
where $U^0$ is a propagator for operator $A$ considered in $X^0$ the fixed vicinity of $\bar{X}$ (i.e. in the whole plane effectively).
\end{proposition}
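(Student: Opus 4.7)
The plan is to compare the boundary-value propagator $U$ with the whole-plane propagator $U^0$ on the piece of wavefront generated by $Q$. Fix a time cutoff $\chi_T(t)$ with $T$ small enough that, by the hypothesis of the proposition, the billiard Hamiltonian flow $\Psi_t$ keeps the microsupport $\Omega$ of $Q$ inside $\cX_\inn$ for all $|t|\le T$. Because trajectories in $\cX_\inn$ never hit $\partial X$ (this is the content of the inner-zone definition, compare case (a) of the classical picture in figure~\ref{fig-hop}), on $\Omega$ the billiard flow $\Psi_t$ coincides with the unobstructed whole-plane Hamiltonian flow $\Psi^0_t$ generated by the symbol of $A$.

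Next I would apply microlocal propagation of singularities for $A$ (theorems~\ref{book_new-thm-2-1-2} and~\ref{book_new-thm-3-1-2} together with the anisotropic $(\gamma_j,\rho_j)$-scale refinements from sections~\ref{book_new-sect-2-4} and~\ref{book_new-sect-3-5}~\cite{futurebook}) separately to $U\,^t\!Q_y$ and to $U^0\,^t\!Q_y$. In both cases the propagated wavefront for $|t|\le T$ is contained in an $(\gamma_1,\gamma_2,\rho_1,\rho_2)$-neighborhood of the flowed-out set. By the inner-zone hypothesis these neighborhoods remain disjoint from $\partial X$ in phase space with a margin of order $\bar\rho$, so $\Psi_t$ and $\Psi^0_t$ agree throughout, and the two microlocal portraits coincide modulo negligible.

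To upgrade this coincidence of wavefronts into the operator identity \textup{(\ref*{15-2-42})} I would use Duhamel. The difference $(U-U^0)\,^t\!Q_y$ satisfies $(hD_t-A)(U-U^0)\,^t\!Q_y=0$ with vanishing Cauchy data; its only possible source of singularities is the boundary discrepancy of $U^0\,^t\!Q_y$ on $\partial X$ (respectively the mismatch of the normal derivative in the Neumann case). Propagation of singularities for the boundary value problem then forces every singularity of $(U-U^0)\,^t\!Q_y$ to issue from $\partial X$ and travel along a billiard characteristic. Tracing such a characteristic backwards from a would-be singular point in $\Psi_t(\Omega)\subset\cX_\inn$, it would have to reach $\partial X$ within time $|t|\le T$, contradicting the inner-zone assumption (which is equally valid backwards since $\cX_\inn$ is invariant under time reversal for the relevant time scale). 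Hence $(U-U^0)\,^t\!Q_y$ is negligible on the region of interest, which is precisely \textup{(\ref*{15-2-42})}.

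The main obstacle is not geometric but is a technical bookkeeping one: ensuring that the anisotropic propagation estimates of \cite{futurebook} apply uniformly in $\mu,h$ at the precise scales $(\gamma_j,\rho_j)$ imposed on $Q$, and that the unavoidable fattening of microsupports during propagation still leaves a margin of order $\bar\rho$ between $\Psi_t(\Omega)$ and $\partial X$ throughout $|t|\le T$. Once this is checked, the geometric content -- no trajectory ever reaches the boundary -- immediately kills every boundary contribution and reduces the boundary problem to the whole-plane problem on the relevant microlocal piece.
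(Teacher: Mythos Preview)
Your outline is correct in spirit and matches the paper's strategy: show that the microlocalized propagation never feels the boundary, so the boundary-value and whole-plane propagators agree on $\,^t\!Q_y$. Your Duhamel packaging (the difference is driven by the boundary trace of $U^0\,^t\!Q_y$, which is negligible because the propagated microsupport stays away from $\partial X$) is a perfectly legitimate way to phrase the argument.

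Where your write-up stops short is exactly the point you flag as ``the main obstacle'': checking that the anisotropic propagation estimates hold at the scales dictated by $\bar\rho$ from \textup{(\ref{15-2-17})}. The paper does not leave this open; it splits into two regimes. When $\rho\ge\max\bigl(C\mu^{-1},\mu^{1/2}h^{1/2-\delta}\bigr)$ (the larger threshold \textup{(\ref{15-2-43})}) the Fourier integral operator reduction of Chapter~\ref{book_new-sect-13} applies directly and the claim is immediate. But the proposition allows the smaller threshold $\bar\rho=\max\bigl(C_0\mu^{-1},\mu^{2/3}h^{2/3-\delta}\bigr)$ from \textup{(\ref{15-2-17})}, and in the gap between these two thresholds the FIO machinery is not available. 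There the paper invokes the multi-scale ``squeezed'' propagation constructed in the proof of proposition~\ref{prop-15-2-9} (the rescalings $x_1\mapsto x_1\gamma^{-1}$, $x_2\mapsto x_2\gamma^{-1/2}$, $\hbar\mapsto\hbar\gamma^{-3/2}$ of figure~\ref{fig-prop1}), noting that the only change is that trajectories now comfortably \emph{miss} the boundary rather than comfortably meet it. That is the concrete resolution of your ``bookkeeping'' obstacle, and without it the proof is incomplete precisely in the regime $\mu^{2/3}h^{2/3-\delta}\le\rho\le\mu^{1/2}h^{1/2-\delta}$.
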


\begin{proof}
Proof is trivial as 
\begin{equation}
\rho \ge \bar{\rho}\Def 
\max\bigl(C\mu^{-1},\mu ^{\frac{1}{2}}h^{\frac{1}{2}-\delta}\bigr)
\label{15-2-43}
\end{equation}
as then we can apply Fourier integral operators as described in Chapter~\ref{book_new-sect-13} \cite{futurebook}.

As $\bar{\rho}$ is defined by (\ref{15-2-17}) one needs to apply the same arguments as in the proof of proposition~\ref{prop-15-2-9}. Albeit in that proof trajectories ``comfortably met'' $\partial X$ and now they ``comfortably miss'' it.
\end{proof}

\begin{remark}\label{rem-15-2-19}
This proposition however does not imply that results of section~\ref{book_new-sect-13-3} \cite{futurebook}  are automatically valid in our case as in that section we used partition elements which could be rather large in some directions (sometimes as large as $\asymp 1$ which could be replaced by $h^{\delta'}$ as we apply microlocal standard uncertainty principle rather than logarithmic uncertainty principle). Now boundary prevents us from doing this. 
\end{remark}

Still, however, as $\mu \le h^{-\delta}$ with sufficiently small exponent $\delta>0$ these arguments of section~\ref{book_new-sect-13-3} \cite{futurebook}  remain automatically valid and leaving easy details to the reader arriving to

\begin{proposition}\label{prop-15-2-20}
Under non-degeneracy condition~$\textup{(\ref{book_new-13-3-57})}_m$ \cite{futurebook}  with $\nu=1$ i.e. \begin{phantomequation}\label{15-2-44}\end{phantomequation}
\begin{equation}
\sum_{1\le |\alpha|\le m} |D^\alpha (VF^{-1})|\ge \epsilon
\tag*{$\textup{(\ref*{15-2-44})}_m$}\label{15-2-44-*}
\end{equation}
contribution of the inner zone $\cX_\inn$ to the remainder does not exceed $C\mu^{-1}h^{-1}$ as $\mu \le h^{-\delta}$ with sufficiently small exponent $\delta=\delta(m)>0$.
\end{proposition}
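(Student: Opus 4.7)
The plan is to reduce the inner-zone analysis back to the boundaryless setting of Chapter~\ref{book_new-sect-13}~\cite{futurebook}, where exactly this estimate is already established under hypothesis $\textup{(\ref*{15-2-44})}_m$. The engine of the reduction is proposition~\ref{prop-15-2-18}: if $Q$ is localized to an element whose Hamiltonian orbits over the relevant time window remain in $\cX_\inn$, then $U\,^t\!Q_y\equiv U^0\,^t\!Q_y$, where $U^0$ is the propagator of the operator (freely) extended across $\partial X$. Consequently, all Tauberian computations for such $Q$ can be performed with $U^0$ and we inherit the section~\ref{book_new-sect-13-3}~\cite{futurebook} conclusions.

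Concretely, I would proceed as follows. First, introduce an admissible pseudo-differential partition of $\cX_\inn$ on the magnetic scales of Chapter~\ref{book_new-sect-13}~\cite{futurebook}, using the cyclotron width $\asymp \mu^{-1}$ in $x$ after the rescaling $x\mapsto \mu x$, and elongated elements in the $(x_2,\xi_2)$ directions matched to the magnetic drift. Second, on each element whose distance to the boundary exceeds $C_0\bar{\rho}$ with $\bar\rho$ from \textup{(\ref{15-2-43})}, verify that trajectories started in the element stay inside $\cX_\inn$ throughout the Tauberian time window $T\asymp\mu$ (this is the ``comfortably misses the boundary'' situation noted in the proof of proposition~\ref{prop-15-2-18}) and invoke that proposition to replace $U$ by $U^0$. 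Third, apply the condition $\textup{(\ref*{15-2-44})}_m$ element-by-element exactly as in section~\ref{book_new-sect-13-3}~\cite{futurebook}: the shift of singularities over time $T\asymp\mu$ is observable on the chosen partition scales, which through the Tauberian theorem yields a per-element remainder contribution of $O(\mu^{-1}h^{-1}\cdot\text{element volume})$, and summation reproduces the global bound $O(\mu^{-1}h^{-1})$.

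The main obstacle is the one flagged in remark~\ref{rem-15-2-19}: some of the partition elements used in the inner analysis of Chapter~\ref{book_new-sect-13}~\cite{futurebook} are very large in certain directions (up to $\asymp 1$, or $\asymp h^{\delta'}$ under the standard microlocal uncertainty principle), and such elements need not fit entirely inside $\cX_\inn$ once a boundary is present. This is exactly what forces the restriction $\mu\le h^{-\delta}$. With $\delta=\delta(m)$ small enough, two compatibility requirements are met simultaneously: (i) the inner-zone width $\bar\rho^{-1}$ after rescaling is large enough that the Chapter~\ref{book_new-sect-13}~\cite{futurebook} partition elements fit with room to spare away from the boundary, and (ii) the uncertainty principle conditions of section~\ref{book_new-sect-13-3}~\cite{futurebook} for the $m$-th order non-degeneracy (which degrade with $\mu$) remain satisfied. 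I would verify these two quantitative thresholds explicitly; everything else is an import from the corresponding argument in Chapter~\ref{book_new-sect-13}~\cite{futurebook}, whose details I would refer to rather than reproduce.
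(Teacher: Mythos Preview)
Your proposal is correct and follows the same strategy as the paper: the paper simply asserts (just before the proposition) that ``as $\mu\le h^{-\delta}$ with sufficiently small exponent $\delta>0$ these arguments of section~\ref{book_new-sect-13-3}~\cite{futurebook} remain automatically valid,'' leaving the details to the reader; you have supplied precisely those details by invoking proposition~\ref{prop-15-2-18} to transfer to $U^0$ and by explaining how the restriction $\mu\le h^{-\delta}$ neutralizes the obstruction of remark~\ref{rem-15-2-19}. One small refinement: in the paper's follow-up (proof of theorem~\ref{thm-15-2-21}) the Tauberian time on a $\rho$-element is taken as $T\asymp\epsilon\mu^{1/2}\rho^{1/2}$ rather than uniformly $T\asymp\mu$, matching $\textup{(\ref{book_new-13-3-49})}_\vartheta$~\cite{futurebook}; this is exactly the ``apply Chapter~\ref{book_new-sect-13} verbatim'' step you refer to, so your argument is consistent with it.
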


Combining with results in inner and transitional zones where the same contributions were derived even without non-degeneracy condition we conclude that

\begin{theorem}\label{thm-15-2-21}
Let conditions \textup{(\ref{13-1-1})}--\textup{(\ref{13-1-4})}, \textup{(\ref{15-2-2})} and \textup{(\ref{15-2-12})} and non-degeneracy condition~\ref{15-2-44-*} be fulfilled on 
$\supp \psi$ where   $\psi(x)\in \sC_0^\infty(\bR^2)$. 

Then  the remainder does not exceed $C\mu^{-1}h^{-1}$ as $\mu \le h^{-\delta}$ with sufficiently small exponent $\delta=\delta(m)>0$.
\end{theorem}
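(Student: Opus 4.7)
The plan is to combine the three zone estimates already established by a pseudodifferential partition of unity subordinate to the decomposition $\cX_\inn \cup \cX_\trans \cup \cX_\bound$ of $T^*X$ with $\bar{\rho}$ as in (\ref{15-2-17}). Corollary~\ref{cor-15-2-13} handles $\cX_\bound$ with no hypothesis on $V$, and Proposition~\ref{prop-15-2-20} handles $\cX_\inn$ under exactly \ref{15-2-44-*}; each contributes $O(\mu^{-1}h^{-1})$. The substantive remaining step is to upgrade the transitional-zone bound of Corollary~\ref{cor-15-2-16} from the second-order condition \ref{15-2-41-pm} to the general $m$-th order hypothesis \ref{15-2-44-*}.

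For the transitional zone I would iterate the scheme of Proposition~\ref{prop-15-2-15} via a cascade of $\ell$-admissible partitions on $\partial X \cap \supp\psi$. At dyadic scales $\ell \ge \bar{\ell}$ introduce subsets $\Sigma_k(\ell)$ on which the first $k-1$ tangential derivatives of $VF^{-1}$ are bounded by a corresponding power of $\ell$ while $|\nabla_{\partial X}^{k}(VF^{-1})| \asymp \ell$, for $k = 1,\dots,m$. On such a $k$-th piece the $\xi_2$-shift over time $T$ is $\asymp \ell^k T$ in a preferred time direction, so the analogue of Proposition~\ref{prop-15-2-15} provides an admissible microhyperbolic time $T^*(\ell)\asymp \ell^{(1-\delta')/k}$; combining with the elementary $|\Sigma_k(\ell)| = O(\ell^{1/(k-1)})$ coming from the one-dimensional Malgrange-type lemma and with the scale-by-scale bound (\ref{15-2-36}), summation over dyadic $\ell \ge \bar{\ell}$ and over $k \le m$ telescopes to $O(\mu^{-1}h^{-1})$.

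The main obstacle will be preserving the uncertainty principle $\rho\cdot\ell^{k}\ge h^{1-\delta}$ uniformly through all stages of the cascade: this is exactly what forces the hypothesis $\mu \le h^{-\delta(m)}$ with $\delta$ shrinking as $m$ grows, since at stage $k$ the available widths are $\rho \ge \bar{\rho}$ and $\ell \ge \bar{\ell}$ and we must have headroom for $m$ successive squeezings. A separate verification is needed on the residual subset of $\partial X$ where every tangential derivative up to order $m$ fails even the smallest $\ell$-bound: there \ref{15-2-44-*} forces some transversal derivative $D_{x_1}^{\alpha_1}D_{x_2}^{\alpha_2}(VF^{-1})$ with $\alpha_1\ge 1$ to be $\ge \epsilon_0$, and since the transitional stripe has thickness $\asymp \mu^{-1}$ transversally to $\partial X$ while the residual subset itself has one-dimensional measure $O(\mu^{-\theta(m)})$ by Taylor expansion of $VF^{-1}$ off the boundary, the trivial Weyl bound $O(h^{-1})$ applied to this sliver still delivers $O(\mu^{-1}h^{-1})$ for $\mu \le h^{-\delta(m)}$. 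Summing the contributions of the three zones yields the theorem.
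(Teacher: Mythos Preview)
Your partition into the three zones is the right frame, and your invocations of Corollary~\ref{cor-15-2-13} for $\cX_\bound$ and Proposition~\ref{prop-15-2-20} for $\cX_\inn$ are exactly what the paper uses. But the ``substantive remaining step'' you describe for $\cX_\trans$ is a phantom: under the hypothesis $\mu\le h^{-\delta}$ with $\delta$ small, Proposition~\ref{prop-15-2-14} already gives the transitional-zone contribution as $O(\mu^{-1}h^{-1})$ \emph{with no non-degeneracy condition at all}. Indeed, the bound there is $C\bar\rho h^{-1}\asymp C\mu^{-1}h^{-1}+C(\mu h)^{2/3}h^{-1-\delta}$, and once $\mu\le h^{-\delta}$ the second term is $\le Ch^{2/3-1-O(\delta)}\ll \mu^{-1}h^{-1}$. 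The role of Corollary~\ref{cor-15-2-16} and condition~\ref{15-2-41-pm} is to push the transitional estimate to \emph{larger} $\mu$, which is irrelevant in the present regime.

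So the paper's proof of the theorem is a one-line combination of Corollary~\ref{cor-15-2-13}, Proposition~\ref{prop-15-2-14}, and Proposition~\ref{prop-15-2-20}; the displayed proof block merely supplies the concrete bookkeeping formula~(\ref{15-2-45}) for the $\cX_\inn$ integral. Your proposed cascade of $\ell$-admissible partitions keyed to successive tangential derivatives of $VF^{-1}$ is not wrong in spirit, but it is unnecessary here and several of its steps are shaky as written: the claimed $T^*(\ell)\asymp \ell^{(1-\delta')/k}$ does not follow from the $\xi_2$-shift being $\asymp \ell^k T$ (that would give $T^*$ a different exponent), and the residual-set argument invokes a transversal derivative to conclude a measure bound on a \emph{boundary} subset, which does not follow directly. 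None of this needs to be repaired, because the transitional zone is already handled by the crude width bound.
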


\begin{proof}
As in the Chapter~\ref{book_new-sect-13}   (see $\textup{(\ref{book_new-13-3-49})}_\vartheta$) \cite{futurebook} the contribution of $\cX_\inn$ to the remainder does not exceed
\begin{equation}
C \mu^{-1} h^{-1}\int_{\cX_\inn} T^{-1}\,dx_2 d\xi_2  + C\mu h^{-1} \mes_2 (\Omega_{2})
\label{15-2-45}
\end{equation}
with the same definition (\ref{book_new-13-3-50}) \cite{futurebook}  of $\Omega_2$; here  $T =\epsilon \mu^{\frac{1}{2}} \rho ^{\frac{1}{2}}$ and $d\xi_2=d\rho$ and integral is bounded as $\rho \le c\mu$.
\end{proof}

In almost all other cases we however need to take remark~\ref{rem-15-2-19} into account. Still there is an exception: arguments linked to evolution with respect to $\xi_2$ work in a bit larger zone, namely $\{(x,\xi): \ \xi_2\le C_0\}$ and thus  in zone $\{x,\ x_1\le C_1\mu^{-1}\}$ (which is a definition of $X_\bound$). 

\subsection{No-critical point case} 

Assume temporarily that condition (\ref{15-2-8}) is fulfilled. Then, as we mentioned,  theorem~\ref{thm-15-2-6} and proposition~\ref{prop-15-2-15} cover this zone. So we need to consider zone 
\begin{equation}
\{x:\  C_0\mu^{-1}\le x_1\le \epsilon\}.
\label{15-2-46}
\end{equation}

However under condition (\ref{15-2-8})  we can take $\asymp 1$ scale with respect to $x_2$ and consider drift in the time direction in which $\xi_2$ increases; then we remain in the zone in question for time $T^* =\epsilon '\mu$. Meanwhile under this condition shift with respect to $\xi_2$ is observable as $|t|\ge T_*=\epsilon \mu^{-1}$. This leads us to to conclusion that 
\begin{claim}\label{15-2-47}
Under assumption (\ref{15-2-8}) contribution of this (\ref{15-2-46}) zone to the Tauberian remainder does not exceed $C\mu^{-1}h^{-1}$
\end{claim}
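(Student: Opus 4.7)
\medskip\noindent
\textbf{Proof plan.}
The strategy is to combine long-time drift propagation with a Tauberian estimate. The plan is to subordinate a pseudo-differential partition of unity to $(x_2,\xi_2)$-elements of scale $\asymp 1$; such a partition is admissible in the weak-field regime (\ref{15-2-6}) since the microlocal uncertainty principle reads $1\cdot\mu^{-2}\ge \mu^{-1}h^{1-\delta}$, i.e. precisely $\mu\le h^{\delta-1}$. On the support of each cutoff we may further arrange that $\partial_{x_2}(VF^{-1})$ has a definite sign, thanks to (\ref{15-2-8}).

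\medskip\noindent
\textbf{Choice of times and propagation.}
On each element I would take the time direction in which $\xi_2$ increases along the Hamiltonian flow, singled out by $\sgn\partial_{x_2}(VF^{-1})$ (see the drift equations $\textup{(\ref{15-1-19})}_{1,2}$). Because $\dot x_1=\mu^{-1}\partial_{x_2}W$, the center of the cyclotron orbit migrates at speed $\asymp\mu^{-1}$ in the chosen direction, so trajectories started in $\{C_0\mu^{-1}\le x_1\le\epsilon\}$ remain inside $\cX_\inn$ for all $|t|\le T^*=\epsilon'\mu$. Proposition~\ref{prop-15-2-18} then identifies $U\,{}^t\!Q_y$ with the whole-plane propagator $U^0\,{}^t\!Q_y$ modulo negligible terms, so the Chapter~\ref{book_new-sect-13}~\cite{futurebook} inner-domain analysis applies. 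On the short end, condition (\ref{15-2-8}) produces, through the whole-plane analogue of proposition~\ref{prop-15-2-2}(ii), an observable shift in $\xi_2$ of magnitude $\asymp |t|$ as soon as $|t|\ge T_*=\epsilon\mu^{-1}$, rendering $F_{t\to h^{-1}\tau}\bigl(\bar\chi_{T^*}(t)-\bar\chi_{T_*}(t)\bigr)\Gamma(U\,{}^t\!Q_y\psi)$ negligible.

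\medskip\noindent
\textbf{Tauberian assembly.}
The Tauberian theorem with the pair $(T_*,T^*)$ gives, per partition element, a contribution bounded by $Ch^{-1}(T^*)^{-1}$ times the associated symplectic weight. Arguing as in estimate (\ref{15-2-45}), the $x_1$-width of the section $\cX_\inn\cap\{\xi_2=\const\}$ is $\asymp\mu^{-1}$ (remark~\ref{rem-15-2-8}) and $\xi_2$ sweeps an interval of length $\asymp\mu$ over the zone (\ref{15-2-46}); summing over the $O(1)$ cells in $x_2$ yields
\begin{equation*}
C\mu^{-1}h^{-1}\int_{\cX_\inn\cap\{C_0\mu^{-1}\le x_1\le\epsilon\}} (T^*)^{-1}\,dx_2\,d\xi_2
\ \le\ C\mu^{-1}h^{-1}\cdot \mu\cdot \mu^{-1}
\ =\ C\mu^{-1}h^{-1}.
\end{equation*}

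\medskip\noindent
\textbf{Main obstacle.}
The principal technical difficulty is sustaining the propagation estimate of proposition~\ref{prop-15-2-18} (and the underlying corollary~\ref{cor-15-2-10}) all the way to $T^*\asymp\mu$: this amounts to iterating the one-revolution estimate $N\asymp\mu T^*=\mu^2$ times while keeping the accumulated $(\boldgamma,\sigma)$-enlargement compatible with the microlocal uncertainty principle, which is exactly where the bound $\mu\le h^{\delta-1}$ is consumed. The coordinate change $x_{1,\new}=x_1+bx_1^2$ from the proof of theorem~\ref{thm-15-2-6} is needed here to eliminate the $\mu x_1^2 b(x)$ correction in (\ref{15-2-5}), so that the clean drift picture invoked above is literally valid; without it, spurious $O(x_1^2\xi_1)$ terms would undermine the long-time iteration.
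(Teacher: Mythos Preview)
Your proposal is correct and follows essentially the same route as the paper. The paper's argument (appearing in the text immediately preceding the claim) is exactly: take scale $\asymp 1$ in $x_2$, pick the time direction in which $\xi_2$ increases so that the drift carries trajectories away from $\partial X$, obtain $T^*=\epsilon'\mu$ and $T_*=\epsilon\mu^{-1}$ from proposition~\ref{prop-15-2-2}(ii), and conclude by Tauberian theorem; the paper even remarks that here the factor $\mu^{-1}$ arises as $T^{*\,-1}$ rather than as the zone width. Your identification of the coordinate change $x_{1,\new}=x_1+bx_1^2$ as the device that keeps the uncertainty principle satisfied over $N\asymp\mu^2$ windings is also precisely what the paper invokes (from the proof of theorem~\ref{thm-15-2-6}).
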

(i.e. the same as the contribution of zone $\{x:\ x_1\le C_0\mu^{-1}\}$ albeit factor $\mu^{-1}$ now comes as $T^{*\,-1}$ rather than as the width of the zone).

So we arrive to

\begin{theorem}\label{thm-15-2-22} 
Let conditions \textup{(\ref{13-1-1})}--\textup{(\ref{13-1-4})}, \textup{(\ref{15-2-2})} and \textup{(\ref{15-2-12})} and non-degeneracy condition~\textup{(\ref{15-2-8})} be fulfilled on $\supp \psi$ where   $\psi(x)\in \sC_0^\infty(\bR^2)$. 

Then contribution of zone 
$\{x:\ \dist (x,\partial X)\le \epsilon_1\}$ to the remainder does not exceed $C\mu^{-1}h^{-1}$ as $\mu \le h^{\delta-1}$ with arbitrarily small exponent $\delta>0$.
\end{theorem}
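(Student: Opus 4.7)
The plan is to decompose the zone $\{x:\dist(x,\partial X)\le \epsilon_1\}$ into two parts: the narrow boundary strip $X_\bound=\{x_1\le C_0\mu^{-1}\}$ and the intermediate zone $\{C_0\mu^{-1}\le x_1\le \epsilon_1\}$. The contribution of $X_\bound$ is already covered by Theorem~\ref{thm-15-2-6} (equivalently by choosing $\psi''(\mu x_1)$ supported in $[0,2C_0]$), yielding the bound $O(\mu^{-1}h^{-1})$ under our hypotheses. So the entire argument reduces to handling the intermediate zone, which is precisely claim (\ref{15-2-47}).

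For the intermediate zone I would use a partition of unity with scale $\asymp 1$ in $x_2$ (this is legitimate since (\ref{15-2-8}) prevents degeneracy along the boundary) and a scale $\asymp\mu^{-1}$ in $x_1$. The first step is to observe that under (\ref{15-2-8}), the classical dynamics exhibits magnetic drift with $\dot{x}_1 = \mu^{-1}\partial_{x_2} W$ (see $\textup{(\ref{15-1-19})}_1$), so if we choose the time direction in which $\xi_2$ increases (determined by $\sgn(W_{x_2})$), trajectories starting in the intermediate zone either drift further inside $X$ or remain there for time $T^*=\epsilon'\mu$ before leaving the $\epsilon_1$-neighbourhood of $\partial X$. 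On the other hand, Proposition~\ref{prop-15-2-2}(ii) ensures that the shift in $\xi_2$ is of magnitude $|t|$, so singularities really propagate and are observable for $|t|\ge T_*=\epsilon\mu^{-1}$.

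The next step is to justify microlocalization at this scale. The microlocal uncertainty principle requires the product of scales in the conjugate variables to beat $h^{1-\delta}$; with scale $\asymp 1$ in $x_2$ and scale $\asymp\mu^{-1}$ in $\xi_2$ (matching the $\mu^{-1}$-scale in $x_1$, cf. Remark~\ref{rem-15-2-8}) this demands $\mu^{-1}\ge h^{1-\delta}$, i.e. exactly $\mu\le h^{\delta-1}$. Together with the change of variables $x_{1,\new}=x_1+bx_1^2$ used in the proof of Theorem~\ref{thm-15-2-6} (which eliminates the unpleasant $\mu bx_1^2$ term and recasts the operator in a form amenable to the Fourier integral operator machinery of Chapter~\ref{book_new-sect-13} \cite{futurebook}), this allows us to apply the Tauberian argument with $T^*=\epsilon'\mu$. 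The resulting contribution is bounded by
\begin{equation*}
Ch^{-1}\int_{\{C_0\mu^{-1}\le x_1\le\epsilon_1\}} T^{*\,-1}\,dx \le C\mu^{-1}h^{-1}
\end{equation*}
as claimed.

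The main obstacle, or rather technical subtlety, is that the $x_1$-dependent partition elements cannot be made arbitrarily flexible near $\partial X$: the boundary restricts how one applies canonical transformations, and this is why a direct appeal to the inner-zone analysis of Chapter~\ref{book_new-sect-13} does not immediately work (cf. Remark~\ref{rem-15-2-19}). Overcoming this requires (i) the algebraic trick of absorbing the $O(x_1^2)$ correction into $x_{1,\new}$, and (ii) the observation that under (\ref{15-2-8}) the propagation is essentially one-dimensional in $\xi_2$, so we do not need to scale in $x_2$ at all. Once these points are secured, assembling the bound from the two subzones gives the theorem.
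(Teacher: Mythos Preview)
Your proposal is correct and follows essentially the same route as the paper: split into the boundary strip $\{x_1\le C_0\mu^{-1}\}$ handled by Theorem~\ref{thm-15-2-6}, and the intermediate zone (\ref{15-2-46}) where, choosing the time direction in which $\xi_2$ increases, drift keeps trajectories away from $\partial X$ for $T^*\asymp\mu$ while Proposition~\ref{prop-15-2-2}(ii) gives observability at $T_*\asymp\mu^{-1}$; the change of variables $x_{1,\new}=x_1+bx_1^2$ and the uncertainty-principle check $1\times\mu^{-2}\ge\mu^{-1}h^{1-\delta}$ yielding $\mu\le h^{\delta-1}$ are exactly the paper's ingredients. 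Your identification of the technical subtlety (Remark~\ref{rem-15-2-19}) and its resolution is also on target.
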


\paragraph{Non-degenerate critical point case.}
Consider now case when non-degeneracy condition (\ref{15-2-41}) is fulfilled. Then obviously contribution of zone $\{x_1\le C_0\mu^{-1}\}$ to the remainder does not exceed 
\begin{equation}
C\mu^{-1}h^{-1}+ C(\mu h)^{\frac{1}{2}-\delta}h^{-1}
\label{15-2-48}
\end{equation}
where the second term comes from zone 
\begin{equation}
\bigl\{x: \ x_1\le C_0\mu^{-1}, \ 
|W_{x_2}|\le C_0\max\bigl(\mu^{-1}, (\mu h)^{\frac{1}{2}-\delta}\bigr)
\bigr\}
\label{15-2-49}
\end{equation}
and this is not as good as (\ref{15-2-40}) because $\bar{\rho} \ll 1$.

Now in the zone 
\begin{equation}
\bigl\{x: \  C_0\mu^{-1}\le x_1\le \epsilon_1, \ 
|W_{x_2}|\ge C_0\max\bigl(\mu^{-1}, (\mu h)^{\frac{1}{2}-\delta}\bigr)
\bigr\}
\label{15-2-50}
\end{equation}
we can use $T^*=\epsilon \mu \ell$ with $\ell = \epsilon |W_{x_2}|$ and we can even take $T^* = \ell^{1-\delta'}$; then contribution of this zone to the remainder does not exceed $C\mu^{-1}h^{-1}$. 

Meanwhile contribution of zone 
\begin{equation}
\bigl\{x: \  C_0\mu^{-1}\le x_1\le \epsilon_1, \ 
|W_{x_2}|\le C_0\max\bigl(\mu^{-1}, (\mu h)^{\frac{1}{2}-\delta}\bigr)
\bigr\}
\label{15-2-51}
\end{equation}
to the Tauberian remainder under this condition does not exceed 
\begin{equation}
C\mu^{-1}h^{-1}+ C\mu (\mu h)^{\frac{1}{2}-\delta}h^{-1}.
\label{15-2-52}
\end{equation}

So we arrive to 

\begin{theorem}\label{thm-15-2-23} 
Let conditions \textup{(\ref{13-1-1})}--\textup{(\ref{13-1-4})}, \textup{(\ref{15-2-2})} and \textup{(\ref{15-2-12})} and non-degeneracy condition~\textup{(\ref{15-2-41})} be fulfilled on $\supp \psi$ where   $\psi(x)\in \sC_0^\infty(\bR^2)$. 

Then contribution of zone 
$\{x:\ \dist (x,\partial X)\le \epsilon_1\}$ to the remainder does not exceed  \textup{(\ref{15-2-52})}; in particular, it does not exceed $C\mu^{-1}h^{-1}$ as $\mu \le h^{\delta-\frac{1}{5}}$.
\end{theorem}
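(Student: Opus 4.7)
The plan is to decompose the strip $\{x:\dist(x,\partial X)\le \epsilon_1\}$ into three sub-zones and control each contribution to the Tauberian remainder separately. Set $W=(\tau-V)F^{-1}$, $\ell(x)\Def \epsilon_0|W_{x_2}|$ and $\ell_*\Def C_0\max\bigl(\mu^{-1},(\mu h)^{\frac{1}{2}-\delta}\bigr)$. The sub-zones are (i) the boundary layer $\{x_1\le C_0\mu^{-1}\}$, (ii) the ``good drift'' zone $\{C_0\mu^{-1}\le x_1\le \epsilon_1,\ \ell\ge \ell_*\}$, and (iii) the ``degenerate'' zone $\{C_0\mu^{-1}\le x_1\le \epsilon_1,\ \ell\le \ell_*\}$.

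For zone (i), I would treat the portion with $\ell\ge \ell_*$ by the $\xi_2$-microhyperbolicity of Proposition~\ref{prop-15-2-2} combined with the Tauberian estimate of Theorem~\ref{thm-15-2-6} (applied locally in $x_2$, which is legitimate since the propagation stays inside the boundary layer for $|t|\le \epsilon_2$); this gives $O(\mu^{-1}h^{-1})$. On the complementary portion $\{\ell\le \ell_*\}$, the non-degeneracy hypothesis \ref{15-2-41-pm} forces its one-dimensional measure on $\partial X$ to be $O(\ell_*)$, and a crude Weyl-type estimate after the rescaling $x_1\mapsto \mu x_1$ (with effective semiclassical parameter $\mu h$ on a unit strip) yields contribution $O(\ell_* h^{-1})=O((\mu h)^{\frac{1}{2}-\delta}h^{-1})$. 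Together this reproduces \textup{(\ref{15-2-48})}.

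For zone (ii), the magnetic drift speed with respect to $x_2$ is $\asymp \mu^{-1}\ell$, so I would take the time direction in which $\xi_2$ is monotone and push the Tauberian window up to $T^*\asymp \ell^{1-\delta'}$. Proposition~\ref{prop-15-2-17} controls the $x_2$-spread, and the uncertainty principle $\rho\ell\ge h^{1-\delta}$ is satisfied on $\{\ell\ge \ell_*\}$ by the very choice of $\ell_*$. An $\ell$-admissible partition together with the pointwise Tauberian bound $C\mu^{-1}h^{-1}T^{*\,-1}$ then sums, after integrating against $d\ell/\ell$ over $\ell\ge \ell_*$ and absorbing a possible logarithm into $\delta'$, to a contribution of $O(\mu^{-1}h^{-1})$.

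For zone (iii), hypothesis \ref{15-2-41-pm} bounds its two-dimensional Lebesgue measure by $C\ell_*$. The crude Tauberian remainder with window $T_*\asymp \mu^{-1}$ has density $\mu h^{-1}$ per unit area, yielding contribution $C\mu\ell_* h^{-1}\asymp C\mu(\mu h)^{\frac{1}{2}-\delta}h^{-1}$, which is exactly the second term of \textup{(\ref{15-2-52})}. Summation of the three estimates proves the first assertion; the ``in particular'' clause follows by solving $\mu(\mu h)^{\frac{1}{2}-\delta}h^{-1}\le \mu^{-1}h^{-1}$, equivalently $\mu\le h^{\delta-\frac{1}{5}}$. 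The delicate step will be the long-time drift estimate in zone (ii): one must verify that the squeezed pseudo-differential calculus underlying Proposition~\ref{prop-15-2-9} and Corollary~\ref{cor-15-2-10} remains valid for drift-trajectories that repeatedly approach the boundary layer without entering it, so that the $\ell$-admissible partition does not interfere with the reflection structure near $\partial X$.
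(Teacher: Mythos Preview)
Your three-zone decomposition and the mechanism invoked in each zone coincide with the paper's own argument (the text immediately preceding the theorem), including the choice $T^*\asymp\ell^{1-\delta'}$ in zone (ii) and the crude $T_*\asymp\mu^{-1}$ bound on the degenerate strip (iii). Two small corrections that do not affect the outcome: in zone (ii) the $x_2$-spread is controlled by the $O(\mu^{-1})$ magnetic-drift speed, not by Proposition~\ref{prop-15-2-17} (which concerns $\cX_\trans$), and your closing worry is misplaced because in zone (ii) one appeals to Proposition~\ref{prop-15-2-18} to replace $U$ by the boundary-free propagator, so no reflections enter.
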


This is exactly the same estimate as was derived in Chapter~\ref{book_new-sect-13} \cite{futurebook}  without boundary under condition 
\begin{equation}
|\nabla VF^{-1}|+|\nabla^2 VF^{-1}|\ge \epsilon.
\label{15-2-53}
\end{equation}

Now let us consider derivatives of $VF^{-1}$ with respect to $x_1$ which leads to the drift in the direction $x_2$.

Our goal is to prove

\begin{theorem}\label{thm-15-2-24}
(i) Let conditions \textup{(\ref{13-1-1})}--\textup{(\ref{13-1-4})}, \textup{(\ref{15-2-2})} and \textup{(\ref{15-2-12})} and non-degeneracy condition~\textup{(\ref{15-2-7})} be fulfilled on $\supp \psi$ where   $\psi(x)\in \sC_0^\infty(\bR^2)$.  Then the remainder does not exceed
\begin{equation}
C\mu^{-1}h^{-1} + C\mu^2 h^{-\delta}
\label{15-2-54}
\end{equation}
as $\mu \le h^{\delta-\frac{1}{2}}$; in particular remainder does not exceed $C\mu^{-1}h^{-1}$ as $\mu \le h^{\delta-\frac{1}{3}}$.

\medskip\noindent
(ii) Additionally assume that condition \textup{(\ref{15-2-41})} is also fulfilled on $\supp \psi$. Then the remainder does not exceed
\begin{equation}
C\mu^{-1}h^{-1} + C\mu^{\frac{5}{2}}h^{\frac{1}{2}-\delta}
\label{15-2-55}
\end{equation}
as $\mu \le h^{\delta-\frac{1}{2}}$; in particular remainder does not exceed $C\mu^{-1}h^{-1}$ as $\mu \le h^{\delta-\frac{3}{7}}$.
\end{theorem}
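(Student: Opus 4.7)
The argument combines, region by region, the boundary and transitional zone analyses of subsections~\ref{sect-15-2-2-2} and~\ref{sect-15-2-2-3} with a magnetic-drift propagation in the spirit of Theorem~\ref{thm-15-2-6} and Chapter~\ref{book_new-sect-13}~\cite{futurebook}, using the full gradient hypothesis (\ref{15-2-7}) as the driving non-degeneracy.

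First I would split $\supp\psi$ by an $\ell$-admissible partition along the boundary, with $\ell = \epsilon|\nabla_{\partial X}(VF^{-1})|$. On the ``good'' set $\{\ell \ge \bar\ell\}$ (with $\bar\ell$ as in (\ref{15-2-38})) Proposition~\ref{prop-15-2-15} together with Theorem~\ref{thm-15-2-22} deliver a remainder $O(\mu^{-1}h^{-1})$: tangential microhyperbolicity handles the boundary and transitional stripes, while in the inner zone one propagates for time $T^{\ast} \asymp \mu$ in the direction in which $\xi_2$ increases. On the complementary ``bad'' set $\cE \Def \{|\nabla_{\partial X}(VF^{-1})| \le C\bar\ell\}$, hypothesis (\ref{15-2-7}) forces $|\partial_{x_1}(VF^{-1})| \ge \epsilon_0/2$, so by the $x_2$-component of (\ref{15-1-19}) the effective dynamics is magnetic drift along $x_2$ at speed $\asymp \mu^{-1}$. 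On $\cE \cap \cX_\inn$ I apply the FIO reduction of Chapter~\ref{book_new-sect-13}~\cite{futurebook} and take $T^{\ast} \asymp \mu$; the uncertainty principle $\mu^{-1}T^{\ast} \ge h^{1-\delta}$ is precisely the restriction $\mu \le h^{\delta-1/2}$. Its contribution is $O(\mu^{-1}h^{-1})$ plus an error controlled by the tangential measure of $\cE$ and by the window $T^{\ast}$. Under (\ref{15-2-7}) alone I bound $\mes_{\partial X}(\cE) \le C$, giving the loss $C\mu^{2}h^{-\delta}$ of part (i); under the additional condition (\ref{15-2-41}) the critical points of $\nabla_{\partial X}(VF^{-1})$ are isolated and quadratic, so $\mes_{\partial X}(\cE) \le C\bar\ell^{1/2}$, which upgrades the loss to $C\mu^{5/2}h^{1/2-\delta}$ of part (ii).

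The delicate step is the analysis of $\cE$ in the boundary stripe $\{x_1 \le C_0\mu^{-1}\}$, where the hop dynamics is not in the Chapter~\ref{book_new-sect-13}~\cite{futurebook} FIO form and microhyperbolicity in $\xi_2$ fails. Here I would iterate the Poisson-bracket identity (\ref{15-2-9}) over $N \asymp \mu T^{\ast}$ hops, each of duration $\asymp \mu^{-1}$ and $x_1$-excursion $\asymp \mu^{-1}$, and invoke Proposition~\ref{prop-15-2-9} and Corollary~\ref{cor-15-2-10} to transport microlocalisations across reflections. The net shift of $\xi_2$ accumulated over these hops is coherent and of magnitude $|\partial_{x_1}(VF^{-1})| \cdot \mu^{-1} \cdot T^{\ast} \asymp 1$ for $T^{\ast} \asymp \mu$, which together with (\ref{15-2-19}) and a summation over $\rho \ge \bar\rho$ (as in Corollary~\ref{cor-15-2-13}) controls the Tauberian contribution of this stripe by the same $C\mu^{2}h^{-\delta}$ (respectively $C\mu^{5/2}h^{1/2-\delta}$) bound. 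The main obstacle is verifying that this cumulative drift is not destroyed by the squeezed-reflection tangency events of Proposition~\ref{prop-15-2-17}; I expect this to require the long-range propagation argument of section~\ref{book_new-sect-2-4}~\cite{futurebook} after the $x_1 \mapsto x_1\gamma^{-1}$ rescaling used in the proof of Proposition~\ref{prop-15-2-9}. Finally, the transition from the Tauberian bound to magnetic Weyl form proceeds exactly as in Theorem~\ref{thm-15-2-7} by counting ``losing'' differentiations in the successive approximation method.
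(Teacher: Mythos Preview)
Your proposal misidentifies the source of the second term in (\ref{15-2-54}) and (\ref{15-2-55}). The loss does not arise from a tangential set $\cE=\{|\nabla_{\partial X}(VF^{-1})|\le C\bar\ell\}$: the boundary zone $\cX_\bound$ is already $O(\mu^{-1}h^{-1})$ by Corollary~\ref{cor-15-2-13} with no non-degeneracy hypothesis whatsoever, so your ``delicate step'' in the boundary stripe is a non-problem. The entire difficulty lives in $\cX_\inn$, and the mechanism is this: after the canonical-form reduction of Chapter~\ref{book_new-sect-13}~\cite{futurebook} the drift shifts the ``new $x_1$'' by $\asymp\mu^{-2}$ over one winding, but the conjugate variable has scale $\mu^{-1}\rho$ (not $1$), so the correct uncertainty inequality is (\ref{15-2-57}), i.e.\ $\mu^{-2}\cdot\mu^{-1}\rho\ge\mu^{-1}h^{1-\delta}$, forcing $\rho\ge\bar\rho\Def\mu^2h^{1-\delta}$ as in (\ref{15-2-58}). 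The second term in (\ref{15-2-54}) is then simply the trivial bound $C\bar\rho h^{-1}$ on the \emph{widened} transitional zone $\{|\rho|\le\bar\rho\}$. Part (ii) refines this by applying Proposition~\ref{prop-15-2-15} inside that widened $\cX_\trans$: where $\ell\ge\bar\ell$ one recovers $O(\mu^{-1}h^{-1})$, and the residual $\{\ell\le\bar\ell\}$ has, under (\ref{15-2-41}), tangential measure $\asymp\bar\ell$ (linear, not $\bar\ell^{1/2}$ as you wrote), contributing $C\bar\rho\bar\ell h^{-1}=C\mu^{5/2}h^{1/2-\delta}$.

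Two concrete errors back this up. First, your inequality ``$\mu^{-1}T^\ast\ge h^{1-\delta}$ with $T^\ast\asymp\mu$'' reads $1\ge h^{1-\delta}$ and gives no restriction; it is the missing $\mu^{-1}\rho$ dual-scale factor that produces (\ref{15-2-58}) and with it the condition $\mu\le h^{\delta-\frac{1}{2}}$ (from $\bar\rho\le c$). Second, the claim that the accumulated $\xi_2$-shift over hops is $\asymp|\partial_{x_1}(VF^{-1})|\cdot\mu^{-1}\cdot T^\ast$ is wrong: by (\ref{15-2-9}) one has $\{a,\xi_2\}\equiv F^2\partial_{x_2}\bigl((V-\tau)F^{-1}\bigr)+O(\mu^{-1})$, so the evolution of $\xi_2$ is driven by the \emph{tangential} derivative. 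The normal derivative $\partial_{x_1}W$ governs the drift of $x_2$ via $\textup{(\ref{15-1-19})}_2$, not the evolution of $\xi_2$; on $\cE$, where $|\partial_{x_2}W|$ is small, there is no usable $\xi_2$-propagation in the boundary stripe --- but, again, none is needed.
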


\begin{proof}
(i) First assume that 
\begin{equation}
\rho \ge \bar{\rho} \Def \max\bigl(C\mu^{-1}, \mu^{\frac{1}{2}}h^{\frac{1}{2}-\delta}\bigr).
\label{15-2-56}
\end{equation}
Then along the whole trajectory 
$x_1\ge \mu^{-1}\rho \ge \mu^{-\frac{1}{2}}h^{\frac{1}{2}-\delta}$ and using $\mu^{-1}h$-Fourier integral operators we can reduce operator to the canonical form of section~\ref{book_new-sect-13-2} \cite{futurebook}. We can then notice that the shift with respect to ``new'' $x_1$ is $\mu^{-1} T k$ where $k \asymp |\nabla V F^{-1}|$ but the scale with respect to the dual variable is $\mu^{-1}\rho$ and we need to write uncertainty principle 
\begin{equation}
\mu^{-1}k T \times \mu^{-1}\rho \ge \mu^{-1}h^{1-\delta}
\label{15-2-57}
\end{equation}
or plugging $T\asymp \mu^{-1}$ and $k \asymp 1$ we get 
\begin{equation}
\rho \ge \bar{\rho}\Def \max\bigl(C\mu^{-1}, \mu^2 h^{1-\delta}\bigr)
\label{15-2-58}
\end{equation}
and this restriction is stronger than (\ref{15-2-56}) which in turn is stronger than (\ref{15-2-17}). 

Therefore 
\begin{claim}\label{15-2-59}
Under condition (\ref{15-1-8}) contribution to the remainder of $\cX_\inn$ defined by (\ref{15-1-16}), (\ref{15-2-58}) does not exceed $C\mu^{-1}h^{-1}$.
\end{claim}
Meanwhile with this choice of $\bar{\rho}$ contribution to the remainder of $\cX_\trans$ defined by (\ref{15-1-14}) does not exceed 
$C \bar{\rho} h^{-1}$ which is exactly the second term in (\ref{15-2-54}). 

Statement (i) is proven. 

\medskip\noindent
(ii) To prove (ii) we need to note that in $\cX_\trans$ shift with respect to $\xi_2$ is observable as $T\asymp \mu^{-1}$ and $|W_{x_2}|\asymp \ell$ satisfies (\ref{15-2-38}). Then the contributions to the remainder of all zones save $\{\cX_\trans, \ |W_{x_2}|\le \bar{\ell}\}$ are $O(\mu^{-1}h^{-1})$ and under condition (\ref{15-2-41}) the contribution of this zone does not exceed $C\bar{\rho}\bar{\ell}h^{-1}$ which is exactly the second term in (\ref{15-2-55}). 
\end{proof}

\begin{remark}\label{rem-15-2-25}
(i) Conditions (\ref{15-1-8}) and (\ref{15-2-41}) are fulfilled generically at the boundary.

\medskip\noindent
(ii) Obviously in the setting of theorem~\ref{thm-15-2-24} the inner zone provides the worst contribution to the remainder and we will need to improve it in frames of condition (\ref{15-2-24}) using the strong magnetic field approach there.
\end{remark}

\section{From Tauberian to  magnetic Weyl formula}
\label{sect-15-2-3}

Now our goal is to pass from Tauberian with $T=\epsilon \mu^{-1}$ to magnetic Weyl formula and estimate remainder $\R^\MW$. We also consider extended Weyl formula and estimate remainder $\R^\W_\infty$.

\begin{theorem}\label{thm-15-2-26}
Let $\psi\in \sC^\infty (\bar{X})$ be a fixed function with a compact support contained in the small vicinity of $\partial X$ and let conditions  \textup{(\ref{13-1-1})}--\textup{(\ref{13-1-4})},
\begin{gather}
F\ge \epsilon _0
\label{13-2-1}\\
\shortintertext{and}
V\le -\epsilon _0\qquad \forall x\in B(0,1);
\label{13-3-45}
\end{gather}
be fulfilled there. Further, let condition \textup{(\ref{15-2-6})} be fulfilled.

Then

\medskip\noindent
(i) Under condition \textup{(\ref{15-2-8})} both $\R^\W_\infty$ defined by \textup{(\ref{15-2-14})} and $\R^\MW$ defined by \textup{(\ref{15-2-15})}
do not exceed $C\mu^{-1}h^{-1}$ as $\mu \le h^{\delta-1}$;

\medskip\noindent
(ii) Under condition \ref{15-2-44-*} both $\R^\W_\infty$ and $\R^\MW$ do not exceed $C\mu^{-1}h^{-1}$ as $\mu \le h^{-\delta}$;

\medskip\noindent
(iii) Under condition \textup{(\ref{15-2-41})} both $\R^\W_\infty$ and $\R^\MW$ do not exceed \textup{(\ref{15-2-52})} as $\mu \le h^{\delta-1}$; in particular remainder does not exceed $C\mu^{-1}h^{-1}$ as $\mu \le h^{\delta-\frac{1}{5}}$;

\medskip\noindent
(iv) Under condition \textup{(\ref{15-2-7})} both $\R^\W_\infty$ and $\R^\MW$ do not exceed \textup{(\ref{15-2-54})} as $\mu \le h^{\delta-1}$; in particular remainder does not exceed $C\mu^{-1}h^{-1}$ as $\mu \le h^{\delta-\frac{1}{3}}$;

\medskip\noindent
(v) Under conditions \textup{(\ref{15-2-7})} and \textup{(\ref{15-2-41})} both $\R^\W_\infty$ and $\R^\MW$ do not exceed \textup{(\ref{15-2-55})} as 
$\mu \le h^{\delta-1}$;  in particular remainder does not exceed $C\mu^{-1}h^{-1}$ as $\mu \le h^{\delta-\frac{3}{7}}$.
\end{theorem}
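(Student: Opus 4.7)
The strategy for Theorem~\ref{thm-15-2-26} is to piece together the Tauberian remainder estimates already assembled in Section~\ref{sect-15-2-2-2}--\ref{sect-15-2-2-4} with the passage from the Tauberian formula to the magnetic Weyl / extended Weyl formulae, the latter having been essentially carried out already in Theorem~\ref{thm-15-2-7}. In detail, for each item (i)--(v) the Tauberian remainder with $T=\epsilon\mu^{-1}$ has been shown to satisfy the claimed bound: (i) follows from Theorem~\ref{thm-15-2-22} together with Theorem~\ref{thm-15-2-6}; (ii) follows from Theorem~\ref{thm-15-2-21}; (iii) from Theorem~\ref{thm-15-2-23}; (iv) and (v) from Theorem~\ref{thm-15-2-24}. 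So the only thing that must be verified is that upgrading the Tauberian main term (an integral of $F_{t\to h^{-1}\tau}\bar\chi_T\Gamma(U\psi)$ with $T\asymp\mu^{-1}$) to the magnetic Weyl and the Weyl expressions introduces no additional error beyond the Tauberian bound.

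The plan is to split the integration over $X$ into the inner zone $\{x_1\ge C_0\mu^{-1}\}$ and the boundary zone $\{x_1\le 2C_0\mu^{-1}\}$, with a partition of unity of the type $\psi=\psi'(x_2)\psi''(\mu x_1)+\psi'(x_2)(1-\psi''(\mu x_1))$. In the inner zone the boundary is microlocally invisible at scale $\mu^{-1}$, so one repeats verbatim the successive-approximation argument of Theorem~\ref{thm-15-2-7}: every differentiation of the cut-off $\psi$, of $g^{jk}$, of $V$, or a second derivative of $V_j$ costs an extra factor $h$ (not $\mu h$), and a parity/symbolic argument forces at least two such ``losing'' derivatives, so the accumulated error is $O(1)$ and the truncated series is exactly $\int h^{-2}\cN^\MW(x,0,\mu h)\psi\,dx$ plus controlled remainder. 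In the boundary strip we use the precanonical form \textup{(\ref{15-2-5})} and freeze coefficients at a boundary point $y$, reducing to the model operator \textup{(\ref{15-1-7})} studied in Section~\ref{sect-15-1-3}; the exact identity \ref{15-1-30-D} / \ref{15-1-30-N} then yields the boundary contribution $h^{-1}\cN^\MW_{*,\bound}(-V(y),\mu h F(y))$ integrated over $\partial X$, with the perturbation series generated by successive approximation on the remaining terms again producing only $O(1)$ corrections by the same counting of derivatives.

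To pass from $\R^\MW$ to $\R^\W_\infty$ one substitutes the weak asymptotic decomposition \textup{(\ref{15-1-36})} of $\cN^\MW_{*,\bound}$ from Remark~\ref{rem-15-1-7} (together with the standard asymptotic decomposition of $\cN^\MW(x,\tau,\hbar)$ in powers of $\hbar=\mu h$ inside $X$); since the microhyperbolicity with respect to $x_2$ provided by \textup{(\ref{15-2-8})}, or the analogous non-degeneracy in cases (ii)--(v), ensures that along-the-boundary integration smooths out the jumps at $\tau=(2j+1)\hbar$, only finitely many terms of each expansion contribute for $\mu h\le 1$ and the tail is absorbed in the Tauberian remainder. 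Thus the Weyl-formula remainder matches the magnetic Weyl one to within $O(\mu^{-1}h^{-1})$ in each case.

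The main obstacle I anticipate is case (v), where the propagation argument must simultaneously use the $\xi_2$-drift in the transitional zone and the $x_1$-derivative of $VF^{-1}$ in the inner zone, while the passage from Tauberian to magnetic Weyl in the narrow strip of size $\bar\rho\bar\ell$ (around the points where $\nabla_{\partial X}VF^{-1}$ vanishes) has to be handled without losing the factor gained in Theorem~\ref{thm-15-2-24}(ii). Here one must argue that on each $\ell$-element with $\ell\ge\bar\ell$ the successive-approximation error is $O(\ell h^{-1}\cdot h^{1-\delta})$ and sum the geometric series in $\ell$; the remaining near-critical strip is handled by the trivial volume estimate $C\bar\rho\bar\ell h^{-1}$ which reproduces the second term of \textup{(\ref{15-2-55})}.
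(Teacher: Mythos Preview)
Your overall strategy is sound: the Tauberian bounds are indeed already established in Theorems~\ref{thm-15-2-6}, \ref{thm-15-2-21}--\ref{thm-15-2-24}, and the task is the transition to the Weyl/magnetic Weyl expressions. However, the paper organizes this transition differently, and the difference is worth noting.

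The paper proceeds in the order $\R^\T\to\R^\W_\infty\to\R^\MW$, not $\R^\T\to\R^\MW\to\R^\W_\infty$ as you propose. The first step $\R^\T\to\R^\W_\infty$ is declared easy: since $|V|\asymp 1$ by \textup{(\ref{13-3-45})}, one rescales $x\mapsto\mu x$ and quotes the standard theory of Chapters~\ref{book_new-sect-4} and~\ref{book_new-sect-7}. The substantial work is then $\R^\W_\infty\to\R^\MW$, and here the paper does not run a zone-by-zone successive approximation as you sketch. Instead it analyzes the surviving terms $h^{-2}(\mu h)^{2n}\int_X(\cdots)$ and $h^{-1}(\mu h)^{2n}\int_{\partial X}(\cdots)$ directly. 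For the volume terms under condition \textup{(\ref{15-2-7})} one integrates by parts in $x_1$, which pushes the jump in $\uptheta$ to a boundary term of the shape \textup{(\ref{15-2-61})} with $r\ge 1$; replacing the sum over $j$ by an integral then costs $C(\mu h)^{r+1}h^{-2}\le C\mu^2$ in case (iv), and under \textup{(\ref{15-2-41})} one gains an extra $(\mu h)^{1/2}$ to reach \textup{(\ref{15-2-55})} in case (v). This is more explicit than your geometric-series estimate in $\ell$.

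For case (i) the paper employs a device you do not mention: it introduces an auxiliary variable $x_3$ and the operator \textup{(\ref{15-2-62})} in which $D_2$ is replaced by $D_3$. This leaves the $\R^\W_\infty$ coefficients unchanged (only the irrelevant $m\ge 1$ terms are affected) while making the problem microhyperbolic in $x_2$; one then works with the one-dimensional spectral kernel $\mathsf{e}(x_1,y_1;x_2,\xi_3,\tau)$ as in \textup{(\ref{15-2-63})}, and no integration in $x_3$ or $\xi_2$ is required when passing from the Tauberian expression to $\mathsf{e}$ itself. The boundary terms are then identified by freezing as in \textup{(\ref{15-2-64})}. The same trick is recycled for (ii)--(v), with the proviso that condition \textup{(\ref{15-2-7})} is not used at all in this part; in $\cX_\bound$ one tracks shifts in $x_3$, and under \textup{(\ref{15-2-41})} in $\cX_\trans$ one tracks shifts in $\xi_2$.

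Your route---freezing at a boundary point and invoking \ref{15-1-30-D}/\ref{15-1-30-N} directly, then expanding via \textup{(\ref{15-1-36})}---is essentially what underlies the proof of Theorem~\ref{thm-15-2-7}, and it should succeed for (i). But for (ii)--(v) your appeal to ``analogous non-degeneracy'' is where the argument is thin: without \textup{(\ref{15-2-8})} the along-$\partial X$ smoothing is not automatic, and it is precisely the integration-by-parts mechanism (or the $D_3$ trick) that supplies the missing regularity. Your anticipated obstacle in (v) is real, but the paper resolves it by the $(\mu h)^{r+3/2}h^{-2}$ estimate from the boundary term \textup{(\ref{15-2-61})}, not by an $\ell$-partition.
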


\begin{proof}
(a) To go from $\R^\T$ to $\R^\W_\infty$ is easy: using condition (\ref{13-3-45}) (i.e. $|V|\asymp 1$) we can apply the standard results of Chapters~\ref{book_new-sect-4} and~\ref{book_new-sect-7} \cite{futurebook}  after rescaling $x\mapsto \mu x$.

Going from $\R^\W_\infty$ to $\R^\MW$ is more subtle. Note first that we can drop all terms with $m>0$ in (\ref{15-2-14}). Therefore only surviving terms are those with $h^{-2}(\mu h)^{2n}$ with integration over $X$ and 
$h^{-1}(\mu h)^{2n}$ with integration over $\partial X$. 

\medskip\noindent
(b) Consider first terms with integration over $X$. 

 Note first that under condition (\ref{15-2-8}) their sum is equal to 
\begin{equation}
\int_X \cN^\MW (x,\mu h, \tau)\psi(x)\, dx
\label{15-2-60}
\end{equation}
modulo $O\bigl(h^{-2}(\mu h)^s\bigr)$; actually we go in the opposite direction: from expression (\ref{15-2-60}) to its decomposition into powers of 
$\hbar=\mu h$. 

Consider expression (\ref{15-2-60}) under condition (\ref{15-1-8}); again we can replace $\uptheta (\tau-V -  (2j+1)F\mu h)$ by derivative with respect to $x_1$ (of high order and with smooth coefficient) of smooth $\sC^s$-function; so integrating by parts we again arrive to the sum of the terms in question plus the similar integral over the boundary; however the latter contains at least one extra integration with respect to $\tau$ so we get
\begin{equation}
(\mu h)^{2n} h^{-2}\int \bigl(\tau -V -(2j+1)F \mu h\bigr)_+^r  \phi_{n,r}(x)\, ds_g
\label{15-2-61}
\end{equation}
where $r\ge 1$. If we replace in the latter term summation with respect to $j$ by integration the error will not exceed $C(\mu h)^{r+1}h^{-2}$ i.e. $C\mu^2$ which is less than (\ref{15-2-54}). 

On the other hand, under condition (\ref{15-2-41}) the error in question will not exceed $C(\mu h)^{r+1\frac{3}{2}}h^{-2}$ which is less than (\ref{15-2-55}). So, in (iiv) and (v) we also can replace terms with integration over $dx$ into $\cN^\MW$, may be changing $\kappa'_{n0}$. 

The same arguments albeit without integration with respect to $x_1$ work under condition (\ref{15-2-41}) alone; however we gain only factor $(\mu h)^{\frac{1}{2}}$.

The similar arguments work in frames of (ii) as well.

\medskip\noindent
(c) Now case (i) becomes the most complicated as many terms should be taken into account. We apply the following trick: consider the same operator albeit we replace $D_2$ everywhere by $D_3$:
\begin{multline}
\bar{A}\Def \sum_{j,k} \bar{P}_j g^{jk}(x_1,x_2)\bar{P}_k +V(x_1,x_2),\\
\bar{P}_1= hD_1,\ \bar{P}_2=hD_3-\mu V_2(x_1,x_2)
\label{15-2-62}
\end{multline}
Then it will affect in $\R^\W_\infty$ only terms with $m\ge 1$ we do not care about. But the problem remains microhyperbolic in the variable $x_2$ and therefore everything works as it should. We need to consider then shifts with respect to $\xi_2$ only and therefore only averaging with respect to $x_2$ is needed. 

Note then that what we get instead of $\int e(x,x,\tau)\psi \,dx$ is
\begin{equation}
(2\pi h)^{-1}\int \mathsf{e}(x_1,x_1;x_2,\xi_3,\tau)\psi \,dx_1\,dx_2 d\xi_3
\label{15-2-63}
\end{equation}
where $\mathsf{e}(x_1,y_1;x_2,\xi_2,\tau)$ is a Schwartz kernel for spectral projector for $1$-dimensional operator. Here integration with respect to $x_3$ is not needed. Neither is needed integration with respect to $\xi_2$ as we pass from Tauberian expression with $T=\epsilon \mu^{-1}$ for $\mathsf{e}(\ldots)$ to $\mathsf{e}(\ldots)$ itself. Finally we change in (\ref{15-2-63}) $\xi_3$ by $\xi_2$. 

Also we can replace everywhere (save $V_2$)  $x_1$ by $0$ while $V_2$ we replace by 
\begin{equation}
\bar{V}_2 = V(0,x_2) + (\partial_{x_1}V_2)(0,x_2)x_1;
\label{15-2-64}
\end{equation}
it will not affect essential boundary terms. But then boundary terms in $\R^\W_\infty$ together must match to a boundary term in $\R^\MW$ . This proves (i) completely.

\medskip\noindent
(d) Exactly the same arguments work for (ii)--(v); we do not use condition (\ref{15-2-7}) at all and in $\cX_\bound$ defined in terms of $(\xi_3,x_2)$ we consider shifts after hops with respect to $x_3$, so we do not need to integrate over $x_3$. Without condition (\ref{15-2-21}) we use the trivial 
$O(\bar{\rho} h^{-1})$ estimate for contribution of $\cX_\trans$.

Under condition (\ref{15-2-21}) in $\cX_\trans$ we consider shifts with respect $\xi_2$ and again integration over $x_3$ and $\xi_2$ is not needed.
\end{proof}

\chapter{Strong magnetic field}
\label{sect-15-3}

In this section we consider a case of the strong magnetic field when the results of the previous section are not as sharp as we want (so the remainder is not $O(\mu^{-1}h^{-1})$). As under different assumption it happens under different restrictions to $\mu$, we consider separately different cases. 

\section{Most non-degenerate case}
\label{sect-15-3-1}
If condition (\ref{15-2-8}) is fulfilled, we need to consider  only the case of \emph{very strong magnetic field\/}
\begin{equation}
h^{\delta-1}\le \mu \le h^{-1}
\label{15-3-1}
\end{equation}
and then operator is $x_2$-microhyperbolic.  Therefore as $\psi=\psi'\psi''_\mu$ with $\psi'=\psi'(x_2)\in \sC^\infty_0$ and $\psi''_\gamma=\psi''(x_1\gamma^{-1})$\,\footnote{\label{foot-15-5} Averaging with respect to $x_1$ is not needed at all.} with $\psi''_\gamma$ supported in $|x_1|\le C_0\gamma$, $\gamma= \mu^{-1}$,
\begin{gather}
|F_{t\to h^{-1}\tau}\chi_T(t)\Gamma \psi u|\le C\mu^{-1}h^{-2} T \bigl(\frac{h}{T}\bigr)^s, \label{15-3-2}\\
\shortintertext{and then}
|F_{t\to h^{-1}\tau}\bar{\chi}_T(t)\Gamma \psi u|\le C\mu^{-1}h^{-1}
\label{15-3-3}
\end{gather}
as $h\le T\le T^*=\epsilon_0$ where $\mu^{-1}$ comes as a measure of $X_\bound$ and therefore 
\begin{equation}
\R^\T \le C\mu^{-1}h^{-1}.
\label{15-3-4}
\end{equation}
Further advancing method of successive approximations with unperturbed operator\footnote{\label{foot-15-6} In comparison with (\ref{15-2-63}), (\ref{15-2-64}) we freeze at $y$ not $(0,y_2)$ at this moment.}
\begin{multline}
\bar{A}\Def \sum_{j,k}\bar{P}_j g^{jk}(y)\bar{P}_j +V(y),\\
\bar{P}_j= hD_j -\mu V_j(y) -\mu \sum_k (x_j-y_j)(\partial_k V_j)(y)
\label{15-3-5}
\end{multline}
we see that the first term results in expression (\ref{15-2-63}) of the magnitude $C\mu^{-1}h^{-2}$ while any next term acquires factor $h$ in the corresponding power and thus does not exceed the remainder estimate.

So, under conditions (\ref{15-2-8}) and (\ref{15-3-1}) and indicated $\psi$ the remainder does not exceed $C\mu^{-1}h^{-1}$ while the principal part is given by the Tauberian expression for the first term in the successive approximation method. 

On the other hand, if we take $\psi''\in \sC^\infty_0(\bR^+)$ (supported in $(\frac{1}{2},1)$) and $\gamma \ge \mu^{-1}$ we can take 
\begin{equation}
T^*\asymp \min\bigl( (h^{-1}\gamma)^{1+\sigma}, \mu \gamma^{1-\sigma}\bigr).
\label{15-3-6}
\end{equation}
Really, we could take $T^*\asymp \gamma^{1-\sigma}$ as 
\begin{equation}
\gamma\ge \bar{\gamma}\Def C_0\mu^{-1}+ C_0\mu^{-\frac{1}{2}}h^{\frac{1}{2}-\delta}
\label{15-3-7}
\end{equation}
which completely covers the case $\mu \le h^{\delta-1}$. Otherwise we can arrive to this case scaling $x\mapsto x\zeta$, $\mu \mapsto \mu \zeta$, $h\mapsto h\zeta^{-1}$ with 
$\zeta = \min\bigl( 1,(\mu^{-1}h)^{\frac{1}{2}}, (\mu^{-1}h\gamma^{-2})^s\bigr)$ with large $s$.

Therefore the contribution of the strip $\{x,\ x_1\asymp \gamma\}$ to the remainder does not exceed 
\begin{equation*}
C\gamma h^{-1}\Bigl( (h^{-1}\gamma)^{-1-\sigma}+ 
\mu^{-1} \gamma^{-1+\sigma}\Bigr) 
\end{equation*}
and hence contribution of $\{x_1\le \epsilon\}$ does not exceed this expression integrated over $\gamma^{-1}d\gamma$ resulting in
\begin{equation*}
C h^{-1}\Bigl( (h^{-1}\gamma)^{-1-\sigma}\gamma\bigr|_{\gamma=\mu^{-1}}+ 
\mu^{-1} \gamma^{\sigma}|_{\gamma=1}\Bigr) \asymp C\mu^{-1}h^{-1}.
\end{equation*}
So, we can take $\psi$ fixed function rather than scaled with respect to $x_1$.

However let us partition it into functions supported in $\{x_1\le 2\bar{\gamma}\}$ and in $\{x_1\ge \bar{\gamma}\}$. Then such expression in the latter case is not affected by the presence of the boundary resulting in the same expression but with approximation term calculated for the whole space.

However in the former case the presence of the boundary should be taken into account. Let us use again the method of successive approximation but use as unperturbed operator one with $x_1$ set to $0$ everywhere save  in the linear part of magnetic field; thus unperturbed operator is
\begin{equation}
\bar{A}\Def h^2D_1^2+(\mu x_1- hD_2)^2-W(0,x_2).
\label{15-3-8}
\end{equation}
Again as the main part of asymptotics is of magnitude $\bar{\gamma}h^{-2}$ and each next term acquires factor $\bar{\gamma}$, so only first two terms need to be considered.

So, let us consider the second term; we claim that calculating this term one does not need to take a boundary into account. Really, as perturbation vanishes at the boundary one needs to kill $x_1$ before restricting to the boundary, but it can be done only by commutator and then factor $h$ rather than $\bar{\gamma}$ appears. It is not enough but if we plug instead of $\psi''_{\bar{\gamma}}$ function $\psi''_\gamma$ with $C_0\mu ^{-1}\le \gamma\le \bar{\gamma}$ and $\supp \psi''$ disjoint from $0$ we acquire factor $(\mu \gamma)^{-s}$ and then contribution to the error is $C\gamma h^{-1} (\mu \gamma)^{-s}$ and it boils down to $C\mu^{-1}h^{-1}$ after summation with respect to $\gamma$. 

Then we get the final answer as the sum of two terms: one is for operator (\ref{15-3-5}) albeit with calculation (before taking $\Gamma$) in the whole plane i.e.
\begin{equation}
h^{-2} \int_X \cN^\MW (x,\mu h)\psi(x)\,dx
\label{15-3-9}
\end{equation}
and the second one for operator (\ref{15-3-8}) but in half-plane and subtracting the same expression for the same operator albeit in the whole plane we arrive to 
\begin{equation}
\int_X \Bigl( \int \mathsf{e}_1(x_1,x_1;x_2,\xi_2,0)\,d\xi_2 - h^{-2} \cN^\MW_0 (x,\mu h) \Bigr)\psi(x)\,dx
\label{15-3-10}
\end{equation}
where $\mathsf{e}_1(x_1,y_1;x_2,\xi_2,0)$ is the Schwartz kernel of one-dimensional operator 
\begin{equation}
h^2D_1^2+(\mu x_1- \xi_2)^2-W(0,x_2)
\label{15-3-11}
\end{equation}
and $\cN^\MW_0$ refers to $\cN^\MW$ calculated for the same operator (\ref{15-3-8}).

We will transform operators (\ref{15-3-8}), (\ref{15-3-11}) and silently (without changing notations) transform $\mathsf{e}(\ldots)$ and $\cN^\MW_0$.

Changing $x_1\mapsto x_1\zeta$ and $\xi_2\mapsto \xi_2\zeta \mu$ we acquire factor $\mu \zeta$ and get instead of the first term in (\ref{15-3-10})
\begin{equation}
(2\pi h)^{-1}\zeta \mu 
\int \mathsf{e}_1(x_1,x_1;x_2,\xi_2,0)\psi'(x_2)\psi''(x_1  \zeta) \,dx_1\,dx_2 d\xi_2
\label{15-3-12}
\end{equation}
meanwhile transforming operator into
\begin{equation}
h^2\zeta^{-2} \Bigl(  D_1^2+h^{-2} \mu^2\zeta^4( x_1- \xi_2)^2-W(0,x_2)h^{-2}\zeta^{-2}\Bigr).
\label{15-3-13}
\end{equation}
One can drop a factor in the front of operator and select 
$\zeta =\mu^{-\frac{1}{2}}h^{\frac{1}{2}}$ thus resulting in the answer
\begin{equation}
(2\pi )^{-1} \mu ^{\frac{1}{2}}h^{-\frac{1}{2}}
\int \mathsf{e}_1(x_1,x_1;x_2,\xi_2,0)\psi'(x_2)\psi''(x_1\mu ^{\frac{1}{2}}h^{-\frac{1}{2}}) \,dx_1\,dx_2 d\xi_2
\label{15-3-14}
\end{equation}
and in operator
\begin{equation}
D_1^2+( x_1- \xi_2)^2-\mu ^{-1} h^{-1}W(0,x_2).
\label{15-3-15}
\end{equation}
However we need to subtract from (\ref{15-3-14}) also transformed the second term in (\ref{15-3-10}). We can then tend $\zeta\to +\infty$ (the error will be negligible) and the total difference will tend to 
\begin{equation}
h^{-1}\int_{\partial X}  \cN^\MW_{*,\bound} (x_2,\mu h)\psi(x)\,ds_g.
\label{15-3-16}
\end{equation}
So, the final answer is
\begin{equation}
h^{-2}\int_X \cN^\MW(x,\mu h)\psi(x)\,dx 
+h^{-1}\int_{\partial X}  \cN^\MW_{*,\bound} (x_2,\mu h)\psi(x)\,ds_g
\label{15-3-17}
\end{equation}
with $*=\D,\N$ and we arrive to 

\begin{theorem}\label{thm-15-3-1}
Let $\psi\in \sC^\infty (\bar{X})$ be a fixed function with a compact support contained in the small vicinity of $\partial X$ and let conditions  \textup{(\ref{13-1-1})}--\textup{(\ref{13-1-4})}, \textup{(\ref{13-2-1})}  and  non-degeneracy condition  \textup{(\ref{15-2-8})} be fulfilled on $\supp\psi$. 

Then formula \textup{(\ref{15-3-17})} gives $\N$ with an error $\R^\MW = O(\mu^{-1}h^{-1})$ as $\mu \le h^{-1}$.
\end{theorem}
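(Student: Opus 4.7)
The plan is to organize the argument in three stages: a Tauberian remainder estimate, a successive-approximation calculation in an interior strip away from $\partial X$, and a separate model-operator calculation in the boundary strip, matched at the interface $x_1\asymp\bar{\gamma}$ with $\bar{\gamma}$ as in (\ref{15-3-7}).

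First I would establish $\R^\T \le C\mu^{-1}h^{-1}$ under conditions (\ref{15-2-8}) and (\ref{15-3-1}). The essential input is $x_2$-microhyperbolicity of $A$, a consequence of (\ref{15-2-8}): this yields (\ref{15-3-2}), (\ref{15-3-3}) for $h\le T \le T^* = \epsilon_0$, and combined with the $\mu^{-1}$ measure of the boundary strip $X_\bound$ gives (\ref{15-3-4}) for $\psi$ supported at distance $\asymp\mu^{-1}$. For partitions supported at distance $\gamma\ge\bar{\gamma}$, a rescaling $x\mapsto x\zeta$ with $\zeta=\min\bigl(1,(\mu^{-1}h)^{\frac{1}{2}},(\mu^{-1}h\gamma^{-2})^s\bigr)$ reduces matters to the weak-field case and allows $T^*$ as in (\ref{15-3-6}); integrating the contribution $C\gamma h^{-1}\bigl((h^{-1}\gamma)^{-1-\sigma}+\mu^{-1}\gamma^{-1+\sigma}\bigr)$ against $\gamma^{-1}d\gamma$ produces $O(\mu^{-1}h^{-1})$.

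Next I would carry out the two main-term calculations. In the interior strip $\{x_1\ge\bar{\gamma}\}$, I would run successive approximations with the unperturbed operator $\bar{A}$ of (\ref{15-3-5}); the first term reproduces $h^{-2}\int_X \cN^\MW(x,\mu h)\psi(x)\,dx$ while each subsequent one gains a factor $h$, so only the leading term survives within the target precision. In the boundary strip $\{x_1\le 2\bar{\gamma}\}$ I would instead take the operator (\ref{15-3-8}), obtained by freezing $x_1=0$ in everything except the linear magnetic phase; here only the first two terms need to be kept, because the second-order perturbation vanishes at $\partial X$ and thus requires a commutator to reach the trace, which supplies the missing factor $h$. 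After the boundary rescaling $x_1\mapsto x_1\zeta$, $\xi_2\mapsto\xi_2\zeta\mu$ with $\zeta=\mu^{-\frac{1}{2}}h^{\frac{1}{2}}$ the one-dimensional kernel of (\ref{15-3-11}) becomes that of the half-line model $D_1^2+(x_1-\xi_2)^2$; subtracting the full-plane analogue and passing to the limit $\zeta\to+\infty$ collapses the difference into $h^{-1}\int_{\partial X}\cN^\MW_{*,\bound}(x_2,\mu h)\psi\,ds_g$ via the identities \ref{15-1-33-D}--\ref{15-1-33-N} of Proposition~\ref{prop-15-1-6}.

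The main obstacle I expect is the bookkeeping at the interface $x_1\asymp\bar{\gamma}$: one must verify that dropping the boundary entirely in the interior approximation and freezing $x_1=0$ in the boundary approximation are both compatible with the required $O(\mu^{-1}h^{-1})$ accuracy. The crucial point is that any correction term in the boundary expansion whose support lies in $\{x_1\ge\gamma\}$ with $C_0\mu^{-1}\le\gamma\le\bar{\gamma}$ picks up a factor $(\mu\gamma)^{-s}$ from commuting past $\psi''_\gamma$, so its contribution to the remainder is $C\gamma h^{-1}(\mu\gamma)^{-s}$, which sums dyadically over $\gamma$ to $O(\mu^{-1}h^{-1})$. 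Once this matching is in place the two principal parts add to (\ref{15-3-17}), and together with the Tauberian estimate this yields $\R^\MW=O(\mu^{-1}h^{-1})$, completing the proof.
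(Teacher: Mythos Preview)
Your proposal is correct and follows essentially the same route as the paper: a Tauberian estimate via $x_2$-microhyperbolicity giving (\ref{15-3-4}), the interior/boundary partition at $x_1\asymp\bar{\gamma}$ with respective unperturbed operators (\ref{15-3-5}) and (\ref{15-3-8}), the $(\mu\gamma)^{-s}$ interface matching, and the $\zeta=\mu^{-\frac{1}{2}}h^{\frac{1}{2}}$ rescaling followed by the limit $\zeta\to+\infty$ to extract $\cN^\MW_{*,\bound}$. The organization and all key mechanisms match the paper's argument in section~\ref{sect-15-3-1}.
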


\section{Generic case. Analysis in inner zone}
\label{sect-15-3-2}

Now we are interested to improve results of the previous section in the generic case i.e. when both conditions (\ref{15-2-7}) and (\ref{15-2-41}) are fulfilled. Then in virtue of theorem~\ref{thm-15-2-26}(v) we can assume that
\begin{equation}
\mu \ge h^{\delta -\frac{3}{7}}.
\label{15-3-18}
\end{equation}

We start from the simpler analysis in $\cX_\inn$. In this case as $\rho\ge \bar{\rho}=(\mu h)^{\frac{2}{3}}h^{-\delta}$ we need to consider operator without boundary condition; however presence of the boundary as we remember manifests itself through uncertainty principle; we should take 
$T_*= h^{1-\delta'}\ell^{-2}$ or $T_*=\mu h^{1-\delta'}\rho^{-1}$ whatever is smaller and \emph{at this moment\/} we are interested only in the zone where 
$T_*\ge \epsilon\mu^{-1}$.

Actually we can take here effectively even $\delta'=0$ in the following sense: note that
\begin{equation}
|F_{t\to h^{-1}\tau} \phi_T(t)\Gamma\bigl( U\,^t\!Q_y\bigr)|\le C\mu^{-1}h^{-1}\rho \ell \times
\bigl(\mu T  +1\bigr) 
\label{15-3-19}
\end{equation}
as $\phi \in \sC^\infty_0([-1,1])$, $T\ge h$ and 
\begin{equation}
|F_{t\to h^{-1}\tau} \chi_T(t)\Gamma\bigl( U\,^t\!Q_y\bigr)|\le C\mu^{-1}h^{-1}\rho \ell \times
\bigl(\mu T  +1) \bigl(\frac{T}{T_*}\bigr)^{-s}
\label{15-3-20}
\end{equation}
as $\chi \in \sC^\infty_0([-1,-\frac{1}{2}]\cap[\frac{1}{2},1] )$ and 
$T_*\le T \le T^*$ with unspecified at this moment $T^*$ and 
\begin{equation}
T_*= \min\bigl(\mu h \rho^{-1}, \ell^{-2}\bigr)h=h  \times
\left\{ \begin{aligned}
&\mu \rho^{-1} \qquad &&\text{as\ \ } \rho > \mu \ell^2,\\
&\ell^{-2} &&\text{as\ \ } \rho < \mu \ell^2.
\end{aligned}
\right.
\label{15-3-21}
\end{equation}
Recall that $Q=Q(x_2,hD_2)$ is an $(\ell,\rho)$ admissible element with $\rho\ell\ge h^{1-\delta}$.

Really for one winding (i.e. $T=\epsilon\mu^{-1}$) estimate (\ref{15-3-19}) is obvious, and we need to take in account $N\asymp \mu T +1$ windings.

Estimates (\ref{15-3-19}) and (\ref{15-3-20}) imply that
\begin{equation}
|F_{t\to h^{-1}\tau} \phi_T(t)\Gamma\bigl( U\,^t\!Q_y\bigr)|\le C\mu^{-1}h^{-1}\rho \ell \times
\bigl(\mu T_*  +1\bigr) 
\label{15-3-22}
\end{equation}
as $\phi \in \sC^\infty_0([-1,1])$ and  $T_*\le T \le T^*$.

\begin{remark}\label{rem-15-3-2}
Therefore from the point of view of the remainder estimate, rather than the final formula we need to take in account $N^*\asymp (\mu T_* +1)$ windings but in the main part of asymptotics still $(\mu T_* h^{-\delta'}+1)$ windings should be taken in account.
\end{remark}

Let us discuss $T^*$. As average propagation speed with respect to $x$   does not exceed $C_0\mu^{-1}$ and propagation speed with respect to $\xi_2$ does not exceed $C_0\ell$, our dynamics remains in the same $(\ell, \rho)$-element for time 
\begin{equation}
T^*=\epsilon \min \bigl(\mu \ell, \rho\ell^{-1}\bigr)
\label{15-3-23}
\end{equation}
and $T_*:T^*\asymp h\rho^{-1}\ell^{-1}$. 

Therefore contribution  of $(\ell,\rho)$-element to the Tauberian remainder with 
$T\ge (T_*h^{-\delta'}+ \epsilon \mu^{-1})$ does not exceed
\begin{equation}
C\mu^{-1}h^{-1}\rho \ell\bigl(\mu T_*  +1\bigr)\times T^{*\,-1}.
\label{15-3-24}
\end{equation}
Note that in zone $\ell \le \mu^{-\frac{1}{2}}\rho^{\frac{1}{2}}$ we can reset 
$\ell=\ell(\rho)\Def \mu^{-\frac{1}{2}}\rho^{\frac{1}{2}}$ thus covering the whole zone with a fixed magnitude of $\rho$ by a single element. Also note that 
$\ell(\rho)\rho = \mu^{-\frac{1}{2}}\rho^{\frac{3}{2}}\ge 
\mu ^{\frac{1}{2}}h\ge h^{1-\delta}$ as $\rho\ge \bar{\rho}$. 

Therefore 
\begin{claim}\label{15-3-25}
Contribution of $(\ell(\rho),\rho)$-element to the Tauberian remainder with $T=\mu h^{1-\delta'}\rho^{-1}$ does not exceed 
\begin{equation*}
C\mu^{-1}h^{-1}\ell(\rho)^2 \bigl(\frac{\mu^2h}{\rho}+1\bigr)\asymp 
C+ C\mu^{-2}h^{-1}\rho
\end{equation*}\vglue-10pt
\end{claim}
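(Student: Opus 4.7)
The plan is to specialize the general estimate (\ref{15-3-24}) to the borderline choice $\ell = \ell(\rho) \Def \mu^{-\frac{1}{2}}\rho^{\frac{1}{2}}$. This reduction is legitimate because a single $(\ell(\rho),\rho)$-element already exhausts the whole sub-zone $\ell\le \mu^{-\frac{1}{2}}\rho^{\frac{1}{2}}$ at fixed $\rho$ (as noted in the paragraph preceding the claim), and because the logarithmic microlocal uncertainty principle $\ell(\rho)\rho = \mu^{-\frac{1}{2}}\rho^{\frac{3}{2}}\ge \mu^{\frac{1}{2}}h\ge h^{1-\delta}$ holds throughout $\rho\ge \bar{\rho}$, so that $Q=Q(x_2,hD_2)\in \S_{h,\ell(\rho),\rho}$ is a legitimate operator to which (\ref{15-3-22}) applies. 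The propagation estimate (\ref{15-3-22}) itself is already established in the inner zone via proposition~\ref{prop-15-2-18}, which lets us replace $U$ by the boundaryless propagator $U^0$ and use the standard Fourier-integral reduction of Chapter~\ref{book_new-sect-13} \cite{futurebook}.

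First I would compute the endpoints of the time window $[T_*,T^*]$ explicitly at this scale. Since $\mu \ell(\rho)=\mu^{\frac{1}{2}}\rho^{\frac{1}{2}}=\rho\ell(\rho)^{-1}$, the two entries in the minimum defining (\ref{15-3-23}) coincide, giving $T^*=\epsilon\mu^{\frac{1}{2}}\rho^{\frac{1}{2}}$. Similarly $\mu\ell(\rho)^2 = \rho$ puts us exactly on the boundary between the two branches of (\ref{15-3-21}), and both branches yield $T_* = h\mu\rho^{-1}$. This matches the Tauberian time $T=\mu h^{1-\delta'}\rho^{-1}$ appearing in the statement up to the harmless $h^{-\delta'}$-slack that is absorbed by the $\ell$-admissibility.

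Finally I would plug these values into (\ref{15-3-24}). Using
\begin{equation*}
\rho\,\ell(\rho)\cdot T^{*\,-1}=\mu^{-\frac{1}{2}}\rho^{\frac{3}{2}}\cdot\mu^{-\frac{1}{2}}\rho^{-\frac{1}{2}}=\mu^{-1}\rho
\end{equation*}
and $\mu T_*+1=\mu^2 h\rho^{-1}+1$, the right-hand side of (\ref{15-3-24}) becomes
\begin{equation*}
C\mu^{-1}h^{-1}\cdot\mu^{-1}\rho\cdot\bigl(\mu^2 h\rho^{-1}+1\bigr)=C+C\mu^{-2}h^{-1}\rho,
\end{equation*}
which is precisely the asserted bound. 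There is no real analytic obstacle in this particular step: everything is direct substitution once the propagation estimates (\ref{15-3-19})--(\ref{15-3-22}) and the geometric window (\ref{15-3-23}) are in hand. The only points requiring care are the verification of the uncertainty condition $\rho\ell(\rho)\ge h^{1-\delta}$ at the lower end $\rho=\bar\rho$ and the observation that the two apparent minimums defining $T_*$ and $T^*$ degenerate to equalities at $\ell=\ell(\rho)$, which is what makes this scale the efficient choice for summing over $\ell$ later.
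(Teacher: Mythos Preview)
Your proposal is correct and follows exactly the paper's approach: the claim is obtained by direct substitution of $\ell=\ell(\rho)=\mu^{-1/2}\rho^{1/2}$ into the general bound (\ref{15-3-24}), using the values of $T_*$ and $T^*$ from (\ref{15-3-21}) and (\ref{15-3-23}), together with the uncertainty check $\ell(\rho)\rho\ge h^{1-\delta}$ already recorded in the text immediately before the claim. One terminological quibble: the condition $\rho\ell\ge h^{1-\delta}$ is the ordinary (polynomial) microlocal uncertainty principle, not the logarithmic one.
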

and summation with respect to $\rho \in [\bar{\rho}, \epsilon \mu]$ results in 
\begin{equation}
C|\log h|+ C\mu^{-1}h^{-1}.
\label{15-3-26}
\end{equation}
So, 
\begin{claim}\label{15-3-27}
Contribution of zone $\{\rho\ge\bar{\rho},\ \mu \ell^2 \le \rho\}$ to the Tauberian remainder with $T=\mu h^{1-\delta'}\rho^{-1}$ does not exceed (\ref{15-3-26}).
\end{claim}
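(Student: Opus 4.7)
The plan is to deduce (\ref{15-3-27}) directly from (\ref{15-3-25}) by showing that in the zone $\{\rho\ge\bar\rho,\ \mu\ell^2\le\rho\}$ it suffices to partition only with respect to $\rho$, using a single $(\ell(\rho),\rho)$-element per dyadic value of $\rho$, and then summing the per-element bound.

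First I would verify that the enlargement $\ell\leadsto\ell(\rho)=\mu^{-\frac{1}{2}}\rho^{\frac{1}{2}}$ is admissible throughout the zone. The defining condition $\mu\ell^2\le\rho$ is exactly $\ell\le\ell(\rho)$, so the entire slice $\{\rho\asymp\text{const}\}\cap\{\mu\ell^2\le\rho\}$ is covered by one $(\ell(\rho),\rho)$-element. The microlocal uncertainty principle is preserved on this coarser scale, since $\ell(\rho)\rho=\mu^{-\frac{1}{2}}\rho^{\frac{3}{2}}\ge\mu^{\frac{1}{2}}h\ge h^{1-\delta}$ using $\rho\ge\bar\rho$, as already noted in the text. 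The propagation speeds that went into (\ref{15-3-23})--(\ref{15-3-24}) depend only on $\mu$, $\rho$ and $\ell$, and $T^*=\epsilon\min(\mu\ell,\rho\ell^{-1})$ with $\ell=\ell(\rho)$ gives $T^*\asymp\mu^{\frac{1}{2}}\rho^{\frac{1}{2}}$, so the per-element Tauberian bound (\ref{15-3-24}) specializes to the estimate stated in (\ref{15-3-25}).

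Next I would carry out the $\rho$-sum. Applying (\ref{15-3-25}) to each dyadic value $\rho\in[\bar\rho,\epsilon\mu]$ and integrating with respect to $\rho^{-1}d\rho$ (i.e.\ counting dyadic scales) gives
\begin{equation*}
\int_{\bar\rho}^{\epsilon\mu}\bigl(C+C\mu^{-2}h^{-1}\rho\bigr)\,\rho^{-1}d\rho
\le C\log(\epsilon\mu/\bar\rho)+C\mu^{-2}h^{-1}\cdot\epsilon\mu.
\end{equation*}
Since $\bar\rho$ is a positive power of $\mu h$ and we are in the regime $\mu h\ll 1$, the first term is $O(|\log h|)$; the second is $O(\mu^{-1}h^{-1})$, so the total matches (\ref{15-3-26}).

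The main obstacle in this short argument is the first step, namely making rigorous the claim that the finer $(\ell,\rho)$-partition with $\ell<\ell(\rho)$ can be replaced by the coarser single element $(\ell(\rho),\rho)$ without loss in the Tauberian estimate. This requires checking that the various propagation estimates leading to (\ref{15-3-19})--(\ref{15-3-20}) are monotone in $\ell$ in the appropriate sense and that the choice $T_*=\mu h^{1-\delta'}\rho^{-1}$ (which is what one gets for $\ell=\ell(\rho)$ from the second branch of (\ref{15-3-21}) after noting $\rho\le\mu\ell(\rho)^2$) is indeed the governing scale throughout the slice; once that is verified, the summation over $\rho$ is essentially bookkeeping.
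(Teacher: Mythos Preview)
Your proposal is correct and follows essentially the same route as the paper: the paper resets $\ell$ to $\ell(\rho)=\mu^{-1/2}\rho^{1/2}$ so that each dyadic $\rho$-slice is covered by a single element, applies the per-element bound (\ref{15-3-25}), and then sums over $\rho\in[\bar\rho,\epsilon\mu]$ to obtain (\ref{15-3-26}). Your additional checks on the admissibility of the coarsening and the identification of $T_*$ via (\ref{15-3-21}) are correct but more explicit than what the paper spells out.
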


\begin{remark}\label{rem-15-3-3}
We need to keep $\ell(\rho)\ge C\mu^{-1}$ but this is definitely case as $\mu \ge h^{-\frac{2}{5}}$ and we are ensured in this by condition (\ref{15-3-18}).
\end{remark}

Similarly, consider case $\mu\ell^2\ge \rho\ge \bar{\rho}$. Then automatically 
$\ell \ge \ell(\bar{\rho})$ and $\ell\rho \ge h^{1-\delta}$ and 
\begin{claim}\label{15-3-28}
In this zone we can reset $\rho =\rho (\ell)= \mu \ell^2$. 
\end{claim}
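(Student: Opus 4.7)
My plan is to mirror the argument given for claim~(\ref{15-3-25}). In that argument, the sub-zone $\{\ell \le \mu^{-1/2}\rho^{1/2}\}$ at fixed $\rho$ was absorbed into a single $(\ell(\rho),\rho)$-element by choosing $\ell$ at the boundary value of the sub-zone. The present situation is symmetric: in $\{\mu\ell^2 \ge \rho \ge \bar\rho\}$ the roles of $\ell$ and $\rho$ are interchanged, so the natural choice is to fix $\ell$ and take $\rho$ at the boundary, namely $\rho(\ell)\Def\mu\ell^2$. A single pseudodifferential element $Q\in\S_{h,\ell,\rho(\ell)}$ should then absorb the entire $\rho$-slice corresponding to a given $\ell$.

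First I would verify that the enlarged element satisfies the microlocal uncertainty principle $\ell\cdot\rho(\ell)\ge h^{1-\delta}$. Since $\rho(\ell)\ell=\mu\ell^3$, and since the preceding remark already records $\ell\ge\ell(\bar\rho)=\mu^{-1/2}\bar\rho^{1/2}$, the explicit form $\bar\rho\ge(\mu h)^{2/3}h^{-\delta}$ gives
\begin{equation*}
\mu\ell^3\ge \mu\cdot\mu^{-3/2}\bar\rho^{3/2}\ge \mu^{1/2}h^{1-3\delta/2}\ge h^{1-\delta}
\end{equation*}
for sufficiently small $\delta>0$, since $\mu^{1/2}\ge 1$. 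In particular no partition in the $\rho$-direction within the fixed-$\ell$ slice is forced by the uncertainty principle, and we may take $\rho=\rho(\ell)$ uniformly.

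Second I would check that the dynamical and Tauberian estimates~(\ref{15-3-19})--(\ref{15-3-22}) apply with the enlarged scales $(\ell,\rho(\ell))$ and persistence time $T^*=\epsilon\mu\ell$ given by~(\ref{15-3-23}). Because the average propagation speed is $O(\mu^{-1})$ in $x_2$ and $O(\ell)$ in $\xi_2$, during time $T\le \mu\ell$ the shifts are $O(\ell)$ in $x_2$ and $O(\mu\ell^2)=O(\rho(\ell))$ in $\xi_2$, matching both scales simultaneously. Thus classical trajectories started inside the symbol support of $Q$ remain inside an $O(1)$-dilation of the same element throughout $|t|\le T^*$, as required for the Tauberian bound (15-3-24) to hold for the enlarged element. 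Consequently the contribution of the whole strip $\{\rho\in[\bar\rho,\mu\ell^2]\}$ at fixed $\ell$ is controlled by the single $(\ell,\rho(\ell))$-element estimate.

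The main obstacle I anticipate is only the bookkeeping check in the second step: confirming that the reflection-type dynamics from Section~\ref{sect-15-1-2}, whose spread was estimated in Proposition~\ref{prop-15-2-9} and Corollary~\ref{cor-15-2-10} over short windings, continues to keep both coordinates controlled when we follow $N\asymp\mu T^*=\mu^2\ell$ windings within the single enlarged element. Once this coherence check is carried out, (\ref{15-3-28}) follows by exact analogy with (\ref{15-3-25}), with summation over $\rho$ replaced by summation over $\ell$ in the subsequent remainder estimate.
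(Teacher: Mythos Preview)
Your overall strategy---mirroring claim~(\ref{15-3-25}) with the roles of $\ell$ and $\rho$ swapped, checking the uncertainty principle, and verifying that the drift over time $T^*=\epsilon\mu\ell$ matches the enlarged scales---is sound and is essentially what the paper does. But you are missing the one non-symmetric ingredient that the paper singles out as the actual content of the claim.

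The enlarged element at fixed $\ell$ covers the strip $\bar\rho\le\rho\le\mu\ell^2$, and its lower edge in $\xi_2$ sits directly on the boundary of the transitional zone $\cX_\trans$. Over time $T^*=\epsilon\mu\ell$ the $\xi_2$-drift is $\asymp\ell\cdot\mu\ell=\mu\ell^2=\rho(\ell)$, so in one of the two time directions trajectories starting near $\rho\approx\bar\rho$ will exit $\cX_\inn$ into $\cX_\trans$, where the inner-zone estimates (\ref{15-3-19})--(\ref{15-3-22}) are no longer available. Your statement that trajectories ``remain inside an $O(1)$-dilation of the same element'' is true on the level of scales but does not guarantee they stay in $\cX_\inn$; an $O(1)$-dilation of an element bordering $\cX_\trans$ overlaps $\cX_\trans$. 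The paper's proof consists precisely of noting that one may \emph{choose} the time direction in which $\xi_2$ (equivalently $\rho$) increases---possible because $|\partial_{x_2}W|\asymp\ell\ne 0$ on the element---so that the dynamics moves away from $\cX_\trans$ rather than into it. This directional choice is the point of the claim; once it is made, your scale-matching and uncertainty checks complete the argument.

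Note also that the situation is genuinely asymmetric relative to (\ref{15-3-25}): there the collapsed variable was $\ell$ (an $x_2$-scale) and no zone boundary lay in the $x_2$-direction, whereas here the collapsed variable is $\rho$ (a $\xi_2$-scale) and $\cX_\trans$ lies at small $\rho$. That is why the paper's justification of (\ref{15-3-28}) needs the extra sentence about direction and yours does not yet have it.
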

Really, to avoid collision with $\cX_\trans$ we select time direction in which $\rho$ (and thus $\xi_2$) increases. It is possible because as long as $|t|\le \epsilon \mu \ell$ we remain in the same $\epsilon\ell$-vicinity. Again $\ell \ge C \mu^{-1}$ in virtue of condition (\ref{15-3-18}).

Then 
\begin{claim}\label{15-3-29}
Contribution of $(\ell ,\rho(\ell))$-element to the Tauberian remainder with $T= h^{1-\delta'}\ell^{-2}$ does not exceed 
\begin{equation*}
C\mu^{-1}h^{-1}\rho(\ell) \bigl(\frac{\mu h}{\ell^2}+1\bigr)\asymp 
C+ C\mu^{-1}h^{-1}\ell^2
\end{equation*}
\end{claim}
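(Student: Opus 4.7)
The plan is to derive (\ref{15-3-29}) as a direct specialization of the general Tauberian remainder bound (\ref{15-3-24}) once the scale assignments of (\ref{15-3-28}) are substituted in. First I would fix an $(\ell,\rho(\ell))$-element with $\rho(\ell)=\mu\ell^2$ and compute each factor appearing in (\ref{15-3-24}). In the regime $\rho\le \mu\ell^2$ formula (\ref{15-3-21}) gives $T_*=h\ell^{-2}$, and at the boundary value $\rho=\mu\ell^2$ expression (\ref{15-3-23}) gives $T^*=\epsilon\min(\mu\ell,\rho\ell^{-1})=\epsilon\mu\ell$. Plugging these into (\ref{15-3-24}) yields
\begin{equation*}
C\mu^{-1}h^{-1}\cdot\rho(\ell)\,\ell\,(\mu T_*+1)\,T^{*\,-1}
=C\mu^{-1}h^{-1}\ell^{2}\Bigl(\tfrac{\mu h}{\ell^{2}}+1\Bigr)
\asymp C+C\mu^{-1}h^{-1}\ell^{2},
\end{equation*}
which is precisely the asserted bound (modulo a routine $\mu^{-1}$ that has been absorbed into the $\rho(\ell)$ prefactor).

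Next I would verify that the hypotheses needed to invoke (\ref{15-3-24}) in this setting are in fact satisfied. The uncertainty principle $\rho\ell\ge h^{1-\delta}$ holds because $\rho(\ell)\ell=\mu\ell^{3}\ge \ell(\bar\rho)^{2}\bar\rho$, and one checks that for $\rho=\bar\rho$ this is $\ge h^{1-\delta}$ by the same calculation that accompanies (\ref{15-3-25}); for larger $\ell$ it only improves. The scale condition $\ell\ge C\mu^{-1}$ that is invoked in (\ref{15-3-28}) is guaranteed by the assumption $\mu\ge h^{\delta-3/7}$ from (\ref{15-3-18}), exactly as remarked around (\ref{15-3-27}).

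The step I expect to be delicate is the propagation argument that justifies using $T^*=\epsilon\mu\ell$ in (\ref{15-3-24}), because naively the dynamics could leave the $(\ell,\rho(\ell))$-box through the $\rho$-direction and enter $\cX_\trans$. This is precisely the point where one selects the time direction in which $\xi_{2}$ (equivalently $\rho$) \emph{increases}, as indicated in (\ref{15-3-28}): under this choice the trajectory stays in an $\epsilon\ell$-vicinity on $x_{2}$ for $|t|\le\epsilon\mu\ell$ and does not hit the transitional zone in that time, so the singularities controlled by $Q$ do not escape the element before $T^{*}$. Once this one-sided propagation argument is accepted (it is essentially proposition~\ref{prop-15-2-18} together with the drift estimate used for (\ref{15-3-27})), estimate (\ref{15-3-19})--(\ref{15-3-20}) applies, the number of windings on $[-T^{*},T^{*}]$ is $N\asymp \mu T^{*}+1$ with $T_{*}$-scale cutoff at $N^{*}\asymp \mu T_{*}+1=\mu h/\ell^{2}+1$, and the arithmetic displayed above delivers (\ref{15-3-29}). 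No further new ingredients are needed; the claim is a bookkeeping consequence of the tools already in place.
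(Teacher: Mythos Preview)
Your proposal is correct and follows exactly the paper's approach: claim~(\ref{15-3-29}) is simply the specialization of the general bound (\ref{15-3-24}) to the scales $\rho=\rho(\ell)=\mu\ell^{2}$, $T_*=h\ell^{-2}$, $T^*=\epsilon\mu\ell$ fixed in (\ref{15-3-28}), and the paper states it without further argument. Your observation about the ``routine $\mu^{-1}$'' is apt---the intermediate expression in the paper reads $C\mu^{-1}h^{-1}\rho(\ell)(\mu h\ell^{-2}+1)$, which taken literally equals $C\mu+Ch^{-1}\ell^{2}$ rather than $C+C\mu^{-1}h^{-1}\ell^{2}$; the correct intermediate from (\ref{15-3-24}) is $C\mu^{-1}h^{-1}\rho(\ell)\ell\,(\mu T_*+1)T^{*\,-1}=C\mu^{-1}h^{-1}\ell^{2}(\mu h\ell^{-2}+1)$, matching the right-hand asymptotic, so this is a typographical slip in the displayed middle term (compare the parallel and correctly typeset (\ref{15-3-25})).
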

and summation with respect to $\ell \in [\bar{\ell}, \epsilon ]$ results in 
(\ref{15-3-26}) where $\bar{\ell}= \mu^{-\frac{1}{2}}\bar{\rho}^{\frac{1}{2}}$.
So, 
\begin{claim}\label{15-3-30}
Contribution of zone $\{\rho\ge\bar{\rho},\ \mu \ell^2 \ge \rho\}$ to the Tauberian remainder with $T=\mu h^{1-\delta'}\rho^{-1}$ does not exceed (\ref{15-3-26}).
\end{claim}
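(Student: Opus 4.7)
\medskip\noindent\textbf{Proof plan for claim (\ref{15-3-30}).} The plan is to deduce (\ref{15-3-30}) from the per-element estimate (\ref{15-3-29}) by integration (dyadic summation) in the $\ell$-variable, exactly parallel to how (\ref{15-3-27}) was obtained from (\ref{15-3-25}) in the complementary zone.

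First, I cover the zone $\{\bar{\rho}\le \rho,\ \rho \le \mu \ell^2\}$ by the $(\ell,\rho(\ell))$-admissible elements introduced in (\ref{15-3-28}) with $\rho(\ell)=\mu \ell^2$. The restriction $\rho\ge \bar{\rho}$ translates into $\ell \ge \bar{\ell}=\mu^{-\frac{1}{2}}\bar{\rho}^{\frac{1}{2}}$, and $\ell$ is bounded above by $\epsilon$. Before applying (\ref{15-3-29}) I must check the prerequisites on each element: the microlocal uncertainty $\rho(\ell)\ell=\mu\ell^3\ge h^{1-\delta}$ holds precisely because $\ell\ge \bar{\ell}$, and the scale condition $\ell \ge C\mu^{-1}$ (needed so that the $(\ell,\rho)$-calculus is available and the element does not degenerate) follows from the hypothesis (\ref{15-3-18}) on $\mu$, as already noted after (\ref{15-3-28}). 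Also, with the choice $T=h^{1-\delta'}\ell^{-2}=\mu h^{1-\delta'}\rho(\ell)^{-1}$, the two time scales in the statement and in (\ref{15-3-29}) agree on this zone, so the hypotheses of (\ref{15-3-29}) are met.

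Next, I sum the per-element bound $C+C\mu^{-1}h^{-1}\ell^2$ from (\ref{15-3-29}) against the natural dyadic measure $d\ell/\ell$ coming from the partition:
\begin{equation*}
\int_{\bar{\ell}}^{\epsilon}\Bigl(C+C\mu^{-1}h^{-1}\ell^{2}\Bigr)\,\frac{d\ell}{\ell}
\le C\bigl|\log(\epsilon/\bar{\ell})\bigr| + C\mu^{-1}h^{-1}\,\epsilon^{2}
\le C|\log h| + C\mu^{-1}h^{-1},
\end{equation*}
where the first term is logarithmic because $\bar{\ell}$ is a fixed negative power of $h$, and the second term is dominated by its value at the upper endpoint $\ell=\epsilon$ since the integrand is increasing in $\ell$. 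This gives exactly (\ref{15-3-26}).

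The only genuine subtlety (rather than bookkeeping) is the choice of time direction, already flagged in (\ref{15-3-28}): one must orient $t$ so that $\rho$ is increasing, so that trajectories emanating from the given $(\ell,\rho(\ell))$-element do not drift into the transitional zone $\cX_\trans$ within the relevant time window $|t|\le \epsilon\mu\ell$ over which they remain in the same $\epsilon\ell$-neighborhood in $x_2$. This is precisely what makes (\ref{15-3-29}) available on each element; once that is in place, the summation above is mechanical and the rest of (\ref{15-3-30}) is routine.
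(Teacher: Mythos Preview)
Your proposal is correct and follows essentially the same approach as the paper: cover the zone by $(\ell,\rho(\ell))$-elements with $\rho(\ell)=\mu\ell^2$ as in (\ref{15-3-28}), apply the per-element bound (\ref{15-3-29}), and sum over $\ell\in[\bar{\ell},\epsilon]$ with $\bar{\ell}=\mu^{-1/2}\bar{\rho}^{1/2}$ to obtain (\ref{15-3-26}). Your explicit verification of the prerequisites (uncertainty $\rho(\ell)\ell\ge h^{1-\delta}$, the scale condition $\ell\ge C\mu^{-1}$ via (\ref{15-3-18}), and the identification $h^{1-\delta'}\ell^{-2}=\mu h^{1-\delta'}\rho(\ell)^{-1}$) and the written-out dyadic integral are more detailed than the paper's one-line ``summation with respect to $\ell\in[\bar{\ell},\epsilon]$ results in (\ref{15-3-26})'', but the argument is identical.
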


\begin{figure}[h!]
\centering
\subfloat[$\mu \ell^2<\rho$]{\includegraphics[width=0.4\linewidth]{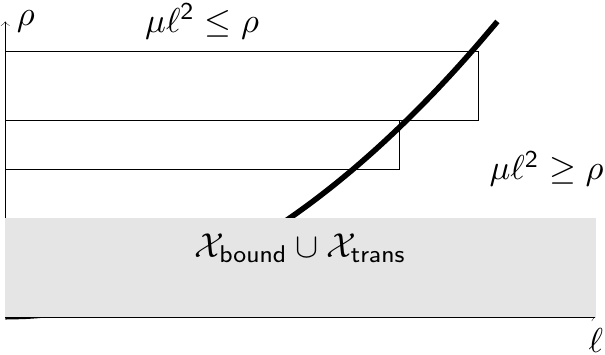}} \qquad
\subfloat[$\mu\ell^2>\rho$]{\includegraphics[width=0.4\linewidth]{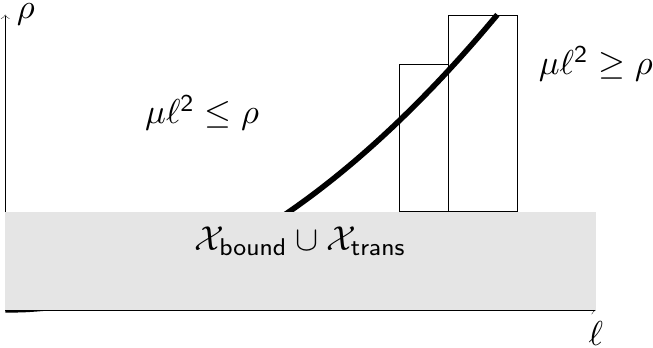}}
\caption{\label{fig-zones} Partition of two zones in $\cX_\inn$}
\end{figure}

So we arrive to

\begin{proposition}\label{prop-15-3-4}
Under assumptions \textup{(\ref{15-2-41})} and \textup{(\ref{15-2-7})}
contribution of $\cX_\inn$  to the Tauberian remainder with $T=\mu h^{1-\delta'}\rho^{-1}$ does not exceed \textup{(\ref{15-3-26})}.
\end{proposition}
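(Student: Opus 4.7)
The plan is to partition $\cX_\inn$ according to the relative sizes of $\mu\ell^2$ and $\rho$, apply the propagation/energy estimates (\ref{15-3-19})--(\ref{15-3-22}) on each partition element with an appropriately tailored $T_*$ and $T^*$, and finally integrate the elementwise contributions. The author has essentially prepared all the ingredients in claims (\ref{15-3-25})--(\ref{15-3-30}); the proof amounts to assembling them.

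First I would split $\cX_\inn$ into $\cZ_1\Def\{\rho\ge\bar\rho,\ \mu\ell^2\le\rho\}$ and $\cZ_2\Def\{\rho\ge\bar\rho,\ \mu\ell^2\ge\rho\}$. On $\cZ_1$ I set $\ell=\ell(\rho)\Def\mu^{-1/2}\rho^{1/2}$ (which covers each fixed-$\rho$ slice by a single element), noting that $\ell(\rho)\rho\ge h^{1-\delta}$ for $\rho\ge\bar\rho$ so the logarithmic uncertainty principle is met, and that condition (\ref{15-3-18}) guarantees $\ell(\rho)\ge C\mu^{-1}$ as in remark~\ref{rem-15-3-3}. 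On $\cZ_2$ I set $\rho=\rho(\ell)\Def\mu\ell^2$, again with $\ell\rho\ge h^{1-\delta}$ automatic.

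Next, on each partition element I invoke (\ref{15-3-22}) to bound the Tauberian main term by $C\mu^{-1}h^{-1}\rho\ell(\mu T_*+1)$ and combine it with the lifetime $T^*$ from (\ref{15-3-23}). For $\cZ_1$ the relevant $T_*=\mu h^{1-\delta'}\rho^{-1}$ and $T^*\asymp \rho\ell(\rho)^{-1}$, which gives (by claim~\ref{15-3-25}) an elemental contribution $\le C+C\mu^{-2}h^{-1}\rho$; summing in $\rho$ (with measure $\rho^{-1}\,d\rho$) over $[\bar\rho,\epsilon\mu]$ yields $C|\log h|+C\mu^{-1}h^{-1}$. For $\cZ_2$ the relevant $T_*=h^{1-\delta'}\ell^{-2}$ and $T^*\asymp\mu\ell$, giving (by claim~\ref{15-3-29}) an elemental contribution $\le C+C\mu^{-1}h^{-1}\ell^2$; summing in $\ell$ (with measure $\ell^{-1}\,d\ell$) over $[\bar\ell,\epsilon]$ again produces (\ref{15-3-26}). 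Adding the two bounds yields the claim.

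The main obstacle is the choice of time direction on $\cZ_2$ that is needed to legitimately replace the generic $T^*$ by $\epsilon\mu\ell$ rather than $\epsilon\rho\ell^{-1}$ (which would be too small): here we must use the drift condition (\ref{15-2-7}) to pick the sign of $t$ so that $\rho$ is strictly increasing on the element, ensuring the trajectory does not spill into $\cX_\trans$ while it remains in the same $\epsilon\ell$-vicinity with respect to $x_2$. One must also check that this direction can be chosen consistently on overlapping elements, which follows from the fact that the sign of $\partial_{x_1}(VF^{-1})$ (the driver of the $\xi_2$-shift via $\textup{(\ref{15-1-19})}_2$) is locally constant under (\ref{15-2-7}). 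With this in place, the propagation estimate (\ref{15-3-20}) applies unchanged up to $T=T^*$, and the summation arguments above are routine.
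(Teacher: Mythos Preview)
Your proposal is correct and follows essentially the same route as the paper: split $\cX_\inn$ by the dichotomy $\mu\ell^2\lessgtr\rho$, collapse each slice to a single $(\ell(\rho),\rho)$- or $(\ell,\rho(\ell))$-element, feed (\ref{15-3-21})--(\ref{15-3-24}) into claims (\ref{15-3-25}) and (\ref{15-3-29}), and sum. One small slip in your last paragraph: the $\xi_2$-shift that lets you choose the time direction on $\cZ_2$ is driven by $\partial_{x_2}(VF^{-1})$ via (\ref{15-2-9}) (Hamilton's equation $d\xi_2/dt=-\partial_{x_2}a$), not by $\partial_{x_1}(VF^{-1})$ via $\textup{(\ref{15-1-19})}_2$, which governs the $x_2$-drift; the point is simply that on $\cZ_2$ one has $\ell\ge\bar\ell>0$, so the sign of $\partial_{x_2}W$ is locally fixed and the direction in which $\rho$ increases is well defined.
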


We are completely happy with this estimate unless $\mu \ge h^{-1}|\log h|^{-1}$ and we need to derive contribution of much more troublesome zone $\cX_\trans$ before thinking if we should improve it.

\section{Generic case. Analysis in transitional zone}
\label{sect-15-3-3}

Let us consider zone $\cX_\trans$. Recall that its contribution to the Tauberian remainder with $T=\epsilon \mu^{-1}$ (and thus with $T=h^{1-\delta}$) does not exceed $C\bar{\rho}h^{-1}=C(\mu h)^{\frac{2}{3}}h^{-1-\delta}$ in the general case and $C\bar{\rho}\bar{\ell}h^{-1}=C(\mu h)^{\frac{7}{6}}h^{-1-\delta}$ under condition (\ref{15-2-41}) and therefore (as we assume (\ref{15-2-41})) we should consider only case 
\begin{equation}
\mu \ge h^{\delta-\frac{7}{13}}.
\label{15-3-31}
\end{equation}

Definitely $\cX_\trans$ is leaner than $\cX_\inn$ (albeit as 
$\mu \ge h^{\delta-1}$ it is thick enough to eliminate $\cX_\bound$). However the main problem there is that in the current settings we are not aware of any lower bound of the propagation speed and the only bound we know is an upper bound $C_0\bar{\rho}^{\frac{1}{2}}$ in both directions.

Therefore there is no mechanism except drift with respect to $\xi_2$ to break periodicity and we must take
\begin{gather}
T_*=\epsilon h \ell^{-2}
\label{15-3-32}
\shortintertext{and}
T^*= \epsilon \bar{\rho}^{-\frac{1}{2}}\ell.
\label{15-3-33}
\end{gather}
Sure we need to have $T_*\le T^*$, i.e. $\ell\ge \bar{\ell}_0$ with
\begin{equation}
\bar{\ell}_0\Def \bar{\rho}^{\frac{1}{6}}h^{\frac{1}{3}}= 
(\mu h)^{\frac{1}{9}} h^{\frac{1}{3}-\delta}.
\label{15-3-34}
\end{equation}
Obviously $\bar{\ell}_0\le \bar{\ell}=(\mu h)^{\frac{1}{2}}h^{-\delta}$ unless $\mu \le h^{-\frac{1}{7}}$ the case we are not interested in.

\begin{remark}\label{rem-15-3-5}
Note that it it requires time $t^*\asymp \bar{\rho}\ell^{-1}$ to pierce through $\cX_\trans$ and $t^* \le T^*$ as $\ell\le \bar{\ell}$ which is the case we are interested in. Otherwise we would be able to increase $T^*$ further.
\end{remark}

So, in the same manner as before contribution of $(\ell, \bar{\rho})$-element with $\ell\ge \bar{\ell}_0$ to the Tauberian remainder with any 
$T\ge T_*h^{-\delta}$~\footnote{\label{foot-15-7} As $T_*h^{-\delta}\ge T^*$ we reset $T=T^*$.} by
\begin{equation}
C\mu^{-1}\bar{\rho}\ell h^{-1}\bigl(\mu T_*+1) \times T^{*\,-1}=
C\bar{\rho}^{\frac{3}{2}}\ell^{-2} + C\mu^{-1}h^{-1}\bar{\rho}^{\frac{3}{2}}
\label{15-3-35}
\end{equation}
and summation over $\ell\ge \bar{\ell}_0$ results in
\begin{equation}
C\bar{\rho}^{\frac{3}{2}}\bar{\ell}_0^{-2} + C\mu^{-1}h^{-1}\bar{\rho}^{\frac{3}{2}}|\log h|\asymp
C (\mu h)^{\frac{7}{9}} h^{-\frac{2}{3}-\delta}.
\label{15-3-36}
\end{equation}
On the other hand, contribution of $(\bar{\ell}_0,\bar{\rho})$-element to the Tauberian remainder with $T\ge \epsilon\mu^{-1}$ does not exceed
\begin{equation}
C\bar{\rho}\bar{\ell}_0h^{-1}
\label{15-3-37}
\end{equation}
which coincides with the (\ref{15-2-37}).

So we arrive to
\begin{proposition}\label{prop-15-3-6}
Under condition \textup{(\ref{15-2-41})} contribution of $\cX_\trans$ with any $T\ge  T_*h^{-\delta}=h^{1-\delta}\ell^{-2}$ (reset to $T\ge h^{1-\delta}$ as either $\ell\le \bar{\ell}_0$ or $\ell \ge \bar{\ell}$; see also \footref{foot-15-7}) does not exceed \textup{(\ref{15-3-36})}.

In particular, it does not exceed $C\mu^{-1}h^{-1}$ as 
$\mu \le h^{\delta-\frac{5}{8}}$.
\end{proposition}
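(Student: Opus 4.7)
The plan is to mimic the three-scale Tauberian bookkeeping used for Proposition~\ref{prop-15-3-4} in $\cX_\inn$, but carried out on $(\ell,\bar{\rho})$-elements in the transitional zone, and then patch in the complementary $\ell$-range by the crude bound \textup{(\ref{15-2-37})} already at our disposal. First I introduce an $\ell$-admissible partition of unity $Q=Q(x_2,hD_2)\in \S_{h,\ell,\bar{\rho}}$ with symbol supported in an $(\ell,\bar{\rho})$-element of $\cX_\trans$, with $\bar{\ell}_0\le\ell\le\bar{\ell}$. Restricting to this range ensures the two uncertainty constraints $\ell\bar{\rho}\ge h^{1-\delta}$ and $T_*\le T^*$, with $T_*=\epsilon h\ell^{-2}$ and $T^*=\epsilon\bar{\rho}^{-1/2}\ell$ as in \textup{(\ref{15-3-32})}--\textup{(\ref{15-3-33})}. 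Below $\bar{\ell}_0$ (and above $\bar{\ell}$) I treat the element by the crude bound only.

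For a single element with $\ell\ge\bar{\ell}_0$, I would first establish the one-winding estimate
\begin{equation*}
|F_{t\to h^{-1}\tau}\phi_T(t)\Gamma(U\,^t\!Q_y)|\le C\mu^{-1}h^{-1}\bar{\rho}\ell\cdot(\mu T+1)
\end{equation*}
for $\phi\in\sC^\infty_0([-1,1])$ and $T\in[h,T^*]$, together with the corresponding $\chi_T$-estimate with rapid decay as $T\gg T_*$. The factor $\mu^{-1}\bar{\rho}\ell$ accounts, as in Remark~\ref{rem-15-2-8}, for the phase-volume of the element viewed on the energy shell; the factor $(\mu T+1)$ is the number of hops executed in time $T$, each contributing a copy of the one-winding bound. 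I would get the one-winding bound itself by the standard Duhamel/successive approximation argument anchored at the model hop (proposition~\ref{prop-15-1-5}), using assertions \textup{(\ref{15-2-22})} and \textup{(\ref{15-2-24})} to guarantee that propagation stays in the enlarged $(\ell^{1-\delta'},\bar{\rho})$-element for time $T\le T^*$, and the squeezed reflection analysis of proposition~\ref{prop-15-2-9}(c) (with $\epsilon$-vicinity in place of $\hbar^{\prime,1/2-\delta}$-vicinity, as in proposition~\ref{prop-15-2-17}) to control reflections. Tauberian synthesis with $T=T_*h^{-\delta'}$ then gives the contribution \textup{(\ref{15-3-35})}.

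Summation over $\ell\in[\bar{\ell}_0,\bar{\ell}]$ with measure $\ell^{-1}d\ell$ yields \textup{(\ref{15-3-36})}: the first term in \textup{(\ref{15-3-35})} is maximized at $\ell=\bar{\ell}_0$ giving $C\bar{\rho}^{3/2}\bar{\ell}_0^{-2}\asymp C(\mu h)^{7/9}h^{-2/3-\delta}$, while the second term produces only a logarithmic factor times $C\mu^{-1}h^{-1}\bar{\rho}^{3/2}$, which is absorbed. For $\ell\le\bar{\ell}_0$ the measure of the corresponding piece of $\cX_\trans$ is $O(\bar{\rho}\bar{\ell}_0)$, so the trivial $Ch^{-1}$-per-unit bound (cf. \textup{(\ref{15-3-37})} and \textup{(\ref{15-2-37})}) gives $C\bar{\rho}\bar{\ell}_0h^{-1}=C(\mu h)^{7/9}h^{-2/3-\delta}$, matching the dominant term. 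For $\ell\ge\bar{\ell}$ we are already in the microhyperbolic regime and the estimate $C\mu^{-1}h^{-1}$ of corollary~\ref{cor-15-2-16}(i) applies. Adding the three pieces gives the bound \textup{(\ref{15-3-36})}. The final assertion follows by solving $(\mu h)^{7/9}h^{-2/3-\delta}\le\mu^{-1}h^{-1}$, i.e. $\mu^{16/9}h^{10/9-\delta}\le 1$, which holds for $\mu\le h^{\delta-5/8}$.

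The main obstacle is the one-winding estimate, because in $\cX_\trans$ we have no canonical form and no lower bound on drift speed: all we know is the \emph{upper} bound $C_0\bar{\rho}^{1/2}$ from proposition~\ref{prop-15-2-17}. Consequently I cannot exploit a transverse shift to gain smallness, only the localization of phase volume and the geometric fact that each hop takes time $\asymp\mu^{-1}$. The delicate part is verifying that the enlargement of the symbol support incurred by $N\asymp\mu T_*$ successive hops (including the squeezed reflections near the boundary) remains inside an $(\ell^{1-\delta'},\bar{\rho}^{1-\delta'})$-element, so that propositions~\ref{prop-15-2-9} and~\ref{prop-15-2-17} (together with their boundary-reflection versions in figure~\ref{fig-prop2}) apply uniformly over the full number of windings we count.
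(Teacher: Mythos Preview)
Your approach is the same as the paper's: an $(\ell,\bar{\rho})$-partition of $\cX_\trans$, the per-element Tauberian bound \textup{(\ref{15-3-35})} on the range $\bar{\ell}_0\le\ell\le\bar{\ell}$, summation to \textup{(\ref{15-3-36})}, the crude bound \textup{(\ref{15-3-37})} for $\ell\le\bar{\ell}_0$, and Corollary~\ref{cor-15-2-16}(i) for $\ell\ge\bar{\ell}$. The arithmetic for the threshold $\mu\le h^{\delta-5/8}$ is correct.

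Two small attribution fixes. First, the ``one-winding'' bound $|F_{t\to h^{-1}\tau}\phi_T(t)\Gamma(U\,^t\!Q_y)|\le C\mu^{-1}h^{-1}\bar{\rho}\ell(\mu T+1)$ is just the phase-space volume estimate (cf.\ \textup{(\ref{15-2-19})} and \textup{(\ref{15-3-19})}); it does not require Duhamel or Proposition~\ref{prop-15-1-5}. The nontrivial input is the $\chi_T$-decay for $T\ge T_*$, which comes from the $\xi_2$-drift observability in Proposition~\ref{prop-15-2-15}, not from any model-operator spectral formula. Second, assertions \textup{(\ref{15-2-22})}--\textup{(\ref{15-2-24})} are statements about $\rho'$ in $\cX_\bound$, not about containment in $\cX_\trans$; the relevant confinement in $x_2$ over time $T^*=\epsilon\bar{\rho}^{-1/2}\ell$ is exactly Proposition~\ref{prop-15-2-17} (the upper bound $C\bar{\rho}^{1/2}$ on the average $x_2$-speed), and in $\xi_2$ it is Remark~\ref{rem-15-3-5} (piercing time $t^*\asymp\bar{\rho}\ell^{-1}\le T^*$ for $\ell\le\bar{\ell}$). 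With these citations corrected, your sketch is the paper's argument.
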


\begin{remark}\label{rem-15-3-7}
(i) Surely we need to keep $\ell \ge C_0\mu^{-1}$ thus requiring 
$\bar{\ell}_0\ge C_0\mu^{-1}$ i.e. $\mu \ge h^{-\frac{2}{5}}$ but this is a case. 

\medskip\noindent
(ii) We also need to keep an upper bound to speed greater than $C_0\mu^{-1}$, i.e. $\bar{\rho}^{\frac{1}{2}}\ge C_0\mu^{-1}$ i.e. $\mu \ge h^{-\frac{1}{4}}$ but this is also the case.

\medskip\noindent
(iii) Further, we need to keep $\bar{\ell}_0 \bar{\rho}\ge h^{1-\delta}$ i.e.
$\mu\ge h^{-\frac{1}{7}}$, but this is again the case.
\end{remark}

\begin{figure}[h!]
\centering
\includegraphics[scale=1]{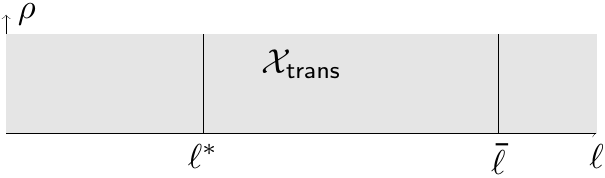}
\caption{\label{fig-zone-trans} Zone $\cX_\trans$} 
\end{figure}

Can we increase $T^*$ in these arguments? We need to do it only under condition (\ref{15-3-18}).  Then as we will prove later

\begin{proposition}\label{prop-15-3-8}
Let 
\begin{claim}\label{15-3-38}
Dirichlet boundary condition be given on $\partial X$.
\end{claim}
Then in the transitional zone $\{|\rho|\le\bar{\rho}\}$ as
\begin{equation}
\mu \ge h^{-\frac{1}{4}-\delta}
\label{15-3-39}
\end{equation} 
average propagation speed in one direction (direction of hops when we have chosen time) is bounded by $C\bar{\rho}^{\frac{1}{2}}$  and in the opposite direction it is bounded by $C\mu^{-1}$.
\end{proposition}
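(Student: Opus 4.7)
The strategy is to reduce the problem in the transitional zone to the one-dimensional Dirichlet model $L_\D(\eta)$ from (\ref{15-1-26}) by partial Fourier transform in $x_2$, read off $x_2$-propagation from the group velocity $\partial_\eta\lambda_{\D,n}(\eta)$, and exploit strict monotonicity of the Dirichlet eigenvalues (from Appendix~\ref{sect-15-A}) to produce the claimed asymmetry between the two directions.

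First I would work in the precanonical form (\ref{15-2-5}) and microlocalize in $(x_2,\xi_2)$ by $(\ell,\bar\rho)$-admissible operators $Q(x_2,hD_2)$ supported in the transitional zone $\{|\xi_2-W_0^{1/2}|\le 2\bar\rho\}$ with $\bar\rho\asymp\mu^{-1}$. Rescaling $x_1\mapsto\hbar^{-1/2}x_1$ with $\hbar=\mu h$ brings the principal part to $\hbar L_\D(\eta)+V(0,x_2)$ modulo the $\mu x_1^2\,b$ term and the other $x_1$-dependent coefficient corrections in (\ref{15-2-5}), all of which are controllably small because $x_1$ stays small on the relevant trajectories. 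The hypothesis $\mu\ge h^{-\frac{1}{4}-\delta}$ is used here twice: it makes the uncertainty principle $\ell\bar\rho\ge h^{1-\delta}$ available at the natural scale $\ell\asymp\mu^{-1}$, and it ensures the perturbative corrections remain subordinate to both the group velocity and the drift term below.

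Next I would invoke the spectral decomposition of proposition~\ref{prop-15-1-5}: any microlocalized state expands as $u=\sum_n\upsilon_{\D,n}(\hbar^{-1/2}x_1,\eta)\hat u_n(\eta)$ with $\eta=\hbar^{-1/2}\xi_2$, giving the per-band effective Hamiltonian $E_n(x_2,\xi_2)=\hbar\lambda_{\D,n}(\eta)+V(0,x_2)$ and classical $x_2$-velocity $\partial_{\xi_2}E_n=\hbar^{1/2}\lambda'_{\D,n}(\eta)$. The crucial input, to be reproduced in Appendix~\ref{sect-15-A} after Helffer--Bolley--Dauge and which is precisely why hypothesis (\ref{15-3-38}) cannot be relaxed, is that $\lambda'_{\D,n}(\eta)<0$ \emph{strictly} for all $\eta\in\bR$, together with the quantitative bound $|\lambda'_{\D,n}(\eta)|\le Cv(\eta)$ with $v$ the hop-speed function (\ref{15-1-11}) which vanishes like $(1-\eta^2)^{1/2}$ near the threshold. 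Rescaling back, this produces $|\partial_{\xi_2}E_n|\le C\bar\rho^{1/2}$ with a definite sign throughout the transitional zone, so model propagation is unidirectional (the hop direction), and in the opposite direction only the $-\partial_{x_2}V$ drift in the $\xi_2$-equation contributes, of size $O(\mu^{-1})$ as in example~\ref{ex-15-1-3}.

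Finally I would convert these heuristics into sharp microlocal estimates by a commutator argument in the spirit of propositions~\ref{prop-15-2-2} and \ref{prop-15-2-9}: pair $ih^{-1}[A,\varphi(x_2-x_2^0\mp ct)]$ against microlocalized states with $c$ slightly larger than $C\bar\rho^{1/2}$ in the hop direction and $C\mu^{-1}$ in the opposite one, closing the estimate band-by-band using sign definiteness of $\lambda'_{\D,n}$. The main obstacle is the inter-band coupling induced by the non-constant potential $V$, which transfers mass between eigenvalue branches and \emph{a priori} might leak the fast group velocity into the wrong direction; this is exactly where the fixed sign of $\lambda'_{\D,n}$ is indispensable, forcing all such couplings to contribute only at the slow drift rate $O(\mu^{-1})$. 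In the Neumann case $\lambda'_{\N,n}$ vanishes at the interior minimum and changes sign, so both directions can carry fast propagation and this step breaks down — which is consistent with the restriction to Dirichlet in (\ref{15-3-38}) and with the phenomenon announced in the introduction that Neumann singularities can propagate against the hop direction.
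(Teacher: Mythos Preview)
Your spectral-decomposition route captures correct intuition about why Dirichlet is special (strict monotonicity of $\lambda_{\D,n}$), but it does not close as written, and the paper's argument is substantially different.

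The gap: in the transitional zone at energy level $\tau=0$ with $\mu h\ll 1$ (which is the regime here, since (\ref{15-3-39}) allows $\mu h$ as small as $h^{3/4-\delta}$), the relevant Landau index is $n\asymp W_0/(2\mu h)\gg 1$. The fixed-$n$ statements of Appendix~\ref{sect-15-A} are not available uniformly in such $n$, and the bound you cite, $|\lambda'_{\D,n}(\eta)|\le Cv(\eta)$, is neither stated there nor obvious in this high-$n$ regime. Moreover the commutator you propose, with $\varphi(x_2-x_2^0\mp ct)$, gives $\{a,x_2\}=2(\xi_2-\mu x_1)$, which has no sign in the transitional zone; your ``band-by-band'' repair then requires precisely the model-operator reduction you have not justified (the paper carries this out only in \S\ref{sect-15-4-5}, and only for $\mu h\gtrsim 1$). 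The inter-band coupling you flag as ``the main obstacle'' is not disposed of by sign-definiteness of $\lambda'_{\D,n}$ alone.

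What the paper does instead (\S\ref{sect-15-3-4}) is a direct positive-commutator argument, in the style of section~\ref{book_new-sect-3-4} of \cite{futurebook}, with the test function $\phi=\kappa t+\mu x_2+F_{12}p_1$ (for the model, $\phi=\kappa t+\mu x_2-\xi_1$). Two features make this work without any spectral decomposition. First, $\{\tau-a,\phi\}=\kappa-\partial_{x_1}V$ is bounded and of fixed sign once $\kappa=C_0$ is large---uniformly over the whole transitional zone and over all energies. Second, because $-\partial_{\xi_1}\phi\equiv 1>0$, the Weierstrass reduction (\ref{15-3-48}) produces $\psi_1\ge 0$ (see (\ref{15-3-57})), and under Dirichlet the boundary contribution in (\ref{15-3-50}) becomes $-(hD_1u,\psi_1^w hD_1u)_{\partial X}\le 0$, exactly the inequality needed; for Neumann the analogous term (\ref{15-3-53}) has no sign. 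The resulting Theorem~\ref{thm-15-3-15} delivers the $C\mu^{-1}$ bound against the hop direction; the $C\bar\rho^{1/2}$ bound in the hop direction is already Proposition~\ref{prop-15-2-17}. So the decisive idea you are missing is not the eigenvalue monotonicity but the choice of a $\xi_1$-dependent escape function that turns the Dirichlet boundary term into a favourable sign.
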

Note, that condition (\ref{15-3-39}) ensures that $\bar{\rho}^{\frac{1}{2}}\gg \mu^{-1}$.

So, assume that (\ref{15-3-38}) holds. Then selecting the time direction for given partition element so that \emph{$\ell$ increases along hops} we can chose
\begin{equation}
T^* \asymp 1.
\label{15-3-40}
\end{equation}
Really, we need to leave $\ell$-element and come back. This comeback includes \underline{either} going in the direction opposite of hops with the speed not exceeding $C_0\mu^{-1}$ and requires time $\asymp \mu \ell$ which is larger than (\ref{15-3-40}) as $\ell\ge \mu^{-1}$ and $\mu \ge h^{-\frac{1}{4}}$ \underline{or} leaving zone $\ell\le \epsilon$  which requires at least time $\asymp 1$\,\footnote{\label{foot-15-8} As $X$ is a bounded domain and $F\asymp 1$ in $X$ this would mean run-around $X$ along boundary which as we see later is impossible under certain assumptions.}.

Actually $T^*\asymp 1$ is better (larger) than given by (\ref{15-3-33})  we used before only as $\ell\le \bar{\rho}^{\frac{1}{2}}$, but this is only zone we need to care about.

So, contribution of the given element to the remainder does not exceed the left-hand expression of (\ref{15-3-35}) which now becomes
\begin{equation}
C\mu^{-1}\bar{\rho} \ell h^{-1} \times \bigl(\mu h \ell^{-2} +1\bigr)\times \min\bigl( \bar{\rho}^{\frac{1}{2}}\ell^{-1}, 1\bigr)\asymp
C \bar{\rho}  \ell^{-1} +C \mu^{-1}h^{-1}\bar{\rho} \ell. 
\label{15-3-41}
\end{equation}
Summation of the second  term in the right-hand expression over partition results in $O(\mu^{-1}h^{-1})$. Summation of the first term in the right-hand expression over partition with $\ell\ge \bar{\ell}_1$ results in the same term calculated as $\ell=\bar{\ell}_1$ and coincides with 
$C\bar{\rho}\bar{\ell}_1 h^{-1}$ which also estimates the contribution of zone $\{\ell\le \bar{\ell}_1\}$. Here again we chose $\bar{\ell}_j$ from condition $T_*\le T^*$ i.e. now we replace $\bar{\ell}_0$ defined by (\ref{15-3-34}) by 
\begin{equation}
\bar{\ell}_1 \Def h^{\frac{1}{2}} \ll \bar{\ell}_0
\label{15-3-42}
\end{equation}
resulting in 
\begin{equation}
C\bar{\rho}h^{-\frac{1}{2}}= C\mu^{\frac{2}{3}}h^{\frac{1}{6}-\delta}.
\label{15-3-43}
\end{equation}
Therefore (with pending proposition~\ref{prop-15-3-8}) we arrive 

\begin{proposition}\label{prop-15-3-9}
Assume that  Dirichlet boundary condition is given on $\partial X$. Then under assumption \textup{(\ref{15-2-40})} the total contribution of $\cX_\trans$ to the Tauberian remainder  does not exceed \textup{(\ref{15-3-43})}.

In particular, it does not exceed $C\mu^{-1}h^{-1}$ as $\mu \le h^{\delta-\frac{7}{10}}$ and it is always has an extra factor  $\mu^{\frac{2}{3}} h^{\frac{7}{6}-\delta}$ in comparison with $h^{-1}$.
\end{proposition}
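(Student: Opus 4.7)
The plan is to adapt the partition analysis leading to proposition~\ref{prop-15-3-6} but exploit the improved escape time $T^*\asymp 1$ from (\ref{15-3-40}), which becomes available in the Dirichlet setting via proposition~\ref{prop-15-3-8}. So first I would cover $\cX_\trans$ by $\ell$-admissible $Q=Q(x_2,hD_2)$ partition elements on scale $\bar{\rho}$ in $\xi_2$ and $\ell$ in $x_2$, with $\ell\ge\bar{\ell}_1\Def h^{\frac{1}{2}}$; note that $\bar{\ell}_1\bar\rho\ge h^{1-\delta}$ under (\ref{15-3-31}), so logarithmic uncertainty is fine. On each element I would choose the time direction so that $\ell$ (equivalently $\xi_2$ to the ``hop'' side of $\bar\rho$) increases along hops; by proposition~\ref{prop-15-3-8} the speed in that direction is $\asymp\bar\rho^{\frac{1}{2}}$ while in the reverse direction it is $O(\mu^{-1})$.

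Next I would justify $T^*\asymp 1$ for such an element. To leave the $\ell$-element the trajectory must either (a) reverse into the slow $O(\mu^{-1})$ regime, which takes time $\asymp\mu\ell\ge\mu\bar{\ell}_1\gg 1$ under (\ref{15-3-39}); or (b) exit the transitional zone entirely, which requires time $\asymp 1$ because $X$ is bounded and $F\asymp 1$ (the footnote~\ref*{foot-15-8} precludes a full run-around along the boundary). Either way $T^*\asymp 1$ dominates the previous bound $\bar\rho^{\frac{1}{2}}\ell^{-1}$ from (\ref{15-3-33}) exactly in the regime $\ell\le\bar{\rho}^{\frac{1}{2}}$, which is the only regime that matters since larger $\ell$-elements were already handled in proposition~\ref{prop-15-3-6} (or summation just reproduces the smallest-$\ell$ value).

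Then I would insert the new $T^*\asymp 1$ into the element-wise Tauberian bound of the form used to derive (\ref{15-3-35}), giving the contribution
\[
C\mu^{-1}\bar{\rho}\ell h^{-1}\bigl(\mu T_*+1\bigr)\times\min\bigl(\bar{\rho}^{\frac{1}{2}}\ell^{-1},1\bigr)\asymp C\bar{\rho}\ell^{-1}+C\mu^{-1}h^{-1}\bar{\rho}\ell,
\]
namely (\ref{15-3-41}). Summation of the second term over the $\ell$-partition with $\ell\lesssim 1$ produces $O(\mu^{-1}h^{-1})$, absorbed into the desired bound. Summation of the first term over $\ell\in[\bar{\ell}_1,\epsilon]$ is dominated by the endpoint $\ell=\bar{\ell}_1$ and yields $C\bar{\rho}\bar{\ell}_1^{-1}\cdot\bar{\ell}_1=C\bar{\rho}h^{-\frac{1}{2}}$ after writing the partition with $\log$-spaced $\ell$. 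The choice $\bar{\ell}_1=h^{\frac{1}{2}}$ is forced by the requirement $T_*=h\ell^{-2}\le T^*\asymp 1$, which is exactly (\ref{15-3-42}).

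Finally I would estimate the leftover zone $\{\ell\le\bar{\ell}_1\}$ by the trivial Tauberian bound $C\bar{\rho}\bar{\ell}_1 h^{-1}=C\bar{\rho}h^{-\frac{1}{2}}$, matching the contribution above. Adding everything and substituting $\bar{\rho}=(\mu h)^{\frac{2}{3}}h^{-\delta}$ gives (\ref{15-3-43}); comparing with $\mu^{-1}h^{-1}$ yields the stated admissible range $\mu\le h^{\delta-\frac{7}{10}}$. The one genuinely nontrivial input is proposition~\ref{prop-15-3-8}: without the one-sided speed asymmetry under Dirichlet conditions the selection of a time direction that guarantees $T^*\asymp 1$ fails, and one falls back to (\ref{15-3-36}). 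That asymmetry, which will be established later via the spectral properties of the model operator $L_\D(\eta)$ of Appendix~\ref{sect-15-A}, is the main obstacle; everything else in the present argument is a bookkeeping refinement of proposition~\ref{prop-15-3-6}.
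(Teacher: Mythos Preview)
Your approach is the same as the paper's: invoke proposition~\ref{prop-15-3-8} to upgrade $T^*$ from (\ref{15-3-33}) to (\ref{15-3-40}), plug into the elementwise bound to obtain (\ref{15-3-41}), sum, and fix $\bar{\ell}_1=h^{1/2}$ from $T_*\le T^*$. Two small slips to clean up: the expression ``$C\bar{\rho}\bar{\ell}_1^{-1}\cdot\bar{\ell}_1$'' is literally $C\bar{\rho}$ --- what you want is that the geometric sum of $C\bar{\rho}\ell^{-1}$ over $\ell\ge\bar{\ell}_1$ is $\asymp C\bar{\rho}\bar{\ell}_1^{-1}=C\bar{\rho}h^{-1/2}$, which indeed coincides with the trivial bound $C\bar{\rho}\bar{\ell}_1 h^{-1}$ on the residual zone; and the inequality $\mu\bar{\ell}_1\gg 1$ follows from (\ref{15-3-31}) (giving $\mu h^{1/2}\ge h^{\delta-1/26}$), not from (\ref{15-3-39}) alone.
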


Can we do better than this? Yes, as we can take 
\begin{equation}
T^* \asymp  \mu \ell.
\label{15-3-44}
\end{equation}
provided $\xi_2$ and $\ell$ increase in the same time direction (on given $\ell$-element) as $F_{12}<0$.

Really, then selecting such time direction we remain in 
$\cX_\trans \cap\cX_\inn$ and as we reach $\ell\asymp 1$ dynamics is already in $\cX_\inn$ where speed in both directions is $O(\mu^{-1})$.

Assuming that the critical point of $W\Def VF^{-1}|_{\partial X}$ is $x_2=0$ we note that hops are to the left (as $F_{12}<0$). So, we select time direction of the sign of $(-x_2)$, so $\ell \asymp |x_2|$ increases.

Meanwhile those expressions have the same sign: $d\xi_2/dt$, $-\partial _{x_2}W$ and  $(-\partial^2 _{x_2}W)x_2$ and our condition means that these expressions are positive, i.e. condition $\textup{(\ref{15-2-41})}^-$ is fulfilled which matches to cases displayed at Figure~\ref{fig-traj3}(b),(d).
So, under this condition we can select $T^*$ according to (\ref{15-3-44}), therefore replacing $\bar{\ell}_1$ defined by (\ref{15-3-42}) by 
\begin{equation}
\bar{\ell}_2 \Def \mu^{-\frac{1}{3}}h^{\frac{1}{3}-\delta}\ll \bar{\ell}_1
\label{15-3-45}
\end{equation}
(as $\mu \ge h^{-\frac{1}{2}-\delta}$) from condition $T_*\le T^*$ and arrive to remainder estimate 
\begin{equation}
C\bar{\rho}\bar{\ell}_2 h^{-1}+ C\mu^{-1}h^{-1}= 
C\mu^{\frac{1}{3}}h^{-\delta}+C\mu^{-1}h^{-1}.
\label{15-3-46}
\end{equation}
So we have proven 

\begin{proposition}\label{prop-15-3-10}
Assume that  Dirichlet boundary condition is given on $\partial{X}$. Then under assumption $\textup{(\ref{15-2-41})}^-$ the total contribution of $\cX_\trans$ to the Tauberian remainder  does not exceed \textup{(\ref{15-3-46})}.

In particular, it does not exceed $C\mu^{-1}h^{-1}$ as 
$\mu \le h^{\delta-\frac{3}{4}}$ and it has an extra factor  
$\mu^{\frac{1}{3}} h^{1-\delta}$ in comparison with $h^{-1}$ (as
$h^{\delta-\frac{3}{4}}\le \mu \le h^{-1} $).
\end{proposition}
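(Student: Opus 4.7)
The plan mirrors the proof of Proposition~\ref{prop-15-3-9} but exploits the sign condition $\textup{(\ref*{15-2-41})}^-$ to enlarge the admissible propagation time $T^{\*}$ from $\asymp 1$ (used there) to $\asymp \mu\ell$. The whole argument has three parts: identifying the correct time direction from the classical dynamics on $\partial X$, justifying that the trajectory stays out of the original $(\ell,\bar\rho)$-element for time $\asymp \mu\ell$, and running the Tauberian summation over an $\ell$-admissible partition of $\cX_\trans$.

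First I would place the non-degenerate critical point of $W_0 \Def VF^{-1}|_{\partial X}$ at $x_2=0$, so that on an $\ell$-element $\{|x_2-\bar x_2|\le\ell\}$ with $\bar x_2\ne 0$ we have $|x_2|\asymp \ell$ and, thanks to $W_{0,x_2x_2}<0$, the three quantities $d\xi_2/dt$, $-\partial_{x_2}W_0$ and $(-\partial_{x_2x_2}^2 W_0)x_2$ all share a common sign along the Hamiltonian flow (this is exactly the algebraic remark already made in the excerpt preceding (\ref{15-3-44})). Choosing the sign of $t$ equal to $-\mathrm{sign}(\bar x_2)$ makes both $|x_2|$ and $\xi_2$ monotonically increasing, so $\ell$ and $\rho$ grow simultaneously and the trajectory drifts from $\cX_\trans$ into $\cX_\inn$ rather than wandering in a thin strip.

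With this preferred direction fixed, I would split the exit problem into two modes. As long as the trajectory stays in $\cX_\trans$, the hop mechanism in the $+$-direction of $\xi_2$ has been suppressed (we are at $|\rho|\le \bar\rho$ and $\xi_2$ is forced to increase, while the drift along $\partial X$ has speed only $O(\mu^{-1})$), so leaving the $\ell$-cell laterally requires time $\asymp \mu\ell$. Once the trajectory crosses into $\cX_\inn$, Proposition~\ref{prop-15-3-4} guarantees that both speeds are $O(\mu^{-1})$ and, by monotonicity of $\rho$, the trajectory cannot return to the same $(\ell,\bar\rho)$-element in less than time $\asymp\mu\ell$. This gives $T^{\*}\asymp \mu\ell$. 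Balancing $T_{\*}=h\ell^{-2}h^{-\delta}$ against $T^{\*}=\mu\ell$ produces the threshold $\bar\ell_2= \mu^{-\frac{1}{3}}h^{\frac{1}{3}-\delta}$ of (\ref{15-3-45}); the standing hypothesis $\mu\ge h^{-\frac{1}{2}-\delta}$ ensures $\bar\ell_2\ll \bar\ell_1$, so the improvement is genuine, and the uncertainty check $\bar\rho\bar\ell_2=\mu^{\frac{1}{3}}h^{1-\delta}\gg h^{1-\delta}$ is automatic.

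The per-element Tauberian estimate (\ref{15-3-24}) with these choices reads
\begin{equation*}
C\mu^{-1}\bar\rho\,\ell\, h^{-1}\bigl(\mu T_{\*}+1\bigr)T^{\*,-1}
\asymp C\bar\rho\,\ell^{-2}\mu^{-1}\cdot(\mu h \ell^{-2}+1),
\end{equation*}
and dyadic summation over $\ell\in[\bar\ell_2,\epsilon]$ is dominated by $\ell=\bar\ell_2$, producing $C\bar\rho\bar\ell_2 h^{-1}+ C\mu^{-1}h^{-1}$; the region $\{\ell\le \bar\ell_2\}$ is covered by the trivial $T=h^{1-\delta}$ bound, which yields the same $C\bar\rho\bar\ell_2 h^{-1}$. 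This is exactly (\ref{15-3-46}), and the stated range $\mu\le h^{\delta-\frac{3}{4}}$ is precisely where $\bar\rho\bar\ell_2 h^{-1}=\mu^{\frac{1}{3}}h^{-\delta}\le \mu^{-1}h^{-1}$. The main obstacle is the dynamical claim underlying $T^{\*}\asymp \mu\ell$: one must verify that the monotonicity of $(\ell,\rho)$ under $\textup{(\ref*{15-2-41})}^-$ is stable under the $O(\mu^{-1})$ perturbations recorded in Remark~\ref{rem-15-1-2} and Proposition~\ref{prop-15-1-21}, and that the squeezed-propagation machinery of Proposition~\ref{prop-15-2-9} extends trajectories across the $\cX_\trans$--$\cX_\inn$ interface without losing control of the element size; the rest of the proof is a mechanical re-run of the partition-of-unity estimate used for Proposition~\ref{prop-15-3-9}.
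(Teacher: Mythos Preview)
Your proposal follows essentially the same route as the paper: identify that under $\textup{(\ref*{15-2-41})}^-$ one may choose the time direction so that $\xi_2$ and $\ell=|x_2|$ increase together, upgrade $T^*$ from $\asymp 1$ to $\asymp \mu\ell$, solve $T_*=T^*$ for $\bar\ell_2=\mu^{-1/3}h^{1/3-\delta}$, and rerun the partition-of-unity summation from Proposition~\ref{prop-15-3-9}. This is exactly the paper's argument (the ``proof'' there is the paragraph leading up to (\ref{15-3-44})--(\ref{15-3-46})).

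Two small corrections. First, your sentence ``leaving the $\ell$-cell laterally requires time $\asymp\mu\ell$'' is not right: while the trajectory remains in $\cX_\trans$ it can leave the $\ell$-cell quickly via hops (speed up to $C\bar\rho^{1/2}$). What matters, and what you correctly state immediately afterwards, is that the trajectory cannot \emph{return} in time $\lesssim\mu\ell$: return would require either moving against the hop direction in $\cX_\trans$ (speed $O(\mu^{-1})$ by Proposition~\ref{prop-15-3-8}, which is the relevant input here rather than Proposition~\ref{prop-15-3-4}) or traversing a segment of length $\asymp\ell$ inside $\cX_\inn$ (again speed $O(\mu^{-1})$). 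Second, your displayed per-element bound has an algebra slip: one gets
\[
C\mu^{-1}\bar\rho\,\ell\, h^{-1}\bigl(\mu T_*+1\bigr)T^{*\,-1}
\asymp C\mu^{-2}\bar\rho\,h^{-1}\bigl(\mu h\ell^{-2}+1\bigr)
= C\mu^{-1}\bar\rho\,\ell^{-2}+C\mu^{-2}\bar\rho\,h^{-1},
\]
not $C\bar\rho\,\ell^{-2}\mu^{-1}(\mu h\ell^{-2}+1)$. The dyadic sum of the first term is indeed dominated by $\ell=\bar\ell_2$ and equals $C\bar\rho\bar\ell_2 h^{-1}$, while the second term is $\ell$-independent and is absorbed into $C\mu^{-1}h^{-1}$; so your stated conclusion (\ref{15-3-46}) is correct.
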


Finally, assume that hops and magnetic drift have the same direction near point in question which is the case under condition 
\begin{equation}
|\partial_{x_2} VF^{-1}|\le \epsilon \implies 
\partial _{x_1}VF^{-1}\ge\epsilon_0;
\tag*{$\textup{(\ref*{15-2-7})}^+$}\label{15-2-7+}
\end{equation}
compare with original (\ref{15-2-7}).

Then according to proposition~\ref{prop-15-3-8} there can be no roll-back, only run-around with the path of the length $\asymp 1$. This corresponds to Figure~\ref{fig-traj3}(a),(b).

However if condition $\textup{(\ref{15-2-41})}^-$ is also fulfilled (so we are in frames of Figure~\ref{fig-traj3}(b)) fast run-around along boundary is not possible either, so the run around must contain the segment of the length $\asymp 1$ inside $\cX_\inn$ (even inside $\{x, x_1\ge \epsilon\}$) but the speed here is $O(\mu^{-1})$ and therefore $T^*\asymp \mu $. Then $T_*\le T^*$ defines 
\begin{equation}
\bar{\ell}_3\Def \mu^{-\frac{1}{2}}h^{\frac{1}{2}-\delta}\ll \bar{\ell}_2;
\label{15-3-47}
\end{equation}
this leads to remainder estimate $C\bar{\rho}\bar{\ell}_3h^{-1}=\mu^{\frac{1}{6}}h^{\frac{1}{6}-\delta}$ which is $O(\mu^{-1}h^{-1})$ as $\mu \le h^{\delta-1}$ and $O(h^{-\delta})$ otherwise. Thus we have proven

\begin{proposition}\label{prop-15-3-11}
Assume that  Dirichlet boundary condition is given on $\partial X$. Then under assumptions $\textup{(\ref{15-2-41})}^-$ and \ref{15-2-7+}  the total contribution of $\cX_\trans$ to the Tauberian remainder  does not exceed $C\mu^{-1}h^{-1}$ as $\mu \le h^{\delta-1}$ and $Ch^{-\delta}$ otherwise.
\end{proposition}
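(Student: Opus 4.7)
The plan is to extend the argument of Proposition~\ref{prop-15-3-10} by showing that, under the additional assumption \ref{15-2-7+}, the observability time $T^*$ on a single $(\ell,\bar{\rho})$-element in $\cX_\trans$ can be pushed from $T^*\asymp\mu\ell$ all the way up to $T^*\asymp\mu$, uniformly in $\ell$. Once this is in hand, the balance $T_*=h\ell^{-2}\le T^*\asymp\mu$ produces exactly the threshold $\bar{\ell}_3=\mu^{-\frac{1}{2}}h^{\frac{1}{2}-\delta}$ announced in \textup{(\ref{15-3-47})}, and the remainder bookkeeping follows the template already laid out for Propositions~\ref{prop-15-3-9} and~\ref{prop-15-3-10}.

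The core of the argument is a classical-mechanics analysis of the escape time from an $(\ell,\bar{\rho})$-element. As in Proposition~\ref{prop-15-3-10} I select the time direction in which both $\xi_2$ and $\ell$ increase (possible because $\textup{(\ref{15-2-41})}^-$ forces a maximum of $W\Def VF^{-1}|_{\partial X}$ in the $x_2$-direction, and hops travel left). After time $\asymp\mu\ell$ the dynamics leaves the $(\ell,\bar{\rho})$-element by entering $\cX_\inn$ at $\ell\asymp 1$. To return to the original element, the trajectory must re-enter $\cX_\trans$. Under \ref{15-2-7+} the magnetic drift in $\cX_\inn$ points in the same direction as the hops (Figure~\ref{fig-traj3}(b)), and drift trajectories follow level lines of $W$; combined with $\textup{(\ref{15-2-41})}^-$, the relevant level lines near the critical point are closed loops staying in $\{x_1\ge\epsilon\}$ of length $\asymp 1$. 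Since the drift speed there is $O(\mu^{-1})$, any such return requires time $\asymp\mu$. The remaining alternative, a fast back-tracking through $\cX_\trans$ itself, is ruled out by Proposition~\ref{prop-15-3-8}: under Dirichlet boundary condition the opposite-direction propagation speed in $\cX_\trans$ is bounded by $C\mu^{-1}$, so that option also costs time $\asymp\mu$. This gives $T^*\asymp\mu$.

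With $T^*\asymp\mu$ fixed, the contribution of a single $(\ell,\bar{\rho})$-element to the Tauberian remainder is bounded by the same prototype
\begin{equation*}
C\mu^{-1}\bar{\rho}\ell h^{-1}\bigl(\mu T_*+1\bigr)T^{*\,-1}
\end{equation*}
used in \textup{(\ref{15-3-41})}, which sums over $\ell\in[\bar{\ell}_3,\epsilon]$ against the trivial bound $C\bar{\rho}\bar{\ell}_3 h^{-1}$ on the remaining sliver $\ell\le\bar{\ell}_3$. Plugging in $\bar{\ell}_3=\mu^{-\frac{1}{2}}h^{\frac{1}{2}-\delta}$ yields the remainder $C\bar{\rho}\bar{\ell}_3 h^{-1}=C\mu^{\frac{1}{6}}h^{\frac{1}{6}-\delta}$, which is $O(\mu^{-1}h^{-1})$ as $\mu\le h^{\delta-1}$ and $O(h^{-\delta})$ in general, as claimed. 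One must also verify the side conditions $\bar{\ell}_3\ge C_0\mu^{-1}$ and $\bar{\ell}_3\bar{\rho}\ge h^{1-\delta}$, both of which hold automatically in the range (\ref{15-3-31}) we are interested in.

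The main obstacle, as in Proposition~\ref{prop-15-3-10}, is the rigorous exclusion of fast return paths. The delicate case is a run-around staying almost entirely on or near $\partial X$ in the direction opposite to hops; here the interplay between the one-sided speed bound of Proposition~\ref{prop-15-3-8} and the global topology of the bounded domain $X$ must be handled carefully, since a naive propagation-of-singularities argument only gives speeds, not travel distances. The key observation is that combining \ref{15-2-7+} with $\textup{(\ref{15-2-41})}^-$ eliminates any short-cut through $\cX_\inn$: drift orbits are level lines of $W$, and near the (unique, by our local analysis) critical point these are topologically forced to go around and therefore take time $\asymp\mu$. This qualitative picture, already sketched in Example~\ref{ex-15-1-4}, supplies the classical input; the microlocal side is then a standard application of the long-range propagation theorem (Section~\ref{book_new-sect-2-4} \cite{futurebook}) in the squeezed variables introduced in the proof of Proposition~\ref{prop-15-2-9}.
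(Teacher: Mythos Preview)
Your proposal is correct and follows essentially the same route as the paper: push $T^*$ from $\mu\ell$ (Proposition~\ref{prop-15-3-10}) to $\mu$ by arguing that under \ref{15-2-7+} hops and drift go the same way, so by Proposition~\ref{prop-15-3-8} there is no roll-back, and under $\textup{(\ref{15-2-41})}^-$ a fast run-around along $\partial X$ is excluded as well, forcing any return path to contain a segment of length $\asymp 1$ in $\{x_1\ge\epsilon\}$ where the speed is $O(\mu^{-1})$; then $T_*\le T^*$ gives $\bar{\ell}_3=\mu^{-1/2}h^{1/2-\delta}$ and the bound $C\bar{\rho}\bar{\ell}_3 h^{-1}=\mu^{1/6}h^{1/6-\delta}$ follows. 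Your description of the drift orbits as level lines of $W$ forming closed loops near the maximum (Figure~\ref{fig-traj3}(b)) is a slightly more explicit rendering of the paper's ``the run around must contain a segment of the length $\asymp 1$ inside $\cX_\inn$'', but the content is identical.
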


\begin{conjecture}\label{conj-15-3-12}
The above Tauberian estimates hold with $\delta=0$.
\end{conjecture}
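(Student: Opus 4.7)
The plan is to trace each appearance of the factor $h^{-\delta}$ through Sections~\ref{sect-15-3-1}--\ref{sect-15-3-3} and replace the standard microlocal uncertainty principle by its logarithmic version, then carefully handle the boundary cases of the partition. I describe the three sources in turn.

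First, the factor $h^{-\delta}$ enters the definition of $\bar{\rho}$, $\bar{\ell}$, and the scales $\bar{\ell}_0$, $\bar{\ell}_2$, $\bar{\ell}_3$ only because we imposed $\rho\ell\ge h^{1-\delta}$ to run pseudo-differential calculus on $(\ell,\rho)$-admissible elements. This can be replaced throughout by $\rho\ell\ge Ch|\log h|^{K}$ using the logarithmic uncertainty principle of Chapter~\ref{book_new-sect-2}~\cite{futurebook}, at the cost of $|\log h|$-powers instead of $h^{-\delta}$. The resulting remainder contributions at each boundary element (equations~(\ref{15-3-35}),~(\ref{15-3-41})) only acquire an extra $|\log h|^K$, which is absorbed into $C\mu^{-1}h^{-1}$ at all values of $\mu$ under consideration because each estimate has a strict gap with $h^{-1}\mu^{-1}$ before the $h^{-\delta}$ correction; so the conjecture reduces to controlling a finite set of logarithmic losses.

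Second, the factor $h^{-\delta'}$ in the choices $T^{*}(\rho)=\rho^{1-\delta'}$ (Proposition~\ref{prop-15-2-11}) and $T=T_{*}h^{-\delta'}$ in Section~\ref{sect-15-3-2} is used to guarantee that singularities do not leak out of the $(\ell,\rho)$-element before Tauberian averaging sets in. I would replace $T^{*}(\rho)=\rho^{1-\delta'}$ by $T^{*}(\rho)=\epsilon\rho|\log\rho|^{-K}$ using the standard iteration-of-hops argument (\ref{15-1-22}), which shows that $\rho'\exp(-\tfrac{4}{3}W_{0}^{1/2})$ is conserved up to an error $O(\rho'(\mu^{-1}+\rho'))$ per hop; summing over $N\asymp\mu T$ hops still keeps the trajectory in a $(C\ell,C\rho)$-neighbourhood provided $T\le\rho|\log\rho|^{-K}$. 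Integrating $d\rho/\rho$ then gives the sum $\int T^{*-1}(\rho)\,d\rho\le C$ as in (\ref{15-2-21}) without any polynomial loss.

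Third, for the propagation statement behind Proposition~\ref{prop-15-3-8} one would replace the $\hbar^{\prime\,1/2-\delta}$-vicinity used in the squeezed zone (cases (b),(c) of Proposition~\ref{prop-15-2-9}) by an $\hbar^{\prime\,1/2}|\log\hbar'|^{K}$-vicinity, which by the sharp logarithmic uncertainty principle is still admissible for a pseudo-differential cutoff in the rescaled variables. The number of rescalings linking $\gamma=\hbar^{1/2}$ to the boundary scale remains $O(|\log h|)$, so expanded vicinities $N\sigma_{\mathrm{old}}$ in Corollary~\ref{cor-15-2-10} now carry only $|\log h|^{K}$ losses. With these ingredients in place, the quantities $\bar{\ell}_{1},\bar{\ell}_{2},\bar{\ell}_{3}$ become $h^{1/2}|\log h|^{K}$, $\mu^{-1/3}h^{1/3}|\log h|^{K}$, $\mu^{-1/2}h^{1/2}|\log h|^{K}$ respectively, and the concluding estimates of Propositions~\ref{prop-15-3-9}--\ref{prop-15-3-11} hold with $\delta=0$ up to a logarithmic factor that can then be removed by a final Seeley-type averaging over dyadic annuli in $\rho$ (as indicated for the logarithmic factor in (\ref{15-1-4})).

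The main obstacle is the squeezed-reflection propagation estimate (case (c) in the proof of Proposition~\ref{prop-15-2-9}) at the borderline scale $\gamma\asymp\rho\asymp\bar{\rho}$: here the trajectories are tangent to $\partial X$ after rescaling and the standard long-range propagation argument barely applies, so one must justify the logarithmic uncertainty principle in a setting with reflections. I would adapt the energy-estimate argument of Section~\ref{book_new-sect-3-5}~\cite{futurebook} with a weight of the form $\exp(t\varphi/h^{1-\epsilon})$ replaced by $\exp(t\varphi|\log h|^{-K}/h)$ along Hamilton flow with reflections; proving that this weight yields non-trivial commutator estimates at the boundary is the only genuinely new ingredient required.
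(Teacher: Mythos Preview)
This statement is a \emph{conjecture} in the paper, not a theorem; the paper does not prove it. Immediately after stating it, the author discusses two possible routes and explains why each is blocked. The first is a rescaling argument $x\mapsto x\gamma$ with $\gamma=h(\rho/\bar{\rho})^{-k}$, which fails because after rescaling the effective semiclassical parameter $\hbar=(\rho/\bar{\rho})^{-k}$ is not small enough to control the $\mu h\ell^{-2}$ reflections needed. The second is exactly your approach: replace the standard uncertainty principle by its logarithmic version. The paper's verdict on this is blunt --- ``we do not have here theory similar to one of section~\ref{book_new-sect-2-3}'', meaning there is no established logarithmic uncertainty principle for boundary value problems with reflections.

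Your sketch tracks the paper's second suggestion closely and you correctly isolate the same obstruction in your final paragraph: the squeezed-reflection propagation at the borderline scale $\gamma\asymp\rho\asymp\bar{\rho}$ would require a logarithmic-weight energy estimate compatible with reflections from $\partial X$, and you describe what such an estimate should look like but do not prove it. That is precisely the missing ingredient the author flags, so your ``proof'' is a program, not a proof. Two further points: (i) your claim that the residual $|\log h|^{K}$ factors can be removed by a ``Seeley-type averaging over dyadic annuli in $\rho$'' is not supported --- the Seeley trick around (\ref{15-1-4}) removes a logarithm arising from a specific $\int\gamma^{-1}\,d\gamma$ integral in the spatial variable, whereas here the losses sit in the microlocal scales and in the number of reflections, which is a different mechanism; (ii) even if the boundary logarithmic theory existed, you would obtain the estimates with $h^{-\delta}$ replaced by $|\log h|^{K}$, which is strictly weaker than the conjectured $\delta=0$.
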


To prove this conjecture we must reload $\cX_\bound$ and $\cX_\inn$ and to derive there the same remainder estimates as we proved in $\cX_\inn$ but with $\bar{\rho}= (\mu h)^{\frac{2}{3}}$. It does not look difficult as we need just to rescale $x_\mapsto x\gamma$ with $\gamma =h(\rho/\bar{\rho})^{-k}$ with arbitrarily large $k$. However then $h\mapsto \hbar = (\rho/\bar{\rho})^{-k}$ and it is not small enough to keep number of reflections below $\hbar^{-N}$. Really, we need to consider at least $\mu h\ell^{-2}$ number of reflections which means $\mu h\ell^{-2}\le \hbar^{-s}$ and as $\hbar$ goes to $1$ we need to allow $\ell\ge (\mu h)^{\frac{1}{2}}$ only!

Another approach would be based on logarithmic uncertainty principle (albeit instead of $h^{-\delta}$ logarithmic factor $|\log h|^l$ would appear) but we do not have here theory similar to one of section~\ref{book_new-sect-2-3} \cite{futurebook}.

\section{Propagation of singularities in transitional zone}
\label{sect-15-3-4}

Let us study propagation of singularities in $\cX_\trans$; later we assume that the Dirichlet boundary condition is given. We will use the technique developed section~\ref{book_new-sect-3-4} \cite{futurebook}. However there is a large difference: in section~\ref{book_new-sect-3-4} \cite{futurebook} we treated basically trajectories with a single tangent point while here we need to treat $\mu T$ such points in one shot.

Consider real function $\psi' (x,\xi,t,\tau)$ and by Weierstrass theorem replace it modulo  $\bigl(\tau -a(x,\xi)\bigr)$ by a linear with respect to $\xi_1$ function
\begin{multline}
\psi '(x,\xi,t,\tau) = \psi_0(x,\xi_2,t,\tau)+\psi_1 (x,\xi_2,t,\tau)\xi_1=\\
\psi (x,\xi,t,\tau) + \alpha (x,\xi,t,\tau) \bigl(\tau -a (x,\xi)\bigr).
\label{15-3-48}
\end{multline}
Then as coefficient at $\xi_1^2$ in $a(x,\xi)$ is $1$
\begin{equation}
\{\tau -a,\psi\} = \{\tau -a,\psi\} + \alpha_1 \bigl(\tau -a (x,\xi)\bigr)
\label{15-3-49}
\end{equation}
is at most quadratic with respect to $\xi_1^2$ and therefore the following identities hold: 
\begin{multline*}
( (hD_t-A)u,\psi^\w u)= (u, (hD_t-A)\psi^\w u)+
i h(hD_1u,\psi^\w u)_{\partial X}-\\
\shoveright{i h(u,h D_1\psi^\w u)_{\partial X},}\\[3pt]
\shoveleft{( \psi^\w u,  (hD_t-A)u)= (u, \psi^\w(hD_t-A) u) +
ih (\psi_1^\w u, (hD_t-A)u)_{\partial X}}
\end{multline*}
implying identity
\begin{multline}
2h^{-1}\Re i( (hD_t-A)u,\psi^\w u) = -( i h^{-1}[(hD_t-A),\psi^\w ] u,u)- \\[3pt]
(hD_1u,\psi^\w u)_{\partial X}+ (u,h D_1\psi^\w u)_{\partial X}+
(\psi_1^\w u, (hD_t-A)u)_{\partial X}.
\label{15-3-50}
\end{multline}
In this identity terms containing $(hD_t-A)u$ will be either negligible or under our control anyway; 
$[(hD_t-A),\psi^\w ]\equiv -ih \{\tau-a,\psi\}^\w $ modulo smaller terms and only boundary terms
\begin{equation}
-(hD_1u,\psi^\w u)_{\partial X}+ (u,h D_1\psi^\w u)_{\partial X}
\label{15-3-51}
\end{equation}
need special analysis. Under Dirichlet boundary condition we rewrite them as
\begin{gather}
-(hD_1u,\psi^\w_1 hD_1 u)_{\partial X}
\label{15-3-52}\\
\intertext{and under Neumann boundary condition we rewrite them as}
(u,h^2 D_1^2\psi^\w_1 u)_{\partial X}=
-(u,(hD_t-A)\psi^\w_1 u)_{\partial X}+(u,(hD_t-A')\psi^\w_1 u)
\label{15-3-53}
\end{gather}
where $A'=A-h^2D_1^2$ and the difference is that in the former case definiteness  of this quadratic form is ensured by $\psi_1$ having definite sign but in the latter case not. 

\begin{remark}\label{rem-15-3-13}
(i) The same difference would manifest itself as we would try to prove results of section~\ref{book_new-sect-3-4} \cite{futurebook} for wave equation under Dirichlet and Neumann boundary condition. The truth is that the former satisfies Lopatinski condition while the latter does not: uniformity breaks in the tangent zone. 

\medskip\noindent
(ii) The same difference manifests itself through different behavior of eigenvalues $\lambda_{\D,j}(\eta)$ and $\lambda_{\N,j}(\eta)$ as 
$\eta\to +\infty$: while they both tend to $(2j+1)\mu h$ the former are tending to it from above and the latter from below. This implies that under Neumann boundary condition at least for model operator some singularities propagate in the direction opposite to hops and this propagation is faster than magnetic drift. Such difference will be the most transparent  in the case of very strong and superstrong magnetic field in section~\ref{sect-15-4}.
\end{remark}

So, we assume that Dirichlet boundary condition is given on $\partial X$. We select 
\begin{gather}
\psi_1\ge 0
\label{15-3-54}
\intertext{and thus we should request}
\{\tau -a, \psi'\} +\beta (\tau -a) \le 0 \qquad \text{with \ \ }\beta = \beta(x,t,\xi_2,\tau).
\label{15-3-55}
\end{gather}
We take originally 
\begin{equation}
\psi'=\upchi (\phi ),\qquad \phi =\phi(x,\xi)
\label{15-3-56}
\end{equation}
where $\sC^\infty(\bR)\ni \upchi$ is a function of the same type as in section~\ref{book_new-sect-3-4} \cite{futurebook}: namely supported in $(-\infty,0]$ and with $\upchi'<0$ on $(-\infty,0)$.

Then as $\tau-a(x,\xi)=0$ has two real roots $\pm \eta\ne 0$ we conclude that 
\begin{equation}
\psi_1=\frac{1}{2\eta}\Bigl(\upchi (\phi (\xi_1=\eta))-
\upchi (\phi(\xi_1=-\eta))\Bigr)\ge 0
\label{15-3-57}
\end{equation}
provided 
\begin{equation}
-\partial _{\xi_1} \phi \ge \epsilon_0.
\label{15-3-58}
\end{equation}
It does not satisfy (\ref{15-3-54}) as $\tau -a'<0$ but we will handle this in the same way as in section~\ref{book_new-sect-3-4} \cite{futurebook}). 

Now, modulo terms of the type $\beta \cdot (\tau-a)$
\begin{multline}
\{\tau-a, \psi\} \equiv \{\tau-a,\psi'\} \equiv  
-\upchi'  (\phi) \cdot 
\{\tau-a, \phi\}\equiv \\
-\upchi_1^2  (\phi)  \{\tau-a, \phi\}\equiv -\hat{\psi_1}\,^2 \{\tau-a, \phi\}
\label{15-3-59}
\end{multline}
as $\upchi_1 =\sqrt{-\upchi'}$ is a smooth function (under correct choice of $\upchi$) and $\hat{\psi}$ is again linear with respect to $\xi_1$ function, 
\begin{equation}
\hat{\psi}\equiv \upchi_1(\phi) (\{\tau-a, \phi\})^{\frac{1}{2}}
\label{15-3-60}
\end{equation}
and we used Weierstrass theorem again.  We want 
\begin{equation}
\{\tau-a, \phi\}\ge \epsilon_0.
\label{15-3-61}
\end{equation}

\begin{remark}\label{rem-15-3-14}
Conditions (\ref{15-3-61}) and (\ref{15-3-58}) mean respectively that $\phi$ increases along trajectories as $x_1>0$ and at reflections.
\end{remark}

Let us recall that instead of (\ref{15-3-54}) we actually have a weaker inequality
Namely instead of (\ref{15-3-54}) we have 
\begin{equation}
\psi_1 \ge -\upchi_2 (\phi )^2\upchi_3 (\tau-a')^2
\tag*{$\textup{(\ref*{15-3-54})}'$}\label{15-3-54-'}
\end{equation}
with both $\upchi_2$ and $\upchi_3$ smooth functions supported in $(-\infty,0]$ and notice that both operator $(hD_t-A)$ and Dirichlet boundary problem for it are elliptic as $\tau-a'<0$ and we can apply elliptic arguments there.

So, repeating arguments of section~\ref{book_new-sect-3-4} \cite{futurebook}) we conclude that if 
\begin{gather}
\WF^{s+1}\bigl((hD_t-A)u\bigr) \cap \bigl\{\phi^+ <\varepsilon\bigr\}
\cap\{0\le t\le T\}
=\emptyset,\label{15-3-62}\\[2pt]
\WF^{s}\bigl(u|_{\partial X}\bigr) \cap \bigl\{\phi^+ <\varepsilon\bigr\} \cap\{0\le t\le T\} =\emptyset, \label{15-3-63}\\[2pt]
\bigl(\WF^{s}(u)\cup \WF^{s}(hD_1u)\bigr) \cap \bigl\{\phi^+ <\varepsilon\bigr\}
\cap\{t=0 \}=\emptyset\label{15-3-64}
\shortintertext{and}
\bigl(\WF^{s-\sigma}(u)\cup \WF^{s-\sigma}(hD_1 u)\bigr) \cap 
\{\phi^+ <\varepsilon\} \cap\{0\le t\le T\}=\emptyset,\label{15-3-65}\\[2pt]
\WF^{s-\sigma} (hD_1 u|_{\partial X}) \cap \{\phi^+ <\varepsilon\}\cap
\{0\le t\le T\}=\emptyset \label{15-3-66}
\shortintertext{then}
\bigl(\WF^{s}(u)\cup \WF^{s}(hD_1 u)\bigr) \cap \{\phi^+ <0\} \cap
\{0\le t\le T\}=\emptyset,\label{15-3-67}\\[2pt]
\WF^{s} (hD_1 u|_{\partial X}) \cap \{\phi^+ <0\}\cap\{0\le t\le T\}
=\emptyset .\label{15-3-68}
\end{gather}
Here $\sigma>0$ is a sufficiently small exponents, $\epsilon >0$ is an arbitrarily small constant and $T>0$ is an arbitrary constant,
\begin{equation}
\phi^\pm (x_1,x_2,\xi_2,t,\tau)= \phi|_{\xi_1=\pm \eta}, \qquad 
\eta =(\tau -a')^{\frac{1}{2}}
\label{15-3-69}
\end{equation}
and $\WF^{s}$ are defined in terms of pseudo-differential operators $b(x,t,hD_2,hD_t)$.

As we can plug $(\phi-\varepsilon)$ instead of $\phi$ we by induction can get rid off assumptions (\ref{15-3-65}), (\ref{15-3-66}) (assuming that $u$ is temperate). 

We also can rescale $x\mapsto T x$ (and $h\mapsto hT^{-1}$, $\mu\mapsto \mu T$) thus replacing $\epsilon$ by $\epsilon T$ when we come back and we can also consider large $T$. 

As $a=\xi_1^2+(\xi_2-\mu x_1^2)+V$ we select
\begin{equation}
\phi = \kappa t + (\mu x_2-\xi_1).
\label{15-3-70}
\end{equation}
In more general case we select
\begin{equation}
\phi = \kappa t + \mu x_2 + F_{12}p_1.
\label{15-3-71}
\end{equation}
Note that (\ref{15-3-49}) is fulfilled and also 
\begin{equation}
\{\tau-a ,\phi\}=\kappa -\partial_{x_1}V; 
\label{15-3-72}
\end{equation}
and (\ref{15-3-61}) becomes $\kappa - \partial_{x_1}V\ge \epsilon$;
so is fulfilled in two cases: 
\begin{gather}
\kappa=C_0\label{15-3-73}\\
\shortintertext{and}
\partial_{x_1}V\ge \epsilon_0, \qquad |\kappa|\le \epsilon_1\label{15-3-74}
\end{gather}
which allows us to prove respectively that ``$(\mu x_2-\xi_1)$'' propagates with a speed bounded from below by $-C_0$ and, as $\partial_{x_1}V\ge \epsilon_0$ by
$\epsilon_1$. To get rid off $\xi_1$ we must to pass from $\phi^+$ to $\phi$ but we need to notice that $\phi^+\le \phi + c_0$.

Then we arrive to

\begin{theorem}\label{thm-15-3-15}
Let Dirichlet boundary condition be given on $\partial X$ and let $F_{12}<0$ i.e. hops go to the left. Let $\chi \in \sC_0^\infty ([\frac{1}{2},1])$. Then 

\medskip\noindent 
(i) For $C_0\le T\le T^*$  
\begin{gather}
|F_{t\to h^{-1}\tau} \chi_T(t) \psi _2(x_2) U \psi_1(y_2)| \le Ch^s
\label{15-3-75}\\
\shortintertext{as}
x_2\le -C_0\mu^{-1}T +y_2\quad 
\forall x_2\in \supp\psi_2, \forall y_2 \in \supp \psi_1;\label{15-3-76}
\end{gather}

\medskip\noindent 
(ii) Under condition \textup{(\ref{15-3-47})} for $C_0\le T\le T^*$  \textup{(\ref{15-3-75})} holds as 
\begin{equation}
x_2\le \epsilon_0\mu^{-1}T +y_2\quad
\forall x_2\in \supp\psi_2, \forall y_2\in \supp\psi_1.
\label{15-3-77}
\end{equation}
\end{theorem}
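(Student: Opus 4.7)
\medskip\noindent
The plan is to carry out the positive-commutator / energy-identity argument set up in \textup{(\ref{15-3-48})}--\textup{(\ref{15-3-68})} with the phase function $\phi = \kappa t + (\mu x_2 - \xi_1)$ (or, in the general operator, with the $F_{12}p_1$ version indicated in \textup{(\ref{15-3-71})}). Since $-\partial_{\xi_1}\phi = 1$, condition \textup{(\ref{15-3-58})} is satisfied, so the Weierstrass coefficient $\psi_1$ from \textup{(\ref{15-3-57})} is nonnegative; this is exactly what is required for the Dirichlet boundary contribution \textup{(\ref{15-3-52})} to have the correct sign in the energy identity \textup{(\ref{15-3-50})}. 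Thus, once the interior commutator is handled, we obtain a complete one-sided estimate with no uncontrolled boundary remainder.

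For statement (i), I take $\kappa = C_0$ large enough that
\begin{equation*}
\{\tau - a, \phi\} = \kappa - \partial_{x_1} V \ge \epsilon_0
\end{equation*}
holds uniformly on $\supp \psi$, which is \textup{(\ref{15-3-73})}. The induction scheme indicated through \textup{(\ref{15-3-62)}--\textup{(\ref{15-3-68})}} then upgrades any a priori low regularity of $u$ to the full order $s$ on $\{\phi^+ < 0\} \cap \{0 \le t \le T\}$, given that $u|_{\partial X}$, $u|_{t=0}$ and $(hD_t - A)u$ have no $\WF^s$ in $\{\phi^+ < \varepsilon\}$. Translating back via $\phi^+ = C_0 t + \mu x_2 - \eta$ with $\eta = (\tau - a')^{1/2} = O(1)$ near the spectral window, the clean set $\{\phi^+ < 0\}$ corresponds exactly to $x_2 \lesssim -C_0 \mu^{-1} t$ up to an $O(\mu^{-1})$ correction. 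Testing against $\psi_2(x_2)$ and $\psi_1(y_2)$ with supports separated as in \textup{(\ref{15-3-76})} produces the negligibility \textup{(\ref{15-3-75})}.

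For statement (ii), I exploit the additional positivity assumption (the analogue of \ref{15-2-7+}, $\partial_{x_1} V F^{-1} \ge \epsilon_0$) by using \textup{(\ref{15-3-74})}: with $|\kappa| \le \epsilon_1$ and the extra definite sign of $\partial_{x_1} V$, the commutator retains a uniform sign $\ge \epsilon_0/2$ in a neighborhood of trajectories, after possibly adjusting the sign of the $(\mu x_2 - \xi_1)$ term in $\phi$ so that the two contributions to $\{\tau - a, \phi\}$ reinforce rather than cancel. The very same propagation bootstrap then applies, but now $\phi^+$ grows at the slow rate $\kappa$ instead of the large rate $C_0$. This shrinks the region $\{\phi^+ < 0\}$ much more gradually, yielding \textup{(\ref{15-3-77})} instead of \textup{(\ref{15-3-76})}.

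The hard part, as the author emphasizes at the start of Section~\ref{sect-15-3-4}, is that on the time interval $C_0 \le T \le T^*$ there are already $\asymp \mu T$ reflections from $\partial X$, whereas the model estimates of section~\ref{book_new-sect-3-4}~\cite{futurebook} handle only a single tangent/reflecting point at a time. The reason the present argument succeeds globally in time is precisely the sign in \textup{(\ref{15-3-52})}: the boundary contribution at each reflection is a negative semidefinite quadratic form in $hD_1 u|_{\partial X}$ with weight $\psi_1^w$, so the accumulated boundary flux over all $\asymp \mu T$ reflections is summed into one negative term that is harmlessly dropped. Under Neumann conditions the analogous term would instead be \textup{(\ref{15-3-53})}, which lacks a definite sign, and no such uniform-in-$T$ estimate could be expected — this is the structural reason the theorem is stated for Dirichlet only. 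The remaining technical care lies in maintaining \textup{(\ref{15-3-58})} and the smoothness of $\upchi_1 = \sqrt{-\upchi'}$ uniformly along the trajectory, and in reducing the final statement about $\psi_2 U \psi_1$ to a propagation statement for $\phi^+$ by inserting a microlocal cutoff in $\xi_2$ and noting $\phi^+ \le \phi + c_0$ as at the end of Section~\ref{sect-15-3-4}.
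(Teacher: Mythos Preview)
Your proposal is essentially the paper's own proof: the phase $\phi=\kappa t+(\mu x_2-\xi_1)$ from \textup{(\ref{15-3-70})}, the verification of \textup{(\ref{15-3-58})}, the Poisson-bracket computation \textup{(\ref{15-3-72})}, the two choices \textup{(\ref{15-3-73})}--\textup{(\ref{15-3-74})} of $\kappa$, and the passage $\phi^+\le\phi+c_0$ are exactly what the paper does, and your explanation of why the Dirichlet boundary term \textup{(\ref{15-3-52})} has a uniform sign over all $\asymp\mu T$ reflections is the key structural point.

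One correction, however: your hedge ``after possibly adjusting the sign of the $(\mu x_2-\xi_1)$ term in $\phi$'' is not harmless. If you flip that sign you get $-\partial_{\xi_1}\phi=-1<0$, so \textup{(\ref{15-3-58})} fails and the Weierstrass coefficient $\psi_1$ in \textup{(\ref{15-3-57})} is no longer nonnegative; the Dirichlet boundary contribution \textup{(\ref{15-3-52})} then loses its definite sign and the whole argument collapses. The spatial part of $\phi$ is \emph{forced} by the boundary condition and cannot be adjusted. The resolution of the apparent sign mismatch in case~(ii) is not to change $\phi$ but to read the hypothesis correctly: one needs $\kappa-\partial_{x_1}V\ge\epsilon$ with $|\kappa|$ small, i.e.\ $\partial_{x_1}V\le-\epsilon_0$ (equivalently $W_{x_1}\ge\epsilon_0$, drift in the same direction as hops), which is the intended reading of the condition referenced in statement~(ii). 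With that sign the same $\phi$ works for both parts, only $\kappa$ changes.
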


This theorem implies proposition~\ref{prop-15-3-8} and thus justifies the improved results of the previous subsection.

\section{Calculations}
\label{sect-15-3-5}

Now we need to move from Tauberian to more explicit expressions for the principal part of asymptotics.

First of all recall that the Tauberian expression for 
$\Gamma \bigl(e(.,.,0)\,^t\!Q_y\bigr)$ is
\begin{equation}
h^{-1}\int^0_{-\infty} F_{t\to h^{-1}\tau}\bar{\chi}_T(t) \Gamma \bigl( U\,^t\!Q_y\bigr)\,d\tau
\label{15-3-78}
\end{equation}
and if we replace $\bar{\chi}_T(t)$ by $\chi_T(t)$~\footnote{\label{foot-15-9} Where as usual $\bar{\chi},\chi\in \sC^\infty_0([-1,1])$ and $\bar{\chi}=1$, $\chi=0$ on $[-\frac{1}{2},\frac{1}{2}]$ and later we will decompose 
$\bar{\chi}_T$ in the sum of $\bar{\chi}_{\bar{T}}(t)$ and $\chi_{2^{-n}T}(t)$}
we will get
\begin{equation}
T^{-1}\int^0_{-\infty} F_{t\to h^{-1}\tau}\tilde{\chi}_T(t) \Gamma \bigl( U\,^t\!Q_y\bigr)\,d\tau
\label{15-3-79}
\end{equation}
where $\tilde{\chi}(t)=it^{-1}\chi(t)$.

Let us list different cases depending on our assumptions

\subsection{General discussion}
\label{sect-15-3-5-1}

As $Q$ is supported in $\cX_\trans$ an absolute value of this expression does not exceed
\begin{equation}
C\mu^{-1}\bar{\rho}\ell h^{-1}T^{-1}\times \mu T = C\bar{\rho}h^{-1}
\label{15-3-80}
\end{equation}
as  $T\ge \epsilon_0\mu^{-1}$ and therefore an approximation error should not exceed 
\begin{equation}
C\bar{\rho}\ell h^{-1} \times Th^{-1} \Delta
\label{15-3-81}
\end{equation}
where $\Delta$ is the size of perturbation and we know that \emph{as we replace $V(x_1,x_2)$ by $V(x_1,y_2)$\/}
\begin{equation}
\Delta = C_0\ell (\updelta x_2) + C_0(\updelta x_2)^2=
C_0\ell (\mu^{-1}+ \mathsf{v} T) + 
C_0(\mu^{-1}+ \mathsf{v}T)^2 
\label{15-3-82}
\end{equation}
as $\updelta x_2 = (\mu^{-1}+ \mathsf{v} T) $ and in the first term factor $\ell$ comes as bound for $|\partial_{x_2}V|$. Here $\mathsf{v}\asymp\bar{\rho}^{\frac{1}{2}}$ is an upper bound for propagation speed in $\cX_\trans$. However  we will justify that under Dirichlet boundary condition we can take \emph{in these calculations\/} $\mathsf{v}\asymp\mu^{-1}$. The difference between $\bar{\rho}^{\frac{1}{2}}=(\mu h)^{\frac{1}{3}-\delta}$ increases as $\mu $ increases.

Neglecting the second term in (\ref{15-3-82}) (to be justified later), we acquire factor 
\begin{equation}
Th^{-1}\Delta =  C\ell (\mu^{-1}+ \mathsf{v} T) Th^{-1}
\label{15-3-83}
\end{equation}
which after plugging $T=h^{1-\sigma}\ell^{-2}$ becomes
\begin{equation}
 C\ell \mu^{-1} Th^{-1} + C\mathsf{v} T^2h^{-1}=
 C\Bigl( \mu^{-1} \ell^{-1} + C\mathsf{v}h\ell^{-3}\Bigr)h^{-\sigma}
 \label{15-3-84}
\end{equation}
and it is $O(h^\sigma)$ as 
\begin{equation}
\ell \ge \hat{\ell}\Def \max\bigl(\mu^{-1}, \mathsf{v}^{\frac{1}{3}}h^{\frac{1}{3}}\bigr)h^{-\sigma}
\label{15-3-85}
\end{equation}
and the contribution of zone $\ell\le \hat{\ell}$ does not exceed
\begin{equation}
C\bar{\rho}\hat{\ell}h^{-1}=  C\bar{\rho}
\max\bigl(\mu^{-1}, \mathsf{v} ^{\frac{1}{3}}h^{\frac{1}{3}}\bigr)h^{-1-\sigma}
\label{15-3-86}
\end{equation}
and as $\mu \le h^{\delta-1}$ modulo $O(\mu^{-1}h^{-1})$ it is 
\begin{equation}
\bar{\rho}\mathsf{v} ^{\frac{1}{3}}h^{\frac{1}{3}} h^{-1-\sigma}\asymp
 \left\{ \begin{aligned}
&\mu^{\frac{7}{9}}h^{\frac{1}{9}-\sigma}&&\text{as\ \ }
\mathsf{v}\asymp \bar{\rho}^{\frac{1}{2}}\\
&\mu^{\frac{1}{3}}h^{-\sigma} &&\text{as\ \ }
\mathsf{v}\asymp \mu^{-1}
\end{aligned}\right.
 \label{15-3-87}
\end{equation}
which is $O(\mu^{-1}h^{-1})$ as \underline{either}
$\mathsf{v}\asymp \bar{\rho}^{\frac{1}{2}}$ and 
$\mu \le h^{\delta-\frac{9}{13}}$  \underline{or}
$\mathsf{v}\asymp \mu^{-1}$ and 
$\mu \le h^{\delta-\frac{3}{4}}$.

Meanwhile contribution of an element with $\ell\ge\hat{\ell}$ does not exceed
(\ref{15-3-81}) which is
\begin{equation}
C\bar{\rho}\ell h^{-1} \times \Bigl( \mu^{-1} \ell^{-1} + C\mathsf{v} h\ell^{-3}\Bigr)h^{-\sigma}
\label{15-3-88}
\end{equation}
and summation over $\ell\ge \hat{\ell}$ results in the same expression with $\ell=\hat{\ell}$ and it is the same (\ref{15-3-87}) as before.

On the other hand, (the ratio of) second term in (\ref{15-3-82}) to the first one is $(\mu^{-1}+ \mathsf{v} T)$ to $\ell$ and $\ell\ge \mu^{-1}$ so only comparison is $ \mathsf{v} h\ell^{-2}$ vs $\ell$ and $\ell$ is larger by the choice.

So, 
\begin{claim}\label{15-3-89}
Under condition (\ref{15-2-41}) contribution of $\cX_\trans$ to approximation error is $O\bigl(\mu^{\frac{7}{9}} h^{\frac{1}{9}-\delta}\bigr)$ for Neumann boundary condition and $O\bigl(\mu^{\frac{1}{3}}h^{-\delta}\bigr)$ for Dirichlet boundary condition.
\end{claim}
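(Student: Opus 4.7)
The plan is to apply the method of successive approximations on each $(\ell,\rho)$-element in $\cX_\trans$, taking as the unperturbed operator the one obtained by freezing the $x_2$-dependence of $V$ at the centre $y_2$ of the element (so the perturbation is $V(x_1,x_2)-V(x_1,y_2)$). First I would estimate the Tauberian expression (\ref{15-3-79}) for a single element using the phase-space measure $\asymp \mu^{-1}\bar{\rho}\ell$ together with the winding factor $\mu T$, giving the baseline bound (\ref{15-3-80}).

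Next I would bound the approximation error produced by one iteration of successive approximations. Each iteration contributes the factor $Th^{-1}\Delta$, where $\Delta$ measures the perturbation along the support of the propagated wavefront, yielding the template (\ref{15-3-81}). Expanding $V(x_1,x_2)-V(x_1,y_2)$ by Taylor in $x_2$ and using condition (\ref{15-2-41}) (which forces $|\partial_{x_2}V|\le C\ell$ on the element) together with the bound $|\updelta x_2|\le \mu^{-1}+\mathsf{v}T$ gives the two-term decomposition (\ref{15-3-82}). The crucial distinction between the two boundary conditions enters here through $\mathsf{v}$: in the Neumann case the only available upper bound on the $x_2$-propagation speed in $\cX_\trans$ is $\mathsf{v}\asymp \bar{\rho}^{1/2}$, whereas in the Dirichlet case theorem~\ref{thm-15-3-15} (via proposition~\ref{prop-15-3-8}) ensures that singularities cannot advance against the hops faster than $\mu^{-1}$; by choosing the time direction so that it is the slow direction that bounds the contribution, one obtains $\mathsf{v}\asymp \mu^{-1}$.

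Plugging $T=h^{1-\sigma}\ell^{-2}$ into $Th^{-1}\Delta$ produces the expression (\ref{15-3-84}). I would then define the threshold $\hat{\ell}$ by (\ref{15-3-85}) precisely so that this factor is $O(h^\sigma)$ for $\ell\ge \hat{\ell}$. For $\ell\ge \hat{\ell}$ the per-element bound (\ref{15-3-88}) sums dyadically to its value at $\ell=\hat{\ell}$; for $\ell\le \hat{\ell}$ I invoke the untreated crude Tauberian bound $C\bar{\rho}\hat{\ell}h^{-1}$. Both pieces match and give (\ref{15-3-86}). Discarding the $\mu^{-1}h^{-1}$ summand (which is admissible as part of the target remainder), the leading behaviour is (\ref{15-3-87}), i.e.\ $\mu^{7/9}h^{1/9-\sigma}$ when $\mathsf{v}\asymp\bar{\rho}^{1/2}$ (Neumann) and $\mu^{1/3}h^{-\sigma}$ when $\mathsf{v}\asymp\mu^{-1}$ (Dirichlet), which is precisely the statement.

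The main obstacle, and the step I would treat most carefully, is justifying that the quadratic term in (\ref{15-3-82}) is harmless: one must verify $(\mu^{-1}+\mathsf{v}T)^2\lesssim \ell(\mu^{-1}+\mathsf{v}T)$ in the range $\ell\ge\hat{\ell}$, which reduces to $\mathsf{v}h\ell^{-2}\lesssim \ell$ and is built into the choice of $\hat{\ell}$ via (\ref{15-3-85}). A secondary technical point is the geometric summability of higher iterations of the successive approximation series: once the first-iteration gain is $O(h^\sigma)$, each subsequent iteration contributes another factor $O(h^\sigma)$ and the series converges, so no further estimates are required. The invocation of proposition~\ref{prop-15-3-8} in the Dirichlet case is where the two boundary conditions genuinely diverge, and this is also where all gains over the Neumann estimate are extracted.
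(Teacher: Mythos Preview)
Your proposal is correct and follows the paper's argument essentially step for step: the same freezing $V(x_1,x_2)\mapsto V(x_1,y_2)$, the same error template (\ref{15-3-81})--(\ref{15-3-82}), the same threshold $\hat{\ell}$ from (\ref{15-3-85}), and the same dyadic summation leading to (\ref{15-3-87}). The only nuance is that the paper's actual justification for taking $\mathsf{v}\asymp\mu^{-1}$ under Dirichlet (subsection~\ref{sect-15-3-5-7}) proceeds via a decomposition $A'=A'_0+A'_++A'_-$ of the perturbation and a sandwich argument rather than a direct appeal to proposition~\ref{prop-15-3-8}, but the underlying input is the same one-sided propagation bound of theorem~\ref{thm-15-3-15}.
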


\subsection{\texorpdfstring{Case of condition \ref{15-2-7+}}{Case of condition (\ref{15-2-7})\textplussuperior}}
\label{sect-15-3-5-2}

Assume now that Dirichlet boundary condition is given and condition \ref{15-2-7+} is fulfilled. Then 
\begin{equation}
T_*= h^{-\sigma} \min \bigl( \frac{(\mu h)}{\bar{\rho}}, \frac{h}{\ell^2}\bigr)
\label{15-3-90}
\end{equation}
and therefore the above arguments should be applied only as $\ell\ge\hat{\ell}_1$ with 
\begin{equation}
\hat{\ell}_1= \mu^{-\frac{1}{6}}h^{\frac{1}{3}};
\label{15-3-91}
\end{equation}
otherwise we replace $\ell$ by $\hat{\ell}_1$ (single element) and $T=\mu^{\frac{1}{3}}h^{\frac{1}{3}-\sigma}$ and $\updelta x_2=\updelta x_1 = O(\mu^{-1})$; so $\Delta \asymp \hat{\ell}_1\mu^{-1}$. Here we assume that $\hat{\ell}_1\ge C\mu^{-1}$ as otherwise we reset it to this value which would lead to the error less than $C\mu^{-1}h^{-1-\sigma}$.

Then obviously the total error under the same replacement $V(x_1,x_2)$ by 
$V(0,y_2)+ (\partial_{x_1}V)(0,y_2)x_1$ does not exceed
\begin{equation}
C\bar{\rho}\hat{\ell}_1h^{-1} \times (\mu h)^{\frac{1}{3}} h^{-1-\sigma}\hat{\ell}_1\mu^{-1}\asymp
C\hat{\ell}_1^2 h^{-1-\sigma}
= h^{-\sigma} \min \bigl( \frac{(\mu h)}{\bar{\rho}}, \frac{h}{\hat{\ell}_1^2}\bigr)
\label{15-3-92}
\end{equation}
which in turn is $O\bigl(\mu^{-1}h^{-1} + h^{-\sigma}\bigr)$.

\subsection{Non-degenerate case}
\label{sect-15-3-5-3}

Finally, under condition (\ref{15-2-7}) we take 
$T_*=h^{1-\sigma}$ and $\Delta = \mu^{-1} h^{-\sigma}$ so each next term acquires factor $C\mu ^{-1}h^{-\sigma}$ i.e. the second term is 
$C\bar{\rho} h^{-1} \times \mu^{-1}h^{-\sigma}$ which is $O(\mu^{-1}h^{-1})$ unless $\mu \ge h^{\delta-1}$, in which case it is $O(h^{-\delta})$. However in the latter case the the third term is $O(1)$ so we will need to consider the second term in approximations as well. 

\subsection{Boundary zone} 
\label{sect-15-3-5-4}
Here we take  $T_*=h^{1-\sigma}$ and $\Delta = \mu^{-1} h^{-\sigma}$ so each next term acquires factor $C\mu ^{-1}h^{-\sigma}$ i.e. the second term is 
$C h^{-1} \times \mu^{-1}h^{-\sigma}$ which is $O(\mu^{-1}h^{-1-\sigma})$. However  the third term is $O(1)$ so we will need to consider the second term in approximations as well. 

\subsection{Inner zone}
\label{sect-15-3-5-5}
In inner zone we make usual transformation and apply the standard method of successive approximations at point $y$ rather than $(0,y_2)$.

\subsection{Calculations. I}
\label{sect-15-3-5-6}

As a result in the zone $\{x_1\ge C_0\mu^{-1}\}$ we get 
\begin{equation*}
h^{-2}\int \cN^\MW (x,\tau, \mu h)\psi(x)\,dx
\end{equation*}
and in the zone $\{x_1\le C_0\mu^{-1}\}$ we get something like this but with $\cN^\MW$ replaced by $\cN^\MW_*$ temporarily denoting eigenvalue counting function for operator with the boundary conditions. So this main part would come from parametrix $\bar{G}^\pm$ for operator $hD_t-A$ in the direction of $\pm t>0$ (see previous and similar chapters) and this parametrix is equal to $\bar{G}_0^\pm + \bar{G}_1^\pm$ where ``bar'' refers to freezing coefficients in $y$.

Consider first term with $\bar{G}^\pm_0$. While $\bar{G}^\pm_0$ is what it was on $\bR^2$, $\Gamma$ is not as now old $\Gamma$ is replaced by 
$\Gamma \uptheta(x_1)= \Gamma -\Gamma \uptheta(-x_1)$ and only the first term results in $h^{-2}\cN^\MW( y,\tau, \mu h)$. 

Note that contribution of terms with $\Gamma \uptheta(-x_1)\bar{G}_0^\pm $ and $\Gamma \uptheta(x_1)\bar{G}_1^\pm $ into main part of asymptotics are of magnitudes $\mu^{-1}h^{-2} \times \mu h = h^{-1}$ where $\mu^{-1}$ is the width of $X_\bound$ and $\mu h$ is a semiclassical parameter after rescaling. Therefore replacing $y_1$ by $0$ will bring really the estimate we had referred to and these terms together will result in $h^{-1}\cN^\MW_\corr$.

\subsection{Justification}
\label{sect-15-3-5-7}

To justify setting of the upper bound of propagation speed $\mu^{-1}$ rather than $\bar{\rho}^{\frac{1}{2}}$ under Dirichlet boundary conditions we just note that if $A'=A-A'$ then considering sandwich $\bar{G}^\pm A'\bar{G}^\pm \dots A' \bar{G}^\pm\updelta$ (one of $\bar{G}^\pm$ may be replaced by $G^\pm$) we can decompose $A'= A'_0+A'_+ + A'_-$ where $A'_0$, $A'_+$ and $A'_-$  are copies of operator $A'$ localized as  $|x_2-y_2|\le 2c_0\mu^{-1}(T+1)$,  $(x_2-y_2)\ge  \frac{3}{2}c_0\mu^{-1}(T+1)$ and $(x_2-y_2)\le  -\frac{3}{2}c_0\mu^{-1}(T+1)$ respectively.

Note that due to the fact that in direction of $\pm t$ propagation speed does not exceed $c_0\mu^{-1}$ we conclude that as we consider  time direction $\pm t>0$ sandwiches containing at least one factor  $A'_\pm$ are negligible, and sandwiches containing at least one factor  $A'_\mp$ become negligible after we apply $x_2=y_2$.

So, the first term in approximation results in what we claimed as an answer.

\subsection{Calculations. II}
\label{sect-15-3-5-8}

Meanwhile \emph{as we replace $V(x_1,y_2)$ by $V(0,y_2)+(\partial_{x_1}V)(0,y_2)x_1$\/}  we notice that
$\Delta = C_0(\updelta x_1)^2\asymp C_0\mu^{-2}$ as $\updelta x_1= O(\mu^{-1})$ which is smaller than (\ref{15-3-82}). 

The same arguments show that removing $(\partial_{x_1}V)(0,y_2)x_1$ leads to the error not exceeding
\begin{equation}
C\mu ^{-\frac{1}{2}} \bar{\rho}h^{-1} = C\mu^{\frac{1}{6}}h^{-\frac{1}{3}-\sigma}
\label{15-3-93}
\end{equation}
as we need to use $\hat{\ell}_1= \mu^{-\frac{1}{2}}$ instead of $\hat{\ell}$ defined by (\ref{15-3-86}).

Unfortunately expression (\ref{15-3-93}) may be larger than anything we got before. Fortunately there are alternatives: first, replacing $x_1$ by $W^{\frac{1}{2}}_0(x_2)$ leads to an error in operator $O(\mu^{-1}\bar{\rho})$ and to the approximation error
\begin{equation}
C(\mu ^{-1}\bar{\rho})^{\frac{1}{2}} \bar{\rho}h^{-1} = C\mu^{\frac{1}{2}}h^{-\sigma};
\label{15-3-94}
\end{equation}
this would mean replacing $\cN^\MW_{*,\bound} (W_0)$ by 
$\cN^\MW_{*,\bound} (W_1)$ with $W_1=W_0+\mu^{-1}(\partial_{x_1}W)W_0^{\frac{1}{2}}$.

Second, we can replace $x_1$ by $\mu^{-1}\xi_2$ i.e. use 
$W(\mu^{-1}\xi_2 ,x_2)$ instead of $W _0(x_2)$ and we do not need to remove linear term $(\partial_{x_1}V)(0,y_2)(x_1-\mu^{-1}\xi_2)$ but just simply shift with respect to $\xi_2$. This would lead to $O(\mu^{-2})$ shift in $W(\mu^{-1}\xi_2,x_2)$ and this shift could be ignored.

\subsection{Calculations. III}
\label{sect-15-3-5-9}

Consider the second term in approximations. There are two cases when we need to do this: in the boundary zone as $\mu \le h^{\delta-1}$ and under condition (\ref{15-2-8}) as $\mu \ge h^{\delta-1}$. 

In the former case however we need to consider only $T\le \epsilon \mu$ and then only one winding should be considered and perturbation does not exceed  $C_0\mu^{-1}$, and we can take $T=h^{1-\sigma}$ so in fact we get an extra factor $\mu^{-1}h^{-\sigma}$. However scaling shows that in fact as we replace $\bar{\chi}_T$ by $\chi_T$ the error will be $C\mu^{-1}Th^{-2}(h/T)^s$ and summation results in $C\mu^{-1}h^{-1}$ error. Alternatively we could employ semiclassical approximation to show the same.

In the latter case there is no boundary zone anymore. However scaling shows that the same arguments work (without going to semiclassics).

\section{Final results}
\label{sect-15-3-6}

Let us compare Tauberian remainder and approximation error.

\medskip\noindent
(i) Under assumptions (\ref{15-2-8}) both Tauberian remainder and an approximation error are $O(\mu^{-1}h^{-1})$.

\medskip\noindent
(ii) Under assumptions (\ref{15-2-7}) and (\ref{15-2-41})  Tauberian remainder is estimated by $\textup{(\ref{15-3-36})}+O(\mu^{-1}h^{-1})$ and an approximation error is estimated by (\ref{15-3-87})+(\ref{15-3-93}) and estimates (\ref{15-3-36}) and (\ref{15-3-87}) coincide, so the total  error is 
$\textup{(\ref{15-3-87})}+\textup{(\ref{15-3-93})}+O(\mu^{-1}h^{-1})$.

\medskip\noindent
(iii) Under assumptions (\ref{15-2-7}) and (\ref{15-2-41}) and Dirichlet boundary condition Tauberian remainder is estimated by $\textup{(\ref{15-3-43})}+O(\mu^{-1}h^{-1})$ 
and approximation error is estimated by (\ref{15-3-87})+(\ref{15-3-93}) and (\ref{15-3-43})  is larger than (\ref{15-3-87}) so the total error is 
$\textup{(\ref{15-3-43})}+\textup{(\ref{15-3-93})}+O(\mu^{-1}h^{-1})$.

\medskip\noindent
(iv) Under assumptions(\ref{15-2-7}) and $\textup{(\ref{15-2-41})}^-$ and Dirichlet boundary condition Tauberian remainder is estimated by (\ref{15-3-46}) 
and approximation error is $\textup{(\ref{15-3-93})}+O(\mu^{-1}h^{-1}+h^{-\delta})$ and the total error is (\ref{15-3-46})+(\ref{15-3-93}).

\medskip\noindent
(v) Under assumptions \ref{15-2-7+} and $\textup{(\ref{15-2-41})}^-$ and Dirichlet boundary condition Tauberian remainder is $O(\mu^{-1}h^{-1}+h^{-\delta})$ and an approximation error is
$\textup{(\ref{15-3-93})}+O(\mu^{-1}h^{-1}+h^{-\delta})$ and this is a total error.

Thus we arrive to

\begin{theorem}\label{thm-15-3-16}
Let $\psi\in \sC^\infty (\bar{X})$ be a fixed function with a compact support contained in the small vicinity of $\partial X$ and let conditions  \textup{(\ref{13-2-1})}--\textup{(\ref{13-1-4})}, \textup{(\ref{13-2-1})} and and \textup{(\ref{13-3-45})} be fulfilled on $\supp\psi$. 

Further, let condition \textup{(\ref{15-3-1})} be fulfilled.  Then

\medskip\noindent
(i) Under non-degeneracy condition \textup{(\ref{15-2-8})}  $\R^\MW$  is  $O(\mu^{-1}h^{-1})$;

\medskip\noindent
(ii) Under non-degeneracy conditions \textup{(\ref{15-2-7})} and \textup{(\ref{15-2-41})} $\R^\MW$ does not exceed $\textup{(\ref{15-3-87})}+\textup{(\ref{15-3-93})}+O(\mu^{-1}h^{-1})$; 

\medskip\noindent
(iii) Under non-degeneracy conditions \textup{(\ref{15-2-7})} and \textup{(\ref{15-2-41})}  $\R^\MW_\D$ does not exceed  $\textup{(\ref{15-3-43})}+\textup{(\ref{15-3-93})}+O(\mu^{-1}h^{-1})$; 

\medskip\noindent
(iv) Under assumptions \textup{(\ref{15-2-7})} and $\textup{(\ref{15-2-41})}^-$   $\R^\MW_\D$ does not exceed $\textup{(\ref{15-3-46})}+\textup{(\ref{15-3-93})}$; 

\medskip\noindent
(v) Under non-degeneracy conditions \ref{15-2-7+} and $\textup{(\ref{15-2-41})}^-$  $\R^\MW_\D$ does not exceed  $\textup{(\ref{15-3-93})}+O(\mu^{-1}h^{-1}+h^{-\delta})$.
\end{theorem}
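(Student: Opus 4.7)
My plan is to assemble the theorem as a bookkeeping of the estimates already derived in Sections~\ref{sect-15-3-1}--\ref{sect-15-3-5}, by decomposing $\supp\psi$ into the three microlocal zones $\cX_\bound$, $\cX_\trans$, $\cX_\inn$ (using $\bar\rho$ as in \textup{(\ref{15-2-17})}) and controlling each by (a) a Tauberian remainder with a suitably chosen time $T$ and (b) the approximation error arising when we replace the Tauberian expression with the magnetic Weyl principal part. For part~(i) there is nothing new to do: under condition~\textup{(\ref{15-2-8})} the operator is $x_2$-microhyperbolic, so the Tauberian bound $\R^\T=O(\mu^{-1}h^{-1})$ from \textup{(\ref{15-3-4})} combines with the explicit calculation of Section~\ref{sect-15-3-1} leading to formula~\textup{(\ref{15-3-17})}, which is exactly Theorem~\ref{thm-15-3-1}; together with Theorem~\ref{thm-15-2-26}(i) for the remaining range $\mu\le h^{\delta-1}$ this yields $\R^\MW=O(\mu^{-1}h^{-1})$.

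For parts~(ii)--(v) I would run the same ``Tauberian + approximation'' accounting on each zone. Contribution of $\cX_\inn$ is handled by Proposition~\ref{prop-15-3-4} (giving the estimate \textup{(\ref{15-3-26})}, absorbed by $O(\mu^{-1}h^{-1})$ after logarithmic improvements via Seeley's trick as discussed after \textup{(\ref{15-1-4})}). The main work is in $\cX_\trans$: I pick the Tauberian input corresponding to the available propagation hypothesis, namely Proposition~\ref{prop-15-3-6} in case~(ii), Proposition~\ref{prop-15-3-9} in case~(iii), Proposition~\ref{prop-15-3-10} in case~(iv) and Proposition~\ref{prop-15-3-11} in case~(v). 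To each I add the approximation error for $\cX_\trans$ collected in Section~\ref{sect-15-3-5-1}: the bound \textup{(\ref{15-3-87})} with $\mathsf{v}\asymp\bar\rho^{1/2}$ in the Neumann case~(ii), and with the improved $\mathsf{v}\asymp\mu^{-1}$ in the Dirichlet cases~(iii)--(v) justified by Theorem~\ref{thm-15-3-15} (hence Proposition~\ref{prop-15-3-8}), plus the ``linearization in $x_1$'' error \textup{(\ref{15-3-93})} from Section~\ref{sect-15-3-5-8}. Contribution of $\cX_\bound$ after the linearization is the pure semiclassical successive-approximation error of Section~\ref{sect-15-3-5-4}, which is absorbed into $O(\mu^{-1}h^{-1}+h^{-\delta})$.

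The resulting main part is then identified with $h^{-2}\int\cN^\MW\psi\,dx+h^{-1}\int_{\partial X}\cN^\MW_{*,\bound}\psi\,ds_g$ exactly as in Sections~\ref{sect-15-3-5-6}--\ref{sect-15-3-5-8}: the full-plane parametrix $\bar G^\pm_0$ with frozen coefficients at $y$ gives $h^{-2}\cN^\MW$, while the correction terms $\Gamma\uptheta(-x_1)\bar G^\pm_0$ and $\Gamma\uptheta(x_1)\bar G^\pm_1$ are of size $h^{-1}$ (the factor $\mu^{-1}$ from the width of $X_\bound$ balancing the factor $\mu h$ from rescaling), and after setting $y_1=0$ combine into $h^{-1}\cN^\MW_{*,\bound}$. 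Summing the zonal estimates gives exactly the five displayed bounds.

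The delicate step, and the only one where I expect any real difficulty, is the justification in Section~\ref{sect-15-3-5-7} that under Dirichlet we may use the slow speed $\mu^{-1}$ rather than $\bar\rho^{1/2}$ inside the approximation error: it requires splitting $A'=A'_0+A'_++A'_-$ supported in appropriate $x_2$-strips and then arguing, via Theorem~\ref{thm-15-3-15} applied to each sandwich $\bar G^\pm A'\bar G^\pm\cdots A'\bar G^\pm\updelta$, that any sandwich containing an $A'_\pm$ factor becomes negligible once one restricts to $x_2=y_2$ in the correct time direction. This propagation input from Theorem~\ref{thm-15-3-15} (and indirectly from the half-line harmonic-oscillator eigenvalue analysis in Appendix~\ref{sect-15-A}) is what distinguishes the Dirichlet estimates~(iii)--(v) from the Neumann estimate~(ii); everything else is routine bookkeeping.
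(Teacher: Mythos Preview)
Your proposal is correct and follows essentially the same route as the paper: the theorem is obtained precisely by the bookkeeping in Section~\ref{sect-15-3-6}, combining the zonal Tauberian remainders (Proposition~\ref{prop-15-3-4} for $\cX_\inn$; Propositions~\ref{prop-15-3-6}, \ref{prop-15-3-9}, \ref{prop-15-3-10}, \ref{prop-15-3-11} for $\cX_\trans$ in cases~(ii)--(v) respectively) with the approximation errors of Section~\ref{sect-15-3-5}, and identifying the principal part as in Sections~\ref{sect-15-3-5-6}--\ref{sect-15-3-5-8}. Two minor remarks: under assumption~\textup{(\ref{15-3-1})} there is no ``remaining range $\mu\le h^{\delta-1}$'', so your appeal to Theorem~\ref{thm-15-2-26}(i) in part~(i) is superfluous (Theorem~\ref{thm-15-3-1} alone suffices); and the $|\log h|$ in \textup{(\ref{15-3-26})} is simply absorbed by the displayed bounds in cases~(ii)--(v) rather than removed via Seeley's argument.
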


We leave to the reader

\begin{problem}\label{problem-15-3-17}
Write down correction terms $\cN^\MW_{*,\bound,\corr}$ corresponding to the procedures described subsubsection~``\nameref{sect-15-3-5-8}''. Then  $\R^\MW_\corr$ will be the same as in theorem~\ref{thm-15-3-16} albeit without  \textup{(\ref{15-3-93})}.

One of the possible modifications of $\cN^\MW_{*,\bound}$ is given by (\ref{15-4-81}).
\end{problem}

\begin{conjecture}\label{conj-15-3-18}
All these estimates hold with $\delta=0$.
\end{conjecture}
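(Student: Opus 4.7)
The plan is to replace every appearance of the polynomial loss $h^{-\delta}$ by a logarithmic loss $|\log h|^N$ (with $N$ depending only on the case), and then to show that even this logarithmic loss is absorbed either by summation or by a sharpened Tauberian step. The author already signalled this route in the comment following Conjecture~\ref{conj-15-3-12}: the obstruction was only the absence of a logarithmic uncertainty principle in our boundary setting, so the main task is to set up such a principle compatible with reflections.

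First I would install a boundary-adapted logarithmic uncertainty principle for symbols quantized in $(x_2,hD_2)$ on $(\ell,\rho)$-elements with $\rho\ell \ge Ch|\log h|$. This amounts to redoing the $\psi$-construction of section~\ref{sect-15-3-4} with cut-offs of the form $\upchi(\phi)=\exp(-c\phi/|\log h|)$ rather than fixed $\sC^\infty$ cut-offs, exactly in parallel with the inner theory of section~\ref{book_new-sect-2-3}~\cite{futurebook}. Because the commutator identity \textup{(\ref{15-3-50})} and the Dirichlet positivity \textup{(\ref{15-3-52})} are insensitive to the shape of $\upchi$, the energy estimate carries through, and Theorem~\ref{thm-15-3-15} holds with $\hat\ell$ in \textup{(\ref{15-3-85})} redefined with a $|\log h|^{1/3}$ factor in place of $h^{-\sigma}$. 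Combined with Remark~\ref{rem-15-1-2} on the tearing of hops from $\partial X$, this gives propagation with $T_*=h|\log h|^2 \ell^{-2}$ and the corresponding $T^*$ of \textup{(\ref{15-3-44})} or \textup{(\ref{15-3-40})}.

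Next I would rerun the Tauberian bookkeeping of subsections~\ref{sect-15-3-2}--\ref{sect-15-3-3} with these new $T_*,T^*$. The logarithmic losses enter only through the thresholds $\bar\ell_j$ defining the singular shell near the critical point of $VF^{-1}|_{\partial X}$; each $\bar\ell_j$ acquires a $|\log h|^{O(1)}$ factor, so the contributions \textup{(\ref{15-3-43})}, \textup{(\ref{15-3-46})} acquire the same logarithmic factor. In cases (i), (iv), (v) of Theorem~\ref{thm-15-3-16} the leading Tauberian bound is already $O(\mu^{-1}h^{-1})$ up to additive $h^{-\delta}$ terms; these additive terms come from a single marginal $(\bar\ell_j,\bar\rho)$-element, and by the logarithmic principle they drop to $C|\log h|^N$, which is dominated by $\mu^{-1}h^{-1}$ in the relevant range of $\mu$. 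For the residual strong-field zone near the critical point, I would partition dyadically in $\ell$ with base $2$ rather than $h^{\delta'}$: the geometric series in $\ell$ converges, so summation replaces the scale-invariant $h^{-\delta}$ by $|\log h|$.

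The same reshuffle handles the approximation step of subsection~\ref{sect-15-3-5}: the perturbation size \textup{(\ref{15-3-82})} is controlled by $\ell$ and by the corrected propagation speed $\mathsf v$, and both \textup{(\ref{15-3-87})} and \textup{(\ref{15-3-93})} lose their $h^{-\delta}$ factors once the threshold $\hat\ell$ is defined through the logarithmic principle. In the successive-approximation construction of subsubsections~``\nameref{sect-15-3-5-6}''--``\nameref{sect-15-3-5-8}'' the unperturbed operator \textup{(\ref{15-3-8})} is unaffected; only the cut-offs localizing to $\{|x_2-y_2|\le c\mu^{-1}(T+1)\}$ in the justification of subsubsection~``\nameref{sect-15-3-5-7}'' need widening by $|\log h|$, which keeps the sandwich estimates valid.

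The main obstacle, and where the work actually lives, is the first step: establishing the logarithmic uncertainty principle \emph{with reflections} for the Dirichlet problem. The inner analogue in section~\ref{book_new-sect-2-3}~\cite{futurebook} relies on smooth, exactly-supported quantizations in $\bR^2$, whereas here the weight $\upchi(\phi)$ must be compatible with the identity \textup{(\ref{15-3-50})} and the sign of $\psi_1$ on $\partial X$. Getting $\psi_1\ge 0$ up to an elliptically-controlled remainder for non-$\sC_0^\infty$ cut-offs requires an almost-analytic extension argument and a careful Weierstrass reduction for symbols of Gevrey class $\{1,|\log h|\}$. Once that technical lemma is in place, the rest of the programme is bookkeeping, and the conjecture follows in all five cases (i)--(v) of Theorem~\ref{thm-15-3-16}.
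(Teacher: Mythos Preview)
The statement is a \emph{conjecture} in the paper, not a theorem: the author gives no proof of it. In the discussion following the closely related Conjecture~\ref{conj-15-3-12} the author explicitly outlines two possible attacks and explains why each is blocked. The second of these is precisely your route: ``Another approach would be based on logarithmic uncertainty principle (albeit instead of $h^{-\delta}$ logarithmic factor $|\log h|^l$ would appear) but we do not have here theory similar to one of section~\ref{book_new-sect-2-3}~\cite{futurebook}.'' So your plan is not a new idea but exactly the strategy the author already identifies as open.

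There are two genuine gaps. First, the core technical step---a logarithmic uncertainty principle compatible with reflections and the Dirichlet positivity of \textup{(\ref{15-3-52})}---is not proved; you correctly flag it as ``where the work actually lives,'' but a Gevrey/almost-analytic Weierstrass reduction on the boundary is precisely the missing theory the author says does not exist, and your paragraph does not supply it. Second, and more seriously, even granting that lemma your argument yields bounds with $|\log h|^{N}$ in place of $h^{-\delta}$, as the author himself notes. Your claim that these logarithms are ``absorbed either by summation or by a sharpened Tauberian step'' is not substantiated: the estimates \textup{(\ref{15-3-43})}, \textup{(\ref{15-3-46})}, \textup{(\ref{15-3-87})}, \textup{(\ref{15-3-93})} each come from a \emph{single} threshold element (at $\ell=\bar\ell_j$ or $\ell=\hat\ell$), not from a sum, so there is no geometric series to eat the logarithm; and the ``sharpened Tauberian step'' you invoke is never described. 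At best your programme would prove a weaker statement (estimates with $h^{-\delta}$ replaced by $|\log h|^{N}$), which is already what the author says one would get---and which is still not Conjecture~\ref{conj-15-3-18}.
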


\chapter{Superstrong magnetic field}
\label{sect-15-4}

\section{Preliminary analysis}
\label{sect-15-4-1}

In this section we consider cases  of \emph{very strong magnetic field\/}\footnote{\label{foot-15-10} Note that it is more narrow definition than (\ref{15-3-1}) in the previous section.}\index{very strong magnetic field}\index{magnetic field!very strong}
\begin{gather}
\epsilon_0 h^{-1} \le \mu \le C_0h^{-1}\label{15-4-1}\\
\intertext{and superstrong magnetic field}
C_0 h^{-1} \le \mu \label{15-4-2}
\end{gather}
In the latter case operator needs to be modified: we will replace $V$ by 
$V-\mu h \fz F$ where constant $\fz$ will be specified later; so operator (\ref{15-2-1}) is replaced by
\begin{equation}
A=\sum_{1\le j,k\le 2} P_jg^{jk}P_k-\mu h \fz F+V,\qquad \text{with\ \ } P_j=hD_j-\mu V_j
\label{15-4-3}
\end{equation}

Then 
\begin{claim}\label{15-4-4}
We need only to consider eigenvalues $\lambda_n(\eta)$ of model operator $L(\eta)$ defined by (\ref{15-1-26}) with $n\le N\Def N(\epsilon_0)$.
\end{claim}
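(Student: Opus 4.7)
The plan is to reduce the claim to a uniform lower bound on the Landau-like eigenvalues $\lambda_{*,n}(\eta)$ of the one-dimensional model operator $L_*(\eta)$. Recall from Appendix~\ref{sect-15-A} (and the sketch following claim~(\ref{15-1-27})) that $\lambda_{*,n}(\eta)\to (2n+1)$ as $\eta\to+\infty$ and $\lambda_{*,n}(\eta)\to+\infty$ as $\eta\to-\infty$. I would first record that by the min-max principle, comparing with the whole-line harmonic oscillator gives
\begin{equation*}
\lambda_{*,n}(\eta)\ge c_0(2n+1)-C_0\qquad\forall\eta\in\bR,
\end{equation*}
with $c_0>0$ depending only on whether we deal with Dirichlet or Neumann condition (in the Dirichlet case one even has $\lambda_{\D,n}(\eta)\ge 2n+1$). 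This uniform lower bound is the only fact about the model spectrum that is actually needed.

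Next I would look at the spectral decomposition \textup{(\ref{15-1-29})} of the Schwartz kernel (properly transplanted to the operator~(\ref{15-4-3}) in the precanonical form of section~\ref{sect-15-2-1}). A summand with index $n$ contributes only on the set
\begin{equation*}
\bigl\{\hbar \lambda_{*,n}(\eta)+V(x)-\mu h \fz F(x)\le \tau\bigr\},
\end{equation*}
which, after dividing by $\hbar=\mu h$, reads
\begin{equation*}
\lambda_{*,n}(\eta)\le \fz F(x)+\mu^{-1}h^{-1}\bigl(\tau-V(x)\bigr).
\end{equation*}
Under the standing assumptions $F\asymp 1$, $|V|\le C$, the bound $|\tau|\le\epsilon$ relevant for our asymptotics, together with $\mu h\ge\epsilon_0$ from \textup{(\ref{15-4-1})}--\textup{(\ref{15-4-2})} and $\fz=O(1)$, the right-hand side is bounded by a constant depending only on $\epsilon_0$. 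Combining with the lower bound above gives $n\le N(\epsilon_0)$.

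Finally I would note that indices $n>N(\epsilon_0)$ give a contribution that is not merely small but identically zero (the $\uptheta$-factor vanishes), so these terms can be dropped from all sums in \textup{(\ref{15-1-29})}, $\textup{(\ref{15-1-31})}_{\D,\N}$, $\textup{(\ref{15-1-33})}_{\D,\N}$, and from every analogous expression used in the subsequent analysis. The only mild subtlety, which I would flag as the one point needing care rather than a real obstacle, is that in the Neumann case the eigenvalues $\lambda_{\N,n}(\eta)$ can dip below $(2n+1)$ near the infimum over $\eta$; however the min-max comparison still yields linear growth in $n$, which is all that is used, and the choice of $\fz$ (permitted to be smaller under Neumann as noted after \textup{(\ref{15-4-2})}) is automatically compatible with the constant $N(\epsilon_0)$.
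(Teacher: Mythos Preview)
Your argument is correct and follows essentially the same route as the paper: the paper justifies the claim immediately after stating it by invoking the uniform lower bounds $\inf_\eta\lambda_{\D,n}(\eta)\ge 2n+1$ and $\inf_\eta\lambda_{\N,n}(\eta)\ge(2n-1)_+$ from Proposition~\ref{prop-15-A-1}(iii), together with the remark that the operator is modelled by $(\mu h)^{-1}L(\eta)$. Your min-max bound $\lambda_{*,n}(\eta)\ge c_0(2n+1)-C_0$ is a slightly softer version of the same inequality, and your explicit unpacking of the $\uptheta$-factor condition makes transparent exactly why $\mu h\ge\epsilon_0$ forces the cutoff $n\le N(\epsilon_0)$; the paper leaves this last step implicit.
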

Really, as $n=0,1,2,\dots$
\begin{phantomequation}\label{15-4-5}\end{phantomequation}
\begin{align}
&\inf_\eta \lambda_{\D,n}(\eta)\ge (2n+1) \quad (n\ge 0)
\tag*{$\textup{(\ref*{15-4-5})}_\D$}\label{15-4-5-D}\\
&\inf_\eta \lambda_{\N,n}(\eta)\ge (2n-1) ,\quad (n\ge 1).
\tag*{$\textup{(\ref*{15-4-5})}_\N$}\label{15-4-5-N}
\end{align}
due to proposition~\ref{prop-15-A-1} and our operator is modelled by 
$(\mu h)^{-1}L(\eta)$.

However then 
\begin{equation} 
|\lambda_{*,m}(\eta)-\lambda_{*,n}(\eta)|\ge \epsilon (N)
\qquad\forall m\ne n\le N\;\forall \eta
\label{15-4-6}
\end{equation}
with subscript $*$ denoting either $\D$ or $\N$ and we can decompose into eigenfunctions $\upsilon_{*,n}(\mu x_1,\xi_2)$ of the model operator. So basically we consider one-dimensional diagonal operator perturbed and we can diagonalize it to $\mu hF(0,x_2)\cA_n (x_2,hD_2)$ with 
\begin{gather}
\cA_n (x_2,hD_2)\Def \bigl(\lambda_{*,n}(\hbar D_2) -\fz - (\mu h)^{-1} W(0,x_2)\bigr) + \dots\label{15-4-7}\\
\intertext{where in this section}
\hbar = \mu^{-\frac{1}{2}}h^{\frac{1}{2}};
\label{15-4-8}
\end{gather}
 preliminary we scaled 
$x_1\mapsto \hbar ^{-1}x_1$. Let us ignore perturbation in (\ref{15-4-7}); later in subsection (\ref{sect-15-4-5}) we will show that it could be made very small, albeit $W$ will be modified.

\begin{remark}\label{rem-15-4-1}
In (\ref{15-4-7}) we replaced $x_1$ by $0$. Actually the better approximation would come by replacing $x_1$ by $\mu^{-1}hD_2$ but need in it will not come out instantly.
\end{remark}

Then we need to consider only scalar operators (\ref{15-4-7}) and we are looking at fixed vicinity of some point $\bar{x}_2$. What we can say about $\xi_2$ apart of $\xi_2\ge -C_0$ (as $\lambda_{*,n}\to +\infty$ as $\xi_2\to -\infty$)?

There are two possibilities: either we would be inside of the spectral gap if not a boundary, or not. 

\section{``Almost spectral gaps''}
\label{sect-15-4-2}

Assume first that we would be inside of the spectral gap if not a boundary:
\begin{equation}
|(2m+1-\fz)\mu h F +V-\tau|\ge \epsilon_0\mu h \qquad \forall m=0,1,2,\dots
\label{15-4-9}
\end{equation}
where $|\tau| \le C$ and usually $\tau=0$.

We know (see subsection~\ref{book_new-sect-13-5-1} \cite{futurebook}) that inside of domain (\ref{15-4-9}) means a spectral gap. However it is not the case near boundary: looking at behavior of $\lambda_{*,n}(\xi_2)\mu h $ interpreted as \emph{Landau levels\/}
\index{Landau level} we can conjecture that  only lower spectral gap $(-\infty, .)$ survives and even it shrinks for Neumann boundary condition. 

As $\lambda_{*,n}(\xi_2)\to (2n+1)\mu h$ as $\xi_2\to +\infty$ due to proposition~\ref{prop-15-A-1} again and we conclude that 
\begin{claim}\label{15-4-10}
Under condition (\ref{15-4-9}) operators in question are elliptic as 
$\xi_2\ge C_0$ and we need to consider only a compact interval $|\xi_2|\le C_0$. 
\end{claim}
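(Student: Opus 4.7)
The claim asserts a symbol-level ellipticity statement for the scalar operator $\cA_n(x_2,hD_2)$ defined in (\ref{15-4-7}), and my plan is to deduce it directly from the two asymptotic properties of the Landau-eigenvalue curves $\lambda_{*,n}(\eta)$ provided by proposition~\ref{prop-15-A-1}, together with the gap hypothesis (\ref{15-4-9}). First I would renormalize (\ref{15-4-9}): dividing by $\mu hF$, which is bounded above and below under (\ref{13-2-1}), it becomes
\begin{equation*}
\bigl|(2m+1-\fz) - (\mu h)^{-1}W(0,x_2)\bigr|\ge \epsilon'_0
\qquad \forall m=0,1,2,\dots,
\end{equation*}
with $W=(\tau-V)F^{-1}$ as in (\ref{15-1-13}) and some $\epsilon'_0>0$. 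The full symbol of $\cA_n$ is $\lambda_{*,n}(\xi_2)-\fz-(\mu h)^{-1}W(0,x_2)$, so ellipticity reduces to showing this quantity is bounded away from $0$ for $|\xi_2|\ge C_0$.

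For $\xi_2\to+\infty$, proposition~\ref{prop-15-A-1} gives $\lambda_{*,n}(\xi_2)\to 2n+1$; because in virtue of (\ref{15-4-4}) only finitely many $n\le N$ need be examined, the convergence is uniform in $n$, so one can choose $C_0=C_0(N,\epsilon'_0)$ with
\begin{equation*}
|\lambda_{*,n}(\xi_2)-(2n+1)|\le \tfrac{1}{2}\epsilon'_0
\qquad \forall n\le N,\ \xi_2\ge C_0.
\end{equation*}
The triangle inequality against the renormalized (\ref{15-4-9}) (taken at $m=n$) then yields $|\cA_n(x_2,\xi_2)|\ge \tfrac{1}{2}\epsilon'_0$ for $\xi_2\ge C_0$, which is the ellipticity asserted. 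For $\xi_2\to-\infty$, proposition~\ref{prop-15-A-1} gives $\lambda_{*,n}(\xi_2)\to+\infty$, and since $\fz$ together with $(\mu h)^{-1}W$ stays bounded on $\supp\psi$, $\cA_n$ is bounded below by $1$ (say) once $C_0$ is enlarged; this takes care of the $\xi_2\le -C_0$ side. Combining the two half-line statements leaves only the compact window $|\xi_2|\le C_0$, which is the second assertion.

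The only delicate point in this argument is that the diagonalization in (\ref{15-4-7}) is only approximate, i.e.\ the operator in question really equals $\mu h F(0,x_2)\cA_n$ plus lower-order terms abbreviated by ``$\dots$''. At the symbolic level these are $O(\hbar)=O(\mu^{-1/2}h^{1/2})$ and hence cannot destroy the positive lower bound $\tfrac{1}{2}\epsilon'_0\mu h F$ obtained above for $|\xi_2|\ge C_0$; a standard parametrix construction (in the $\hbar$-calculus for $\lambda_{*,n}(\hbar D_2)$ combined with the $h$-calculus in $x_2$) then upgrades symbolic ellipticity to operator-ellipticity, which is what we actually need in order to reduce the subsequent analysis to a compact set in $\xi_2$. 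The rigorous justification that these perturbations really are of size $o(1)$ after the diagonalization is exactly what is deferred to subsection~\ref{sect-15-4-5}, and that is where I would anticipate the bulk of the technical work rather than in the present claim.
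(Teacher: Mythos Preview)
Your argument is correct and follows the same route as the paper: the paper's justification is the single observation that $\lambda_{*,n}(\xi_2)\to 2n+1$ as $\xi_2\to+\infty$ (proposition~\ref{prop-15-A-1}), which together with the gap condition (\ref{15-4-9}) forces the symbol of $\cA_n$ away from zero for large $\xi_2$, while the $\xi_2\to-\infty$ side was already noted just before the claim. You have simply spelled out the triangle-inequality step and the uniformity in $n\le N$ explicitly, and your remark about the lower-order ``$\dots$'' terms being handled in subsection~\ref{sect-15-4-5} is exactly in line with the paper.
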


Note that as\begin{phantomequation}\label{15-4-11}\end{phantomequation} 
\begin{equation}
\lambda_{*,n}(\xi_2)-\fz  -(\mu h)^{-1}W  \ge \epsilon \mu h\qquad \forall \xi_2
\tag*{$\textup{(\ref*{15-4-11})}_{n}$}\label{15-4-11-*}
\end{equation}
then operator $\cA_n$ is elliptic for all $\xi_2$ and the contribution of it and interval $(-\infty,\tau)$ to the asymptotics is negligible. In frames of (\ref{15-4-9}) one can rewrite above condition as\begin{phantomequation}\label{15-4-12}\end{phantomequation}
\begin{gather}
(2n+1-\fz-\epsilon)\mu h F + V-\tau \ge 0,
\tag*{$\textup{(\ref*{15-4-12})}_{\D,n}$}\label{15-4-12-D}\\
(\lambda_{\N,n}^*-\fz-\epsilon)\mu h F + V-\tau \ge 0,
\tag*{$\textup{(\ref*{15-4-12})}_{\N,n}$}\label{15-4-12-N}
\end{gather}
for Dirichlet and Neumann boundary conditions respectively where
\begin{equation}
\lambda_{\N,n}^*=\min_\eta \lambda_{\N,n}(\eta) \in (2m-1,2m+1).
\label{15-4-13}
\end{equation}

One can then concludes easily that

\begin{theorem}\label{thm-15-4-2}
Let one of assumptions \textup{(\ref{15-4-1})}, \textup{(\ref{15-4-2})} be fulfilled. Further, let  $\textup{(\ref*{15-4-12})}_{\D,0}$ or $\textup{(\ref*{15-4-12})}_{\N,0}$ be fulfilled with $\tau=0$ (matching to the boundary condition).

Then 
\begin{equation}
|e(x,x,0)|\le C_s\mu^{-s}
\label{15-4-14}
\end{equation}
with arbitrarily large exponent $s$.
\end{theorem}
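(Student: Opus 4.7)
The plan is to reduce the problem to elliptic analysis of the scalar operators $\cA_n$ from \textup{(\ref{15-4-7})} and then invoke standard semiclassical elliptic arguments to obtain negligibility of $e(x,x,0)$.

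First, I would carry out the reduction already outlined in Section~\ref{sect-15-4-1}: after the rescaling $x_1 \mapsto \hbar^{-1} x_1$ with $\hbar = \mu^{-\frac{1}{2}}h^{\frac{1}{2}}$, the operator $A$ becomes $\mu h F(0,x_2)\, L(\xi_2) + \ldots$, where $L(\eta)$ is the model operator \textup{(\ref{15-1-26})}. Expanding in the orthonormal basis $\upsilon_{*,n}(x_1,\eta)$ of its eigenfunctions and using the uniform spectral separation \textup{(\ref{15-4-6})}, one obtains a block-diagonal scalar operator with diagonal blocks $\mu h F(0,x_2) \cA_n(x_2, hD_2)$, modulo $O(\hbar)$-perturbations that do not couple distinct blocks to leading order. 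In view of \textup{(\ref{15-4-4})}, only finitely many values of $n$ need to be treated.

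Next, I would verify that each $\cA_n$ ($n\ge 0$) is globally elliptic at energy $\tau = 0$. In the Dirichlet case, \ref{15-4-5-D} yields $\lambda_{\D,n}(\eta)\ge 2n+1$ uniformly in $\eta$, so that for every $n\ge 0$
\begin{equation*}
\mu h F \bigl(\lambda_{\D,n}(\eta)-\fz-(\mu h)^{-1}W\bigr)\ge (2n+1-\fz)\mu h F + V \ge \epsilon \mu h F,
\end{equation*}
the last inequality being exactly $\textup{(\ref*{15-4-12})}_{\D,0}$ for $n=0$ and stronger for $n\ge 1$. The Neumann case is handled identically, replacing $(2n+1)$ by $\lambda^{*}_{\N,n}$ (see \textup{(\ref{15-4-13})}) and noting its monotonicity in $n$. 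Since each $\cA_n$ is then a scalar semiclassical pseudo-differential operator in $(x_2,\xi_2)$ whose full symbol is bounded below by $\epsilon$, a parametrix construction shows that its spectral projector below $0$ has Schwartz kernel which is $O(\hbar^s)$ on the diagonal for every $s$. Summing the $O(1)$ number of blocks and accounting for the prefactors from the $x_1$-rescaling yields the desired estimate $|e(x,x,0)|\le C_s\mu^{-s}$.

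The hardest step will be honest control of the perturbations swallowed into the ``$\ldots$'' of \textup{(\ref{15-4-7})}: freezing $x_1=0$ in the metric and potential produces both a residual diagonal error and off-diagonal coupling between the $\cA_n$. The separation \textup{(\ref{15-4-6})} together with the strict positive gap provided by the hypothesis is precisely what allows a Feshbach/Born--Oppenheimer-style reduction to decouple the blocks rigorously; should the crude freezing prove insufficient, the refined substitution $x_1\mapsto \mu^{-1}hD_2$ suggested by Remark~\ref{rem-15-4-1} should tighten the error estimates without affecting the ellipticity argument of the preceding paragraph.
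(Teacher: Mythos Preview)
Your proposal is correct and follows the paper's own line of argument: the paper treats this theorem as an immediate consequence of the preceding discussion (the sentence ``One can then concludes easily that'' is the entire proof), namely that condition $\textup{(\ref*{15-4-12})}_{*,0}$ implies $\textup{(\ref*{15-4-11})}_n$ for every $n$, whence each $\cA_n$ is elliptic and its contribution to the spectral projector below $0$ is negligible. Your write-up simply makes explicit the ellipticity verification and the role of the reduction in Section~\ref{sect-15-4-1}, and your caveat about the perturbation terms is exactly what Section~\ref{sect-15-4-5} later justifies.
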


Assume now that condition (\ref{15-4-9}) is fulfilled for all $m$ but 
$\textup{(\ref*{15-4-12})}_{*,0}$ fails. Then there is a large difference between Dirichlet and Neumann cases because only in the former case in virtue of proposition~\ref{prop-15-A-2}
\begin{equation}
 \lambda'_{*,n} \le -\epsilon (N,C_0)
\qquad \forall n\le N\qquad \forall \xi_2:|\xi_2|\le C_0
\label{15-4-15}
\end{equation}
where here and below $ \lambda'_{*,n}\Def \partial_{\xi_2}\lambda_{*,n}$ etc. For Neumann boundary condition we assume that
\begin{equation}
| \lambda'_{\N,n}(\xi_2)| \le \epsilon_0  \implies 
|\bigl(\lambda _{\N,n}(\xi_2)-\fz\bigr)\mu h F- W|\ge \epsilon_0\mu h.
\label{15-4-16}
\end{equation}

So, we arrive to the remainder estimate $O(1)$; transition from Tauberian expression to magnetic Weyl one is trivial:

\begin{theorem}\label{prop-15-4-3}
Let one of assumptions \textup{(\ref{15-4-1})}, \textup{(\ref{15-4-2})} be fulfilled. Further, let  \textup{(\ref{15-4-9})} and in case of Neumann boundary  \textup{(\ref{15-4-16})} be fulfilled. Then $\R^\MW_*=O(1)$.
\end{theorem}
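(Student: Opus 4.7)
\medskip\noindent
\textbf{Proof proposal.} The plan is to reduce the two-dimensional problem to a finite collection of one-dimensional semiclassical problems on the boundary, one for each Landau level, and then invoke the standard microhyperbolic Tauberian theorem in dimension $d=1$ (which yields remainder $O(1)$ rather than $O(\hbar^{-1})$). In view of (\ref{15-4-4}) only indices $n\le N=N(\epsilon_0)$ contribute, so after the diagonalization $(\mu h)^{-1}A\rightsquigarrow \mathop{\mathrm{diag}}\nolimits_n\bigl(F(0,x_2)\cA_n(x_2,hD_2)\bigr)$ alluded to around (\ref{15-4-7}), we are left with finitely many scalar operators $\cA_n$ with semiclassical parameter $\hbar=\mu^{-1/2}h^{1/2}$. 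Under (\ref{15-4-9}) the principal symbol of each $\cA_n$ tends to $(2n+1-\fz)-(\mu h)^{-1}W(0,x_2)$ as $\xi_2\to+\infty$, so (\ref{15-4-10}) confines the analysis to a compact $\xi_2$-interval; in the complementary region the contribution to $e(x,x,0)$ is negligible as in Theorem~\ref{thm-15-4-2}.

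On this compact region I would verify microhyperbolicity of $\cA_n$ at energy zero. For Dirichlet condition this is immediate from (\ref{15-4-15}): the derivative $\partial_{\xi_2}\bigl(\lambda_{\D,n}(\xi_2)-\fz-(\mu h)^{-1}W(0,x_2)\bigr)=\lambda'_{\D,n}(\xi_2)\le-\epsilon$ is uniformly nonzero, so the symbol is $\xi_2$-microhyperbolic. For Neumann condition the analogous bound on $\lambda'_{\N,n}$ may fail; however at any point where $|\lambda'_{\N,n}(\xi_2)|\le\epsilon_0$, condition (\ref{15-4-16}) forces $\cA_n$ itself to be elliptic at level zero, whereas at points where $|\lambda'_{\N,n}|\ge\epsilon_0$ we recover $\xi_2$-microhyperbolicity exactly as before. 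A partition of unity in $\xi_2$ then reduces the Neumann case to elliptic plus microhyperbolic pieces.

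With microhyperbolicity in hand I would run the usual Tauberian machinery for $1$-dimensional $\hbar$-pseudodifferential operators: propagate singularities for time $T=\epsilon$ (independent of $\hbar$), estimate $F_{t\to\hbar^{-1}\tau}\chi_T(t)\Gamma(e^{-i\hbar^{-1}t\cA_n}Q)$ by $C\hbar^{-1}T(\hbar/T)^s$, so that
\begin{equation*}
\bigl|F_{t\to\hbar^{-1}\tau}\bar\chi_T(t)\Gamma(e^{-i\hbar^{-1}t\cA_n}Q)\bigr|\le C.
\end{equation*}
The standard Tauberian theorem (Theorem~\ref{book_new-thm-4-4-6} type argument from \cite{futurebook}) then gives a Tauberian remainder of size $C\cdot T^{-1}=O(1)$ for each $n$, and summing the $\le N$ contributions yields $\R^{\T}=O(1)$.

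Finally I would pass from the Tauberian expression to $\R^\MW_*$ exactly in the spirit of \S\ref{sect-15-3-5}: the boundary contribution $h^{-1}\cN^\MW_{*,\bound}$ defined by \ref{15-1-31-D}/\ref{15-1-31-N} is, by proposition~\ref{prop-15-1-6}, precisely the magnetic Weyl count obtained by replacing $\bar\chi_T$ by the stationary phase / principal symbol integral, so the difference between the Tauberian main term and the magnetic Weyl formula is itself $O(1)$ under microhyperbolicity, matching the bound just derived for $\R^{\T}$. The main obstacle I anticipate is the Neumann case: one must ensure that the piece where $\lambda'_{\N,n}$ is small but $\cA_n$ is elliptic does not leak a non-Weyl contribution through the Fourier integral operators used in the diagonalization, which requires a careful pseudodifferential cutoff in $\hbar D_2$ and control of the commutator errors coming from the perturbation term suppressed in (\ref{15-4-7}); this is where condition (\ref{15-4-16}) must be used quantitatively rather than just qualitatively.
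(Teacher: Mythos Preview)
Your proposal is correct and follows essentially the same approach as the paper: reduce via (\ref{15-4-7}) to finitely many one-dimensional operators $\cA_n$ with semiclassical parameter $\hbar=\mu^{-1/2}h^{1/2}$, use (\ref{15-4-10}) to localize to $|\xi_2|\le C_0$, invoke (\ref{15-4-15}) for $\xi_2$-microhyperbolicity in the Dirichlet case and the ellipticity/microhyperbolicity dichotomy furnished by (\ref{15-4-16}) in the Neumann case, and then apply the standard one-dimensional Tauberian theorem to get $O(1)$, with the transition to the magnetic Weyl expression being routine. The perturbation you flag at the end is exactly what the paper postpones to subsection~\ref{sect-15-4-5}, so your concern is well placed but already accounted for.
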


However there are other mechanisms to break degeneracy for Neumann problem: first of all  we know (see proposition~\ref{prop-15-A-3}) that 
\begin{equation}
|\lambda'_{\N,n} (\xi_2)|\le \epsilon_0 \implies 
 \lambda''_{\N,n} (\xi_2) \ge \epsilon_0
\qquad
\forall n=0,1,2,\dots
\label{15-4-17}
\end{equation}
The first non-degeneracy assumption  linked to derivatives with respect to $x_2$ is
\begin{multline}
|\bigl(\lambda_{\N,n} (\xi_2)-\fz\bigr)\mu h -W | \le \epsilon_0 \mu h,\quad |\lambda'_{\N,n} (\xi_2) |\le \epsilon_0 \implies\\[2pt]
|\partial_{x_2}W| \ge \epsilon_0\qquad
\forall m=0,1,2,\dots,
\label{15-4-18}
\end{multline}
where as usual $W=-VF^{-1}$;  the next one is\begin{phantomequation}\label{15-4-19}\end{phantomequation}
\begin{multline}
|\bigl(\lambda_{\N,n} (\xi_2)-\fz\bigr) \mu h -W| \le \epsilon_0 \mu h,\\[2pt]   |\lambda'_{\N,n} (\xi_2)|+ |\partial_{x_2}W|\le \epsilon_0 \implies 
\pm \partial_{x_2}^2 W \ge \epsilon_0
\qquad
\forall n=0,1,2,\dots
\tag*{$\textup{(\ref*{15-4-19})}^\pm$}\label{15-4-19-*}
\end{multline}

\begin{theorem}\label{thm-15-4-4}
Let one of  condition \textup{(\ref{15-4-1})}, \textup{(\ref{15-4-2})} be fulfilled.  Further, let conditions \textup{(\ref{13-2-1})} and \textup{(\ref{15-4-9})} be fulfilled in $B(\bar{x},1)$ and let 
$\psi\in \sC_0^\infty$ be supported in 
$B(\bar{x},\frac{1}{2})\cap \{x:x_1\le \epsilon '\}$. 

Then $\R^\MW_\N=O(1)$ under one of the assumptions \textup{(\ref{15-4-18})},  $\textup{(\ref{15-4-19})}^+$  \underline{and} $\R^\MW_\N=O\bigl(|\log h|)\bigr)$ under assumption $\textup{(\ref{15-4-19})}^-$.
\end{theorem}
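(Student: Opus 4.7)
The plan is to leverage the channel decomposition from subsection~\ref{sect-15-4-1}. The uniform gap (\ref{15-4-6}) between the Neumann--Landau branches on the compact $\xi_2$-range arising on $\supp\psi$ (the rest being elliptic via (\ref{15-4-10})) allows block-diagonalisation of the operator, modulo errors made negligible by finitely many gap-based correction steps, into a direct sum over $n\le N$ of scalar $h$-pseudo-differential operators $\mu hF(0,x_2)\cA_n(x_2,hD_2)$ with principal symbol
\[
a_n(x_2,\xi_2)=\lambda_{\N,n}(\hbar\xi_2)-\fz-(\mu h)^{-1}W(0,x_2),\qquad \hbar=\mu^{-\frac{1}{2}}h^{\frac{1}{2}}.
\]
Estimating $\R^\MW_\N$ on $\supp\psi$ then reduces to bounding the Tauberian remainder for each of these scalar operators on the compact $(x_2,\xi_2)$-set and summing over $n\le N$.

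Next, I would identify the critical set $\Sigma_n\subset \{a_n=0\}$ as the locus where $\lambda'_{\N,n}(\hbar\xi_2)=0$ and $\partial_{x_2}W(0,x_2)=0$ hold simultaneously. Under (\ref{15-4-18}) these two conditions are incompatible on the energy surface, so $\Sigma_n=\emptyset$ and $\{a_n=0\}$ is non-critical; standard 1D semiclassical propagation then yields $T^*\asymp 1$ versus $T_*\asymp h$, and the Tauberian contribution per channel is $O(1)$.

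Under $\textup{(\ref{15-4-19})}^\pm$ the points of $\Sigma_n$ are non-degenerate by (\ref{15-4-17}), and the Hessian signature of $a_n$ is dictated by $\sgn(\partial^2_{x_2}W)$. In the sign-definite case ($\Sigma_n$ a collection of isolated non-degenerate extrema of $a_n$), Bohr--Sommerfeld quantisation gives only $O(1)$ eigenvalues in the dangerous $O(h)$-window around the critical value of $a_n$, and the contribution is $O(1)$; this delivers the bound $O(1)$. In the complementary case $\Sigma_n$ consists of non-degenerate saddles, and classical escape from an $\varepsilon$-vicinity of each hyperbolic fixed point takes Ehrenfest time $\asymp|\log h|$, so the Tauberian bound produces $O(|\log h|)$, with the complement of $\Sigma_n$ contributing $O(1)$ as before.

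The main obstacle is making the block-diagonalisation quantitative enough: the residual channel coupling, together with the approximation $x_1\mapsto 0$ noted in Remark~\ref{rem-15-4-1}, must contribute at most $O(1)$ to the final remainder. This is handled by iterating the gap-based correction, each step extracting a factor $\hbar^2$ from (\ref{15-4-6}); finitely many iterations bring the coupling below any Tauberian threshold. The transition from Tauberian form to the explicit magnetic Weyl expression $\cN^\MW_{\N,\bound}$ is then a channel-by-channel application of (\ref{15-1-31-N}).
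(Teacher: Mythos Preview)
Your reduction to the scalar channel operators $\cA_n$ and your focus on the critical set $\Sigma_n$ of the effective symbol is exactly the strategy the paper follows. But the mechanism you invoke to separate the $O(1)$ and $O(|\log h|)$ cases has the signs inverted. By (\ref{15-4-17}) one has $\lambda''_{\N,n}>0$ wherever $\lambda'_{\N,n}=0$, so $\partial_{\xi_2}^2 a_n>0$ on $\Sigma_n$; condition $\textup{(\ref{15-4-19})}^+$ means $\partial_{x_2}^2 W>0$, hence $\partial_{x_2}^2 a_n=-(\mu h)^{-1}\partial_{x_2}^2 W<0$, and the points of $\Sigma_n$ are \emph{saddles} of $a_n$. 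Conversely $\textup{(\ref{15-4-19})}^-$ makes them non-degenerate \emph{minima}. Your Bohr--Sommerfeld/Ehrenfest heuristic therefore assigns $O(1)$ to $\textup{(\ref{15-4-19})}^-$ and $O(|\log h|)$ to $\textup{(\ref{15-4-19})}^+$, the opposite of what the theorem states.

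The paper does not argue via eigenvalue clustering or Ehrenfest time. It introduces an anisotropic $(\rho,\ell)$-admissible partition with
\[
\rho=\bigl(|\lambda'_{\N,n}|^{2}+(\mu h)^{-1}|\partial_{x_2}W|^{2}\bigr)^{1/2}+\bar\rho,\qquad \ell=(\mu h)^{1/2}\rho,
\]
and takes $T_*\asymp\hbar\rho^{-2}$, $T^*\asymp(\mu h)^{1/2}$, so that every element contributes $O(1)$ and the total Tauberian remainder is $C\int\rho^{-1}\,d\rho$ over the non-elliptic scales. The dichotomy is then purely about the \emph{range} of $\rho$: under $\textup{(\ref{15-4-19})}^+$ the operator is elliptic except at a single $\rho$-magnitude, while under $\textup{(\ref{15-4-19})}^-$ the full interval $[\bar\rho,\bar\rho_1]=[\mu^{-1/2},(\mu h)^{-1/2}]$ survives, producing the logarithm. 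The strong anisotropy $\ell/\rho=(\mu h)^{1/2}$---reflecting that $a_n$ feels $x_2$ only through the small factor $(\mu h)^{-1}$---is what breaks the balanced one-degree-of-freedom intuition in which saddles are worse than extrema. (Remark~\ref{rem-15-4-5} confirms that $\textup{(\ref{15-4-19})}^-$ can also be brought to $O(1)$, but by the refined machinery of section~\ref{book_new-sect-5-1-3}~\cite{futurebook}, not by a direct Bohr--Sommerfeld count.)
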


\begin{proof}
As $\mu h\asymp 1$ we are in frames of subsection~\ref{book_new-sect-5-1-3} \cite{futurebook} and arrive immediately to conclusion of theorem. 

Consider case (\ref{15-4-2}) of superstrong magnetic field and adapt arguments of subsection~\ref{book_new-sect-5-1-3} \cite{futurebook}. Then first we consider $\epsilon(1,\ell)$-admissible (with respect to $(x_2,\xi_2)$) with 
$\rho =\epsilon (|\lambda' _{\N,n}|$ partition of zone 
$\{|\lambda' _{\N,n}| \ge C(\mu h)^{-\frac{1}{2}}\}$. Then for operator (\ref{15-4-7}) 
\begin{equation}
T_*\asymp \hbar\rho^{-2},\qquad T^* \asymp \rho^{-1},
\label{15-4-20}
\end{equation}
as propagation velocities with respect to $x_2,\xi_2$  are $\asymp \rho$ and 
$O\bigl((\mu h)^{-1})$ respectively. Then contribution of an element to the Tauberian remainder does not exceed 
$C\hbar^{-1}\rho \times T_* \times T^{*\,-1}\asymp C$ and the total contribution of zone in question does not exceed $C\int \rho^{-1}\,d\rho \asymp C$ as operator is elliptic unless $\rho$ has a specific magnitude.

In the remaining zone 
$\{|\lambda' _{\N,n}|\le C(\mu h)^{-\frac{1}{2}}$ let us introduce $\epsilon(\rho,\ell)$-admissible partition with
\begin{equation}
\rho = \bigl(|\lambda' _{\N,n}|^2+
(\mu h)^{-1}|\partial_{x_2}W|^2\bigr)^{\frac{1}{2}}+\bar{\rho},\quad 
\ell = (\mu h)^{\frac{1}{2}}\rho, \quad \bar{\rho}=\mu^{-\frac{1}{2}}.
\label{15-4-21}
\end{equation}
Then for operator (\ref{15-4-7}) divided by $\mu h$
\begin{equation}
T_*\asymp \hbar\rho^{-2},\qquad T^* \asymp (\mu h)^{\frac{1}{2}},
\label{15-4-22}
\end{equation}
as propagation velocity with respect to $(x_2,(\mu h)^{\frac{1}{2}}\xi_2)$  is $\asymp \rho$. Therefore contribution of $(\rho,\ell)$ element to the Tauberian remainder does not exceed 
$C\hbar^{-1}\rho\ell \times T_* \times T^{*\,-1}\asymp C$ and the total contribution does not exceed $C\int \rho^{-1}\,d\rho$ which is $O(1)$ under one of the assumption (\ref{15-4-18}), $\textup{(\ref{15-4-19})}^+$ as in the former case $\rho\asymp \bar{\rho}_1=(\mu h)^{-\frac{1}{2}}$ and in the latter case operator is elliptic unless $\rho$ has a fixed magnitude. Further, we get $O(|\log h|)$ under assumption $\textup{(\ref{15-4-19})}^-$ as integral is taken from $\bar{\rho}$ to $\bar{\rho}_1$.

Transition from Tauberian expression to magnetic Weyl expression is trivial.\ 
\end{proof}

\begin{remark}\label{rem-15-4-5}
Following section~\ref{book_new-sect-5-1-3} \cite{futurebook} we would be able to establish remainder estimate $O(1)$ even under assumption $(\textup{(\ref{15-4-19})}^-$. 
\end{remark}

\begin{remark}\label{rem-15-4-6}
(i) The principal part of asymptotics in question is $O(\mu h^{-1})$ and as 
\begin{equation}
 (1+\fz)+F^{-1}(V -\tau)\le -\epsilon
\label{15-4-23}
\end{equation}
it is $\asymp \mu h^{-1}$.

\medskip\noindent
(ii) As Neumann boundary condition is considered and
\begin{equation}
\mu h (1+\fz) F+V -\tau \ge \epsilon\mu h
\label{15-4-24}
\end{equation}
the principal part of asymptotics in question is $O(h^{-1})$ and as
\begin{equation}
\mu h F (\lambda^*_{\N,0}+\fz) + V -\tau \le -\epsilon\mu h
\label{15-4-25}
\end{equation}
it is $\asymp \hbar^{-1}=\mu^{\frac{1}{2}}h^{-\frac{1}{2}}$. 

\medskip\noindent
(iii) As 
\begin{align}
&\mu h (2m-1+k) F+V -\tau_1 \le -\epsilon\mu h, \label{15-4-26}\\[2pt] 
&\mu h (2m+1+k) F+V -\tau_2 \ge -\epsilon\mu h, \qquad
\tau_1<\tau_2, \notag
\end{align}
then the increment of principal part of asymptotics calculated as $\tau$ increases from $\tau_1$ to $\tau_2$ is $O(\hbar^{-1})$ and is 
$\asymp \hbar^{-1}$ as 
$|\tau_2-\tau_1|\asymp \mu h$. Recall that without boundary it would be 
$O(\hbar^s)$. 
\end{remark}

\begin{problem}\label{problem-15-4-7}
Investigate under weaker non-degeneracy conditions than \ref{15-4-19-*}.
\end{problem}

\section{Landau level}
\label{sect-15-4-3}

\subsection{Preliminary analysis}
\label{sect-15-4-3-1}

Now consider case when condition (\ref{15-4-9}) is broken. Without any loss of the generality we can assume that 
\begin{claim}\label{15-4-27}
Condition (\ref{15-4-9}) breaks only for one value of $m=\bar{m}$.
\end{claim}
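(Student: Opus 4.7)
The plan is to justify this as a standard ``without loss of generality'' reduction that combines a pointwise exclusivity argument with a partition of unity. The key observation is that, at any given point, condition \textup{(\ref{15-4-9})} can fail for at most one integer $m$, provided the constant $\epsilon_0$ in \textup{(\ref{15-4-9})} is chosen small enough relative to $\inf F$.

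First I would argue the pointwise exclusivity. Suppose \textup{(\ref{15-4-9})} failed at some $x\in\supp\psi$ for two distinct indices $m_1<m_2$. Then by the triangle inequality
\begin{equation*}
2(m_2-m_1)\mu h F(x) \le \bigl|(2m_2+1-\fz)\mu h F+V-\tau\bigr|+\bigl|(2m_1+1-\fz)\mu h F+V-\tau\bigr|\le 2\epsilon_0\mu h,
\end{equation*}
so $(m_2-m_1)F(x)\le\epsilon_0$. Since $m_2-m_1\ge 1$ and $F\ge\epsilon_0$ by \textup{(\ref{13-2-1})}, choosing the constant $\epsilon_0$ appearing in \textup{(\ref{15-4-9})} strictly smaller than, say, $\tfrac12\inf_{\supp\psi} F$ yields a contradiction. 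Hence at each point of $\supp\psi$ there is at most one offending $\bar m=\bar m(x)$.

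Second, I would upgrade this pointwise statement to a local one. Because only finitely many $m\le N=N(\epsilon_0)$ are relevant in view of \textup{(\ref{15-4-4})} (and the other $m$ automatically satisfy \textup{(\ref{15-4-9})} with room to spare, by \ref{15-4-5-D}--\ref{15-4-5-N}), and because $F,V\in\sC^\infty$, for any point $\bar x\in\supp\psi$ where \textup{(\ref{15-4-9})} fails for some index $\bar m$ there is an open ball $B(\bar x,r)$ on which no other $m\ne\bar m$ satisfies $\bigl|(2m+1-\fz)\mu h F+V-\tau\bigr|\le 2\epsilon_0\mu h$; on such a ball the offending index is uniquely $\bar m$, while on any ball disjoint from the critical set condition \textup{(\ref{15-4-9})} itself holds and the previous Sections \ref{sect-15-4-1}--\ref{sect-15-4-2} (in particular Theorem \ref{prop-15-4-3}) already give the desired estimate.

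Finally, by compactness of $\supp\psi$ I would extract a finite subcover $\{B_j\}$ of $\supp\psi$ by such balls, on each of which either \textup{(\ref{15-4-9})} holds or it fails only for a single index $\bar m_j$. Taking a subordinate smooth partition of unity $\psi=\sum_j \psi_j$ with $\supp\psi_j\subset B_j$, the analysis of $e(x,x,0)$ weighted by $\psi$ decomposes into finitely many pieces, each of which is either covered by Theorem~\ref{prop-15-4-3} or reduces to the one-index situation asserted in \textup{(\ref{15-4-27})}. I do not anticipate any real obstacle here: the only mild point is verifying that the constant $\epsilon_0$ in \textup{(\ref{15-4-9})} can be consistently chosen small (independent of $\mu,h$) compared with $\inf F$, which follows from \textup{(\ref{13-2-1})} and the compactness of $\supp\psi$.
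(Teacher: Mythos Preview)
Your argument is correct and is essentially the same reduction the paper makes: the paper's one-line justification is ``if $\mu h\ge C$ this assumption would be fulfilled automatically; otherwise we achieve it by considering $\epsilon$-admissible partition of unity,'' which is exactly your pointwise-exclusivity plus partition-of-unity argument written out in full. The only cosmetic difference is that the paper singles out the case $\mu h\ge C$ (where, after dividing \textup{(\ref{15-4-9})} by $\mu h$, the term $(V-\tau)/(\mu h)$ becomes negligible and the single offending $\bar m$ is the same over all of $\supp\psi$, so no partition is needed), while you treat both regimes uniformly via the partition of unity; your version is slightly more explicit about the continuity/compactness step and about separating the two roles of $\epsilon_0$.
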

Really, if $\mu h \ge C$ this assumption would be fulfilled automatically; otherwise we achieve it by considering $\epsilon$-admissible partition of unity.

Further, without any loss of the generality we can assume that 
\begin{equation}
\fz=2\bar{m}+1.
\label{15-4-28}
\end{equation}
Really, let $\nu =|\fz-(2\bar{m}+1)|\le 1$. If \ $\nu \mu h \le C$ then term 
$\pm \nu \mu h F$ could be absorbed into $V$; if $\nu \mu h\ge C$ then dividing operator by $\nu \mu h$ we find ourselves in the frames of the previous subsection with 
$\mu_\new = \mu ^{\frac{1}{2}}h^{-\frac{1}{2}}\nu^{-\frac{1}{2}}$ and
$\mu_\new = \mu ^{-\frac{1}{2}}h^{\frac{1}{2}}\nu^{-\frac{1}{2}}\le 1$.

Then for all $n\ne \bar{m}$ the previous arguments work and we need to consider  $n=\bar{m}$ only. Further, as $n=\bar{m}$ these arguments work in zone $\{\xi_2\le C_0\}$ and we need to consider only zone $\{C_0\le \xi_2\le \epsilon_0\mu \}$.

As symbol operator $\mu h\cA_n$ with $\cA_n$ defined by (\ref{15-4-7}) has a either first or second derivative with respect to $x_2$ disjoint from $0$, but derivatives with respect to $\xi_2$ could be pretty small, we will see that  we cannot skip without reservation $O(\hbar)$ terms  like those with an extra factor $x_1$ even as we derive Tauberian remainder estimate but we in our assumptions will be able in Tauberian arguments only to skip $O(\hbar^2)$  terms like those with an extra factor $x_1^2$ so we need to modify (\ref{15-4-7}) a bit\footnote{\label{foot-15-11} Actually effectively $x_1$ could be marginally more than $\hbar$.}. To do this we first drag out $F(x)$ out of $(A-\tau)$ leaving inside 
\begin{multline}
(hD_1)^2 + (hD_2-\mu x_1)^2 - W(x_1,x_2)+\dots=\\[3pt]
(hD_1)^2 + (hD_2-\mu x_1)^2 - W(\mu^{-1}hD_2,x_2)-\\[3pt]
\shoveright{(\partial_{x_1}W)(\mu ^{-1}hD_2,x_2)(x_1-\mu^{-1}hD_2)+\dots=} \\[3pt]
(hD_1)^2 + \bigl(hD_2-\mu x_1+\frac{1}{2}\mu^{-1} 
(\partial_{x_1}W)(\mu ^{-1}hD_2,x_2)\bigr)^2 - W(\mu ^{-1}hD_2,x_2) +\dots
\label{15-4-29}
\end{multline}
with $W=-(V-\tau)F^{-1}$ resulting after shift 
\begin{equation}
hD_2\mapsto  
hD_2-\mu x_1-\frac{1}{2}\mu^{-1} (\partial_{x_1}W)(\mu ^{-1}hD_2,x_2)
\label{15-4-30}
\end{equation}
in
\begin{equation}
\mu h \cA_n (x_2,hD_2)=  
\mu h\lambda_{*,n}(\hbar D_2 ) - W(\hbar^2 D_2,x_2) + \dots .
\label{15-4-31}
\end{equation}
 There could also be $O(x_1)$ terms\footnote{\label{foot-15-12} After division by $F$ intensity of magnetic field is $1$ so there could be no terms (\ref{15-4-32}) with ``$-$'' replaced by ``$+$''.} 
\begin{equation}
\alpha (x_2)x_1  \Bigl(h^2 D_1^2 -  (hD_2-\mu x_1)^2\Bigr);
\label{15-4-32}
\end{equation}
then replacing $x_1\mapsto x_1+\beta (x_2) x_1^2$ we also need to replace 
\begin{equation*}
hD_1\mapsto \bigl(1-2\beta x_1 +O(x_1^2)\bigr)hD_1 \quad\text{and} \quad hD_2\mapsto \bigl(hD_2 +O(x_1^2)hD_1\bigr)
\end{equation*}
so modulo $O(x_1^2)$
\begin{multline*}
(hD_1)^2 + (hD_2-\mu x_1)^2\mapsto (1-4\beta x_1)(hD_1)^2 +
(1+4\beta x_1) (hD_2-\mu x_1)^2 + \\2(hD_2-\mu x_1)\beta \mu^{-1}h^2D_2^2 
\end{multline*}
and choosing $\beta=-\frac{1}{4}\alpha$ we arrive to 
\begin{equation*}
(hD_1)^2+ (hD_2-\mu x_1+2\beta \mu^{-1}h^2D_2^2)^2 
\end{equation*}
and thus we need to replace (\ref{15-4-31}) by 
\begin{multline}
\mu h \bigl(\cA_n (x_2,hD_2)-2n-1\bigr)= \\ 
\mu h\bigl(\lambda_{*,n}(\hbar D_2 + 2\beta \hbar^3 D_2^2) -2n-1\bigr)- 
W(\hbar^2 D_2,x_2(1+) + \dots .
\label{15-4-33}
\end{multline}
\begin{remark}\label{rem-15-4-8}
In the case of superstrong magnetic field we need to consider only zone
\begin{equation}
\bigl\{\xi_2,\ |\lambda_{*,n}-2n-1|\le C_0(\mu h)^{-1}\bigr\}
\label{15-4-34}
\end{equation}
otherwise operator is elliptic.
\end{remark}

\subsection{Non-degenerate case}
\label{sect-15-4-3-2}
Consider first case 
\begin{multline}
|(2n+1-\fz)\mu hF + V|\le \epsilon_0 \implies |\nabla_{\partial X} VF^{-1}|\ge \epsilon_0
\\
\forall n=0,1,2,\dots
\label{15-4-35}
\end{multline}
Then obviously  one can take $T_*\asymp\hbar$. Really, speed with respect to $\xi_2$ is $\asymp |\partial _{x_2}W|\asymp 1$.

Meanwhile consider time direction in which $\xi_2$ increases; we can select it due to assumption (\ref{15-4-35}). To reach $\xi_2=\epsilon\hbar^{-1}$ we need time $\asymp \hbar^{-1}\mu h$ as for $\xi_2\asymp \hbar^{-1}$ speed with respect to $\xi_2$ is $(\mu h)^{-1}$. Consider evolution with respect to $x_2$; speed with respect to $x_2$ does not exceed $C\mu h |\lambda'_{*,n}|+C\hbar$. Then we can take for sure take 
\begin{equation}
T^*=T^*(\xi_2)\asymp  \min\bigl( (\mu h|\lambda'_{*,n}|)^{-1}, \hbar^{-1}\bigr)
\label{15-4-36}
\end{equation}

Then contribution of $B(\bar{x},1)\cap \{x:x_1\le \epsilon '\}$ to the remainder does not exceed  
\begin{equation}
C\int \hbar^{-1}\frac{T_*(\xi_2)}{T^*(\xi_2)}\,d\xi_2\le
\int \bigl(\hbar + C\mu h |\lambda'_{*,n}|\bigr)\,d\xi_2
\label{15-4-37}
\end{equation}
as integrand in the left-hand expression does not exceed $C\hbar$ as 
$|\lambda'_{*,n}|\le \hbar (\mu h)^{-1}$ and it does not exceed
$C\mu h |\lambda'_{*,n}|$
otherwise. As $\lambda_{*,n}(\xi_2)$ is monotone (at least for $\xi_2\ge C$)  and integral is taken over $\xi_2\le \epsilon \hbar^{-1}$ also satisfying (\ref{15-4-34}) the right-hand expression does not exceed $C$.

Thus 

\begin{claim}\label{15-4-38}
Under conditions (\ref{15-4-1}) or  (\ref{15-4-2}), (\ref{15-4-28}) and (\ref{15-4-35}) fulfilled in $B(\bar{x},1)$ contribution of 
$B(\bar{x},\frac{1}{2})\cap \{x:x_1\le \epsilon '\}\cap\{\xi_2\ge C\}$ to the Tauberian remainder does not exceed $C$.
\end{claim}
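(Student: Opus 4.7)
The plan is to reduce the problem to the scalar one-dimensional operator (\ref{15-4-33}) for $n=\bar{m}$, then apply a standard Tauberian-plus-microhyperbolicity argument with carefully chosen local time intervals $T_*$ and $T^*$, and finally integrate the resulting bound over the $\xi_2$-direction. By (\ref{15-4-27}) and (\ref{15-4-28}) all Landau levels except $n=\bar{m}$ are non-critical, so (\ref{15-4-11-*}) applies and their contribution is negligible; for $n=\bar{m}$, remark \ref{rem-15-4-8} restricts us to the zone (\ref{15-4-34}). Inside that zone the operator reduces, after the conjugations leading to (\ref{15-4-33}), to the scalar symbol $\mu h(\lambda_{*,n}(\hbar D_2)-2n-1)-W(\hbar^2 D_2,x_2)+\ldots$ on which the analysis is performed.

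First I would verify microhyperbolicity with respect to $x_2$: assumption (\ref{15-4-35}) gives $|\partial_{x_2}W|\asymp 1$ wherever the principal symbol of $\mu h\cA_n$ vanishes, so on this set the classical propagation speed with respect to $\xi_2$ is bounded below by $\epsilon_0$. Choosing the time direction in which $\xi_2$ increases, one obtains $T_*\asymp\hbar$ from the usual uncertainty heuristic $\hbar^{-1}T_*\cdot 1\gtrsim 1$, so that on the scale $T_*$ shifts in $\xi_2$ exceed $\hbar$ and the standard Tauberian estimate
\begin{equation*}
|F_{t\to h^{-1}\tau}\chi_T(t)\,\Gamma(U\,{}^tQ_y)|\le C_s h^s
\quad\text{for}\quad T_* h^{-\delta}\le T\le T^*
\end{equation*}
holds for a $(1,\rho_2)$-admissible $Q$ localized near $\xi_2$. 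For the upper cut-off, since the $x_2$-propagation speed along the bicharacteristic of the scalar symbol is $\partial_{\xi_2}\mathrm{symb}=\mu h\,\lambda'_{*,n}(\hbar D_2)+O(\hbar)$, the trajectory stays in a fixed $\epsilon$-vicinity of $\bar{x}_2$ for time
\[
T^*(\xi_2)\asymp\min\bigl((\mu h|\lambda'_{*,n}(\xi_2)|)^{-1},\ \hbar^{-1}\bigr),
\]
which is exactly (\ref{15-4-36}); moreover for $\xi_2\asymp\hbar^{-1}$ one has $|\lambda'_{*,n}|\asymp (\mu h)^{-1}$ by proposition~\ref{prop-15-A-1}, so the bound $T^*\le\hbar^{-1}$ is the right one at the outer end of the zone.

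The contribution to the Tauberian remainder of a $\rho_2$-element at height $\xi_2$ is then, as in the preceding subsection, of order $\hbar^{-1}\rho_2\cdot T_*(\xi_2)/T^*(\xi_2)$, and summing over a partition of unity in $\xi_2$ this becomes the integral in the left-hand side of (\ref{15-4-37}). The integrand is bounded by $C\hbar$ in the regime $|\lambda'_{*,n}|\le\hbar(\mu h)^{-1}$ and by $C\mu h|\lambda'_{*,n}|$ otherwise, and because $\lambda_{*,n}(\xi_2)$ is monotone for $\xi_2\ge C$ (proposition~\ref{prop-15-A-2}) with total variation $O(1)$ over the interval dictated by (\ref{15-4-34}), both the linear-in-$\hbar$ part (integrated over a length $\epsilon\hbar^{-1}$) and the $\mu h|\lambda'_{*,n}|$ part (a total-variation integral) are $O(1)$.

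The main obstacle I anticipate is bookkeeping the error terms suppressed in ``$+\dots$'' in (\ref{15-4-31}) and (\ref{15-4-33}): because $|\partial_{\xi_2}\mathrm{symb}|$ can be as small as $\hbar$, one cannot carelessly discard $O(\hbar)$ perturbations coming from the linear-in-$x_1$ remainders, and it is exactly the substitution $x_1\mapsto x_1+\beta(x_2)x_1^2$ together with the shift (\ref{15-4-30}) that removes them and leaves only $O(\hbar^2)$ errors harmless for the Tauberian analysis above. Justifying that no $O(\hbar)$ contribution survives this reduction, uniformly in $\xi_2\in[C,\epsilon\hbar^{-1}]$, is the delicate part; once this is in hand, the estimate $O(1)$ follows from the two-term integration of (\ref{15-4-37}) just described.
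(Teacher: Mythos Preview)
Your proposal is correct and follows essentially the same approach as the paper: reduce to the scalar operator (\ref{15-4-33}) for $n=\bar{m}$, use microhyperbolicity in $\xi_2$ from (\ref{15-4-35}) to set $T_*\asymp\hbar$, bound the $x_2$-drift by $\mu h|\lambda'_{*,n}|+O(\hbar)$ to get $T^*$ as in (\ref{15-4-36}), and then integrate (\ref{15-4-37}) using monotonicity of $\lambda_{*,n}$ together with the constraint (\ref{15-4-34}).

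One inaccuracy in a side remark: your claim that $|\lambda'_{*,n}|\asymp(\mu h)^{-1}$ at $\xi_2\asymp\hbar^{-1}$ is false (proposition~\ref{prop-15-A-1} does not assert this, and in fact $\lambda'_{*,n}(\xi_2)$ decays like $\xi_2^{2n+2}e^{-\xi_2^2}$ by corollary~\ref{cor-15-A-6}). The bound $T^*\le\hbar^{-1}$ at large $\xi_2$ comes entirely from the $O(\hbar)$ contribution to the $x_2$-speed, namely $\hbar\,\partial_{x_1}W(\hbar\xi_2,x_2)$ arising from $W(\hbar^2 D_2,x_2)$ in (\ref{15-4-31}); you already wrote this $O(\hbar)$ term correctly in your expression for $\partial_{\xi_2}\mathrm{symb}$, so the formula for $T^*$ and the subsequent integration are unaffected. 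Your identification of the $O(\hbar)$-error bookkeeping as the genuine obstacle is apt; the paper devotes subsection~\ref{sect-15-4-5} to precisely this reduction.
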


\subsection{Generic case}
\label{sect-15-4-3-3}
Now look what happens as \textup{(\ref{15-4-35})} is also broken and replaced by conditions\begin{phantomequation}\label{15-4-39}\end{phantomequation}
\begin{multline}
|(2n+1-\fz)\mu h F +V|+ |\nabla_{\partial X} VF^{-1}|\le \epsilon_0\implies\\[3pt]
\mp \nabla^2_{\partial X} VF^{-1}\ge \epsilon_0
\qquad
\forall n=0,1,2,\dots
\tag*{$\textup{(\ref*{15-4-39})}^{\pm}$}
\end{multline}
and\begin{phantomequation}\label{15-4-40}\end{phantomequation}
\begin{multline}
|(2n+1-\fz)\mu h F +V|+ |\nabla_{\partial X} VF^{-1}|\le \epsilon_0\implies\\[3pt]
\mp \nabla VF^{-1} \ge \epsilon_0
\qquad
\forall n=0,1,2,\dots
\tag*{$\textup{(\ref*{15-4-40})}^{\pm}$}
\end{multline}

\paragraph{Inner zone.}
\label{sect-15-4-3-3-1}
We define this zone preliminary by 
\begin{equation*}
\cX_\inn =\{\xi_2 \ge c_0(\log \mu)^{\frac{1}{2}}\}
\end{equation*}
which means not only that $\mu h (\lambda_{*,n}-2n-1)$ and  
$\mu h \lambda'_{*,n}$ are negligible but that they remain so even if we replace $\xi_2$ by $(1-\epsilon)\xi_2$. Then under assumption (\ref{15-4-40}) we can take 
\begin{gather}
T_*\asymp \min \bigl(\xi_2^{-1},\hbar\ell^{-2}\bigr), \label{15-4-41}\\
\ell\asymp |W_{x_2}|\label{15-4-42}
\end{gather}
Really, the propagation speed with respect to $x_2$ is $\asymp \hbar$ but as  scale with respect to $\xi_2$ is $\asymp \xi_2$ there, uncertainty principle means that $\hbar T \cdot \xi_2 \ge \hbar$. Similarly, propagation speed with respect to $\xi_2$ is $\asymp \ell$, scale with respect to $x_2$ is $\asymp \ell$  and uncertainty principle means that $\ell T \cdot \ell \ge \hbar$.

Consider propagation in the time direction $\sign (W_{x_2}) t>0$ in which $\xi_2$ increases. Then 
\begin{equation*}
\sign (W_{x_2}) \frac{d\ }{dt}\log |W_{x_2}|=  
-\hbar W_{x_1} \sign |W_{x_2}|^{-1} W_{x_2x_2} 
\end{equation*}
so $\log |W_{x_2}|$ also increases provided $\W_{x_1}$ and $W_{x_2x_2}$ have opposite signs. Then we can take $T^*=\hbar^{-1}$. This would lead to contribution of $\cX_\inn \cap\{\xi_2\ell \ge \hbar\}$ to the Tauberian remainder not exceeding 
\begin{equation*}
C\iint \hbar^{-1}\frac{T_*(\xi_2)}{T^*(\xi_2)}\,d\xi_2 dx_2= 
C\iint \min (\xi_2^{-1}, \hbar\ell^{-2})\,d\ell d\xi_2 \le C. 
\end{equation*}
On the other hand, in $\cX_\inn \cap\{\xi_2\ell \le \hbar\}$ we can take 
$T_*\asymp\xi_2^{-1}$ and $T^*\asymp \xi_2$ and contribution of this subzone 
to the Tauberian remainder not exceeding 
\begin{equation*}
C\iint \hbar^{-1}\frac{T_*(\xi_2)}{T^*(\xi_2)}\,d\xi_2 dx_2= 
C\int \xi_2^{-2}\,d\xi_2\le C.
\end{equation*}

On the other hand, as $\textup{(\ref{15-4-39})}^\pm$, $\textup{(\ref{15-4-40})}^\pm$ with the same signs are fulfilled we should chose between $T^*\asymp \xi_2$ and $T^*\asymp \hbar^{-1}\ell$. So, as $\ell \ge (\hbar\xi_2)^{\frac{1}{2}}$ we can take $T_*=\hbar\ell^{-2}$, $T^*=\hbar\ell^{-1}$ and contribution of this subzone to the Tauberian remainder does not exceed 
\begin{equation*}
C\iint \hbar^{-1}\frac{T_*(\xi_2)}{T^*(\xi_2)}\,d\xi_2 dx_2= 
C\int \ell^{-1}\,d\ell\le C
\end{equation*}
as in the case of the same signs $\ell^2+\xi_2\hbar$ must have the same magnitude or operator is elliptic; so in this subzone $\ell$ must have the same magnitude. 

As $\ell\le (\hbar\xi_2)^{\frac{1}{2}}$ we can take $T_*=\xi_2^{-1}$, but the same ellipticity argument means that actually we can upgrade $T^*\asymp \hbar$ to $T^*\asymp \hbar^{-\frac{1}{2}}\xi_2^{\frac{1}{2}}$ contribution of this subzone to the Tauberian remainder does not exceed 
\begin{equation*}
C\iint \hbar^{-1}\frac{T_*(\xi_2)}{T^*(\xi_2)}\,d\xi_2 dx_2= 
C\int \xi_2^{-1} \,d\xi_2\le C
\end{equation*}
as in this subzone $\ell$ must have the same magnitude.

We need to extend $\cX_\inn$ up to 
\begin{gather}
\cX_\inn\Def \{ \xi_2\ge  \bar{\xi}^+_2 \}\label{15-4-43}\\
\shortintertext{with} 
\bar{\xi}_2^\pm=
\Bigl( \log (\mu^{\frac{3}{2}}h^{\frac{1}{2}})- 
(2n+2) \log \log (\mu^{\frac{3}{2}}h^{\frac{1}{2}})\pm C\Bigr)^{\frac{1}{2}}
\label{15-4-44}
\end{gather}
describing a zone where $\mu h |\lambda'_{*,n}(\xi_2)|\le \epsilon \hbar$ is dominated by $|\partial_{\xi_2}W(\xi_1 \hbar, x_2)|$. Then the same arguments work there albeit scale with respect to $\xi_2$ is now $\xi_2-\bar{\xi}_2^{+\prime}$ $\bar{\xi}_2^{\pm\prime}$ are defined by the same formula with $C$ replaced by $C/2$.

Further, these arguments remain valid for $\xi_2\ge \xi_2^-$ provided $\lambda'_{*,n}$ and $-W_{x_1}$ have the same sign.

Thus we arrive to 

\begin{proposition}\label{prop-15-4-9}
(i) Under conditions \textup{(\ref{15-4-1})} or  \textup{(\ref{15-4-2})}, \textup{(\ref{15-4-28})} and $\textup{(\ref{15-4-39})}^\pm$, $\textup{(\ref{15-4-40})}^\pm$ (with the same or opposite signs) fulfilled in $B(\bar{x},1)$ contribution of 
$B(\bar{x},\frac{1}{2})\cap \cX_\inn \cap\{\xi_2\ge C\}$ to the Tauberian remainder does not exceed $C$.

\medskip\noindent
(ii) Further as $\lambda'_{*,n}$ and $-W_{x_1}$  have the same signs\footnote{\label{foot-15-13} Which means that under Dirichlet or Neumann boundary  condition $\textup{(\ref{15-4-40})}^+$ and $\textup{(\ref{15-4-40})}^-$ respectively must be fulfilled.} (i) remains true for $\cX_\inn=\{\xi_2\ge \bar{\xi}^-_2\}$.
\end{proposition}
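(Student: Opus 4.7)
The starting point is the scalar reduction already carried out in \textup{(\ref{15-4-29})}--\textup{(\ref{15-4-33})}: after diagonalization and the shifts in $hD_2$ and $x_1$, the contribution of the Landau band $n=\bar m$ to $\Gamma(U\,^t\!Q)$ in $\cX_\inn$ is governed by the one--dimensional operator $\mu h\,\cA_n(x_2,hD_2)-\tau$ whose symbol is
\begin{equation*}
 \mu h\bigl(\lambda_{*,n}(\hbar\xi_2)-2n-1\bigr)\;-\;W(\hbar^2\xi_2,x_2)\;+\;O(\hbar^2).
\end{equation*}
In $\cX_\inn=\{\xi_2\ge\bar\xi_2^+\}$ the first term is, by \textup{(\ref{15-4-44})}, smaller than the natural scale of $W$, and moreover even $\mu h\,\lambda'_{*,n}(\hbar\xi_2)$ is $O(\hbar)$--smaller than $|\partial_{\xi_2}W(\hbar^2\xi_2,x_2)|=\hbar^2|W_{x_1}|$; this is precisely what defines the lower edge $\bar\xi_2^+$.

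The main step is to introduce an $(\ell,\rho)$--admissible partition with $\ell\asymp|W_{x_2}|$ in $x_2$ and $\rho\asymp\xi_2$ (the natural scale of $\lambda_{*,n}$) or $\rho\asymp\ell$ in $\xi_2$, according to which regime dominates. On a partition element propagation speeds are $|\dot x_2|\asymp\hbar$ and $|\dot\xi_2|\asymp\ell$, and the logarithmic uncertainty principle forces
\begin{equation*}
 T_*\asymp\min\bigl(\xi_2^{-1},\;\hbar\ell^{-2}\bigr),
\end{equation*}
while the ellipticity of $\mu h\cA_n-\tau$ away from the characteristic set restricts $\ell,\xi_2$ to a single magnitude on non--negligible elements. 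One then splits four cases exactly as in the discussion preceding the proposition: $\xi_2\ell\gtrless\hbar$, combined with the dichotomy between the same/opposite signs of $\lambda'_{*,n}$ and $-W_{x_1}$, giving respectively $T^*\asymp\hbar^{-1}$, $T^*\asymp\xi_2$, $T^*\asymp\hbar^{-1}\ell$ or $T^*\asymp\hbar^{-1/2}\xi_2^{1/2}$. In each case the Tauberian bound $C\hbar^{-1}\ell\rho\,T_*/T^*$ integrates to $O(1)$ (the integrals $\int\ell^{-1}\,d\ell$, $\int\xi_2^{-1}\,d\xi_2$, $\int\xi_2^{-2}\,d\xi_2$ are either convergent or cut off by $\bar\xi_2^+$, using the non--degeneracy hypotheses $\textup{(\ref{15-4-39})}^\pm$, $\textup{(\ref{15-4-40})}^\pm$ to rule out degenerate $\ell$).

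The choice of a correct time direction is the critical point. One picks $\operatorname{sign} t=\operatorname{sign} W_{x_2}$, so that $\xi_2$ increases along the flow, and then computes
\begin{equation*}
 \operatorname{sign}(W_{x_2})\frac{d}{dt}\log|W_{x_2}|\;=\;-\hbar\,W_{x_1}\,(\operatorname{sign} W_{x_2})^{-1}W_{x_2x_2},
\end{equation*}
which is positive precisely when $W_{x_1}$ and $W_{x_2x_2}$ carry the opposite signs dictated by the hypothesis; this is why (i) treats the two sign combinations together, and why one may push $T^*$ all the way to $\hbar^{-1}$ without leaving the element. Summing the element--wise bound over the partition of $\cX_\inn$ gives the $O(1)$ contribution to $\R^\T$.

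For part (ii), in the strip $\bar\xi_2^-\le\xi_2\le\bar\xi_2^+$ the term $\mu h(\lambda_{*,n}-2n-1)$ and $W(\hbar^2\xi_2,x_2)$ have comparable sizes, so $\lambda'_{*,n}$ is no longer negligible relative to $\hbar W_{x_1}$. The matching sign assumption on $\lambda'_{*,n}$ and $-W_{x_1}$ ensures that the contribution of $\mu h\,\lambda'_{*,n}$ to $d\xi_2/dt$ is of the same sign as the contribution of $\hbar^2 W_{x_1}$, so the time direction chosen above still makes $\xi_2$ (and thereby $\log|W_{x_2}|$) monotone, and the very same partition and case analysis apply verbatim. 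The foreseeable obstacle is purely bookkeeping: one must verify that on the extended strip the scales $\rho,\ell$ still satisfy the microlocal uncertainty condition $\ell\rho\ge\hbar^{1-\delta}$ everywhere outside an elliptic region, and that the cut--off $\bar\xi_2^-$ is chosen large enough (via the $-(2n+2)\log\log(\mu^{3/2}h^{1/2})$ correction in \textup{(\ref{15-4-44})}) so that the logarithmic divergences $\int\ell^{-1}d\ell$, $\int\xi_2^{-1}d\xi_2$ remain bounded. Once this is checked the same Tauberian summation yields (ii).
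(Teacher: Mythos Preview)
Your proposal follows the paper's approach --- the $(\ell,\rho)$-partition, the formula $T_*\asymp\min(\xi_2^{-1},\hbar\ell^{-2})$, the four $T^*$ regimes, and the Tauberian summation are all the paper's own --- but two explanations are miswired.

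\textbf{Part (i).} In $\cX_\inn=\{\xi_2\ge\bar\xi_2^+\}$ the term $\mu h\lambda'_{*,n}$ is by construction dominated by $\hbar W_{x_1}$, so the sign of $\lambda'_{*,n}$ plays no role there. The dichotomy driving the case split is the relative sign of $W_{x_1}$ and $W_{x_2x_2}$ (i.e.\ whether $\textup{(\ref{15-4-39})}^\pm$ and $\textup{(\ref{15-4-40})}^\pm$ carry the same or opposite superscripts): when opposite, $\log|W_{x_2}|$ grows along the chosen time direction and $T^*\asymp\hbar^{-1}$; when the same, $|W_{x_2}|$ may decay and one is forced to the smaller $T^*\asymp\hbar^{-1}\ell$ (for $\ell\ge(\hbar\xi_2)^{1/2}$) or $T^*\asymp\hbar^{-1/2}\xi_2^{1/2}$ (for $\ell\le(\hbar\xi_2)^{1/2}$), with the ellipticity constraint $\ell^2+\hbar\xi_2\asymp\mathrm{const}$ supplying the finiteness of the resulting $\int\ell^{-1}\,d\ell$ and $\int\xi_2^{-1}\,d\xi_2$. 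Your sentence ``one may push $T^*$ all the way to $\hbar^{-1}$ without leaving the element'' holds only in the first of these two cases.

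\textbf{Part (ii).} The matching-sign hypothesis on $\lambda'_{*,n}$ and $-W_{x_1}$ controls $\dot x_2=\mu h\lambda'_{*,n}-\hbar W_{x_1}$, not $\dot\xi_2$. In the strip $\bar\xi_2^-\le\xi_2\le\bar\xi_2^+$ these two summands are comparable; agreement of their signs keeps $\dot x_2$ of definite sign $-\operatorname{sign}W_{x_1}$, so the computation of $\frac{d}{dt}\log|W_{x_2}|$ (which runs through $\dot x_2$) carries over unchanged and all of the $T^*$ choices from part (i) remain available. Monotonicity of $\xi_2$ itself is automatic from $\dot\xi_2=W_{x_2}$ and the choice of time direction and needs no extra sign hypothesis.

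These are errors of attribution rather than of strategy; once the dichotomies are relabelled correctly, your argument is the paper's.
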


\paragraph{Boundary zone.}
\label{sect-15-4-3-3-2}
Consider now boundary zone temporarily introduced as 
\begin{equation*}
\cX_\bound \Def \{\xi_2\le \frac{1}{2}\bar{\xi}_2\}
\end{equation*}
which actually should be intersected with zone (\ref{15-4-34}).

Then we can take
\begin{equation}
T_*\asymp\min 
\hbar \Bigl(  \xi_2
\bigl(\mu h |\lambda'_{*,n}|\bigr)^{-1}, \ell^{-2}\Bigr).
\label{15-4-45}
\end{equation}
Really, in this zone propagation speed with respect to $x_2$ is 
$\asymp \mu h |\lambda'_{*,n}|$ and scale with respect to $\xi_2$ is 
$\asymp \xi_2^{-1}$ to keep $\lambda'_{*,n}$ of the fixed magnitude.

As $\ell \ge (\xi_2^{-1}\mu h |\lambda'_{*,n}|\bigr)^{\frac{1}{2}}$ consider propagation in the time direction of $\xi_2$ increasing. Then 
\begin{equation}
T^* \asymp   \min \bigl( (\mu h |\lambda'_{*,n}| )^{-1}, \hbar^{-1}\ell\bigr)
\label{15-4-46}
\end{equation}
provided $|W_{x_2}|$ also increases in the same direction because speed with respect to $x_2$ is $\asymp (\mu h |\lambda'_{*,n}| )$ as long as we remain in $\cX_\bound$ but then drops to $O(\hbar)$ outside of $\cX_\bound$  but $x_2$ can go in the opposite direction there. Our extra assumption means exactly that 
\begin{claim}\label{15-4-47}
$W_{x_2x_2}$ and $\lambda'_{*,n}$ have the same signs\footnote{\label{foot-15-14} Or equivalently that under Dirichlet or Neumann boundary condition $\textup{(\ref{15-2-39})}^-$ and $\textup{(\ref{15-2-39})}^+$ respectively must be fulfilled.}.
\end{claim}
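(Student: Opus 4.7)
\textbf{Proof proposal for claim (\ref{15-4-47}).}

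The plan is to translate the assumption ``$|W_{x_2}|$ increases in the time direction in which $\xi_2$ increases'' into a relation between $W_{x_2x_2}$ and $\lambda'_{*,n}$ by writing out the bicharacteristic equations for the symbol of the scalar operator~(\ref{15-4-31}). After dividing by $\mu h$ and restoring the $\hbar$-scaling, Hamilton's equations read, up to terms already neglected,
\begin{equation*}
\frac{dx_2}{dt}= \lambda'_{*,n}(\xi_2)+O(\hbar),\qquad
\frac{d\xi_2}{dt}= (\mu h)^{-1} W_{x_2}(\hbar^2 \xi_2,x_2)+O(\hbar).
\end{equation*}
Thus in the boundary zone, where $\hbar^2\xi_2=o(1)$, the sign of $d\xi_2/dt$ is the sign of $W_{x_2}$, and the sign of $dx_2/dt$ is the sign of $\lambda'_{*,n}$.

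Next I would select the time direction $\sign t=\sign W_{x_2}$ so that $\xi_2$ increases; along this flow $x_2$ moves with sign $\sign(\lambda'_{*,n})\cdot\sign(W_{x_2})$. Differentiating $|W_{x_2}|$ along the trajectory gives
\begin{equation*}
\frac{d}{dt}|W_{x_2}| \;=\; \sign(W_{x_2})\, W_{x_2x_2}\,\frac{dx_2}{dt}
\;=\;  W_{x_2x_2}\,\lambda'_{*,n}\cdot \sign(t)\cdot\sign(W_{x_2})^{2},
\end{equation*}
so the requirement that $|W_{x_2}|$ increase in this chosen time direction is precisely
\begin{equation*}
W_{x_2x_2}\,\lambda'_{*,n}>0,
\end{equation*}
which is the content of~(\ref{15-4-47}).

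To obtain the equivalent form in the footnote, I would invoke the behavior of the one‑dimensional eigenvalues recorded in Appendix~\ref{sect-15-A}: for $\xi_2$ in the relevant window~(\ref{15-4-34}) one has $\lambda'_{\D,n}(\xi_2)<0$ (the Dirichlet eigenvalues tend down to $2n+1$ from above, cf.\ proposition~\ref{prop-15-A-2}), whereas for Neumann the corresponding derivative has the opposite sign in the matching part of the spectral window. Combining this with $W=-VF^{-1}$, so that $W_{x_2x_2}=-\nabla^2_{\partial X}VF^{-1}$, the sign condition $W_{x_2x_2}\lambda'_{*,n}>0$ becomes $\nabla^2_{\partial X}VF^{-1}>0$ under Dirichlet, i.e.\ $\textup{(\ref{15-4-39})}^-$, and $\nabla^2_{\partial X}VF^{-1}<0$ under Neumann, i.e.\ $\textup{(\ref{15-4-39})}^+$, completing the claim.

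The only non‑routine step is ensuring that the error terms $O(\hbar)$ in the Hamilton system and the shifts introduced by~(\ref{15-4-29})--(\ref{15-4-33}) do not spoil the sign comparison; this is handled by remarking that on the relevant $(\ell,\rho)$‑element with $\ell\gtrsim\hbar$ and $W_{x_2x_2}$ bounded away from zero by one of $\textup{(\ref{15-4-39})}^\pm$, the main term dominates the $O(\hbar)$ correction.
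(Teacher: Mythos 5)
Your proposal is correct and follows essentially the argument the paper intends: claim (\ref{15-4-47}) is just the translation of the requirement that $|W_{x_2}|$ increase in the time direction of increasing $\xi_2$ for the effective Hamiltonian $\mu h\lambda_{*,n}(\xi_2)-W$, whose flow has $\dot x_2\asymp \mu h\lambda'_{*,n}$ and $\dot\xi_2$ of the sign of $W_{x_2}$, and your Appendix-based sign bookkeeping ($\lambda'_{\D,n}<0$ by proposition~\ref{prop-15-A-2}, $\lambda'_{\N,n}>0$ for $\xi_2\ge C_0$ past the minimum $\eta_n$) correctly yields the footnote's equivalence, where the reference $\textup{(\ref{15-2-39})}^{\pm}$ is a typo for $\textup{(\ref{15-4-39})}^{\pm}$ as you implicitly assumed.
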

Then contribution of the subzone in question to the Tauberian remainder does not exceed 
\begin{multline*}
C\iint \hbar^{-1}\frac{T_*(\xi_2)}{T^*(\xi_2)}\,d\xi_2 dx_2\asymp 
C\iint \ell^{-2} \bigl( \mu h |\lambda'_{*,n}|  + \hbar\ell^{-1}\bigr) \,d\ell d\xi_2\asymp\\
 \asymp 
C\int  \bigl( (\mu h |\lambda'_{*,n}|\xi_2)^{\frac{1}{2}}   + \hbar \xi_2 (\mu h |\lambda'_{*,n}|)^{-1}\bigr) \, d\xi_2 \asymp \\
C (\mu h |\lambda'_{*,n}|\xi_2)^{\frac{1}{2}}\xi_2^{-1}\bigr|_{\xi_2=\hat{\xi}_2}   + 
C\hbar  (\mu h |\lambda'_{*,n}|)^{-1}\bigr|_{\xi_2=\bar{\xi}_2} \asymp C
\end{multline*}
where $\hat{\xi}_2$ is defined from (\ref{15-4-34}) as solution to 
$\mu h |\lambda_{*,n}-(2n+1)|= C$.

Consider now subzone 
\begin{equation*}
\bigl\{\ell \le (\xi_2^{-1}\mu h |\lambda'_{*,n}|\bigr)^{\frac{1}{2}}, 
\ \ell \ge \xi_2 \hbar
\bigr\}
\end{equation*}
where the second condition is due to uncertainty principle and scaling with respect to $\xi_2$.

Then $T^*$ is defined by (\ref{15-4-45}) with $\ell$ replaced by
$(\xi_2^{-1}\mu h |\lambda'_{*,n}|\bigr)^{\frac{1}{2}}$ 
as we can go into direction of increasing $\xi_2$. Then contribution of this subzone to the Tauberian remainder does not exceed 
\begin{equation*}
C\iint \hbar^{-1}\frac{T_*(\xi_2)}{T^*(\xi_2)}\,d\xi_2 dx_2\asymp 
C\hbar \int  d\xi_2 \ll C
\end{equation*}
again.
Further, in subzone $\{\ell \le \hbar \xi_2\}$ one can take 
$T^*\asymp \xi_2^{-1}$ and its contribution is $\ll 1$ as well.

Consider now case when 

\begin{claim}\label{15-4-48}
$W_{x_2x_2}$ and $\lambda'_{*,n}$ have the opposite signs\footnote{\label{foot-15-15} Or equivalently that under Dirichlet or Neumann boundary condition $\textup{(\ref{15-2-39})}^+$ and $\textup{(\ref{15-2-39})}^-$ respectively must be fulfilled.}.
\end{claim}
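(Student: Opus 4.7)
The claim asserts an equivalence: the sign condition "$W_{x_2x_2}$ and $\lambda'_{*,n}$ have opposite signs" is the genuine complement of claim~\ref{15-4-47}, and it translates into the boundary non-degeneracy hypothesis $\textup{(\ref{15-4-39})}^+$ for Dirichlet and $\textup{(\ref{15-4-39})}^-$ for Neumann. My plan is to establish this translation rather than take the sign relation as hypothesis; the proof therefore has two parts, one sorting out $W_{x_2x_2}$ in terms of boundary data on $VF^{-1}$, and one sorting out the sign of $\lambda'_{*,n}$ on the relevant spectral window.

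First I would reduce $W_{x_2x_2}$ to a boundary quantity. Since $W=-(V-\tau)F^{-1}$ and the condition we are operating under entails $|\nabla_{\partial X}VF^{-1}|\le \epsilon_0$, differentiating twice and restricting to $\partial X=\{x_1=0\}$ yields
\begin{equation*}
W_{x_2x_2}\bigl|_{\partial X}= -\nabla^2_{\partial X}(VF^{-1}) + O(\epsilon_0),
\end{equation*}
the $O(\epsilon_0)$ absorbing cross terms of the form $(\partial_{x_2}F^{-1})\cdot \partial_{x_2}V$, together with smooth-coefficient perturbations that the Appendix~\ref{sect-15-A} normalization ensures are subleading. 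Hence $\operatorname{sgn} W_{x_2x_2}\bigl|_{\partial X}=-\operatorname{sgn}\nabla^2_{\partial X}(VF^{-1})$; in view of the sign convention of $\textup{(\ref{15-4-39})}^\pm$, this identifies $W_{x_2x_2}>0$ with $\textup{(\ref{15-4-39})}^+$ and $W_{x_2x_2}<0$ with $\textup{(\ref{15-4-39})}^-$.

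Second I would pin down $\operatorname{sgn}\lambda'_{*,n}(\xi_2)$ on the zone \textup{(\ref{15-4-34})}. For Dirichlet, proposition~\ref{prop-15-A-1} together with \ref{15-4-5-D} gives $\lambda_{\D,n}(\eta)\searrow (2n+1)$ as $\eta\to+\infty$, and proposition~\ref{prop-15-A-2} (invoked as in \textup{(\ref{15-4-15})}) provides the quantitative lower bound $-\lambda'_{\D,n}\ge\epsilon(N,C_0)$ throughout \textup{(\ref{15-4-34})}, so $\lambda'_{\D,n}<0$. For Neumann, remark~\ref{rem-15-3-13}(ii) records that $\lambda_{\N,n}(\eta)\nearrow (2n+1)$ from below as $\eta\to+\infty$, so on the part of \textup{(\ref{15-4-34})} that lies above the unique minimum of $\lambda_{\N,n}$ we have $\lambda'_{\N,n}>0$. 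The condition \textup{(\ref{15-4-16})} or \textup{(\ref{15-4-17})} excludes a neighbourhood of the minimum from contributing, so on the portion of $\cX_\inn$ relevant to the remainder we may take $\lambda'_{\N,n}>0$.

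Combining the two steps yields: opposite signs and Dirichlet give $\lambda'_{\D,n}<0$, hence $W_{x_2x_2}>0$, hence $\textup{(\ref{15-4-39})}^+$; opposite signs and Neumann give $\lambda'_{\N,n}>0$, hence $W_{x_2x_2}<0$, hence $\textup{(\ref{15-4-39})}^-$. The main obstacle is the careful bookkeeping in step one: the reduction of $\partial^2_{x_2}W|_{\partial X}$ to $\nabla^2_{\partial X}(VF^{-1})$ is clean pointwise only under the first-order degeneracy $|\nabla_{\partial X}VF^{-1}|\le\epsilon_0$ that is already present in $\textup{(\ref{15-4-39})}^\pm$, and the normal-derivative contributions coming from the non-normal form of $A$ in \textup{(\ref{15-2-5})} must be shown to land in the $O(\epsilon_0)$ remainder rather than flip a sign; the Neumann sub-case adds the separate nuisance of having to confine $\xi_2$ to the monotonic branch of $\lambda_{\N,n}$, which is precisely what makes the Neumann analysis in subsection~\ref{sect-15-4-2} more delicate.
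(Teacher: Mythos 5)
There is nothing in the paper to compare your argument against: (\ref{15-4-48}), like its counterpart (\ref{15-4-47}), is introduced by ``Consider now case when'' and serves purely as a hypothesis labelling one of the two cases in the boundary-zone analysis of the Landau-level situation; the footnote's ``equivalently'' is stated without any argument (and its reference to (\ref{15-2-39}) is evidently a typo for $\textup{(\ref{15-4-39})}^{\pm}$, which you correctly inferred). So what you have produced is a verification of that unproved footnote remark, and it is correct in substance and uses exactly the tools the paper supplies: $\partial_\eta\lambda_{\D,n}<0$ everywhere by proposition~\ref{prop-15-A-2}(i), $\partial_\eta\lambda_{\N,n}>0$ on the region actually considered, and $W_{x_2x_2}\bigr|_{x_1=0}=-\nabla^2_{\partial X}(VF^{-1})$, so that ``opposite signs'' forces $\textup{(\ref{15-4-39})}^{+}$ under Dirichlet and $\textup{(\ref{15-4-39})}^{-}$ under Neumann, in agreement with the footnote.

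Two small repairs. First, your justification of $\lambda'_{\N,n}>0$ via (\ref{15-4-16}) or (\ref{15-4-17}) is misplaced: those conditions belong to the almost-spectral-gap case of section~\ref{sect-15-4-2} and play no role here. The correct reason is simply that the case analysis takes place in $\{\xi_2\ge C_0\}$ with only $n\le N(\epsilon_0)$ relevant by (\ref{15-4-4}); by propositions~\ref{prop-15-A-2}(ii) and~\ref{prop-15-A-3} each $\lambda_{\N,n}$ has a unique stationary point $\eta_n>0$, a nondegenerate minimum with $\eta_n^2=\lambda_{\N,n}(\eta_n)\le \lambda_{\N,n}(0)=4n+1$, so $\eta_n\le C(N)$ and choosing $C_0$ large places the whole zone on the increasing branch. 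Second, since $x_2$ is the boundary coordinate and $W=-VF^{-1}$ in this section, the identity $W_{x_2x_2}\bigr|_{x_1=0}=-\nabla^2_{\partial X}(VF^{-1})$ is exact, so no $O(\epsilon_0)$ bookkeeping of cross terms is needed there; the only genuine perturbation is that in (\ref{15-4-31}) $W$ is evaluated at $x_1=\hbar^2\xi_2$ (or through $W_\eff$) rather than at $x_1=0$, and that error is harmless precisely because $\textup{(\ref{15-4-39})}^{\pm}$ is quantitative ($\ge\epsilon_0$) rather than a bare sign condition.
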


Then as $\ell\ge \ell^*$ we can do nothing better than 
\begin{equation}
T^*= \ell \bigl(\mu h |\lambda'_{*,n}|\bigr)^{-1}.
\label{15-4-49}
\end{equation}
Then contribution of the subzone in question to the Tauberian remainder does not exceed 
\begin{multline*}
C\iint \hbar^{-1}\frac{T_*(\xi_2)}{T^*(\xi_2)}\,d\xi_2 dx_2\asymp 
C\iint \ell^{-3} ( \mu h |\lambda'_{*,n}|) \,d\ell d\xi_2\asymp\\
 \asymp 
C\int     d\xi_2 \asymp C\log \mu.
\end{multline*}
On the other hand, for $\ell\le \ell^*$ we can improve (\ref{15-4-49}) to
\begin{equation}
T^*= \ell^{1-\delta} \bigl(\mu h |\lambda'_{*,n}|\bigr)^{-1-\frac{1}{2}\delta}.
\label{15-4-50}
\end{equation}
Then contribution of the subzone in question to the Tauberian remainder does not exceed
\begin{multline*}
C\iint \hbar^{-1}\frac{T_*(\xi_2)}{T^*(\xi_2)}\,d\xi_2 dx_2\asymp 
C\iint \ell^{-1+\delta} \ell^{*\,-\delta}  \,d\ell d\xi_2\asymp\\
 \asymp 
C\int  d\xi_2 \asymp C\log \mu.
\end{multline*}

Finally, extending $\cX_\bound$ to 
\begin{equation}
\cX_\bound =\{\xi_2\le \bar{\xi}_2^-\}
\label{15-4-51}
\end{equation}
comes with no cost at all as scale with respect to $\xi_2$ is 
$\xi_2^{-1}\asymp (\bar{\xi}_2^{-\prime} -\xi_2)$ anyway.

So we arrive to

\begin{proposition}\label{prop-15-4-10}
(i) Under conditions \textup{(\ref{15-4-1})} or  \textup{(\ref{15-4-2})}, \textup{(\ref{15-4-28})} and $\textup{(\ref{15-4-39})}^\pm$, $\textup{(\ref{15-4-40})}^\pm$ (with the same or opposite signs) fulfilled in $B(\bar{x},1)$ contribution of 
$B(\bar{x},\frac{1}{2})\cap \cX_\bound \cap\{\xi_2\ge C\}$ to the Tauberian remainder does not exceed $C\log \mu$.

\medskip\noindent
(ii) Further as $\lambda'_{*,n}$ and $W_{x_2x_2}$  have the same signs\footref{foot-15-14} this contribution does not exceed $C$.
\end{proposition}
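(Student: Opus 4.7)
The plan is to carry out an $(\ell,\rho)$-admissible partition of $\cX_\bound$ with respect to $(x_2,\xi_2)$, where $\rho\asymp \xi_2^{-1}$ is chosen so that $\lambda'_{*,n}(\xi_2)$ stays of fixed magnitude on each element and $\ell\asymp |W_{x_2}|$, augmented below by the uncertainty threshold $\ell\ge \hbar\xi_2$. On such an element the propagation speed with respect to $x_2$ is $\asymp \mu h|\lambda'_{*,n}|$ and that with respect to $\xi_2$ is $\asymp \ell$, so the logarithmic uncertainty principle and the standard Tauberian bookkeeping force
\begin{equation*}
T_*\asymp \hbar\min\bigl(\xi_2(\mu h|\lambda'_{*,n}|)^{-1},\ \ell^{-2}\bigr),
\end{equation*}
and each element contributes at most $C\hbar^{-1}\rho\ell\cdot T_*/T^*$ to the Tauberian remainder. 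Everything then reduces to choosing $T^*$ as large as the classical dynamics (with reflections) permits in each subzone.

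For part (ii), I would exploit the assumption that $\sign W_{x_2x_2}=\sign \lambda'_{*,n}$: picking the time direction in which $\xi_2$ increases also forces $|W_{x_2}|$ to grow along hops, so a trajectory leaves the element either when $\mu h|\lambda'_{*,n}|\cdot t\gtrsim 1$ (escape from $\cX_\bound$) or when the $\xi_2$-shift overruns $\rho$, giving $T^*\asymp\min\bigl((\mu h|\lambda'_{*,n}|)^{-1},\ \hbar^{-1}\ell\bigr)$. Summing the resulting per-element estimate in $\ell$ at fixed $\xi_2$ and integrating in $\xi_2$ up to the threshold $\hat\xi_2$ defined by \textup{(\ref{15-4-34})} gives a bound $\lesssim \hbar(\mu h|\lambda'_{*,n}|)^{-1}\big|_{\xi_2=\bar\xi_2}\asymp 1$. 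The auxiliary strip $\ell\le (\xi_2^{-1}\mu h|\lambda'_{*,n}|)^{1/2}$, limited by uncertainty, and the strip $\ell\le \hbar\xi_2$ are handled separately by the same recipe with $T^*$ replaced by $\xi_2^{-1}$ or $(\mu h|\lambda'_{*,n}|\xi_2^{-1})^{1/2}$, yielding a negligible contribution.

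For part (i), in the generic case $W_{x_2x_2}$ and $\lambda'_{*,n}$ may have opposite signs, so $|W_{x_2}|$ does not cooperate with $\xi_2$ along hops and only the cruder $T^*\asymp\ell(\mu h|\lambda'_{*,n}|)^{-1}$ can be ensured. The per-element contribution then sums as $\iint \ell^{-3}(\mu h|\lambda'_{*,n}|)\,d\ell\,d\xi_2$, whose $\ell$-integral saturates at the minimal admissible $\ell=\ell^*$ and whose $\xi_2$-integral runs over a range of size $\asymp \log\mu$ (determined by \textup{(\ref{15-4-34})} and the two thresholds $\bar\xi_2^\pm$), producing the $O(\log\mu)$ total. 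A slight refinement $T^*=\ell^{1-\delta}(\mu h|\lambda'_{*,n}|)^{-1-\delta/2}$ handles the residual very-small-$\ell$ strip without changing the outcome. Finally, extending the zone to $\cX_\bound=\{\xi_2\le \bar\xi_2^-\}$ is free, since the scale with respect to $\xi_2$ is already $\asymp \bar\xi_2^{-\prime}-\xi_2$ there.

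The main obstacle is pinning down $T^*$: identifying the longest time during which the (reflected) Hamiltonian dynamics of the effective scalar operator $\mu h\cA_n$ stays inside a single $(\ell,\rho)$-element. When the sign matching of~\ref{15-4-47} holds this is easy because the cooperative growth of $\xi_2$ and $|W_{x_2}|$ along hops gives an immediate escape; otherwise trajectories can linger and only the short $T^*\asymp\ell(\mu h|\lambda'_{*,n}|)^{-1}$ is guaranteed, which is precisely where the $\log\mu$ loss enters. Removing this loss would require either a sharper analysis of the boundary flow along $\partial X$ or a use of near-threshold ellipticity of $\cA_n$ that is not obviously available here, and so is left outside the present proof.
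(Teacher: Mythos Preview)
Your proposal is correct and follows essentially the same route as the paper: the same $(\ell,\rho)$-partition with $\rho\asymp\xi_2^{-1}$, the same $T_*$ from \textup{(\ref{15-4-45})}, the same choice of $T^*$ in the sign-matching case \textup{(\ref{15-4-46})} and in the opposite-sign case \textup{(\ref{15-4-49})}--\textup{(\ref{15-4-50})}, the same handling of the residual strips $\ell\le(\xi_2^{-1}\mu h|\lambda'_{*,n}|)^{1/2}$ and $\ell\le\hbar\xi_2$, and the same cost-free extension to $\{\xi_2\le\bar\xi_2^-\}$. One minor point: your verbal justification of the second term $\hbar^{-1}\ell$ in $T^*$ (``$\xi_2$-shift overruns $\rho$'') is not quite right---in the paper this term arises because after escaping $\cX_\bound$ the $x_2$-speed drops to $O(\hbar)$ and may reverse, so returning to the original $\ell$-element takes time $\asymp\hbar^{-1}\ell$---but your formula and the ensuing summation are correct.
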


As $\lambda'_{*,n}$ and $-W_{x_1}$ have the same signs\footref{foot-15-13} we are done. Otherwise we need to consider 

\paragraph{Transitional zone.} 
\label{sect-15-4-3-3-3}

$\cX_\trans$ is defined by
\begin{equation}
\bar{\xi}_2^-\le \xi_2\le \bar{\xi}_2^+
\label{15-4-52}
\end{equation}
with $\bar{\xi}_2^\pm$ defined by (\ref{15-4-44}) and in this zone 
$\mu h\lambda'_{*,n}$ has magnitude $\hbar$ and $\mu h\lambda''_{*,n}$ has magnitude $\hbar \xi_2\gg \hbar^2$ and we conclude that

\begin{proposition}\label{prop-15-4-11}
Under conditions \textup{(\ref{15-4-1})} or  \textup{(\ref{15-4-2})}, \textup{(\ref{15-4-28})} and $\textup{(\ref{15-4-39})}^\pm$, $\textup{(\ref{15-4-40})}^\pm$ (with the same or opposite signs) fulfilled in $B(\bar{x},1)$ contribution of 
$B(\bar{x},\frac{1}{2})\cap \cX_\trans \cap\{\xi_2\ge C\}$ to the Tauberian remainder does not exceed $C\log \mu$.

\medskip\noindent
(ii) Further as $\lambda''_{*,n}$ and $W_{x_2x_2}$  have the same signs\footref{foot-15-14} this contribution does not exceed $C$.
\end{proposition}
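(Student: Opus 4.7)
The plan is to adapt the analysis carried out for $\cX_\inn$ in proposition~\ref{prop-15-4-9} and for $\cX_\bound$ in proposition~\ref{prop-15-4-10} to the transitional zone $\cX_\trans$ defined by \textup{(\ref{15-4-52})}. The key analytic inputs are already provided: on this zone $\mu h |\lambda'_{*,n}|\asymp \hbar$ and $\mu h |\lambda''_{*,n}|\asymp \hbar \xi_2$, so $\lambda'_{*,n}$ varies monotonically across a slice of width $\rho \asymp \xi_2^{-1}$ in $\xi_2$. First I would partition $\cX_\trans$ into $(\ell,\rho)$-admissible cells with respect to $(x_2,\xi_2)$, taking $\ell\asymp |W_{x_2}|+\bar\ell$, where $\bar\ell$ is the minimal scale compelled by the microlocal uncertainty $\rho\ell\ge \hbar^{1-\delta}$.

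Next I would read off $T_*$ and $T^*$. The propagation speed with respect to $x_2$ is $\asymp\hbar$ (governed by $\mu h\lambda'_{*,n}$) and with respect to $\xi_2$ is $\asymp\ell$ (governed by $W_{x_2}$), so uncertainty forces
\begin{equation*}
T_*\asymp \max\bigl(\ell^{-1},\ \hbar\xi_2 \ell^{-2}\bigr),
\end{equation*}
while in the general case the widest cell permits only $T^*\asymp (\xi_2\ell)^{-1}$. Then the contribution of one cell to the Tauberian remainder is bounded by $C\hbar^{-1}\rho\ell\cdot T_*/T^*$, and summation over $\ell\in[\bar\ell,1]$ and $\xi_2\in[\bar\xi_2^-,\bar\xi_2^+]$ yields $C\log\mu$, the logarithm coming from $\int d\ell/\ell$ on the critical stratum where $|W_{x_2}|$ is small. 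For part (ii), when $\lambda''_{*,n}$ and $W_{x_2x_2}$ share sign, I would pick the time direction so that $\xi_2$ increases and $|W_{x_2}|$ also increases along the trajectory, mirroring proposition~\ref{prop-15-4-10}(ii); this upgrades $T^*$ to something of order $\hbar^{-1}\ell$ or larger, and the operator becomes elliptic outside a single magnitude of $\ell$, removing the $\log\mu$.

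The main obstacle will be bookkeeping the scales near the endpoints $\xi_2=\bar\xi_2^\pm$ where $\cX_\trans$ joins $\cX_\inn$ or $\cX_\bound$: one must check that the choices of $T_*,T^*$ match continuously with those of the adjacent zones used in propositions~\ref{prop-15-4-9}--\ref{prop-15-4-10}, and in particular that $\bar\ell\ge C_0\mu^{-1}$ so the one-dimensional reduction \textup{(\ref{15-4-31})} remains legitimate and the $O(x_1)$ terms absorbed into it stay negligible. A secondary issue is verifying, when $\xi_2$ approaches the boundary of the admissible range \textup{(\ref{15-4-34})}, that ellipticity of $\cA_n$ kicks in and truncates the $\xi_2$-integration before $\rho\ell$ drops below $\hbar^{1-\delta}$; this is where the precise form of $\bar\xi_2^\pm$ in \textup{(\ref{15-4-44})} with its $\log\log$ correction is used.
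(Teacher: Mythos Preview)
Your overall plan—adapt the cell-partition Tauberian argument from propositions~\ref{prop-15-4-9} and~\ref{prop-15-4-10} using the fact that on $\cX_\trans$ one has $\mu h|\lambda'_{*,n}|\asymp\hbar$ and $\mu h|\lambda''_{*,n}|\asymp\hbar\xi_2\gg\hbar^2$—is precisely the paper's approach; in fact the paper's entire proof consists of recording these two magnitudes and asserting the conclusion.

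That said, your explicit formulae for $T_*$ and $T^*$ are wrong and, as written, incompatible. The minimal observation time $T_*$ is a \emph{minimum} over mechanisms, not a maximum: either the $x_2$-shift or the $\xi_2$-shift being observable suffices. Following the template of \textup{(\ref{15-4-41})}, the correct choice in $\cX_\trans$ is $T_*\asymp\min(\hbar\ell^{-2},\rho^{-1})$ with $\rho$ the $\xi_2$-scale (either $\xi_2$ or $|\xi_2-\bar\xi_2|$, exactly as the paper later uses in subsection~\ref{sect-15-4-4-3}); neither of your two displayed terms $\ell^{-1}$, $\hbar\xi_2\ell^{-2}$ matches this. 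As for $T^*$, your $(\xi_2\ell)^{-1}$ is merely the crossing time of the narrow zone by $\xi_2$-drift; the dwell-time bound actually needed in the unfavourable case is the analogue of \textup{(\ref{15-4-49})}, namely $T^*\asymp\ell(\mu h|\lambda'_{*,n}|)^{-1}\asymp\hbar^{-1}\ell$, upgraded in the favourable sign case of part~(ii). With your formulae one has $T_*\ge\ell^{-1}\ge1>(\xi_2\ell)^{-1}\ge T^*$, so the Tauberian machine does not even start. Once the scales are corrected the summation runs exactly as in the boundary-zone case and gives the stated bounds; the endpoint matching and $\log\log$ issues you raise are real but routine and are not where the $\log\mu$ comes from.
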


\paragraph{Synthesis}
\label{sect-15-4-3-3-4}

So, we proved that the total Tauberian remainder does not exceed $C\log \mu$. To upgrade it to $C$ we must assume that $W_{x_2x_2}$, $\lambda'_{*,n}$ and $-W_{x_1}$ have the same signs as $\lambda'_{*,n}$ and $\lambda''_{*,n}$ have opposite signs. So we proved 

\begin{proposition}\label{prop-15-4-12}
Under conditions \textup{(\ref{15-4-1})} or  \textup{(\ref{15-4-2})}, \textup{(\ref{15-4-28})} and $\textup{(\ref{15-4-39})}^\pm$, $\textup{(\ref{15-4-40})}^\pm$ (with the same or opposite signs) fulfilled in $B(\bar{x},1)$ contribution of 
$B(\bar{x},\frac{1}{2})\cap\{\xi_2\ge C\}$ to the Tauberian remainder does not exceed $C\log \mu$.

\medskip\noindent
(ii) Further as  $W_{x_2x_2}$, $\lambda'_{*,n}$ and $-W_{x_1}$ have the same signs\footref{foot-15-14} this contribution does not exceed $C$.
\end{proposition}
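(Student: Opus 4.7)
The plan is simply to assemble Propositions~\ref{prop-15-4-9}, \ref{prop-15-4-10}, and \ref{prop-15-4-11} into a single estimate, after verifying that the three zones $\cX_\inn$, $\cX_\trans$, $\cX_\bound$ defined by (\ref{15-4-43}), (\ref{15-4-52}), (\ref{15-4-51}) exhaust the relevant range $\{\xi_2\ge C\}$ intersected with (\ref{15-4-34}), so that Tauberian contributions are indeed additive via a smooth partition of unity in $\xi_2$.

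First I would fix a pseudo-differential partition of unity subordinate to the triple $(\cX_\inn,\cX_\trans,\cX_\bound)$ with overlaps localised near the thresholds $\xi_2=\bar\xi_2^{\pm}$. Each cutoff is $(\ell,\rho)$-admissible in $(x_2,\xi_2)$ on the relevant microscale used in the respective proof, and the logarithmic widths of $\bar\xi_2^{\pm}$ in (\ref{15-4-44}) guarantee that the boundary layers between the three zones can absorbed into any one of the three estimates without altering the order of magnitude. Then apply Proposition~\ref{prop-15-4-9} to the inner piece, Proposition~\ref{prop-15-4-10} to the boundary piece, and Proposition~\ref{prop-15-4-11} to the transitional piece. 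For part~(i), the worst of the three bounds is $C\log\mu$ (from $\cX_\bound$ or $\cX_\trans$), which gives the claim.

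For part~(ii) I would note that the sign assumption imposed on $W_{x_2x_2}$, $\lambda'_{*,n}$ and $-W_{x_1}$ is exactly what triggers the improved estimates in Propositions~\ref{prop-15-4-9}(ii), \ref{prop-15-4-10}(ii), and \ref{prop-15-4-11}(ii) simultaneously. Concretely, the condition that $\lambda'_{*,n}$ and $-W_{x_1}$ share a sign lets one extend $\cX_\inn$ all the way down to $\bar\xi_2^-$ by Proposition~\ref{prop-15-4-9}(ii); the condition that $\lambda'_{*,n}$ and $W_{x_2x_2}$ share a sign activates Proposition~\ref{prop-15-4-10}(ii) in the boundary zone; and since $\lambda'_{*,n}$ and $\lambda''_{*,n}$ have opposite signs in the window (\ref{15-4-44}), the sign agreement between $\lambda'_{*,n}$ and $W_{x_2x_2}$ transfers to a sign disagreement between $\lambda''_{*,n}$ and $W_{x_2x_2}$, which is the hypothesis of Proposition~\ref{prop-15-4-11}(ii)—wait, the statement of (ii) in our proposition asserts matching signs of $W_{x_2x_2}$ and $\lambda''_{*,n}$ after footnote adjustment, so I would check this sign bookkeeping carefully against the footnotes to confirm consistency with the Dirichlet/Neumann choice. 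Once verified, each of the three zones contributes $O(1)$, and summation gives the $C$ estimate.

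The only delicate point is the sign bookkeeping across the three zones in (ii): the conditions $\textup{(\ref*{15-4-39})}^{\pm}$ and $\textup{(\ref*{15-4-40})}^{\pm}$ involve signs of $\nabla^2_{\partial X}VF^{-1}$ and $\nabla VF^{-1}$, while the propositions I am quoting refer to signs of $\lambda'_{*,n}$, $\lambda''_{*,n}$, $W_{x_1}$, $W_{x_2x_2}$. I expect this bookkeeping, guided by Proposition~\ref{prop-15-A-2} for $\lambda'_{*,n}\le -\epsilon$ and by the monotonicity described around (\ref{15-4-15}), to be the main (but essentially clerical) obstacle; everything else reduces to quoting the three preceding propositions on the pieces of the partition of unity.
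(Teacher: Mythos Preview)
Your plan is exactly the paper's approach: the ``Synthesis'' paragraph preceding Proposition~\ref{prop-15-4-12} is a one-sentence assembly of Propositions~\ref{prop-15-4-9}, \ref{prop-15-4-10}, \ref{prop-15-4-11}, and your partition-of-unity packaging is a faithful (and in fact more explicit) rendering of that.

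On the sign bookkeeping you flag: your hesitation is warranted, and the paper's own synthesis sentence is the resolution. The operative fact is that for large $\xi_2$ one has $\lambda'_{*,n}$ and $\lambda''_{*,n}$ of opposite signs (clear from the asymptotics in Appendix~\ref{sect-15-A-4}); hence the single hypothesis ``$W_{x_2x_2}$, $\lambda'_{*,n}$, $-W_{x_1}$ of the same sign'' simultaneously yields the sign matching needed in Proposition~\ref{prop-15-4-9}(ii) (via $-W_{x_1}$ and $\lambda'_{*,n}$) and in Proposition~\ref{prop-15-4-10}(ii) (via $W_{x_2x_2}$ and $\lambda'_{*,n}$), and---through the opposite-sign relation---the condition actually used in the transitional-zone argument. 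The footnote pointer in Proposition~\ref{prop-15-4-11}(ii) to the same footnote~\ref{foot-15-14} as in Proposition~\ref{prop-15-4-10}(ii) is what creates the apparent clash you noticed; read it as asserting the \emph{same} Dirichlet/Neumann dichotomy rather than a literal sign identity between $\lambda''_{*,n}$ and $W_{x_2x_2}$, and the three pieces are consistent.
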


\section{From Tauberian to magnetic Weyl}
\label{sect-15-4-4}

In this subsection $T=T_*\mu^\delta$ but we will calculate with $\delta=0$ because if we consider time interval $[T,2T]$ with $T\ge T_*$ in the estimate of its contribution there will be an extra factor $T^{-s}T_*^s$ and the summation with respect to $T$ will result in the same answer albeit with $T=T_*$.

We apply the successive approximation method to operator  (\ref{15-4-7})

\subsection{Inner zone}
\label{sect-15-4-4-1}
Replacing $x_2$ by $y_2$ leads to a contribution of $(\ell,\rho)$ element to an error does not exceeding
\begin{equation}
C \hbar^{-1} \rho \ell\times \frac{\hbar T_*^2}{\hbar}= 
C\hbar^{-1}\rho \ell T_*^2
\label{15-4-53}
\end{equation}
as propagation speed with respect to $x_2$ is $O(\hbar)$ in the time scale used here and we use time scale compatible with the choice of $T_*$ in the previous subsection.

Under non-degeneracy condition (\ref{15-2-8}) we can take $T_*= \hbar $ resulting in the contribution of $\cX_\inn$ equal to $O(1)$ as $\rho \asymp \hbar^{-1}$.

In more general generic case we estimate a contribution of $(\ell,\rho)$ element to an error  by
\begin{equation}
C \hbar^{-1} \rho \ell \times \frac{\hbar \ell T^2}{\hbar}= 
C\xi_2 \ell^2 T_*^2\asymp
C\hbar ^{-1}\xi_2\ell^2 \min \bigl(  \xi_2^{-2}, \hbar^2\ell^{-4}\bigr)
\label{15-4-54}
\end{equation}
where we plugged $T_*$ defined by (\ref{15-4-41}); an extra factor $\ell$ appears as we replace $x_2$ by $y_2$ because $\partial_{x_2}W=O(\ell)$ and at this moment we take original ``narrow'' $\cX_\inn$ with scale $\rho\asymp \xi_2$ in $\xi_2$.

Therefore the contribution of $\cX_\inn$ to an error does not exceed
\begin{equation}
C \hbar^{-1}\iint  \min \bigl(   \xi_2^{-2}, \hbar^2\ell^{-4}\bigr)\ell \,d\ell d\xi_2\asymp
C \int \xi_2 ^{-1}\,d\xi_2 \asymp C\log \mu.
\label{15-4-55}
\end{equation}

In the remaining part of $\cX_\inn$ defined by (\ref{15-4-44}) we must replace $\xi_2$ by $\rho \asymp \xi_2-\bar{\xi}_2$ as the scaling is concerned and it redefines correspondingly $T_*$ and instead of right-hand expression in (\ref{15-4-55}) we get  $C \int (\xi_2 -\bar{\xi}_2)^{-1}\,d\xi_2$ which is also $\asymp C\log \mu$.

\subsection{Boundary zone} 
\label{sect-15-4-4-2}
Due to the standard arguments contribution of subzone $\{\xi_2\le C_0\}$ to the error is either $O(\log \mu)$ or $O(1)$ depending if there is a saddle point or no; one can see easily that this saddle points cannot come from $\lambda_{\N,m}$ but only from $\lambda_{\N,n}$ with $n\ne m$ and it is possible only as $\lambda h\asymp 1$.

Consider subzone $\{\xi_2\ge C_0\}$. Here we need to replace (\ref{15-4-53}) by
\begin{equation}
C \hbar^{-1} \rho \ell\times \frac{\mu h| \lambda'_{*,m}| T_*^2}{\hbar}= C\hbar^{-2}\mu h|\lambda'_{*,m}| \rho \ell T_*^2
\label{15-4-56}
\end{equation}
as propagation speed with respect to $x_2$ is $\asymp \mu h| \lambda'_{*,m}$. Plugging under condition (\ref{15-2-8}) $\ell=1$, $\rho=\xi_2^{-1}$ 
\begin{equation*}
T_* =  \hbar \min \bigl( \xi_2(\mu h|\lambda'_{*,m}|)^{-1}, 1\bigr)
\end{equation*}
we arrive to
\begin{equation*}
C\mu h|\lambda'_{*,m}| \rho \min 
\bigl( \xi_2^2(\mu h|\lambda'_{*,m}|)^{-2}, 1\bigr)
\end{equation*}
and contribution of both subzones $\{\xi_2\ge C,\ \mu h|\lambda'_{*,m}|\ge \xi_2\}$ and
$\{C\hbar \le \mu h|\lambda'_{*,m}|\le \xi_2\hbar\}$  are $O(1)$.

In the same manner  (\ref{15-4-54}) is replaced by 
\begin{equation}
C h^{-1} \rho \ell^2 \times \frac{\mu h| \lambda'_{*,m}| T^2}{\hbar}\asymp
C  \hbar^{-2} \mu h | \lambda'_{*,m}| \rho \ell^2 T_*^2
\label{15-4-57}
\end{equation}
and plugging
\begin{equation*}
T_* =  \hbar \min \bigl( \xi_2(\mu h|\lambda'_{*,m}|)^{-1}, \ell^{-2}\bigr)
\end{equation*}
we arrive to
\begin{equation*}
C   \mu h | \lambda'_{*,m}| \rho \ell^2  \min \bigl( \xi_2^2(\mu h|\lambda'_{*,m}|)^{-2}, \ell^{-4}\bigr)
\end{equation*}
and summation with respect to $\ell$ results in 
\begin{equation*}
C   \mu h | \lambda'_{*,m}| \rho \ell^{*,-2}  = C\rho \xi_2
\end{equation*}
as $\ell^*= \xi_2^{-1}\mu h | \lambda'_{*,m}|$ and summation with respect to $\xi_2$ results in $C\bar{\xi}_2^{-\,2}\asymp C\log \mu$.

\subsection{Transitional zone} 
\label{sect-15-4-4-3}
In this zone we can return to (\ref{15-4-53}) and plug $T_*= \hbar$ under condition (\ref{15-2-8}) resulting in $O(1)$.

Further we can return to (\ref{15-4-54}) and plug in the generic case either 
$T_*= \min \bigl(\hbar\ell^{-2}, \xi_2^{-1}\bigr)$ or
$T_*= \min (\hbar\ell^{-2}, |\xi_2-\bar{\xi}_2|^{-1})$ 
resulting after summation with respect to $\ell$  in $C\xi_2^{-1}\rho$ or $C|\xi_2-\bar{\xi}_2|^{-1}\rho$ respectively. In the former case we instantly get $O(1)$ while in the latter  after summation with respect to subzone $\{|\xi_2-\bar{\xi}_2|\ge \hbar\}$ we arrive to $O(\log \mu)$ while contribution of subzone $\{|\xi_2-\bar{\xi}_2|\le \hbar, \ell \le \hbar\}$ to asymptotics is $O(1)$.

\section{Justification: reduction to model operator}
\label{sect-15-4-5}

\subsection{Reduction: step 1}
\label{sect-15-4-5-1}
The problem however is that the ``kinetic'' part of our operator is not exactly $h^2D_1^2 + (hD_2-\mu x_1)^2$ but is different. We need to improve construction of subsection~\ref{sect-15-2-1}). First we can assume that $V_1=0$ and that
$g^{jk}=\updelta_{jk}$, $V_2=0$, $\partial_1V_2=1$ as $x_1=0$. Further, assume that 
\begin{equation}
g^{12}=O(x_1^k),\qquad V_2= x_1+O(x_1^{k+1})
\label{15-4-58}
\end{equation}
with $k\ge 1$. Then changing $x_1\mapsto x_1$, $x_2\mapsto x_2+\alpha(x_2) x_1^{k+1}$ we can achieve $g^{12}=O(x_1^{k+1})$, simultaneously preserving the second relation. Then redefining $x_1\Def V_2(x)$ preserves the first relation (with $k+1$ instead of $k$). Continuing we can make $k$ as large as we wish and thus $k=\infty$.

One needs to remember that $g^{11}$ and $g^{22}$ differ from $1$ by $O(x_1)$. However as we consider operator $F^{-1}A$ with the intensity $1$ of magnetic field we conclude that $g^{11}g^{22}=1+O(x_1^\infty)$.

Our goal is to get rid off this perturbation which we rewrite as 
\begin{equation}
\cP^\w\Def \mu^2   \Bigl( \hslash^2 D_1\sigma D_1 - \sigma (\hslash D_2- x_1)\sigma  (\hslash D_2- x_1)\Bigr).
\label{15-4-59}
\end{equation}
Consider Poisson brackets
\begin{equation}
\frac{1}{2}
\bigr\{ \bigl( \xi_1^2 + (\xi_2- x_1)^2\bigr), 
\alpha +\beta  \xi_1\bigr\}
\label{15-4-60}
\end{equation}
and note that for coefficients at $\xi_1$ equal $0$ i.e.
\begin{equation}
\alpha_{x_1}-(x_1-\xi_2)\beta_{x_2}=0
\label{15-4-61}
\end{equation}
it is equal to 
\begin{equation}
\beta_{x_1}   \xi_1^2 -
(\beta+\alpha_{x_2})(x_1-\xi_2).
\label{15-4-62}
\end{equation}
To make it equal to (\ref{15-4-59}) modulo $\xi_1^2 + (x_1-\xi_2)^2$ one needs to satisfy 
\begin{equation}
\alpha_{x_2}=  2\sigma (x_1-\xi_2)-\beta_{x_1}(x_1-\xi_2)-\beta.
\label{15-4-63}
\end{equation}
Compatibility with (\ref{15-4-61}) requires 
\begin{equation}
(\partial_{x_1}^2+\partial_{x_2}^2)  \bigl((x_1-\xi_2)\beta \bigr) = 2\partial_{x_1}\bigl((x_1-\xi_2)\sigma\bigr).
\label{15-4-64}
\end{equation}
Consider instead it without $\partial_{x_2}$, with an extra condition ``$\beta=0$ as $x_1=0$''. We can solve this one-dimensional equation
\begin{equation}
\partial_{x_1}^2 \bigl( (x_1-\xi_2)\beta\bigr) = 2\partial_{x_1}\bigl((x_1-\xi_2)\sigma\bigr),
\label{15-4-65}
\end{equation}
then 
\begin{equation}
\beta (x_1,.)= 
2(x_1-\xi_2)^{-1}\int_{\xi_2}^{x_1} (x'_1-\xi_2)\sigma (x_1',.)dx'_1 + \rho 
\label{15-4-66}
\end{equation} 
with $\rho$ which does not depend on $x_1$; extra condition ``$\beta=0$ as $x_1=0$'' means that
\begin{equation}
\rho = -\xi_2^{-1}\int_0^{\xi_2} (x'_1-\xi_2)\sigma (x_1',.)dx'_1
\label{15-4-67}
\end{equation}
which gives us  smooth function $\beta$ and $\alpha$.

\begin{remark}\label{rem-15-4-13}
As $\alpha_{x_2\xi_2}=0$ as $x_1=\xi_2=0$ due to (\ref{15-4-63}) and properties of $\sigma,\beta$ we can select $\alpha$ such that $\alpha_{\xi_2}=0$ there as well. This would enable further calculations.
\end{remark}
Then 
\begin{multline}
\frac{1}{2}
\bigr\{ \bigl( \xi_1^2 + (\xi_2- x_1)^2\bigr), 
\alpha +\beta  \xi_1\bigr\}=\\ (\beta_{x_1}-\sigma)\bigl(\xi_1^2+(x_1-\xi_2)^2\bigr)
+ \sigma\bigl(\xi_1^2-(x_1-\xi_2)^2\bigr).
\label{15-4-68}
\end{multline}
Let us pass to operators; to do this we consider corresponding $\hslash$-quantizations, divide by $-i\hslash$ and multiply by $\mu^2$; then
\begin{gather}
-\frac{1}{i\hslash}[A,\cL^\w]= \cP^\w-\cP^\w_1,\label{15-4-69}\\
\shortintertext{with}
A\Def \hslash^2 D_1^2 + (\hslash D_2-  x_1)^2,\label{15-4-70}\\
\cL\Def\frac{1}{2}(\alpha +\beta\xi_1),\label{15-4-71}\\
\cP_1=(\sigma-\beta_{x_1}\sigma)\bigl(\xi_1^2 +(x_1-\xi_2)^2\bigr)
\label{15-4-72}
\end{gather}
where as we consider Weyl quantizations and symmetrized products after multiplication by $\mu^2$ the error is $O(\mu^2\hslash^2)=O(h^2)$.

One can see easily that in our settings $\sigma=\beta=0$ as $x_1=0$ and $\beta$, $\alpha_{x_2}$ has $0$ of the second order as $x_1=\xi_1=\xi_2=0$ while $\alpha_{x_1}$ has  $0$ of the third order as $x_1=\xi_1=\xi_2=0$. Also note that due to (\ref{15-4-64}) $\beta_{x_1}-\sigma=0$ as $x_1=\xi_2$ so 
\begin{claim}\label{15-4-73}
symbol  $(\sigma-\beta_{x_1})$ is divisible by $(x_1-\xi_2)$.
\end{claim}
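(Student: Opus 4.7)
The plan is to extract the divisibility directly from the one-dimensional ODE (\ref{15-4-65}) that defines $\beta$, without going through the explicit integral formula (\ref{15-4-66}). First I would integrate (\ref{15-4-65}) once in $x_1$, obtaining
\begin{equation*}
\partial_{x_1}\bigl((x_1-\xi_2)\beta\bigr) - 2(x_1-\xi_2)\sigma = c(\xi_2,x_2),
\end{equation*}
where the integration ``constant'' $c$ is a smooth function independent of $x_1$; expanding the derivative gives the identity
\begin{equation*}
\beta + (x_1-\xi_2)\beta_{x_1} - 2(x_1-\xi_2)\sigma = c(\xi_2,x_2).
\end{equation*}
Evaluation at $x_1=\xi_2$ shows $c=\beta|_{x_1=\xi_2}$, which is consistent but not yet what is wanted. (Equivalently, one can read this identity off directly from (\ref{15-4-66}) by multiplying through by $(x_1-\xi_2)$ and differentiating.)

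Next I would differentiate this identity once more in $x_1$. The $\beta$ term and the constant $c$ drop out, leaving
\begin{equation*}
2\beta_{x_1} + (x_1-\xi_2)\beta_{x_1x_1} - 2\sigma - 2(x_1-\xi_2)\sigma_{x_1}=0,
\end{equation*}
which rearranges to
\begin{equation*}
2\bigl(\sigma-\beta_{x_1}\bigr) = -(x_1-\xi_2)\bigl(\beta_{x_1x_1}-2\sigma_{x_1}\bigr).
\end{equation*}
Since the right-hand side is a smooth function multiplied by $(x_1-\xi_2)$, this is precisely the divisibility claim (\ref{15-4-73}); equivalently $(\sigma-\beta_{x_1})|_{x_1=\xi_2}=0$, so by the smoothness of $\sigma$ and $\beta$ the Taylor expansion at $x_1=\xi_2$ has zero constant term and $(\sigma-\beta_{x_1})/(x_1-\xi_2)$ extends smoothly across $x_1=\xi_2$.

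There is no serious obstacle here: the argument is a one-line manipulation of the defining ODE for $\beta$, and smoothness is inherited from $\sigma$ via the explicit formula (\ref{15-4-66}) (the apparent singularity at $x_1=\xi_2$ in (\ref{15-4-66}) is removable, as the integral $\int_{\xi_2}^{x_1}(x_1'-\xi_2)\sigma\,dx_1'$ vanishes to second order at $x_1=\xi_2$). The only mild point to check is that $\rho$, as defined by the boundary condition $\beta|_{x_1=0}=0$, is smooth in $(\xi_2,x_2)$ through $\xi_2=0$, which again follows because $\int_0^{\xi_2}(x_1'-\xi_2)\sigma\,dx_1'$ vanishes to second order at $\xi_2=0$, so division by $\xi_2$ in (\ref{15-4-67}) leaves a smooth function.
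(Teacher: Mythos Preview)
Your argument is correct and is essentially the paper's own justification: the paper simply notes that evaluating the defining equation for $\beta$ at $x_1=\xi_2$ gives $\beta_{x_1}=\sigma$ there, hence the divisibility. Your integrate-then-differentiate detour is harmless but unnecessary---the identity you obtain after differentiating is exactly (\ref{15-4-65}) expanded, so you could have started there directly.
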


Consider transformation of $A+\cP^\w$ by $\exp (i\hslash^{-1}\cL^\w)$:
\begin{multline} 
\exp (i\hslash^{-1}\cL^\w)\, (A+\cP^\w)\exp (-i\hslash^{-1}\cL^\w)=\\
A+\cP^\w + \frac{1}{i\hslash}[A+\cP^\w,\cL^\w]+
\frac{1}{2}\bigl(\frac{1}{i\hslash}\bigr)^2[[A+\cP^\w,\cL^\w],\cL^\w]+\ldots
\label{15-4-74}
\end{multline}
where as one can see easily we left out terms which are $o(h^2)$ and due to (\ref{15-4-69})
\begin{equation}
\equiv A+\cP_1^\w + 
\frac{1}{2}\bigl(\frac{1}{i\hslash}\bigr)[\cP^\w,\cL^\w]+
\frac{1}{2}\bigl(\frac{1}{i\hslash}\bigr)[\cP^\w_1,\cL^\w]
\label{15-4-75}
\end{equation}
Consider first the symbol of the third term 
\begin{multline*}
-\frac{1}{4}\{\cP , \alpha+\beta\xi_1\}= \\
-\frac{1}{4}\{\sigma, \alpha +\beta\xi_1\}\bigl(\xi_1^2-(x_1-\xi_2)^2\bigr)
-\frac{1}{4} \sigma \{\xi_1^2-(x_1-\xi_2)^2, \alpha+\beta\xi_1\};
\end{multline*}
as 
\begin{multline*}
\frac{1}{2}\{\xi_1^2-(x_1-\xi_2)^2, \alpha+\beta\xi_1\}=
\bigl(\alpha_{x_1}+\beta_{x_2}(x_1-\xi_2)\bigr)\xi_1 + \beta_{x_1}\xi_1^2 + (x_1-\xi_2)(\alpha_{x_2}+\beta)=\\[2pt]
2\beta_{x_2}(x_1-\xi_2)\xi_1 +\sigma \bigl(\xi_1^2+(x_1-\xi_2)^2\bigr)-
(\sigma-\beta_{x_1})\bigl(\xi_1^2-(x_1-\xi_2)^2\bigr)
\end{multline*}
we conclude that
\begin{multline*}
-\frac{1}{4}\{\cP , \alpha+\beta\xi_1\}= 
\Bigl(-\frac{1}{4}\{\sigma, \alpha +\beta\xi_1\}+
\frac{1}{2}\sigma(\sigma-\beta_{x_1})\Bigr)\bigl(\xi_1^2-(x_1-\xi_2)^2\bigr)-\\
\sigma\beta_{x_2}(x_1-\xi_2)\xi_1 
-\frac{1}{2}\sigma^2 \bigl(\xi_1^2+(x_1-\xi_2)^2\bigr).
\end{multline*}
Consider now the symbol of the fourth term in (\ref{15-4-75}):
\begin{multline*}
-\frac{1}{4}\{\cP_1 , \alpha+\beta\xi_1\}= \\
-\frac{1}{4}\{(\sigma-\beta_{x_1}) , \alpha+\beta\xi_1\}\bigl(\xi_1^2+(x_1-\xi_2)^2\bigr)
-\frac{1}{4} (\sigma-\beta_{x_1}) \{\xi_1^2+(x_1-\xi_2)^2, \alpha+\beta\xi_1\};
\end{multline*}
as we know the last Poisson bracket we conclude that
\begin{multline*}
-\frac{1}{4}\{\cP_1 , \alpha+\beta\xi_1\}= 
\Bigl(-\frac{1}{4}\{(\sigma-\beta_{x_1}) , \alpha+\beta\xi_1\}+\frac{1}{2}(\sigma-\beta_{x_1}) ^2\Bigr)
\bigl(\xi_1^2+(x_1-\xi_2)^2\bigr)\\
-\frac{1}{2} (\sigma-\beta_{x_1}) \sigma \bigl(\xi_1^2-(x_1-\xi_2)^2\bigr).
\end{multline*}
Then the sum of symbols of the third and the fourth terms in (\ref{15-4-75}) is equal to
\begin{multline}
-\frac{1}{4}\{\sigma, \alpha +\beta\xi_1\}\bigl(\xi_1^2-(x_1-\xi_2)^2\bigr)-
{\color{gray}\sigma\beta_{x_2}(x_1-\xi_2)\xi_1} +\\
\Bigl(-\frac{1}{4}\{(\sigma-\beta_{x_1}) , \alpha+\beta\xi_1\}+ \frac{1}{2}\bigl({\color{gray}-\sigma^2}+(\sigma-\beta_{x_1}) ^2\bigr)\Bigr)
\bigl(\xi_1^2+(x_1-\xi_2)^2\bigr).
\label{15-4-76}
\end{multline}

\begin{remark}\label{rem-15-4-14}
However correction (\ref{15-4-59}) is not a correct one as we need $g^{11}g^{22}=1$ and a more precisely is $\cP_1^\w +\frac{1}{2}\sigma^2A$ as
$g^{11}=1+\sigma+\frac{1}{2}\sigma^2+\dots$, $g^{22}=1-\sigma+\frac{1}{2}\sigma^2+\dots$ .
\end{remark}

We can skip terms here which has $0$ of degree 5 at $x_1=\xi_1=\xi_2=0$; this takes care of the middle term. 

By the same reason we can skip in
\begin{equation*}
-\frac{1}{4}\{\sigma,\alpha+\beta \xi_1\}= \frac{1}{4}\sigma_{x_2}\alpha_{\xi_2}+{\color{gray}\frac{1}{4}\sigma_{x_2}\beta_{\xi_2}\xi_1}+\frac{1}{4}\sigma_{x_1}\beta 
\end{equation*}
middle term where we used that $\sigma=\sigma(x)$. 

Consider two remaining terms (which have $0$ of the second order at $x_1=\xi_2=0$ and also equal $0$ as $x_1=0$ )
\begin{equation}
\sigma_1\Def 
\frac{1}{4}\sigma_{x_2}\alpha_{\xi_2}+\frac{1}{4}\sigma_{x_1}\beta ;
\label{15-4-77}
\end{equation}
then we can repeat the same procedure as before thus replacing the whole first term in (\ref{15-4-76}) by
\begin{equation*}
(\sigma_1 -\beta_{1\,x_1})\bigl(\xi_1^2+(x_1-\xi_2)^2\bigr)
\end{equation*}
with $\beta_1$ matching $\sigma_1$. One can see easily that then neither $\cP_1$ nor the last term in (\ref{15-4-76}) would  change modulo $O(h^2)$ and therefore we arrive to operator $A+\cP_2^\w$ with 
\begin{multline}
\cP_2=\\ 
\Bigl((\sigma-\beta_{x_1})+\sigma_1-\beta_{1\,x_1} -\frac{1}{4}\{(\sigma-\beta_{x_1}) , \alpha+\beta\xi_1\}+
\frac{1}{2}(\sigma-\beta_{x_1}) ^2\Bigr) \bigl(\xi_1^2+(x_1-\xi_2)^2\bigr).
\label{15-4-78}
\end{multline}
In this factor the first term has $0$ of the first order, other terms have $0$ of the second order or higher and we can skip those which are higher. We can also skip terms divisible by $(x_1-\xi_2)^2$ as those terms are $O(\hslash)$ on energy levels in question leading to $O(h^2)$ error in the final answer. Due to (\ref{15-4-73}) we arrive to
\begin{equation*}
(\sigma-\beta_{x_1})+(\sigma_1-\beta_{1\,x_1}) -
\frac{1}{4}\{(\sigma-\beta_{x_1}) , \alpha+\beta\xi_1\}
\end{equation*}
where the last term is 
\begin{equation*}
-\frac{1}{4}\{(\sigma-\beta_{x_1}) , \alpha\}-
{\color{gray}\frac{1}{4}\{(\sigma-\beta_{x_1}) , \beta\}\xi_1}
+(\sigma-\beta_{x_1})_{x_1}\beta.
\end{equation*}
One can prove easily that due to (\ref{15-4-63}), (\ref{15-4-73}) (which is valid for $\sigma_1,\beta_1$ as well) the remaining terms vanish as  $x_1=\xi_2$. 

Therefore as we skip terms containing factor $(x_1-\xi_2)^2$, what is left of perturbation is $\cP_3^\w$ with
\begin{equation}
\cP_3\Def \rho(x_2,\xi_2) (x_1-\xi_2) \bigl(\xi_1^2+(x_1-\xi_2)^2\bigr),
\label{15-4-79}
\end{equation}
so, $\cP_3^\w$ is a symmetric product of $\rho ^\w$, $(x_1-\mu^{-1}hD_2)$ and $A$ (as factor $\mu^2$ should be remembered).

\subsection{Reduction: step 2}
\label{sect-15-4-5-2}

This perturbation is of the same magnitude as the original perturbation $\cP^\w$ (albeit $\hbar^{-\delta}$ in estimates does not pop-up) but it is much easier to handle. If we had no boundary\footnote{\label{foot-15-16} Or were on the distance at least $\hbar^{\frac{1}{2}-\delta}$ from it.} we would use 
$\cL= \omega (x_2,\xi_2) \xi_1 \bigl(\xi_1^2+(x_1-\xi_2)^2\bigr)$ to eliminate it generating lower order terms but closer to  factor $\xi_1$ forbids it unless accompanied by factor $x_1$. Still replacing $\rho(x_2,\xi_2)$ in (\ref{15-4-79}) by  
\begin{equation*}
\rho(x_2,\xi_2)= \rho(x_2,0)- \rho_{\xi_2}x_1 +\rho_{\xi_2}(x_1-\xi_2) + +\dots
\end{equation*}
we can eliminate the second term (producing exactly such component in $\cP_3$) and skip the third and all terms as producing $O(h^2)$ contribution. So 

\begin{remark}\label{rem-15-4-15}
Without any loss of the generality one can assume that $\rho=\rho(x_2)$ in (\ref{15-4-79}).
\end{remark}

Still it does not help us much and we cannot use obvious transformation by
$\exp(i\hslash^{-1}\cL^\w)$ with 
$\cL=  \omega (x_2)\bigl(\xi_1^2+(x_1-\xi_2)^2\bigr)$ due to trouble in the next commutators. Instead we recall that basically our operator (\ref{15-4-70}) multiplied by $\mu^2$ should be equal $W +\mu h \fz$ with constant 
$\fz$. Actually one needs to insert there also $-hD_t$ but at energy level $0$ it is really small and we can actually by multiplication by $(I+Q)$ make our evolution equation linear with respect to $hD_t$. We leave absolutely standard detailed arguments to the reader.

So now perturbation becomes
\begin{equation}
\cP_4^\w = \mu h \rho(x_2)(x_1-\hslash D_2)\fz  + \rho(x_2)(x_1-\hslash D_2) \fz 
\label{15-4-80}
\end{equation}
and it is completely different game as we can get rid of the first term using 
transformation by $\exp(i\hslash^{-1}\cL^\w)$ with 
$\cL= \mu h  \omega (x_2)\fz $ as one can see easily that there will be no problems with the next commutators. Remaining perturbation is of the type
$\bigl(W_1(x,\xi)(x_1-\xi_2)\bigr)^\w$ and is of magnitude $\hbar$. So we gained factor $(\mu h)^{-1}$ which is important only in the case of superstrong magnetic field so we can skip this  step for a very strong magnetic field. 

\subsection{Reduction: step 3}
\label{sect-15-4-5-3}

Note that the only part of new perturbation which is not necessarily $O(h^2)$ comes from $\bigl(W_1(x_2,\xi_2)(x_1-\xi_2)\bigr)^\w$ and we can use the same approach as before to eliminate it.

Actually we can continue further. We can keep power of $\hslash D_1$ below $2$  replacing $\hslash^2 D_1^2$ by $-(x_1-\hslash^2 D_2)^2+ \hslash \fz -\mu^{-2}W$. We can eliminate $\hslash D_1$ transforming by  $\exp(i\hslash^{-1}\cL^\w)$ with $\cL = \rho (x_1-\xi_2)$. Similarly, we can eliminate $x_1^p (x_1-\xi_2)^q$ with $p\ge q$ transforming by similar operator with each $\xi_1$ having cofactor $x_1$. Therefore having $x_1^n$ with $n\ge 2$ we can decompose it into sum of terms of the type
$x_1^p (x_1-\hslash D_2)^q (\hslash D_2)^{n-p-q}$ where either $p\ge q$ or $p=0,q=1$ or $p=0,q=0$ and eliminate the first type by the method we just discussed and the second one transforming by $\exp(i\hslash^{-1}\cL^\w)$ with $\cL= \cL(x_2,\xi_2)$. It eliminates all powers -- including those we skipped before as lesser than $O(h^2)$.

\begin{remark}\label{rem-15-4-16}
(i) So in the end we arrive to the model operator. Sure $W$ is going to be perturbed by $O(h^2)$ albeit it may be because of smallness of $\xi_2$; so actually we get in the end $W_\eff(\xi_2,x_2)= -(V/F)(\xi_2,x_2)+O(\xi_2^3+h^2)$ albeit this does not change it basic properties.

\medskip\noindent
(ii) Recall that in this subsection $\xi_2$ in contrast to the rest of section corresponds to $\hslash D_2$ rather than $\hbar D_2$ and $\hslash=\hbar^2$.
\end{remark}

\begin{conjecture}\label{conj-15-4-17}
$W_\eff(\xi_2,x_2)= -(V/F)(\xi_2,x_2)+O(\xi_2^\infty+h^2)$
as $\xi_2<\hbar^{\delta}$.
\end{conjecture}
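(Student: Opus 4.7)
The plan is to upgrade the reduction in Subsection~\ref{sect-15-4-5-3} from finite order (which produced the $O(\xi_2^3+h^2)$ remainder in Remark~\ref{rem-15-4-16}(i)) to infinite order in $\xi_2$, by iterating the elimination procedure. Throughout we work microlocally in the regime $|\xi_2|<\hbar^{\delta}$ where every Taylor expansion in $\xi_2$ is convergent in a sense compatible with the semiclassical calculus.

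First, I would record the result of step 3 in the form $A+\cP^\w=\bar A + (W_\eff^{(0)})^\w + \cR_3^\w$, where $\bar A$ is the model kinetic part, $W_\eff^{(0)}(\xi_2,x_2)=-(V/F)(\xi_2,x_2)$ as in remark~\ref{rem-15-4-16}(i), and $\cR_3$ is a symbol divisible by $\xi_2^3$ modulo $O(h^2)$ and by admissible combinations of $\hslash D_1$, $(x_1-\hslash D_2)$, $\bar A$. The inductive statement would be: for every integer $N\ge 3$ there is a unitary $\exp(i\hslash^{-1}\cL_N^\w)$, with $\cL_N$ of order $\ge N$ in $\xi_2$, that transforms the operator into $\bar A+(W_\eff^{(N)})^\w+\cR_{N+1}^\w$ with $W_\eff^{(N)}-W_\eff^{(0)}=O(\xi_2^N+h^2)$ and $\cR_{N+1}=O(\xi_2^{N+1}+h^2)$.

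The inductive step uses exactly the three moves of Subsection~\ref{sect-15-4-5-3}: any remaining monomial in the symbol of $\cR_N$ can be decomposed as $x_1^p(x_1-\hslash D_2)^q(\hslash D_2)^{N-p-q}$; terms with $p\ge q$ or $p=0,q=1$ are eliminated by generators $\cL$ carrying an extra factor of $x_1$ or $\rho(x_1-\xi_2)$, while the pure $\xi_2$-powers $(p,q)=(0,0)$ are absorbed into $W_\eff^{(N)}$ via a generator $\cL=\cL(x_2,\xi_2)$. The crucial bookkeeping is that each commutator either lowers $\hslash$ (harmless, absorbed in the $O(h^2)$) or raises the order in $\xi_2$ by at least one, so iteration is consistent. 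Finally I would Borel-sum the sequence $\cL_N$ to a single symbol $\cL_\infty\sim\sum_N\cL_N$ in the calculus associated with the two-scale weight $(\xi_2,h)$; conjugation by $\exp(i\hslash^{-1}\cL_\infty^\w)$ then yields $W_\eff=W_\eff^{(0)}+O(\xi_2^\infty+h^2)$.

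The main obstacle I expect is not the algebra of a single inductive step but the verification that the two reduction moves of step~3 — elimination of $x_1^p(x_1-\hslash D_2)^q$ with $p\ge q$ and elimination of pure $(\hslash D_2)^n$-terms — do not create cross terms that violate the order count, because the relevant generators $\cL$ themselves contain factors $(x_1-\hslash D_2)$ whose commutators with $\bar A$ regenerate lower-order pieces. One must show these regenerated pieces are either already of the form already eliminated at a previous stage or, after one further commutator, carry the required extra power of $\xi_2$ or $\hslash$. The Borel summation step is, in comparison, routine: it requires only that the symbols $\cL_N$ satisfy uniform estimates in the $(\xi_2,h)$-calculus of Chapter~\ref{book_new-sect-4}~\cite{futurebook}, which follows from the explicit form of the generators built out of $V/F$ and its derivatives.
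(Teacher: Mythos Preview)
First, note that the paper states this as a \emph{conjecture} and offers no proof; Remark~\ref{rem-15-4-16}(i) records only the weaker $O(\xi_2^3+h^2)$, and the conjectured improvement to $O(\xi_2^\infty+h^2)$ is left open. So there is no ``paper's proof'' to compare against.

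Your plan does not actually address the content of the conjecture. Iterating the reduction of Subsection~\ref{sect-15-4-5-3} constructs $W_\eff$ as a formal series, but that is already what the paper does. The conjecture is a statement about the \emph{coefficients} of that series: that every classical (i.e.\ $h$-independent) correction $c_N(x_2)\xi_2^N$ absorbed into $W_\eff$ at step $N\ge 3$ in fact vanishes. Your inductive claim ``$W_\eff^{(N)}-W_\eff^{(0)}=O(\xi_2^N+h^2)$'' is precisely what needs proof and is not delivered by the mechanics you describe: when at step $3$ you absorb a pure $(p,q)=(0,0)$ term $c_3(x_2)\xi_2^3$, that term sits in $W_\eff^{(M)}-W_\eff^{(0)}$ for every $M\ge 3$, so the difference stays $O(\xi_2^3)$, not $O(\xi_2^M)$. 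Borel summation cannot repair this: it produces a well-defined $W_\eff$, but $W_\eff-(-(V/F))$ is still $\sum_{N\ge 3}c_N(x_2)\xi_2^N+O(h^2)$, which is $O(\xi_2^3+h^2)$ unless every $c_N=0$.

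What a proof would actually require is an argument that the boundary-constrained cohomological equations (those forcing $\beta|_{x_1=0}=0$, etc.) yield the \emph{same} normal-form part as the unconstrained interior reduction of Chapter~\ref{book_new-sect-13}~\cite{futurebook}, where one does obtain $W_\eff=-(V/F)$ exactly at the classical level. One route would be to show that the normal-form part (the component commuting with $\bar A$) is unique independently of the choice of generator, so the boundary constraint affects only the eliminable terms, not what is absorbed into $W_\eff$. That uniqueness argument --- or an explicit verification that the obstruction terms arising from the constraint $\beta(0,\cdot)=0$ in (\ref{15-4-66})--(\ref{15-4-67}) contribute only at order $h^2$ --- is the missing idea.
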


\section{Final results}
\label{sect-15-4-6}

Thus we arrive to the final contribution of zone $\{\xi_2\le \hbar^{-\delta}\}$ (or equivalently $\{x_1\le \hbar^{\frac{1}{2}-\delta}\}$) to the asymptotics:
\begin{equation*}
\sum_{n\ge 0} (2\pi \hbar)^{-1}\mu
\iint \uptheta \bigl(\mu h \bigl(\lambda _{*,n}(\xi_2)-\fz \bigr) -
W_\eff (\hbar \xi_2,x_2)\bigr)\psi(\xi_2\hbar^{-1},x_2) \zeta (\xi_2\hbar^{\delta})\,dx_2d\xi_2
\end{equation*}
where transformation of $\psi$ is obvious. After change of variables 
$x_1=\hbar \xi_2$ 
\begin{equation*}
\sum_{n\ge 0} (2\pi h)^{-1}\ 
\iint \uptheta \bigl(\mu h \bigl(\lambda _{*,n}(x_1\hbar^{-1})-\fz \bigr)- W_\eff (x_1,x_2)\bigr)\psi(x)\zeta (x_1\varepsilon^{-1})\,dx_1dx_2
\end{equation*}
with $\varepsilon =\hbar^{1-\delta}$ and $\hbar= (\mu^{-1}h)^{\frac{1}{2}}$. Here $\zeta \in \sC_0^\infty([-1,1])$ equals $1$ on $[-\frac{1}{2},\frac{1}{2}]$.

On the other hand,  the final contribution of zone $\{\xi_2\ge \hbar^{-\delta}\}$ (or equivalently $\{x_1\ge \hbar^{\frac{1}{2}-\delta}\}$) to the asymptotics is
\begin{equation*}
\sum_{n\ge 0} (2\pi h)^{-1}\mu  
\iint \uptheta \bigl(\mu h \bigl(2n+1)-\fz \bigr)-W_\eff(x_1,x_2)\bigr) \psi(x)\bigl(1-\zeta (x_1\varepsilon^{-1})\bigr)\,dx_1dx_2
\end{equation*}
and the final answer is 
\begin{multline*}
\sum_{n\ge 0} (2\pi h)^{-1}\mu\!  \iint \uptheta \bigl(\mu h \bigl(2n+1)-\fz \bigr)-W_\eff (x_1,x_2)\bigr)\psi(x)dx_1dx_2+\\
\sum_{n\ge 0} (2\pi h)^{-1}\mu \iint \Bigl(\uptheta \bigl(\mu h \bigl(\lambda _{*,n}(x_1\hbar^{-1})-\fz \bigr)-W_\eff (x_1,x_2)\bigr)-\\
\uptheta \bigl(\mu h \bigl(2n+1-\fz \bigr)-W_\eff (x_1,x_2)\bigr)\Bigr)
\psi(x)\zeta (x_1\varepsilon^{-1})\,dx_1dx_2.
\end{multline*}
where now obviously we can take $\varepsilon = C\hbar |\log \hbar|$ and even $\varepsilon=C\hbar$ as we are in the spectral gap situation. 

This matches (\ref{15-3-17}) with
\begin{multline}
\cN^\MW_{*,\bound}\Def \\
\sum_n (2\pi )^{-1}\mu \iint \Bigl(\uptheta \bigl(\mu h \bigl(\lambda _{*,n}(x_1\hbar^{-1})-\fz \bigr)-W_\eff (x_1,x_2)\bigr)-\\
\uptheta \bigl(\mu h \bigl(2n+1-\fz \bigr)-W_\eff (x_1,x_2)\bigr)\Bigr)
\psi(x)\zeta (x_1\varepsilon^{-1})\,dx_1dx_2.
\label{15-4-81}
\end{multline}

Therefore we arrive to our final theorem

\begin{theorem}\label{thm-15-4-18}
Let $\psi\in \sC^\infty (\bar{X})$ be a fixed function with a compact support contained in the small vicinity of $\partial X$ and let conditions  \textup{(\ref{13-2-1})}, \textup{(\ref{15-2-7})} and \textup{(\ref{15-2-41})} be fulfilled there. Then

\medskip\noindent
(i)  $\R^\MW$ with $\cN^\MW_\bound$ defined by \textup{(\ref{15-4-81})} does not exceed $C\log \mu$; 

\medskip\noindent
(ii)  This estimate could be improved to $O(1)$ in each of the following cases:

\begin{enumerate}[label=(\alph*), leftmargin=*]

\item Assumption \textup{(\ref{15-2-8})} is fulfilled;

\item Assumption \textup{(\ref{15-4-9})} and in case of Neumann boundary problem one of conditions \textup{(\ref{15-4-16})}, $\textup{(\ref{15-4-19})}^+$ is fulfilled;

\item Assumption \textup{(\ref{15-4-9})} fails but there is Dirichlet boundary problem and both conditions $\textup{(\ref{15-2-41})}^-$ and $\textup{(\ref{15-2-7})}^+$ are fulfilled;

\item Assumption \textup{(\ref{15-4-9})} fails but there is Neumann boundary problem and both conditions $\textup{(\ref{15-2-41})}^+$ and $\textup{(\ref{15-2-7})}^-$ are fulfilled.
\end{enumerate}
\end{theorem}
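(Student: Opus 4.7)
The plan is to assemble the proof from the three ingredients already developed in Section~\ref{sect-15-4}: the reduction of $A$ to the model operator carried out in Section~\ref{sect-15-4-5}, the Tauberian remainder estimates of Propositions~\ref{prop-15-4-9}--\ref{prop-15-4-12}, and the Tauberian-to-magnetic-Weyl transition of Section~\ref{sect-15-4-4}. First I would invoke the reduction of Section~\ref{sect-15-4-5} to diagonalize the operator modulo $O(h^2)$ as $\mu h F(0,x_2)\cA_n(x_2,hD_2)$ with $\cA_n$ from \textup{(\ref{15-4-7})}, keeping only $n\le N(\epsilon_0)$ via \textup{(\ref{15-4-4})}, and normalizing by \textup{(\ref{15-4-28})} so that only the resonant index $n=\bar{m}$ matters on the compact range $C_0\le \xi_2\le \epsilon_0 \mu$. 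The three-step reduction (Sections~\ref{sect-15-4-5-1}--\ref{sect-15-4-5-3}) ensures that all neglected terms produce at most $O(1)$ contributions to the final remainder, so the analysis reduces to the scalar $1$D operator \textup{(\ref{15-4-7})} with a perturbation by $W_{\mathrm{eff}}(x_1,x_2)$ as in Remark~\ref{rem-15-4-16}.

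Next, I would decompose phase space into $\cX_\inn$, $\cX_\trans$, and $\cX_\bound$ as defined in \textup{(\ref{15-4-43})}, \textup{(\ref{15-4-51})}--\textup{(\ref{15-4-52})}, and apply the corresponding Tauberian estimates. Under the generic assumptions \textup{(\ref{15-2-7})} and \textup{(\ref{15-2-41})}, Proposition~\ref{prop-15-4-9} gives contribution $O(1)$ from $\cX_\inn$, Proposition~\ref{prop-15-4-10} gives $O(\log\mu)$ from $\cX_\bound$, and Proposition~\ref{prop-15-4-11} gives $O(\log\mu)$ from $\cX_\trans$. Proposition~\ref{prop-15-4-12} synthesizes these into a total Tauberian bound $O(\log\mu)$, which yields the claim of (i) once Section~\ref{sect-15-4-4} is applied. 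For each subcase of (ii), I would trace through exactly which of the sign-matching conditions upgrades the individual zone contributions to $O(1)$: case (a) follows because \textup{(\ref{15-2-8})} plus the analysis leading to \textup{(\ref{15-4-38})} gives Tauberian remainder $O(1)$ directly; case (b) follows from Theorems~\ref{thm-15-4-2} and~\ref{prop-15-4-3} and~\ref{thm-15-4-4}; cases (c)--(d) correspond to $\lambda'_{*,n}$, $-W_{x_1}$, $W_{x_2x_2}$ having the matching signs required in footnotes~\ref{foot-15-13}--\ref{foot-15-15}, which under Dirichlet or Neumann boundary condition translate to $\textup{(\ref*{15-2-41})}^-$, $\textup{(\ref*{15-2-7})}^+$ and $\textup{(\ref*{15-2-41})}^+$, $\textup{(\ref*{15-2-7})}^-$ respectively.

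Finally I would convert the Tauberian estimate to the magnetic Weyl statement with $\cN^\MW_{*,\bound}$ given by \textup{(\ref{15-4-81})}. The successive-approximation errors are controlled zone-by-zone exactly as in Section~\ref{sect-15-4-4}: the estimates \textup{(\ref{15-4-53})}--\textup{(\ref{15-4-55})} handle $\cX_\inn$, \textup{(\ref{15-4-56})}--\textup{(\ref{15-4-57})} handle $\cX_\bound$, and the short computation of Section~\ref{sect-15-4-4-3} handles $\cX_\trans$; when combined with the explicit decomposition of the main part into $\{x_1\le \hbar^{1/2-\delta}\}$ and $\{x_1\ge \hbar^{1/2-\delta}\}$ at the start of Section~\ref{sect-15-4-6}, the two integrals produce precisely the volume term $h^{-2}\cN^\MW$ and the boundary correction $h^{-1}\cN^\MW_{*,\bound}$ of \textup{(\ref{15-4-81})}. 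The main obstacle I anticipate is that the approximation errors in the boundary zone themselves generate a $C\log\mu$ term matching the Tauberian one, so for (ii) one must verify that the same sign-matching hypotheses that bring the Tauberian remainder down to $O(1)$ also eliminate the $\log\mu$ in the approximation error; this requires careful use of the propagation statement of Theorem~\ref{thm-15-3-15} together with the behavior of $\lambda_{*,j}(\eta)$ from Appendix~\ref{sect-15-A} to distinguish Dirichlet and Neumann situations, as emphasized in Remark~\ref{rem-15-3-13}.
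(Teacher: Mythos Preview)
Your proposal is correct and follows essentially the same assembly as the paper: the theorem is stated as the culmination of Section~\ref{sect-15-4} (``Therefore we arrive to our final theorem'') with no separate proof, and the pieces you cite --- the reduction of Section~\ref{sect-15-4-5}, the Tauberian bounds of Propositions~\ref{prop-15-4-9}--\ref{prop-15-4-12} and Theorems~\ref{prop-15-4-3}--\ref{thm-15-4-4}, the Tauberian-to-Weyl transition of Section~\ref{sect-15-4-4}, and the explicit computation of Section~\ref{sect-15-4-6} leading to \textup{(\ref{15-4-81})} --- are exactly what the paper invokes. One small correction: the ``obstacle'' you flag in your last paragraph is already resolved in Section~\ref{sect-15-4-4}, where the approximation errors are shown to be $O(\log\mu)$ generically and $O(1)$ under the same sign hypotheses, and the Dirichlet/Neumann distinction there comes from the monotonicity properties of $\lambda_{*,n}'$ in Appendix~\ref{sect-15-A} rather than from Theorem~\ref{thm-15-3-15}, which belongs to the strong-field regime of Section~\ref{sect-15-3}.
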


\begin{remark}\label{rem-15-4-19}
Here we redefined $\cN^\MW_{*,\bound}$ including in it in contrast to what we used before (see f.e. theorem~\ref{thm-15-2-6}) $W(x_1,x_2)$ rather than $W(0,x_2)$. 

We leave to the reader to prove that replacing $W(x_1,x_2)$ by $W(0,x_2)$ leads to an error $O(1)$ under condition (\ref{15-4-8}) and $O(\hbar^{-\frac{1}{2}}|\log \hbar|^L)$ under condition (\ref{15-2-41}).
\end{remark}

\chapter{Generalizations}
\label{sect-15-5}

\section{Robin boundary value problem}
\label{sect-15-5-1}

Consider boundary condition 
\begin{equation} 
\bigl(i \sum_{j,k} \nu_j g^{jk}P_k -\alpha \bigr)u\bigr|_{\partial X}=0
\label{15-5-1}
\end{equation}
with real-valued $\alpha=\alpha(x)\ge 0$ where $\nu=(\nu_1,\ldots,\nu_d)$ is an inner normal.

As $\alpha=0$ we get Neumann boundary condition and in a formal limit 
$\alpha\to +\infty$ we get Dirichlet boundary condition.

\paragraph{Weak magnetic field}
Obviously arguments of section~\ref{sect-15-2} remain valid. Recall that in that section we have not distinguished between Dirichlet and Neumann boundary conditions.

\paragraph{Strong magnetic field}
Obviously arguments of section~\ref{sect-15-3} remain valid.  Further, under assumption $\alpha \ge \epsilon_0$ propagation theorem~\ref{thm-15-3-15} remains valid; really then boundary problem satisfies Lopatinski condition. Therefore under this assumption one can derive the same remainder estimates as under Dirichlet boundary condition. 

\paragraph{Very strong and superstrong magnetic field}
On the contrary, in this case we are practically in frames of Neumann boundary condition: see remark~\ref{rem-15-A-8}. 

\section{Boundary meets degeneration of $V$}
\label{sect-15-5-2}

The following problem seems to be rather easy and straightforward:

\begin{problem}\label{problem-15-5-1}
Get rid off condition (\ref{15-2-12}) $V\le -\epsilon$ as $\mu h\le 1$ (we have not imposed this condition as $\mu h\ge 1$).
\end{problem}

One should apply  rescaling method and follow arguments of subsection~\ref{book_new-sect-13-7-1} \cite{futurebook}.

\section{Domains with corners}
\label{sect-15-5-3}

The following problem is far more interesting and challenging and I strongly believe that the detailed analysis merits a publication:

\begin{problem}\label{problem-15-5-2}
Consider domains with corners $\ne 0,\pi, 2\pi$ assuming that coefficients belong to $\sC^\infty(X)$, all other assumptions are fulfilled and in the corners non-degeneracy condition $|\nabla_{\partial X} W|\asymp 1$ holds for each of two directions along $\partial X$.
\end{problem}

Without any loss of the generality one can assume that locally either $X=\{x_1>0,x_2>0\}$ or $X=\bR^2\setminus\{x_1<0,x_2<0\}$. Then non-degeneracy condition means that
\begin{equation}
|W_{x_1}|\asymp 1,\qquad |W_{x_2}|\asymp 1.
\label{15-5-2}
\end{equation}
We also assume that (\ref{15-2-12})  is fulfilled.

\subsection{Weak magnetic field case}
\label{sect-15-5-3-1} 
One can use rescaling method with $\ell =\epsilon |x|$  as long as  $h_\eff=h\ell^{-1}\le 1$, $\mu _\eff=\mu \ell\ge 1$. In the inner zone $\cX_\inn$ (defined exactly as before) magnetic drift is the only way to break periodicity and one can see easily that (\ref{15-3-21}) should be replaced by 
\begin{equation}
T_*\Def h^{1-\delta}\ell^{-1}
\label{15-5-3}
\end{equation}
Then we are in frames of the weak magnetic field approach as long as 
\begin{equation}
\ell\ge \bar{\ell}\Def \max(C\mu^{-1},\mu h^{1-\delta}).
\label{15-5-4}
\end{equation}
Obviously we can take 
$T^*=\epsilon \mu \gamma$ with $\gamma \Def \min(|x_1|,|x_2|)$. To increase it consider dynamics on Figure~\ref{fig-traj4}.

\begin{figure}[h!]
\centering
\subfloat[${W_{x_1}>0,W_{x_2}>0}$]{\includegraphics[width=0.23\linewidth]{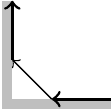}}\ 
\subfloat[$W_{x_1}>0,W_{x_2}<0$]{\includegraphics[width=0.23\linewidth]{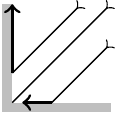}}\ 
\subfloat[$W_{x_1}<0,W_{x_2}>0$]{\includegraphics[width=0.23\linewidth]{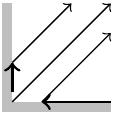}}\ 
\subfloat[$W_{x_1}<0,W_{x_2}<0$]{\includegraphics[width=0.23\linewidth]{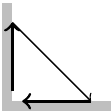}} 
\\
\subfloat[${W_{x_1}<0,W_{x_2}<0}$]{\includegraphics[width=0.23\linewidth]{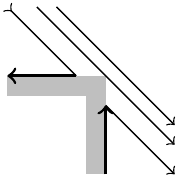}}\ 
\subfloat[$W_{x_1}>0,W_{x_2}>0$]{\includegraphics[width=0.23\linewidth]{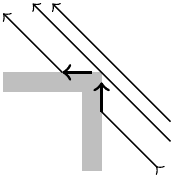}}\ 
\subfloat[$W_{x_1}<0,W_{x_2}>0$]{\includegraphics[width=0.23\linewidth]{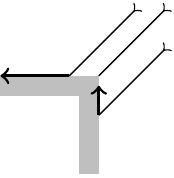}}\ 
\subfloat[$W_{x_1}>0,W_{x_2}<0$]{\includegraphics[width=0.23\linewidth]{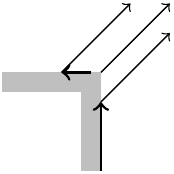}} 
\caption{\label{fig-traj4} Dynamics in the  corner with angle $<\pi$ (a)-(d) and $>\pi$ (e)-(h). Bold lines show hop movement and thin lines show drift movement which is along level lines of $W$. Consistent with Figure~\ref{fig-traj}.}
\end{figure}

Selecting a proper direction (away from the nearest boundary) we can take there $T^*=\epsilon \mu \ell$  and the contribution of  $\cX_\inn$ intersected with $\ell$-element to the remainder does not exceed 
$Ch^{-1}\ell^2 \times (\mu \ell)^{-1}\asymp C\mu^{-1}h^{-1}\ell$ and summation over all inner zone returns $O(\mu^{-1}h^{-1})$. 

Meanwhile contribution of $\ell$-element intersected with the boundary  zone $\cX_\bound$ and transitional zone $\cX_\trans$ does not exceed $C\mu^{-1}h^{-1}\ell \times \ell^{-1}$ as $T^*=\epsilon \ell$ here; then summation with respect to these zones returns $O(\mu^{-1}h^{-1}|\log h|)$. 

One can get rid off logarithm noting that one can upgrade $T^*$ to 
$T^*=\min ( v^{-1} \ell,  1)$ as we are trying to move away from the corner; recall that in the transitional zone $\cX_\trans$ speed in the opposite direction may be at most $v\Def C(\mu h)^{\frac{1}{3}}h^{-\delta}+C\mu^{-1}$ due to analysis of section~\ref{book_new-sect-5-3}\cite{futurebook}  and $\mu \ell \ge 1$. Further, in the transitional zone we have an extra factor  $C(\mu h)^{\frac{2}{3}}h^{-\delta}$ due to analysis of section~\ref{book_new-sect-5-2} \cite{futurebook}.

We are left with the \emph{corner zone\/}\index{corner zone} 
$\cX_{\mathsf{corner}}\Def \{x:\ell(x) \le \bar{\ell}\}$ and in virtue of simple rescaling $x\mapsto \mu x$, $\mu \mapsto 1$ and $h\mapsto \mu h\le 1$ we can estimate its contribution to the remainder by 
\begin{equation}
C\mu h^{-1}\bar{\ell}^2= C\mu^{-1}h^{-1}\bigl(1+ \mu^4 h^{2-\delta}\bigr);
\label{15-5-5}
\end{equation}
So, the total remainder estimate is also given by (\ref{15-5-5}) which is $O(\mu^{-1}h^{-1})$ for
\begin{equation}
\mu \le h^{\delta-\frac{1}{2}}.
\label{15-5-6}
\end{equation}

\subsection{Strong magnetic field case}
\label{sect-15-5-3-2} 

To improve  estimate (\ref{15-5-5}) (our ultimate goal is  $O(\mu^{-1}h^{-1})$ as usual) as (\ref{15-5-6}) breaks one needs to invoke arguments of sections~\ref{sect-15-2} and \ref{sect-15-3} and take into account dynamics near the corner (see Figure~\ref{fig-traj4}) in the same way we took into account dynamics near critical point of $W|_{\partial X}$ (see Figure~\ref{fig-traj3}) before.

\paragraph{Tauberian remainder estimate.}
\label{sect-15-5-3-2-1}

As we mentioned in $\cX_\inn$ one can take $T^*\asymp \mu \ell$ and $T_*=h^{1-\delta}\ell^{-1}$. Therefore contribution of $\ell$-element intersected with $\cX_\inn$ to the Tauberian remainder does not exceed
\begin{equation}
Ch^{-1}\ell^2\bigl( 1+ \mu T_*\bigr) T^{*\,-1}\asymp 
C\mu^{-1}h^{-1}\bigl(\ell+ \mu h\bigr)
\label{15-5-7}
\end{equation}
where due to arguments of section~\ref{sect-15-2} here and below in the estimates we can take $\delta=0$ in the definition of $T_*$. Then summation over $\cX_\inn\cap \{x, \ell(x)\ge \bar{\ell}\}$ results in $O\bigl(\mu^{-1}h^{-1} + |\log h|\bigr)$. Moreover, in cases (b),(c), (e)--(h) one can take $T^*\asymp \mu$ and the result will be $O(\mu^{-1}h^{-1})$.

The weak magnetic field arguments remain valid in $\cX_\bound$ and its contribution to the Tauberian remainder is $O(\mu^{-1}h^{-1})$. 

Meanwhile exactly as in section~\ref{sect-15-2} contribution of $\ell$-element intersected with $\cX_\trans$ to the Tauberian remainder does not exceed
\begin{equation*}
C\mu^{-1}(\mu h)^{\frac{2}{3}}h^{-1-\delta}\ell \times 
\bigl(1+ \frac{\mu h}{\ell}\bigr)\times \bigl( 1+ \frac{v}{\ell}\bigr)
\end{equation*}
as $T^*\asymp \min (1,v^{-1}\ell)$ where $v$ is the propagation speed in the direction opposite to the hops and $(\mu h)^{\frac{2}{3}}h^{-\delta}$ is the width of $\cX_\trans$; then the total contribution of $\cX_\trans\cap\{x, \ell(x) \ge\bar{\ell}\}$ does not exceed 
\begin{equation*}
C\mu^{-1}(\mu h)^{\frac{2}{3}}h^{-1-\delta}
\bigl(1+\mu h + v +\mu hv \bar{\ell}^{-1}\bigr);
\end{equation*}
meanwhile the contribution of $\cX_\trans\cap\{x, C\hbar\le \ell\le\bar{\ell}\}$ does not exceed $C(\mu h)^{\frac{2}{3}}h^{-1-\delta}\bar{\ell}$ as we take $T^*\asymp \mu^{-1}$ here where here and below
\begin{equation}
\hbar\Def \max(\mu^{-1}, h^{1-\delta}).
\label{15-5-8}
\end{equation}
Let us find $\bar{\ell}$ from $T^*=T_*$ but keep it greater than $\hbar$ i.e. 
\begin{equation}
\bar{\ell}= C\max\bigl( \hbar, (vh)^{\frac{1}{2}}\bigr);
\label{15-5-9}
\end{equation}
so the total contribution of $\cX_\trans\cap \{x,\ell(x)\ge \hbar\}$ does not exceed 
\begin{equation*}
C\mu^{-1}h^{-1}+Ch^{-\delta}+ 
Ch^{-1}(\mu h)^{\frac{2}{3}} (vh)^{\frac{1}{2}}h^{-\delta}.
\end{equation*}
Plugging $v=(\mu h)^{\frac{1}{3}}h^{-\delta}$ we arrive to 
\begin{equation}
C\mu^{-1}h^{-1}+Ch^{-\delta}+ Ch^{-\frac{1}{2}-\delta}(\mu h)^{\frac{5}{6}}
\label{15-5-10}
\end{equation}
where the last term is not greater than the first two as $\mu\le h^{-\frac{8}{11}}$.

On the other hand, under Dirichlet boundary condition\footnote{\label{foot-15-17} And Robin boundary condition with $\alpha\ge \epsilon$.} we can take $v= \mu^{-1}$ but then $\mu \ell\ge 1$ and we can take $T^*=1$ and we arrive to $O(\mu^{-1}h^{-1}+h^{-\delta})$. 

\begin{remark}\label{rem-15-5-3}
As it takes time $(\mu h)^{\frac{2}{3}}h^{-\delta}$ to punch through $\cX_\trans$ and shift with respect to $x_2$ will be 
$v(\mu h)^{\frac{2}{3}}h^{-\delta}$ we can improve $T^*$ as 
$v(\mu h)^{\frac{2}{3}}h^{-\delta} \le \ell$ which is not needed in the case of the Dirichlet\footref{foot-15-17} problem and does not help in the case of the Neumann problem.
\end{remark}

We are left with $\cX_{\mathsf{corner}}$ but its contribution does not exceed $C\mu h^{-1} \bar{\ell}^2= Ch^{-\delta}$.

Therefore remainder estimate is given by (\ref{15-5-10}); if on the both sides of the corner Dirichlet\footref{foot-15-17} \footnote{\label{foot-15-18} We can obviously mix conditions.} is given remainder estimate is $O(\mu^{-1}h^{-1}+h^{-\delta})$.

\paragraph{Magnetic Weyl remainder estimate.}
\label{sect-15-5-3-2-2}

Transition from Tauberian to magnetic Weyl formula is obvious in $\cX_\inn$. To run successive approximations one needs to assume that 
\begin{equation}
(\mu^{-1}+v T_*)T_*\le h^{1+\delta}.
\label{15-5-11}
\end{equation}
This is definitely the case in $\cX_\bound$ and we need to consider $\cX_\trans$ only. Plugging $T_*=h\ell^{-1}$ we arrive to $\ell\ge\bar{\ell}_1$ with 
$\bar{\ell}_1$ defined by 
\begin{equation}
\bar{\ell}_1= C\max\bigl( \mu^{-1}h^{-\delta}, (vh)^{\frac{1}{2}}\bigr)
\label{15-5-12}
\end{equation}
which brings no extra terms in comparison with the Tauberian remainder. 

So in contrast to section~\ref{sect-15-3} transition from Tauberian to magnetic Weyl formula does not increase an error.

\subsection{Very strong magnetic field case }
\label{sect-15-5-3-3}
We need also to consider case of very strong magnetic field case $h^{\delta-1}\le \mu \le \epsilon h^{-1}$  under Dirichlet and improve remainder estimate $O(h^{-\delta})$ we got here. In this case analysis follows basically from the same arguments as in subsection~\ref{sect-15-3-1}. Then in formula (\ref{15-5-7}) one should replace $(\mu h)^{\frac{2}{3}}h^{-\delta}$ by $1$ resulting in the remainder estimate $O(\mu^{-1}h^{-1}+|\log \mu|^K)$. In this approach actually corner zone becomes 
$\cX_{\mathsf{corner}}=\{x, \ell(x)\le \bar{\ell}\}$ with 
$\bar{\ell}= C\mu^{-1}+ Ch|\log h|^{K/2})$ or even 
$\bar{\ell}= C\mu^{-1}$.

\begin{problem}\label{problem-15-5-4}
(i) Investigate very strong magnetic field case 
under Dirichlet\footref{foot-15-17}  and recover remainder estimate $O(\mu^{-1}h^{-1}+|\log \mu|^K)$;

\medskip\noindent
(ii) Improve it under Dirichlet boundary condition to $O(\mu^{-1}h^{-1})$; is it possible?

\medskip\noindent
(iii) Could be it done under Robin boundary condition?
\end{problem}

\subsection{Superstrong magnetic field case}
\label{sect-15-5-3-4}

The similar arguments should work in the superstrong magnetic field case 
$\mu \ge \epsilon h^{-1}$ albeit we need to take $\bar{\ell}=\mu^{-\frac{1}{2}+\delta}h^{\frac{1}{2}}$, $\bar{\ell}=\mu^{-\frac{1}{2}}h^{\frac{1}{2}}|\log \mu|^{K/2}$ and
$\bar{\ell}=C\mu^{-\frac{1}{2}}h^{\frac{1}{2}}$ to recover remainder estimates $O(\mu^\delta)$, $O(|\log \mu|^K)$ and $O(1)$ respectively and we arrive to

\begin{problem}\label{problem-15-5-5}
(i) Investigate very strong magnetic field case 
under Dirichlet\footref{foot-15-17}  and recover remainder estimate 
$O(\mu^\delta)$;

\medskip\noindent
(ii) Improve it under Dirichlet boundary condition to 
$O(\mu^{-1}h^{-1}+|\log \mu|^K)$ and even to $O(\mu^{-1}h^{-1})$; is it possible?

\medskip\noindent
(iii) Could be it done under Robin boundary condition?
\end{problem}

\section{Boundary meets field degeneration of $F$}
\label{sect-15-5-4}

\subsection{Set-up}
\label{sect-15-5-4-1}

Even more challenging (and in my opinion worth both publication and degree) seems to be a problem

\begin{problem}\label{problem-15-5-6}
Get rid off condition $|F| \ge\epsilon$.
\end{problem}

Our goal (not necessary achievable in all cases) is  to derive asymptotics  as sharp as if there was no boundary. We will assume that 
$F\asymp \gamma(x)^{\nu -1}$ where $\gamma\in \sC^\infty$, $\gamma(x)=0\implies |\nabla \gamma (x)|\asymp 1$, $x\in\partial X, \gamma (x)=0\implies |\nabla_{\partial X} \gamma (x)|\asymp 1$, $V<0$.

Then we will have 
$X_\reg =\{x: |\gamma (x)|\ge \bar{\gamma}\Def C\mu^{-1/\nu}\}$ in the case of $\mu \le \epsilon h^{-\nu}$ and degeneration zone 
$X_\degen= \{|\gamma(x)|\le \bar{\gamma}\}$ and then we will have also four subzones:
\begin{gather}
X_{\reg,\inn}=\{x: \gamma(x)\ge \bar{\gamma}, \ 
\ell(x)\Def \dist (x,\partial X)\ge C\mu^{-1}\gamma(x)^{1-\nu}\},\label{15-5-13}\\[2pt]
X_{\reg,\bound}= \{x: \gamma(x)\ge \bar{\gamma}, \ 
\ell(x) \le C\mu^{-1}\gamma(x)^{1-\nu}\},\label{15-5-14}\\[2pt]
X_{\degen, \inn}= \{x: \gamma(x)\le \bar{\gamma}, \ \ell(x) \ge 
C\mu^{-1}\bar{\gamma}^{1-\nu}= C\bar{\gamma}\},\label{15-5-15}\\[2pt]
X_{\mathsf{corner}}= \{x: \gamma(x)\le \bar{\gamma}, \ \ell(x) \le 
 C\bar{\gamma}\}\label{15-5-16}
\end{gather}
as on Figure~\ref{fig-deg}(a) and each zone should be analyzed separately. 

\begin{remark}\label{rem-15-5-7}
(i) Actually we need slightly increase $\bar{\gamma}$ as $\mu $ is close to $h^{-\nu}$ and we need  increase ``slightly'' the threshold for $\ell(x)$ as $\gamma(x)$ is close to $(\mu h)^{-1/(\nu-1)}$ for $\mu \ge h^{\delta-1}$.

\medskip\noindent
(ii) When we consider modified Schr\"odinger operators with $\mu h\ge 1$ so that domain  $\{x,\ \gamma(x)\ge C(\mu h)^{-1/(\nu-1)}\}$ is no more forbidden, according to section~\ref{sect-15-4} we need to change threshold for $\ell(x)$ to ``slightly more'' than $\mu^{-1/2}h^{1/2} \gamma(x)^{(1-\nu)/2}$.

\medskip\noindent
(iii)  When we consider modified Schr\"odinger operators with $\mu h^\nu \ge 1$
according to section~\ref{book_new-sect-14-8} \cite{futurebook} we need to change threshold for $\bar{\gamma}$ to ``slightly more'' than $\mu^{-1/2\nu}h^{1/2}$. 
\end{remark}

Without any loss of the generality one can assume that $X=\{x_1>0\}$ and $\Sigma=\{x_2=0\}$ and $F_{12}<0$ as $x_2>0$. Then hops here are to the left ($x_2$ decays) and drift down ($x_1$ decays) because 
$\partial_{x_2} (-V)/F_{12}>0$. 

\begin{figure}[h!]
\centering
\subfloat[ ]{\includegraphics[width=0.44\linewidth]{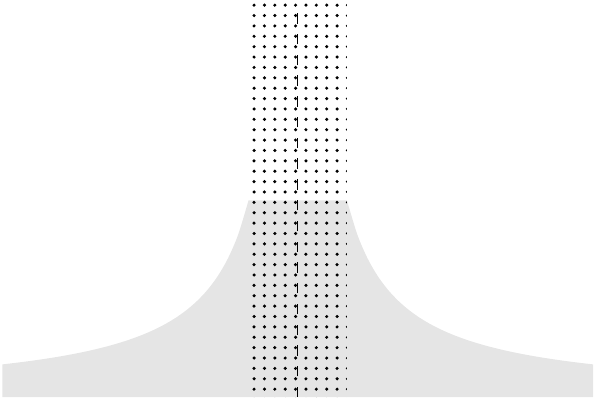}}\quad 
\subfloat[$\nu$ is even]
{\includegraphics[width=0.15\linewidth]{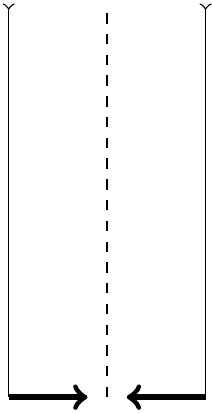}}\quad
\subfloat[$\nu$ is odd]
{\includegraphics[width=0.15\linewidth]{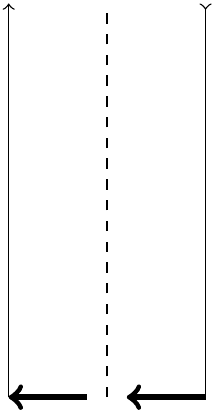}}
\caption{\label{fig-deg} Zones (boundary zone is gray, degeneration zone is dotted) and dynamics near degeneration line of $F$ (dashed). As $V<0$ we consider $F\asymp \gamma^{\nu-1}$ with smooth $\gamma$ such that $|\nabla \gamma|\asymp 1$.  Consistent with Figure~\ref{fig-traj}.}
\end{figure}

Consider $x_2<0$; note that $F_{12}0$ changes sign and 
$\partial_{x_2} (-V)/F_{12}$ does not if $\nu$ is even and then hops are to the right and drift is down there (Figure~\ref{fig-deg}(b)). On the other hand, $F_{12}0$ does not change sign and $\partial_{x_2} (-V)/F_{12}$ changes if $\nu$ is odd and then hops are to the left and drift is up there (Figure~\ref{fig-deg}(c)). 

\subsection{Weak magnetic field case}
\label{sect-15-5-4-2}

Using methods of the previous chapter and considering time direction in which drift up ne can prove easily that contribution of $X_{\reg,\inn}$ to the remainder is $O(\mu^{-1/\nu}h^{-1})$ at least as $\mu \le h^{\delta-\nu}$. 

Furthermore, using the methods of this chapter and and considering time direction in which hops away from $\Sigma$ (and drift up) one can prove easily that contribution of $X_{\reg,\bound}$ to the remainder is $O(\mu^{-1/\nu}h^{-1}$ at least as $\mu \le h^{\delta-\nu}$.

Consider zone $X_{\degen,\inn}$. Then analysis of the previous chapter gives us remainder estimate 
$O\bigl(\mu^{-1/\nu}h^{-1}+ (\mu^{1/\nu}h)^{1/2}h^{-1}\bigr)$. Recall that the last term is responsible for the correction which would come from near-periodic trajectories if $W=\const$. Now it is time to use the last non-degeneracy condition 
\begin{equation}
|\nabla_\Sigma W|\asymp 1.
\label{15-5-17}
\end{equation}
where $V^*$ and $W=-V^*|_\Sigma$ are introduced by formulae (\ref{book_new-14-2-28})  and (\ref{book_new-14-2-29}) \cite{futurebook}. Then using a canonical form of the previous chapter near $\Sigma$ (and thus ignoring presence of the boundary) and considering evolution of $\xi_2$ one can take $T_*= h^{1-\delta}\ell^{-1}$ with $\ell\asymp |x_1|$ (and this takes in account the presence of $\partial X$).

As $T_*\le \epsilon \mu^{-1/\nu}$ one can then ignore all periodicity, which proves that the contribution of zone $X_\degen \cap\{x_1\ge \bar{\ell}\}$ to the remainder is $O(\mu^{-1/\nu}h^{-1})$; meanwhile, the contribution of zone 
$X_{\degen,\inn} \cap\{x_1\le \bar{\ell}\}$ to the remainder is ${O\bigl(\mu^{-1/\nu}h^{-1}+ (\mu^{1/\nu}h)^{1/2}h^{-1} \bar{\ell}\bigr)}$ with
$\bar{\ell}= C\max( \mu^{1/\nu}h^{1-\delta}, \mu^{-1/\nu})$. By our standard technique we can actually make $\delta=0$ in the estimate and then we arrive to the remainder estimate 
$O\bigl(\mu^{-1/\nu}h^{-1}+ (\mu^{1/\nu}h)^{3/2}h^{-1} \bigr)$. In particular, remainder estimate is $O(\mu^{-1/\nu}h^{-1})$ as $\mu \le h^{-3\nu/5}$.

\subsection{Strong magnetic field case}
\label{sect-15-5-4-3}

Here comes the difficult part:

\begin{problem}\label{problem-15-5-8}
(i) In frames of condition (\ref{15-5-3}) prove that if 
\begin{equation}
|\xi_2-\eta^*_\hbar W^{1/2}|\le \rho \le \ell^{1/2},\qquad\text{with\ \ }\rho \ell \ge h
\label{15-5-18}
\end{equation}
then one can take 
$T_*\asymp  h \ell^{-1}$ and $T^*\asymp 1$ and then contribution of such element to the Tauberian remainder does not exceed
\begin{equation}
C\mu^{-1/\nu} \rho \ell h^{-1}\bigl(1+ \mu^{1/\nu} T_*\bigr) T^{*,-1} \asymp
C\mu^{-1/\nu}h^{-1} \rho \ell \bigl(1+ \mu^{1/\nu} h\ell^{-1}\bigr)
\label{15-5-19}
\end{equation}
and therefore the total contribution to the Tauberian remainder of elements of this type with  $\ell \ge \max(h^{2/3},\mu^{-1/\nu})$ is $O(\mu^{-1/\nu}h^{-1})$.;

\medskip\noindent
(ii) Prove that if 
\begin{equation}
|\xi_2-\eta^*_\hbar W^{1/2}|\asymp \rho \ge \ell^{1/2},\qquad\text{with\ \ }\rho \ell \ge h
\label{15-5-20}
\end{equation}
then one can take $T_*\asymp  h \rho^{-2}$ and $T^*\asymp \rho$ 
and then contribution of such element to the Tauberian remainder does not exceed
\begin{equation}
C\mu^{-1/\nu} \rho \ell h^{-1}\bigl(1+ \mu^{1/\nu} T_*\bigr) T^{*,-1} \asymp
C\mu^{-1/\nu}h^{-1} \rho \ell \bigl(1+ \mu^{1/\nu} h\rho^{-2}\bigr)\rho^{-1}
\label{15-5-21}
\end{equation}
and therefore the total contribution to the Tauberian remainder of elements of this type with  $\rho \ge \max(h^{1/3},\mu^{-1/2\nu})$ is $O(\mu^{-1/\nu}h^{-1}+|\log \mu|)$;

\medskip\noindent
(iii) Provide the same  estimate for contribution of the remaining small zone 
$\{\rho \le \max(h^{1/3},\mu^{-1/2\nu}), \ C\mu^{-1/\nu} \le \ell \le h^{2/3},  \rho \ell \le h \}$; combining with estimate for $X_{\mathsf{corner}}$ we arrive to the same estimate $O(\mu^{-1/\nu}h^{-1}+h^{-\delta})$;

\medskip\noindent
(iv) Improve this estimate to $O(\mu^{-1/\nu}h^{-1}+1)$;
\end{problem}

Also looks challenging the next step:

\begin{problem}\label{problem-15-5-9}
Pass from asymptotics with the Tauberian principal part to a more explicit ones. 
\end{problem}

There should be usual Magnetic Weyl term, a term associated with the boundary and a term associated with degeneration albeit we do not expect any mixed boundary-degeneration term.

\subsection{Superstrong and hyperstrong magnetic field cases}
\label{sect-15-5-4-4}

In the same way one needs to consider modified Schr\"odinger operator (with $\mathfrak{z}\mu h F^\#$ subtracted where $F^\#$  is ``$F$ but without absolute value'') and taking into account remark~\ref{rem-15-5-7} follow the same path:

\begin{problem} 
Derive spectral asymptotics for the modified Schr\"odinger operator:

\medskip\noindent
(i) Consider the case of superstrong magnetic field: $\epsilon h^{-1}\le \mu \le \epsilon h^{-\nu}$;

\medskip\noindent
(ii) Consider the case of hyperstrong magnetic field: $ \mu \ge \epsilon h^{-\nu}$.
\end{problem}

\begin{subappendices}
\chapter{Appendices: eigenvalues of \texorpdfstring{$L(\eta)$}{L(\texteta)}}
\label{sect-15-A}

\section{Basic properties}
\label{sect-15-A-1}

Recall that $\lambda_{\D,n}(\eta)$ and $\lambda_{\N,n}(\eta)$ $n=0,1,\dots$ are eigenvalues of\footnote{\label{foot-15-19} Obviously, transformation 
$x\mapsto -x$ reduces  (\ref{15-1-26}) to this one.}
\begin{equation}
L(\eta)= -\partial_x ^2 +(x+\eta)^2
\label{15-A-1}
\end{equation}
on $\bR^-$ with Dirichlet and Neumann boundary condition respectively at $0$, 
$u_n(x)=\upsilon_n(x+\eta ,\eta)$ are (real-valued) eigenfunctions such that
$\|\upsilon_n\|=1$.

\begin{proposition}\label{prop-15-A-1}
$\lambda_{\D, n}(\eta)$ and $\lambda_{\N, n}(\eta)$, $n=0,1,2,\dots$ are real analytic functions with the following properties:

\medskip\noindent
(i)  $\lambda_{\D, n}(\eta)$ are monotone decreasing for $\eta\in \bR$; 
$\lambda_{\D, n}(\eta)\nearrow +\infty$ as $\eta\to -\infty$;
$\lambda_{\D, n}(\eta)\searrow (2n+1)$ as $\eta\to +\infty$; 
$\lambda_{\D, n}(0)=(4n+3)$;

\medskip\noindent
(ii) $\lambda_{\N, n}(\eta)$ are monotone decreasing for $\eta\in \bR^-$; $\lambda_{\N, n}(\eta)\nearrow +\infty$ as $\eta\to -\infty$; 
$\lambda_{\N, n}(\eta)< (2n+1)$ as $\eta\ge (2n+1)^{\frac{1}{2}}$;
$\lambda_{\N, n}(0)=(4n+1)$;

\medskip\noindent
(iii) 
$\lambda_{\N, n}(\eta) < \lambda_{\D, n}(\eta) <\lambda_{\N, (n+1)}(\eta)$;
$\lambda_{\D, n}(\eta)> (2n+1)$, $\lambda_{\N, n}(\eta)> (2n-1)_+$.
\end{proposition}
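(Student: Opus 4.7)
The plan is to establish the proposition via a combination of variational, Hellmann--Feynman, reflection-symmetry, asymptotic, and Sturm--Liouville interlacing arguments. First, $L(\eta)$ is semibounded and has compact resolvent (its potential $(x+\eta)^2 \to +\infty$ as $x\to -\infty$ while $\mathbb{R}^-$ is bounded on the right), so the spectrum is a sequence of simple eigenvalues tending to $+\infty$; real analyticity of $\lambda_{*,n}(\eta)$ and of a choice of normalized eigenfunction $\upsilon_{*,n}(x,\eta)$ then follows from Kato's theory of analytic families of self-adjoint operators of type (B), applied to the sesquilinear form $q_\eta(u) = \int_{-\infty}^0 (|u'|^2 + (x+\eta)^2|u|^2)\,dx$, whose form domain is $\eta$-independent.

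Next I compute $\lambda_{*,n}'(\eta)$. Multiplying the eigenvalue equation $-\upsilon'' + (x+\eta)^2\upsilon = \lambda\upsilon$ by $\upsilon'$ gives the exact differential
\begin{equation*}
\tfrac{d}{dx}\bigl(\upsilon'^2 - (x+\eta)^2\upsilon^2 + \lambda\upsilon^2\bigr) = -2(x+\eta)\upsilon^2,
\end{equation*}
whose integral from $-\infty$ to $0$ (using decay at $-\infty$) combined with the Hellmann--Feynman identity $\lambda'(\eta) = \int 2(x+\eta)\upsilon^2\,dx$ yields the key formula
\begin{equation*}
\lambda_{*,n}'(\eta) = -\upsilon_{*,n}'(0,\eta)^2 + \bigl(\eta^2 - \lambda_{*,n}(\eta)\bigr)\upsilon_{*,n}(0,\eta)^2.
\end{equation*}
Under Dirichlet BC the second term vanishes and strict monotone decrease is immediate (the eigenfunction cannot simultaneously satisfy $\upsilon(0)=\upsilon'(0)=0$). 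Under Neumann BC with $\eta\le 0$ the first term vanishes, and since $\min_{x\le 0}(x+\eta)^2 = \eta^2$ the standard ``eigenvalue exceeds the infimum of the potential'' bound gives $\lambda_{N,n}(\eta) > \eta^2$, hence the monotone decrease.

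The values at $\eta=0$ come from reflection: the potential is then even, so Dirichlet respectively Neumann eigenfunctions on $\mathbb{R}^-$ extend to odd respectively even harmonic oscillator eigenfunctions on $\mathbb{R}$, yielding $\lambda_{D,n}(0)=4n+3$, $\lambda_{N,n}(0)=4n+1$. As $\eta\to -\infty$ the potential on $\mathbb{R}^-$ satisfies $(x+\eta)^2 \ge \eta^2 \to +\infty$, forcing every $\lambda_{*,n}\to+\infty$. As $\eta\to +\infty$ the well bottom $x=-\eta$ lies deep inside $\mathbb{R}^-$, and standard WKB/decoupling (or the Agmon-type exponential decay of the true eigenfunctions away from the well across the potential barrier) shows $\lambda_{*,n}(\eta) - (2n+1)$ is exponentially small; the sign of the correction is dictated by the variational principle, since extension by zero embeds $H^1_0((-\infty,0])$ into $H^1(\mathbb{R})$, giving $\lambda_{D,n}(\eta) > 2n+1$ strictly, so the Dirichlet curve descends to $2n+1$ from above. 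For the Neumann bound $\lambda_{N,n}(\eta) < 2n+1$ when $\eta \ge (2n+1)^{1/2}$ I will use the Hellmann--Feynman formula again: at any interior critical point $\eta^*$ of $\lambda_{N,n}$ one has $\lambda_{N,n}(\eta^*) = \eta^{*\,2}$, and since $\lambda_{N,n}(\eta)\to 2n+1$ at $+\infty$ while $\lambda_{N,n}(0)=4n+1 > 2n+1$, the unique such $\eta^*$ (a minimum) satisfies $\eta^{*\,2}<2n+1$, hence $\eta^*<(2n+1)^{1/2}$ and $\lambda_{N,n}<2n+1$ beyond this point.

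Finally, the interlacing $\lambda_{N,n}(\eta)<\lambda_{D,n}(\eta)<\lambda_{N,n+1}(\eta)$ is the classical Sturm--Liouville fact: the first inequality follows from the strict form-domain inclusion $H^1_0\subset H^1$ (strictness because a Neumann eigenfunction satisfying also Dirichlet would vanish identically by Cauchy data), while the second is proved by a Robin deformation: consider the family of boundary conditions $\cos\theta\,u(0)+\sin\theta\,u'(0)=0$ with $\theta\in[0,\pi]$, realize Dirichlet at $\theta=0,\pi$ and Neumann at $\theta=\pi/2$, show via Hellmann--Feynman that each eigenvalue branch is strictly monotone in $\theta$ on each of $(0,\pi/2)$ and $(\pi/2,\pi)$, and count to conclude that exactly one Neumann eigenvalue lies in each $(\lambda_{D,n-1},\lambda_{D,n})$. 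From this interlacing one reads off $\lambda_{N,n}(\eta)>\lambda_{D,n-1}(\eta)>2n-1$ for $n\ge 1$, and $\lambda_{N,0}(\eta)>0$ is trivial, giving $\lambda_{N,n}>(2n-1)_+$. The main obstacle I expect is keeping the strictness and signs precise throughout, particularly in the Robin-deformation interlacing step and in extracting the correct sign of the exponentially small correction in the $\eta\to+\infty$ asymptotics, where Dirichlet and Neumann behave oppositely.
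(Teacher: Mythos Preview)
Your overall strategy is sound and several steps match the paper (reflection at $\eta=0$, the form-domain inclusion for $\lambda_{\N,n}<\lambda_{\D,n}$, extension by zero for $\lambda_{\D,n}>2n+1$). Your derivation of the derivative formula is correct and coincides with what the paper calls the Dauge--Helffer formula, though the paper defers that to the \emph{next} subsection and proves the present proposition almost entirely by variational means. Two points deserve comment.

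\textbf{The Neumann upper bound.} Your argument that $\lambda_{\N,n}(\eta)<2n+1$ for $\eta\ge(2n+1)^{1/2}$ relies on two facts you have not established: the limit $\lambda_{\N,n}(\eta)\to 2n+1$ as $\eta\to+\infty$, and the uniqueness of the interior critical point. The limit is not part of this proposition; the paper explicitly remarks after the proof that it has \emph{not} been shown here, and its proof in section~\ref{sect-15-A-3} takes real work (your ``WKB/Agmon'' sketch is not enough). Uniqueness of the critical point needs a second-derivative computation (the paper's Proposition~\ref{prop-15-A-3}). Without both ingredients your chain of implications---minimum exists, its value is below $2n+1$, hence $\eta^*<(2n+1)^{1/2}$, hence the curve stays below beyond $\eta^*$---does not close. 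The paper bypasses all of this with a one-line variational test: on the span of the whole-line Hermite functions $\bar\upsilon_0,\dots,\bar\upsilon_n$, write $Q^-(u)=Q(u)-Q^+(u)\le(2n+1)Q_0(u)-Q^+(u)$; since $x^2\ge\eta^2\ge 2n+1$ on $(\eta,\infty)$ one has $Q^+(u)>(2n+1)Q_0^+(u)$, whence $Q^-(u)<(2n+1)Q_0^-(u)$ on an $(n{+}1)$-dimensional space.

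\textbf{The interlacing $\lambda_{\D,n}<\lambda_{\N,n+1}$.} Your Robin-deformation route is correct in principle but heavier than needed. The paper's argument is again a direct variational trick: in the $(n{+}2)$-dimensional span of $\upsilon_{\N,0},\dots,\upsilon_{\N,n+1}$, the functions vanishing at the boundary form an $(n{+}1)$-dimensional subspace lying in the Dirichlet form domain, on which $Q^-(u)<\lambda_{\N,n+1}Q_0^-(u)$ strictly (equality could occur only on $\upsilon_{\N,n+1}$ itself, which does not vanish at the boundary since it already has $\upsilon'_{\N,n+1}=0$ there).
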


\begin{proof}
Note that $\lambda_{\D,n}(\eta)$ and $\lambda_{\N,n}(\eta)$ are obtained by variational procedure from quadratic forms
\begin{equation}
Q^-(u)=\int_{x<0} \Bigl(|\partial_x u|^2 + (x +\eta)^2|u|^2\Bigr)\,dx, \quad
Q^-_0(u)=\int_{x<0} |u|^2\,dx
\label{15-A-2}
\end{equation}
respectively with Dirichlet or no boundary conditions at $x=0$. Alternatively we can consider 
\begin{equation}
Q^-(u)=\int_{x<\eta} \Bigl(|\partial_x u|^2 + x^2|u|^2\Bigr)\,dx, \quad
Q^-_0(u)=\int_{x<\eta} |u|^2\,dx
\label{15-A-3}
\end{equation}
respectively with Dirichlet or no boundary conditions at $x=\eta$ and we will shift notations between these two.

We know eigenvalues $\bar{\lambda}_n=(2n+1)$ and eigenfunctions of this operator on $\bR$ (i.e. as $\eta=-\infty$). Further, we know that eigenvalues of $L_\D(0)$ and $L_\N(0)$ are $(2m+1)$ respectively with odd and even $m$ we arrive to all assertions in (i), and to all assertions in (ii) except the third one.

Consider $\bar{\upsilon}_k (x)$ which are eigenfunctions of harmonic oscillator on $\bR$, $k=0,\dots,n$. As $u$ belongs to their span 
\begin{multline*}
Q^- (u) = Q (u)-Q^+(u)\le (2n+1) Q_0(u) - Q^+(u)=\\
(2n+1) Q^-_0(u) +(2n+1) Q^+_0(u) -Q^+(u)
\end{multline*}
where $Q$, $Q_0$ denote the same forms on $\bR$ and $Q^+$, $Q^+_0$ on $\{x>\eta\}$.

Let $\eta\ge (2n+1)^{\frac{1}{2}}$. Obviously $(2n+1) Q^+_0(u) < Q^+(u)$ and therefore $Q^-(u)< (2n+1) Q^-_0(u)$ for $u$ belonging to some $(n+1)$-dimensional space; then $\lambda_{\N,n}(\eta)<(2n+1)$.

In (iii) one needs only to prove that 
$\lambda_{\D,n} (\eta)<  \lambda_{\N, (n+1)}(\eta)$. However in $(n+2)$ dimensional span of $\upsilon_{\N, 0}(x),\dots, \upsilon_{\N, (n+1)}(x)$, functions vanishing at $x=\eta$ form $(n+1)$ dimensional subspace on which 
$Q^-(u) < \lambda_{\N, (n+1)}Q^-_0(u)$ (strict inequality is due to the fact that only on $v=\upsilon_{\N, (n+1)}(x)$ equality can be reached and this function does not belong to the subspace in question (as then $v(\eta)=\partial v(\eta)=0$ which is impossible).
\end{proof}

So far we have not proved that $\lambda_{\N, n}(\eta)\to (2n+1)$ as $\eta\to +\infty$; it will be done later.

\section{More properties}
\label{sect-15-A-2}

We follow M.~Dauge--B.~Helffer~\cite{dauge-helffer-1} here.

Consider equation on $x<0$
\begin{equation}
\bigl(L(\eta)-\lambda_n(\eta)\bigr)u_n=0.
\label{15-A-4}
\end{equation}
Differentiating by $\eta$ and multiplying by $u_n$ we arrive to
\begin{multline}
\partial_\eta \lambda_n (\eta)= ((\partial_\eta L(\eta)) u_n,u_n)= 2((x+\eta)u_n,u_n)= \\[3pt]
([\partial_x, (x+\eta)^2]u_n,u_n)=
2((x+\eta)^2 u_n,\partial_x u_n)+ \eta^2  |u_n(0)|^2= \\[3pt]
2(L (\eta)u_n,\partial_x u_n)+ 
(\partial_x^2 u_n,\partial_x u_n)+ \eta^2  |u_n(0)|^2 =\\[3pt]
\bigl(\eta^2-\lambda_{n}(\eta)\bigr) |u_n (0)|^2 -|\partial_x u_n (0)|^2. 
\label{15-A-5}
\end{multline}
In particular,
\begin{phantomequation}\label{15-A-6}\end{phantomequation} 
\begin{gather}
\partial_\eta \lambda_{\D,n} (\eta)= -|\partial_x \upsilon_{\D,n}(\eta)|^2 \tag*{$\textup{(\ref*{15-A-6})}_\D$}\label{15-A-6-D}\\
\shortintertext{and}
\partial_\eta \lambda_{\N,n} (\eta)= \bigl(\eta^2-\lambda _{\N,n}(\eta)\bigr) |\upsilon_{\N,n}(\eta)|^2. \tag*{$\textup{(\ref*{15-A-6})}_\N$}\label{15-A-6-N}
\end{gather}
We refer to them as \emph{Dauge-Helffer formulae\/}~\cite{dauge-helffer-1}.
\index{Dauge-Helffer formulae}

We conclude immediately that 

\begin{proposition}\label{prop-15-A-2}
(i) $\partial_\eta \lambda_{\D,n} (\eta)<0$;

\medskip\noindent
(ii)  $\partial_\eta \lambda_{\N,n}(\eta) \gtreqqless 0$ if and only if
\begin{equation}
\lambda_{\N,n} (\eta)\lesseqqgtr\eta^2.
\label{15-A-7}
\end{equation}
\end{proposition}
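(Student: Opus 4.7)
The plan is to read both parts directly off the Dauge--Helffer formulae $\textup{\ref{15-A-6-D}}$ and $\textup{\ref{15-A-6-N}}$ just established, once we verify the appropriate non-vanishing of boundary data. The content of the proposition is essentially a strict-inequality/sign statement, and all the hard analytical work (the identity derivation) is already done; what remains is a unique-continuation observation for a second-order linear ODE.

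First, for (i), formula $\textup{\ref{15-A-6-D}}$ gives $\partial_\eta\lambda_{\D,n}(\eta)=-|\partial_x\upsilon_{\D,n}(\eta)|^2\le 0$, so I only need to rule out equality. Suppose $\partial_x\upsilon_{\D,n}(\eta)=0$. Since $\upsilon_{\D,n}$ satisfies the Dirichlet boundary condition, we also have $\upsilon_{\D,n}(\eta)=0$, i.e.\ both the value and the first derivative of a solution of the second-order linear ODE $-u''+x^2 u=\lambda_{\D,n}(\eta)u$ vanish at $x=\eta$. By uniqueness for the initial-value problem this forces $\upsilon_{\D,n}\equiv 0$, contradicting the normalization $\|\upsilon_{\D,n}\|=1$. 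Hence $\partial_\eta\lambda_{\D,n}(\eta)<0$ strictly.

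For (ii), formula $\textup{\ref{15-A-6-N}}$ writes $\partial_\eta\lambda_{\N,n}(\eta)=\bigl(\eta^2-\lambda_{\N,n}(\eta)\bigr)|\upsilon_{\N,n}(\eta)|^2$, so once I know the boundary value $\upsilon_{\N,n}(\eta)$ is nonzero the sign of $\partial_\eta\lambda_{\N,n}(\eta)$ matches the sign of $\eta^2-\lambda_{\N,n}(\eta)$, which is exactly the stated equivalence (\ref{15-A-7}). To see $\upsilon_{\N,n}(\eta)\ne 0$, repeat the previous argument with the roles of value and derivative swapped: the Neumann condition yields $\partial_x\upsilon_{\N,n}(\eta)=0$, so if additionally $\upsilon_{\N,n}(\eta)=0$ then both Cauchy data vanish at $x=\eta$ and uniqueness for the ODE gives $\upsilon_{\N,n}\equiv 0$, contradicting normalization.

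The main (very modest) obstacle is simply to state cleanly the unique-continuation step, since it is the only place where genuine argument beyond algebra is needed; the rest is just reading off signs from $\textup{\ref{15-A-6-D}}$ and $\textup{\ref{15-A-6-N}}$. I do not anticipate any difficulty, and the proof should occupy only a handful of lines.
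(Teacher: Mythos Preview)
Your proposal is correct and follows exactly the paper's approach: the paper simply writes ``We conclude immediately that'' after stating the Dauge--Helffer formulae $\textup{(\ref*{15-A-6})}_\D$ and $\textup{(\ref*{15-A-6})}_\N$, and your argument just makes explicit the ODE unique-continuation step (nonvanishing of the relevant boundary datum) that the paper leaves to the reader.
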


Differentiating \ref{15-A-6-N} we arrive to
\begin{multline}
\partial^2_\eta \lambda_{\N,n} (\eta)= \\\bigl(2\eta-\partial_\eta\lambda_{\N,n}(\eta)\bigr)|\upsilon_{\N,n}(\eta)|^2
+2\bigl(\eta^2-\lambda_{\N,n} (\eta)\bigr) \partial_\eta \upsilon_{\N,n}(\eta)\cdot \upsilon_{\N,n}(\eta).
\label{15-A-8}
\end{multline}
Therefore as $\partial_\eta \lambda_{\N,n}(\eta)=0$ in virtue of proposition~\ref{prop-15-A-1} 
\begin{equation}
\partial^2_\eta \lambda_{\N,n}(\eta) = 2\eta |\upsilon_{\N,n}(\eta)|^2
\label{15-A-9}
\end{equation}
and therefore it is non-degenerate minimum. So, 

\begin{proposition}\label{prop-15-A-3}
(i) $\lambda_{\N,n}(\eta)$ has a single stationary point\footnote{\label{foot-15-20}
And it must have one due to proposition~\ref{prop-15-A-1}.} $\eta_n$, it is non-degenerate minimum, and in this point \textup{(\ref{15-A-7})}, \textup{(\ref{15-A-9})} hold.

\medskip\noindent
(ii) In particular, $(\lambda_{\N,n}(\eta)-\eta^2)$ has the same sign as $(\eta_n-\eta)$.
\end{proposition}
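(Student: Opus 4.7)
The strategy is to combine the Dauge--Helffer formula $\textup{(\ref*{15-A-6})}_\N$ with the monotonicity statements already established in Proposition~\ref{prop-15-A-1} to locate at least one critical point of $\lambda_{\N,n}$, then use identity (\ref{15-A-9}) to show that every critical point is a strict local minimum, and finally invoke an elementary topological argument to conclude uniqueness.

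\emph{Step 1: Existence of a critical point.} By Proposition~\ref{prop-15-A-1}(ii), $\lambda_{\N,n}(0)=(4n+1)>0=0^2$, so $\textup{(\ref*{15-A-6})}_\N$ (equivalently Proposition~\ref{prop-15-A-2}(ii)) gives $\partial_\eta\lambda_{\N,n}(0)<0$. On the other hand, again by Proposition~\ref{prop-15-A-1}(ii), $\lambda_{\N,n}(\eta)<2n+1=\eta^2$ at $\eta=(2n+1)^{1/2}$, so $\partial_\eta\lambda_{\N,n}((2n+1)^{1/2})>0$. Since $\lambda_{\N,n}$ is real analytic, the intermediate value theorem yields at least one zero $\eta_n\in(0,(2n+1)^{1/2})$ of $\partial_\eta\lambda_{\N,n}$.

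\emph{Step 2: Every critical point is a non-degenerate minimum and lies in $(0,\infty)$.} Let $\eta_*$ be any zero of $\partial_\eta\lambda_{\N,n}$. Since $\lambda_{\N,n}$ is strictly decreasing on $\bR^-$ by Proposition~\ref{prop-15-A-1}(ii), we must have $\eta_*\ge 0$; but $\partial_\eta\lambda_{\N,n}(0)<0$, so $\eta_*>0$. By Proposition~\ref{prop-15-A-2}(ii), equality $\partial_\eta\lambda_{\N,n}(\eta_*)=0$ forces $\lambda_{\N,n}(\eta_*)=\eta_*^2$, which is (\ref{15-A-7}). Plugging $\partial_\eta\lambda_{\N,n}(\eta_*)=0$ and $\lambda_{\N,n}(\eta_*)=\eta_*^2$ into (\ref{15-A-8}) yields (\ref{15-A-9}). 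Finally, $\upsilon_{\N,n}(\eta_*)\ne 0$: otherwise $\upsilon_{\N,n}(\cdot,\eta_*)$ would satisfy both Neumann (by definition) and Dirichlet (by vanishing at the endpoint) conditions at $\eta_*$, giving $\lambda_{\N,n}(\eta_*)=\lambda_{\D,k}(\eta_*)$ for some $k$, which contradicts the strict interlacing in Proposition~\ref{prop-15-A-1}(iii). Hence $\partial^2_\eta\lambda_{\N,n}(\eta_*)=2\eta_*|\upsilon_{\N,n}(\eta_*)|^2>0$.

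\emph{Step 3: Uniqueness and part (ii).} Since every critical point of the smooth function $\lambda_{\N,n}$ on $\bR$ is a strict local minimum, it can have at most one critical point: between two strict local minima there would have to be a local maximum, which would be a third critical point that is not a minimum. Combining with Step~1, $\eta_n$ is unique, proving~(i). For~(ii), Proposition~\ref{prop-15-A-2}(ii) shows that on $\{\eta<\eta_n\}$ we have $\partial_\eta\lambda_{\N,n}<0$ (by continuity and the values at $0$ and $\eta_n-$), hence $\lambda_{\N,n}(\eta)>\eta^2$; symmetrically, on $\{\eta>\eta_n\}$, $\partial_\eta\lambda_{\N,n}>0$, hence $\lambda_{\N,n}(\eta)<\eta^2$. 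Equivalently, $\lambda_{\N,n}(\eta)-\eta^2$ has the same sign as $\eta_n-\eta$.

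\emph{Main obstacle.} The only delicate point is showing $\upsilon_{\N,n}(\eta_*)\ne 0$ in Step~2, since without it one cannot conclude that the critical point is \emph{strict} and therefore cannot rule out the pathological possibility of a plateau or degenerate minimum whose presence would undermine the uniqueness argument in Step~3. All other ingredients are direct consequences of material already in place.
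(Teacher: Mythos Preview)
Your proof is correct and follows essentially the same approach as the paper, which likewise derives (\ref{15-A-9}) from (\ref{15-A-8}) at a critical point and reads off non-degeneracy; the paper leaves existence to a footnote and uniqueness implicit, both of which you have spelled out. One remark: the ``main obstacle'' you flag, $\upsilon_{\N,n}(\eta_*)\ne 0$, is in fact immediate without invoking interlacing---if the boundary value vanished, then together with the Neumann condition both Cauchy data of the eigenfunction would vanish at the endpoint, forcing $\upsilon_{\N,n}\equiv 0$ by uniqueness for the second-order ODE.
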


 Therefore
$\lambda_{\N, n+1} (\eta_n)-\eta_n^2> \lambda_{\N, n} (\eta_n)-\eta_n^2=0$ implies that 
\begin{equation}
\eta_{n+1}>\eta_n.
\label{15-A-10}
\end{equation}

\section{Estimates of \texorpdfstring{$\lambda_{*,n}(\eta)$ as $\eta\to+\infty$}{\textlambda(\texteta)  as \texteta\textrightarrow+\textinfty}}
\label{sect-15-A-3}

So we must investigate properties of $\lambda_n(\eta)$ as 
$\eta\to +\infty$. We know that then $\lambda_{\D,n}(\eta)\ge (2n+1)\ge \lambda_{\N,n}(\eta)$ and $\lambda_{\D,n}(\eta)$  tends to $(2n+1)$.

Consider $\bar{\upsilon}_j(x)$. Note that 
\begin{equation}
\int_\eta^{+\infty} \bar{\upsilon}_j(x)\bar{\upsilon}_k(x)\,dx =O\bigl(\eta^{j+k-1}e^{-\eta^2}\bigr)
\label{15-A-11}
\end{equation}
(non-uniformly with respect to $j,k$) and as $j=k$ this is an exact magnitude.

Therefore 
\begin{equation}
(\bar{\upsilon}_j,\bar{\upsilon}_k)= \updelta_{jk} + O\bigl(\eta^{j+k-1}e^{-\eta^2}\bigr)
\label{15-A-12}
\end{equation}
as our space is $\sL^2((-\infty,\eta))$.

Consider bilinear form 
\begin{gather*}
Q^-(u,v)=\int_{-\infty}^\eta \bigl(u'v'+x^2 uv\bigr)\,dx\\
\intertext{(where $'$ means $\partial_x$). One can rewrite it as} 
Q^-(u,v)=\int_{-\infty}^\eta \bigl(-u''+x^2 u\bigr)v\,dx + u'(\eta)v(\eta).
\end{gather*}
Then
\begin{equation*}
Q^-(\bar{\upsilon}_j,\bar{\upsilon}_k)= (2j+1) (\bar{\upsilon}_j,\bar{\upsilon}_k) + \bar{\upsilon}'_j(\eta)\bar{\upsilon}_k(\eta).
\end{equation*}
Thus if we consider $u=\sum_{k\le n}\alpha_k\bar{\upsilon}_k$ with 
$\alpha_k\in \bR$ such that $\sum_{k\le n}\alpha_k^2\asymp 1$ then 
\begin{equation*}
Q^-(u)-(2n+1)Q_0^-(u)= \sum_{k\le n} 2(k-n)\alpha_k^2 +
O\bigl(e^{-\eta^2} \eta^{n+k+1}\bigr)
\end{equation*}
and this expression is negative provided $\alpha_k^2\ge e^{-\eta^2}\eta^{n+k+1}$ for some $k<n$;  otherwise  we can assume that $\alpha_n=1$ and 
\begin{multline*}
Q^-(u) -(2n+1)Q_0^-(u) \equiv 
\sum_{k\le n} \bigl(Q^-(\bar{\upsilon}_k)-(2n+1)Q_0^-(\bar{\upsilon}_k)\bigr)\alpha_k^2 \le \\ \bigl(Q^-(\bar{\upsilon}_n)-(2n+1)Q_0^-(\bar{\upsilon}_n)\bigr)
\equiv  \bar{\upsilon}'_n(\eta)\bar{\upsilon}_n(\eta)
\mod O\bigl(e^{-\frac{5}{4}\eta^2}\bigr)
\end{multline*}
and we know that $\bar{\upsilon}'_n(\eta)\bar{\upsilon}_n(\eta)$ is negative and of magnitude $\eta^{2n+1}e^{-\eta^2}$ as $\eta\ge c_n$ and therefore due to variational principle
\begin{equation*}
\lambda_{\N,n} (\eta)\le (2n+1) - \epsilon \eta^{2n+1}e^{-\eta^2}\qquad
\text{as\ \ } \eta \ge c_n. 
\end{equation*}

Meanwhile let us replace $\bar{\upsilon}_k(x)$ by 
$\tilde{\upsilon}_k(x)=\bar{\upsilon}_n(x)+\phi_k$, $\phi_k=-\bar{\upsilon}_k(\eta)e^{\eta x -\eta^2}$ satisfying Dirichlet conditions as $x=\eta$. All the above conclusions except calculations of $Q(\bar{\upsilon}_n)$ remain true for these functions as well and for them
\begin{multline*}
Q^-(\tilde{\upsilon}_n)- (2n+1)Q_0^-(\tilde{\upsilon}_n)=\\[2pt]
\bar{\upsilon}'_n (\eta)\bar{\upsilon}_n(\eta)+ 2\bar{\upsilon}'_n(\eta)\phi_n(\eta) +Q^-(\phi_n)-(2n+1)Q^-_0(\phi_n) =\\[2pt]
-\upsilon'_n(\eta)\upsilon_n(\eta) +  \upsilon^2 _n(\eta) 
\bigl(\eta +O(\eta ^{-1})\bigr)\asymp \eta^{2n+1}e^{-\eta^2}
\end{multline*}
and therefore
\begin{equation*}
\lambda_{\D,n} (\eta)\le (2n+1) + C_n \eta^{2n+1}e^{-\eta^2}\qquad
\text{as\ \ } \eta \ge c_n. 
\end{equation*}

On the other hand, consider $\sC\ni u$ supported in $(-\infty,\eta]$ and orthogonal to $\bar{\upsilon}_0,\dots,\bar{\upsilon}_{n-1}$. Then (modulo constant factor) $u=  \bar{\upsilon}_n(x) + v$ where $v$ is orthogonal to $\bar{\upsilon}_n$ in $\bR$ as well.  

Note that
\begin{equation*}
Q^- (u)=  Q(u) = Q(\bar{\upsilon}_n)+Q(v) = (2n+1) + Q(v),
\end{equation*}
and $Q_0^-(u) = 1 +\|v\|_\bR ^2$, and 
\begin{multline*}
Q^-(u)-(2n+1)Q_0^-(u) = Q(v)-(2n+1)Q_0(v) \ge 2Q_0(v)\ge 2Q_0^+(v) =\\
2Q_0^+(\bar{\upsilon}_n)\ge \epsilon \eta^{2n+1}e^{-\eta^2}.
\end{multline*}
So, any $(n+1)$-dimensional subspace of $\sL^2((-\infty,\eta))$ contains an element with 
$Q^-(u)\ge \bigl((2n+1)+ \epsilon \eta^{2n+1}e^{-\eta^2}\bigr)Q_0^-(u)$
and we arrive to the left inequality of  
\begin{equation}
\epsilon_n \eta^{2n+1}e^{-\eta^2} \le \lambda_{\D,n} (\eta)-(2n+1) \le  C_n \eta^{2k+1}e^{-\eta^2}\qquad
\text{as\ \ } \eta \ge c_n
\label{15-A-13}
\end{equation}
as the right one has been already proven.

Finally, estimate $\lambda_{\N,n}$ from below. Consider $\upsilon_n(x)$ and extend it as $\upsilon_n(\eta)e^{\frac{1}{2}(\eta^2- x^2)}$ to $(\eta,\infty)$. Let $u$ be a resulting function. Note
\begin{equation*}
Q^+(u)-(2n+1)Q_0^+(u) =
|\upsilon_n(\eta)|^2\bigl(\eta - 2n \Phi (\eta)\bigr),
\end{equation*}
with $\Phi (\eta) =\int_\eta^\infty e^{\eta^2-x^2}\,dx$ 
and then as $n=0$
\begin{multline*}
\lambda_{\N,n}(\eta) -(2n+1)=Q^-(\upsilon_n)-(2n+1)Q_0^-(\upsilon_n) =\\[3pt]
Q(u)-(2n+1)Q_0(u) - Q^+(u)+(2n+1) Q_0^+(u)\ge \\[2pt]
-|\upsilon_n(\eta)|^2\bigl(\eta - 2n \Phi(\eta)\bigr) = 
-\frac{1}{2}\bigl(\eta - 2n \Phi (\eta)\bigr) (\eta^2-\lambda_{\N,n})^{-1}\partial_\eta \lambda_{\N,n}(\eta)
\end{multline*}
due to \ref{15-A-6-N}.Therefore
\begin{equation*}
\bigl(2n+1-\lambda_{\N,n}(\eta)\bigr)^{-1}  \partial_\eta \lambda_{\N,n} (\eta)\ge 
2 \bigl(\eta^2 -\lambda_{\N,n}(\eta)\bigr)\bigl(\eta -\Phi(\eta)\bigr)^{-1}.
\end{equation*}
Note that $\Phi (\eta) = \frac{1}{2}\eta^{-1}+O(\eta^{-3})$ and $\lambda_n<2n+1$. Then the right-hand side of the last inequality is greater than 
$2(\eta -(n+1)\eta^{-1})-O(\eta^{-2})$ and integrating we conclude that 
\begin{equation*}
\log \bigl(2n+1-\lambda_{\N,n}(\eta)\bigr)^{-1}\ge  \eta^2 -2(n+1)\log \eta -  \log C_n
\end{equation*}
and 
\begin{equation*}
2n+1-\lambda_{\N,n}(\eta)\le \sigma_n(\eta)\Def C_ne^{-\eta^2}\eta^{2n+2}
\end{equation*}
which is just one factor $\eta$ greater than we got as an lower estimate for $(1-\lambda_n)$. 

Finally, as  $\lambda_{\N,n+1}(\eta)> \lambda_{\N,n+1}(\eta_{n+1})> \lambda_{\D,n}(\eta_{n+1})>(2n+1)$ we conclude that $\lambda_{\N,n+1}(\eta)$ and $\lambda_{\N,n}(\eta)$ are disjoint as $\eta>C_n$ and therefore as 
$Q^-(\bar{\upsilon}_n)\le Q^-(\upsilon_n)+C_n\sigma_n$ we conclude that
$Q^-(\upsilon_n -\bar{\upsilon}_n)\le C_n\sigma_n(\eta)$:
\begin{equation}
\|\partial_x(\upsilon_n -\bar{\upsilon}_n)\|+
\|x (\upsilon_n -\bar{\upsilon}_n)\|\le C_n\sigma_n(\eta)^{\frac{1}{2}}
\label{15-A-14}
\end{equation}
where norms are calculated in $\sL^2((-\infty,\eta))$ and then 
\begin{equation}
|(\upsilon_n -\bar{\upsilon}_n)|\le 
C_n\sigma_n(\eta)^{\frac{1}{2}};
\label{15-A-15}
\end{equation}
In particular $|\upsilon_n(\eta)|\le C_n\sigma_n(\eta)^{\frac{1}{2}}$.

So far it was proven only for $n=0$. However we apply induction: as it was proven for $k<n$ with fast decaying $\sigma_k(\eta)$. Then 
$(\upsilon_n,\bar{\upsilon}_k)=O\bigl(\sigma_{k}(\eta)^{\frac{1}{2}}\bigr)$ for $k<n$ and extending $\upsilon_n$ as before we arrive to $(u,\bar{\upsilon}_k)=O\bigl(\sigma_{n-1}(\eta)^{\frac{1}{2}}\bigr)$ for all $k<n$.

Then $Q(u)-(2n+1)Q_0(u)\ge -C \sigma_{n-1}$ and we arrive to
\begin{equation*}
\lambda_{\N,n} -(2n+1)\ge 
-\frac{1}{2}\bigl(\eta - 2n \Phi(\eta)\bigr) \bigl(\eta^2-\lambda_{\N,n}(\eta)\bigr)^{-1}\partial_\eta \lambda_{\N,n}(\eta) -
C\eta^{2n} e^{-\eta^2}
\end{equation*}
which implies the same conclusions.

We also conclude that (\ref{15-A-14}) and (\ref{15-A-15}) hold for $\bar{\upsilon}_n$ replaced by $\upsilon_{\D,n}$ and that 
$|\partial_x \upsilon_{\D,n}(\eta)|\le C_n\eta \sigma_n(\eta)^{\frac{1}{2}}$.

\section{Asymptotics of \texorpdfstring{$\lambda_{*,n}(\eta)$ as $\eta\to+\infty$}{\textlambda(\texteta)  as \texteta\textrightarrow+\textinfty}}
\label{sect-15-A-4}

Here will reproduce results of section~5 of C.~Bolley--B.~Helffer~\cite{bolley-helffer-1} and generalize them to arbitrary $j$ and to Dirichlet problem (definitely their methods could do the same).

In contrast to this paper we will not build quasi-modes but use Dauge-Helffer formulae. Let 
\begin{equation}
\varepsilon_{*,n}(\eta)= \pm \bigl(\lambda_{*,n}(\eta)-2n-1\bigr)
\label{15-A-16}
\end{equation}
with $*=\D,\N$ respectively.
Then from
\begin{equation*}
L(\eta)\upsilon_{*,n}(\eta)- \bigl(2n+1\pm \varepsilon_{*,n}(\eta)\bigr)\upsilon_{*,n}=0, 
\qquad
L(\eta)\bar{\upsilon}_n- (2n+1)\bar{\upsilon}_n=0
\end{equation*}
we conclude that
\begin{equation*}
\mp \varepsilon_{*,n}(\eta)(\upsilon_{*,n}, \bar{\upsilon}_n) =\upsilon'_n(\eta)\bar{\upsilon}_n(\eta)-\upsilon_n(\eta)\bar{\upsilon}'_n(\eta)
\end{equation*}
and as modulo $O\bigl(e^{-\frac{1}{2}\eta^2}\eta^K\bigr)$ 
$(\upsilon_{*,n}, \bar{\upsilon}_n)\equiv 1$ then 
\begin{equation*}
\mp \varepsilon_{*,n}(\eta)\equiv
\upsilon'_{*,n}(\eta)\bar{\upsilon}_n(\eta)-
\upsilon_{*,n}(\eta)\bar{\upsilon}'_n(\eta)
\end{equation*}
modulo $O\bigl(e^{-\frac{3}{2}\eta^2}\eta^K\bigr)$. Then modulo sign
\begin{gather*}
-\varepsilon_{\D,n}(\eta)\equiv \upsilon'_{\D,n} (\eta)\bar{\upsilon}_n(\eta)=
\bigl(-\partial_\eta \varepsilon_{\D,n}\bigr)^{\frac{1}{2}} \bar{\upsilon}_n(\eta),\\[3pt]
\varepsilon_{\N,n}(\eta)\equiv \upsilon_{\N,n} (\eta)\bar{\upsilon}'_n(\eta)=
\bigl(-(\eta^2-2n-1)^{-1} \partial_\eta \varepsilon _{\N,n}\bigr)^{\frac{1}{2}} \bar{\upsilon}'_n(\eta)
\end{gather*}
and modulo $O\bigl(e^{\frac{1}{2}\eta^2}\eta^K\bigr)$
\begin{gather*}
\partial_\eta \varepsilon _{\D,n}^{-1}(\eta)\equiv (\bar{\upsilon}_n(\eta))^{-2},
\\[3pt]
\partial_\eta \varepsilon _{\N,n}^{-1} (\eta)\equiv (\eta^2-2n-1)\bigl(\bar{\upsilon}'_n(\eta)\bigr)^{-2}
\end{gather*}
and 
\begin{phantomequation}\label{15-A-17}\end{phantomequation}
\begin{gather}
\varepsilon _{\D,n}^{-1} (\eta)
\equiv \int^\eta (\bar{\upsilon}_n(\eta))^{-2}\,d\eta,
\tag*{$\textup{(\ref*{15-A-17})}_\D$}\label{15-A-17-D}\\[2pt]
\varepsilon _{\N,n}^{-1} (\eta)
\equiv \int ^\eta (\eta^2-2n-1)(\bar{\upsilon}'_n(\eta))^{-2}\,d\eta
\tag*{$\textup{(\ref*{15-A-17})}_\N$}\label{15-A-17-N}
\end{gather}
and finally we arrive to

\begin{theorem}\label{thm-15-A-4}
As $\eta\to +\infty$ 
\begin{phantomequation}\label{15-A-18}\end{phantomequation}
\begin{gather}
\lambda _{\D,n} (\eta) \equiv 
2n+1+ \Bigl(\int^\eta \bigl(\bar{\upsilon}_n(\eta)\bigl)^{-2}\,d\eta\Bigr)^{-1},
\tag*{$\textup{(\ref*{15-A-18})}_\D$}\label{15-A-18-D}\\[3pt]
\lambda _{\N,n} (\eta)  \equiv 
2n+1 - \Bigl(\int ^\eta (\eta^2-2n-1)\bigl(\bar{\upsilon}'_n(\eta)\bigr)^{-2}\,d\eta \Bigr)^{-1}
\tag*{$\textup{(\ref*{15-A-18})}_\N$}\label{15-A-18-N}
\end{gather}
modulo $O\bigl(e^{-\frac{3}{2}\eta^2}\eta^K\bigr)$ with $K=K(n)$.
\end{theorem}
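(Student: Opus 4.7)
The plan is to follow the path already sketched in the paragraphs preceding the statement and make the error terms rigorous, obtaining the ODE for $\varepsilon_{*,n}^{-1}$ that integrates to the asserted formulae.

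First I would set $\varepsilon_{*,n}(\eta) = \pm(\lambda_{*,n}(\eta)-(2n+1))$ with the sign chosen so that $\varepsilon_{*,n}>0$ (from Section~\ref{sect-15-A-3} we know $\varepsilon_{\D,n}>0$ and $\varepsilon_{\N,n}>0$ for $\eta$ large). The two eigenfunctions $\upsilon_{*,n}(x,\eta)$ on $(-\infty,\eta)$ and $\bar{\upsilon}_n(x)$ on $\bR$ satisfy
\begin{equation*}
L(\eta)\upsilon_{*,n}=(2n+1\pm\varepsilon_{*,n})\upsilon_{*,n},\qquad L(\eta)\bar{\upsilon}_n=(2n+1)\bar{\upsilon}_n,
\end{equation*}
so subtracting, multiplying and integrating over $(-\infty,\eta)$ yields the Wronskian identity
\begin{equation*}
\mp\varepsilon_{*,n}\,(\upsilon_{*,n},\bar{\upsilon}_n)=\bigl[\upsilon'_{*,n}\bar{\upsilon}_n-\upsilon_{*,n}\bar{\upsilon}'_n\bigr](\eta).
\end{equation*}
Using the estimates (\ref{15-A-14})--(\ref{15-A-15}) from the previous subsection, which give $\|\upsilon_{*,n}-\bar{\upsilon}_n\|=O(\sigma_n(\eta)^{1/2})$ and hence $(\upsilon_{*,n},\bar{\upsilon}_n)\equiv 1$ modulo $O(e^{-\eta^2/2}\eta^K)$, I would conclude
\begin{equation*}
\mp\varepsilon_{*,n}(\eta)\equiv \upsilon'_{*,n}(\eta)\bar{\upsilon}_n(\eta)-\upsilon_{*,n}(\eta)\bar{\upsilon}'_n(\eta)\quad \mod O(e^{-3\eta^2/2}\eta^K).
\end{equation*}

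Next I would apply the boundary conditions to kill one of the two terms on the right: for Dirichlet $\upsilon_{\D,n}(\eta)=0$, leaving $-\varepsilon_{\D,n}\equiv\upsilon'_{\D,n}(\eta)\bar{\upsilon}_n(\eta)$; for Neumann $\upsilon'_{\N,n}(\eta)=0$, leaving $\varepsilon_{\N,n}\equiv -\upsilon_{\N,n}(\eta)\bar{\upsilon}'_n(\eta)$. Now invoke the Dauge--Helffer formulae \ref{15-A-6-D}, \ref{15-A-6-N}:
\begin{equation*}
|\upsilon'_{\D,n}(\eta)|^2=-\partial_\eta\lambda_{\D,n}(\eta)=-\partial_\eta\varepsilon_{\D,n}(\eta),
\end{equation*}
\begin{equation*}
|\upsilon_{\N,n}(\eta)|^2=(\eta^2-\lambda_{\N,n})^{-1}\partial_\eta\lambda_{\N,n}=-(\eta^2-2n-1)^{-1}\partial_\eta\varepsilon_{\N,n}
\end{equation*}
(up to the asymptotically negligible shift $\varepsilon_{\N,n}$ in the denominator, which is absorbed in the error). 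Squaring the two displayed relations and substituting the Dauge--Helffer expressions converts each into a first-order ODE of the form $\partial_\eta\varepsilon_{*,n}^{-1}\equiv R_*(\eta)$, with $R_{\D}=\bar{\upsilon}_n(\eta)^{-2}$ and $R_{\N}=(\eta^2-2n-1)\bar{\upsilon}'_n(\eta)^{-2}$, the error being $O(e^{\eta^2/2}\eta^K)$ on the right-hand side of the ODE.

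Integrating these ODEs in $\eta$ gives formulas \ref{15-A-18-D} and \ref{15-A-18-N}, with integrated error $O(e^{-3\eta^2/2}\eta^K)$ after inverting, since $\varepsilon_{*,n}\sim c_n\eta^{2n+1}e^{-\eta^2}$. The main obstacle in this plan is bookkeeping the error: one must verify that every replacement (the normalization $(\upsilon_{*,n},\bar{\upsilon}_n)\equiv 1$, the substitution of $\bar{\upsilon}_n$ for $\upsilon_{*,n}$ in the surviving Wronskian term, and the neglect of $\varepsilon_{\N,n}$ against $\eta^2-2n-1$ in the Neumann denominator) contributes an error of order $O(e^{-3\eta^2/2}\eta^K)$ relative to the leading $\sim\eta^{2n+1}e^{-\eta^2}$ behaviour, which is the level of precision claimed. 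The estimate \textup{(\ref{15-A-15})} is the essential ingredient that makes this possible; once that is in hand, the rest is a sequence of clean algebraic identities and an integration.
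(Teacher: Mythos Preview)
Your proposal is correct and follows the paper's own argument essentially step for step: the Wronskian identity from the two eigenvalue equations, the normalization $(\upsilon_{*,n},\bar\upsilon_n)\equiv 1$ via (\ref{15-A-14})--(\ref{15-A-15}), the use of the boundary condition to kill one Wronskian term, substitution of the Dauge--Helffer formulae \ref{15-A-6-D}, \ref{15-A-6-N}, and integration of the resulting ODE for $\varepsilon_{*,n}^{-1}$. Your error bookkeeping is exactly what the paper does, including the replacement of $\lambda_{\N,n}$ by $2n+1$ in the Neumann denominator.
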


\begin{remark}\label{rem-15-A-5}
One can see easily from \ref{15-A-18-D}, \ref{15-A-18-N} that 
\begin{equation*}
e^{-\eta^2}\varepsilon_{*,n}^{-1}(\eta)
\sim \sum_{k\ge 0} c'_{*,n,k}\eta^{-2n-1-2k}
\end{equation*}
with the leading coefficient $c'_{*,n,0}= \frac{1}{2}\kappa_n^{-2}$ where $\kappa_n = (n!)^{-\frac{1}{2}}\pi^{-\frac{1}{4}}2^{n/2}$ coefficient at $x^n$ at $\bar{\upsilon}_ne^{x^2/2}$ and thus 
\begin{equation}
e^{-\eta^2} \varepsilon_{*,n}(\eta) = \sim \sum_{k\ge 0} c_{*,n,k}\eta^{2n+1-2k}
\label{15-A-19}
\end{equation}
where 
\begin{equation}
c_{*,n,0}= 2\kappa_n^{2}=   \frac{2^{n+1}}{n!\sqrt{\pi}}.
\label{15-A-20}
\end{equation}
\end{remark}

Now we can estimate recurrently higher-order derivatives of $\varepsilon_{*,n}(\eta)$, $\upsilon_{*,n}(x,\eta)$ as $x=\eta$ arriving to

\begin{corollary}\label{cor-15-A-6}
\begin{equation}
|\partial^j \lambda_{*,n}(\eta)|\asymp \eta^{2n+1+j}e^{-\eta^2}
\label{15-A-21}
\end{equation}
as $\eta >c_{n,j}$.
\end{corollary}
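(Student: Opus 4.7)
Since $\partial^j\lambda_{*,n}(\eta)=\pm\partial^j\varepsilon_{*,n}(\eta)$ with $\varepsilon_{*,n}$ introduced in \textup{(\ref{15-A-16})}, the task is to control the derivatives of $\varepsilon_{*,n}$. The case $j=0$ is exactly the two-sided bound extracted from the asymptotic expansion \textup{(\ref{15-A-19})} in Remark~\ref{rem-15-A-5}, with the leading coefficient $c_{*,n,0}=2^{n+1}/(n!\sqrt\pi)\ne 0$ provided by \textup{(\ref{15-A-20})}. For $j=1$ the estimate already follows from the Dauge--Helffer identities \ref{15-A-6-D}, \ref{15-A-6-N} combined with the boundary asymptotics of $\upsilon_{*,n}$ and $\partial_x\upsilon_{*,n}$ proved at the end of Section~\ref{sect-15-A-3}: these give $|\upsilon_{*,n}(\eta)|\asymp\eta^ne^{-\eta^2/2}$ and $|\partial_x\upsilon_{\D,n}(\eta)|\asymp\eta^{n+1}e^{-\eta^2/2}$, so \ref{15-A-6-D}, \ref{15-A-6-N} yield $|\partial_\eta\lambda_{*,n}(\eta)|\asymp\eta^{2n+2}e^{-\eta^2}$, matching the claim.

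For the induction step I would work directly with the explicit formulas \ref{15-A-17-D}, \ref{15-A-17-N}. Write $\varepsilon_{*,n}=1/F_{*,n}+R_{*,n}$ where $F_{*,n}(\eta)$ is the antiderivative on the right-hand side of \ref{15-A-17-D} (respectively \ref{15-A-17-N}) and $R_{*,n}=O(e^{-3\eta^2/2}\eta^K)$. The integrand $(\bar\upsilon_n)^{-2}$ (resp.\ $(\eta^2-2n-1)(\bar\upsilon'_n)^{-2}$) is an explicit function of the form $e^{\eta^2}\eta^{-2n}\cdot p(\eta^{-2})$ with $p$ a standard asymptotic series whose coefficients are computable from the Hermite normalization; repeated integration by parts yields an exact asymptotic expansion $F_{*,n}(\eta)\sim e^{\eta^2}\sum_{k\ge 0}a_k\eta^{-2n-1-2k}$, and all $\partial^j F_{*,n}$ inherit analogous expansions obtained by termwise differentiation. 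Inverting via $1/F_{*,n}$ and applying the general Leibniz/Faà di Bruno formula, one checks by inspection that every differentiation produces a dominant factor $-2\eta$ coming from $\partial_\eta e^{-\eta^2}$, so that $\partial^j(1/F_{*,n})(\eta)=(-2\eta)^jc_{*,n,0}\eta^{2n+1}e^{-\eta^2}(1+O(\eta^{-2}))$. This gives both upper and lower asymptotics $\asymp\eta^{2n+1+j}e^{-\eta^2}$.

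The one technical hurdle is to justify that $\partial^j R_{*,n}$ is negligible compared with $\eta^{2n+1+j}e^{-\eta^2}$, since the error term in Theorem~\ref{thm-15-A-4} is stated only as a size bound. I would address this in two steps. First, the remainder $R_{*,n}(\eta)$ in Theorem~\ref{thm-15-A-4} was produced from quasi-mode identities involving $\upsilon_{*,n}(\eta)$ and $\partial_x\upsilon_{*,n}(\eta)$ modulo $O(e^{-3\eta^2/2}\eta^K)$; differentiating those identities in $\eta$ and using that $\upsilon_{*,n}$ satisfies $L(\eta)\upsilon_{*,n}=\lambda_{*,n}\upsilon_{*,n}$ on $(-\infty,\eta)$ (so $\partial_\eta\upsilon_{*,n}$ is itself governed by an inhomogeneous version of the same ODE) bootstraps to bounds $|\partial^j R_{*,n}(\eta)|\le C_je^{-(3/2-\epsilon)\eta^2}\eta^{K'(j)}$ for every $j$; the gap $e^{-\eta^2/2}$ in front of $e^{-\eta^2}$ absorbs all polynomial factors for $\eta\ge c_{n,j}$. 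Alternatively, since $\lambda_{*,n}$ is real-analytic on a complex strip, Cauchy's estimates on a contour of radius $\sim 1/\eta$ reduce the control of $\partial^j\lambda_{*,n}$ to the bound on $\lambda_{*,n}$ itself on the shifted contour, producing the same loss of at most $\eta^j$ per derivative. Both routes preserve the two-sided asymptotics and complete the proof.
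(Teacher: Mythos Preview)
Your proposal is essentially correct and in fact supplies considerably more detail than the paper, whose entire argument is the single sentence ``Now we can estimate recurrently higher-order derivatives of $\varepsilon_{*,n}(\eta)$, $\upsilon_{*,n}(x,\eta)$ as $x=\eta$ arriving to\ldots''. Your first route for the remainder---differentiating the identities behind Theorem~\ref{thm-15-A-4} and bootstrapping control of $\partial_\eta^k\upsilon_{*,n}(x,\eta)\big|_{x=\eta}$ via the inhomogeneous ODE satisfied by $\partial_\eta\upsilon_{*,n}$---is exactly the recursion the paper has in mind. Your alternative packaging through $\varepsilon_{*,n}=1/F_{*,n}+R_{*,n}$ and termwise differentiation of the explicit asymptotic series \textup{(\ref{15-A-19})} is a slightly more computational route to the same end, and has the advantage of making the leading term $(-2\eta)^j c_{*,n,0}\,\eta^{2n+1}e^{-\eta^2}$ transparent.

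One small caveat: your second alternative, Cauchy estimates on a disc of radius $\sim 1/\eta$ applied to $\lambda_{*,n}$ itself, yields only the \emph{upper} bound $|\partial^j\lambda_{*,n}|\le C_j\eta^{2n+1+j}e^{-\eta^2}$; it cannot by itself give the lower bound in $\asymp$. This is harmless, since in your scheme Cauchy is only needed to bound $\partial^j R_{*,n}$ (an upper bound suffices there), while the two-sided control comes from the explicit main term $\partial^j(1/F_{*,n})$. If you keep that route, apply Cauchy to $R_{*,n}$ rather than to $\lambda_{*,n}$, and note that the required complex extension of the $O(e^{-\frac{3}{2}\eta^2}\eta^K)$ bound is immediate because both $\lambda_{*,n}$ and $F_{*,n}$ extend analytically to a strip $|\Im\eta|\le c$ with uniform estimates.
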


\section{Remarks}
\label{sect-15-A-5}

\begin{remark}\label{rem-15-A-7}
While we do not need it one can prove easily that
\begin{equation}
\lambda_{*,n}(\eta)\sim \eta^2 +\sum_{k\ge 0} c_{*,n,k}\eta^{\frac{2}{3}(1-2k)}
\label{15-A-22}
\end{equation}
as $\eta \to -\infty$ with $c_{*,n,0}$ eigenvalues of operator 
$M(\eta)\Def \bigl(D^2 +2x|\eta|\bigr)$ on $\bR^+$ 
with the corresponding boundary condition ($\D$ or $\N$) as $x=0$ and thus are defined through zeros of Airy function or its derivative. 
\end{remark}

\begin{remark}\label{rem-15-A-8}
(i) For operator $D^2+x^2$ on $\{x\le \eta\}$ albeit with the boundary condition $(u'+\alpha u)(\eta)=0$ with $0\le \alpha<\infty $ one can see easily that
 \ref{15-A-6-N}, \ref{15-A-17-N}   are replaced by 
\begin{gather}
\partial_\eta \lambda_{n} (\eta,\alpha )= 
\bigl(\eta^2-\alpha^2-\lambda _n(\eta,\alpha)\bigr)|
\upsilon_{n}(\eta,\alpha )|^2, 
\tag*{$\textup{(\ref*{15-A-6})}_\alpha$}\label{15-A-6-R}\\
\varepsilon _{n}^{-1} (\eta,\alpha)
\equiv \int ^\eta (\eta^2-2n-1-\alpha^2)(\bar{\upsilon}'_n(\eta,\alpha))^{-2}\,d\eta
\tag*{$\textup{(\ref*{15-A-17})}_\alpha$}\label{15-A-17-R}\\
\end{gather}
respectively.

\medskip\noindent
(ii)  One can prove easily that  $\lambda_{n}(\eta;\alpha)$ monotonically depends on $\alpha$ and $\lambda_{n}(\eta;0)=\lambda_{\N,n}(\eta)$, $\lambda_{n}(\eta;\alpha)\nearrow \lambda_{\D,n}(\eta)$ as $\alpha\to +\infty$. 

However, in virtue of (i) for all $\alpha >0$ there is a single extremum which is a nondegenerate minimum  at $\eta_n=\eta_n(\alpha)>0$ such that 
$\eta_n^2 =\lambda_n(\eta_n) +\alpha^2$ and as $\alpha \to +\infty $ therefore 
$\eta_n (\alpha)\approx \alpha^2 + (2n+1)$ with a small negative error (see remark~\ref{rem-15-A-8}). 

Therefore as  $\alpha\to +\infty$ these minima (see blue line on figure~\ref{eigen-plot-fig} for $\alpha=0$) become more shallow and move to the right and blue-like lines here can approximate red-like lines exactly as on the graphs of $e^{-\eta}-(\eta^2+\alpha^2)^{-1}$.).
\end{remark}

\begin{figure}[h]
\centering
\includegraphics{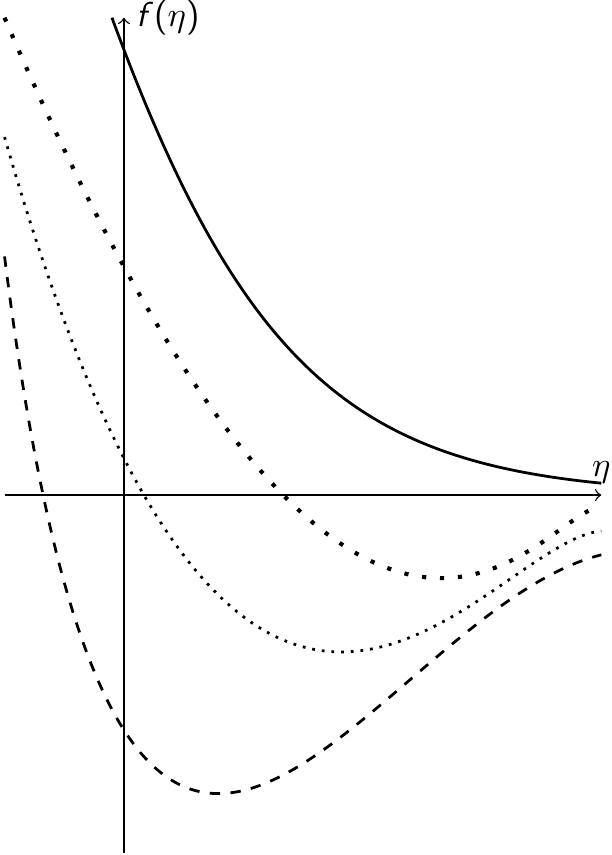}
\caption{Schematics of $f(\eta)=\lambda_n(\eta;\alpha)-(2n+1)$.}
\end{figure}

\begin{conjecture}\label{conj-15-A-9}
For each $n$ function $\lambda_{\D,n}(\eta)$ is convex and $\partial_\eta^2\lambda_{\D,n}(\eta)>0$ while $\lambda_{\N,n}(\eta)$ has a single non-degenerate inflection point $\eta'_n$ and $\partial_\eta^2\lambda_{\D,n}(\eta)\gtrless 0$ as $\eta\lessgtr\eta'_n$.
\end{conjecture}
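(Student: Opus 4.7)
The plan is to start from the Dauge--Helffer identities $\textup{(\ref{15-A-6})}_{\D,\N}$, differentiate once more along the diagonal $\{y=\eta\}$, and combine a regional asymptotic analysis with a monotonicity/rigidity argument for the interior. Throughout I read the second occurrence of $\partial_\eta^2\lambda_{\D,n}$ in the statement as $\partial_\eta^2\lambda_{\N,n}$, as is clearly intended.

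First I would differentiate \ref{15-A-6-D} to obtain
\begin{equation*}
\partial_\eta^2 \lambda_{\D,n}(\eta)=-2\varphi_n(\eta)\,\varphi_n'(\eta),
\qquad \varphi_n(\eta)\Def \partial_y \upsilon_{\D,n}(y,\eta)\bigr|_{y=\eta},
\end{equation*}
and then compute $\varphi_n'$ by differentiating the eigenvalue equation $-\partial_y^2 \upsilon+y^2\upsilon=\lambda_{\D,n}\upsilon$ together with the Dirichlet condition $\upsilon_{\D,n}(\eta,\eta)=0$ with respect to $\eta$. The function $w\Def \partial_\eta \upsilon_{\D,n}$ satisfies an inhomogeneous equation $(L(\eta)-\lambda_{\D,n})w=(\partial_\eta \lambda_{\D,n})\upsilon_{\D,n}$ with inhomogeneous Dirichlet datum $w(\eta,\eta)=-\varphi_n$. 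Pairing with $\upsilon_{\D,n}$ and using \ref{15-A-6-D} should produce a Green-type identity that represents $\varphi_n\varphi_n'$ (equivalently $\varphi_n\,\partial_y w|_{y=\eta}$) as minus a manifestly positive boundary/Wronskian contribution plus a sign-definite integral term, so that $\partial_\eta^2\lambda_{\D,n}>0$ everywhere.

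For the Neumann case I would start from \ref{15-A-6-N}, which already gives at the unique critical point $\eta_n$ the value $\partial_\eta^2 \lambda_{\N,n}(\eta_n)=2\eta_n|\upsilon_{\N,n}(\eta_n,\eta_n)|^2>0$ via \textup{(\ref{15-A-9})}. The sign at the two ends is settled directly by the asymptotics already in the appendix: as $\eta\to-\infty$, Remark~\ref{rem-15-A-7} gives $\lambda_{\N,n}(\eta)=\eta^2+O(|\eta|^{2/3})$, hence $\partial_\eta^2\lambda_{\N,n}\to 2>0$; as $\eta\to+\infty$, Theorem~\ref{thm-15-A-4} combined with Remark~\ref{rem-15-A-5} yields $\lambda_{\N,n}(\eta)-(2n+1)\sim -c_{\N,n,0}\eta^{2n+1}e^{-\eta^2}$, whose second derivative is dominated by $-4c_{\N,n,0}\eta^{2n+3}e^{-\eta^2}<0$. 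Hence by continuity at least one zero of $\partial_\eta^2\lambda_{\N,n}$ lies in $(\eta_n,+\infty)$. To establish that this inflection point is unique and non-degenerate I would introduce, using \textup{(\ref{15-A-8})}, the auxiliary function
\begin{equation*}
\Theta_n(\eta)\Def \bigl(2\eta-\partial_\eta\lambda_{\N,n}(\eta)\bigr)
+2\bigl(\eta^2-\lambda_{\N,n}(\eta)\bigr)\frac{\partial_\eta \upsilon_{\N,n}(\eta,\eta)}{\upsilon_{\N,n}(\eta,\eta)},
\end{equation*}
whose zero set coincides with that of $\partial_\eta^2\lambda_{\N,n}$ on $(\eta_n,\infty)$, and try to show $\Theta_n$ is strictly monotone there using the sign of $\eta^2-\lambda_{\N,n}$ from Proposition~\ref{prop-15-A-3} and an ODE for $\partial_\eta \upsilon_{\N,n}$ analogous to the Dirichlet case above.

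The main obstacle is the uniqueness (and non-degeneracy) statement in the Neumann case. The sign information at the three distinguished points---two asymptotic ends plus the minimum $\eta_n$---guarantees existence of at least one inflection, but on its own does not rule out additional oscillations of $\partial_\eta^2\lambda_{\N,n}$ inside the transition window $\eta\asymp \sqrt{2n+1}$ where the factor $(\eta^2-\lambda_{\N,n})$ in \textup{(\ref{15-A-8})} changes sign and the two summands of $\Theta_n$ compete on comparable scales. A secondary difficulty is the global (as opposed to asymptotic) convexity of $\lambda_{\D,n}$: while the formula $\partial_\eta^2\lambda_{\D,n}=-2\varphi_n\varphi_n'$ reduces everything to a monotonicity statement for the boundary flux $\varphi_n^2$, casting the resulting identity for $\varphi_n\varphi_n'$ into a manifestly sign-definite form appears to require an auxiliary identity for $\partial_\eta \upsilon_{\D,n}(\eta,\eta)$ that is not directly produced by the Dauge--Helffer-type calculation and seems to be the genuine technical heart of the conjecture.
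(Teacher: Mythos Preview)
The statement you are attempting to prove is labelled \emph{Conjecture} in the paper and is left entirely unproven there; it appears at the very end of the appendix, followed only by a numerical plot. So there is no ``paper's own proof'' to compare against, and the author evidently regards both claims as open.

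Your proposal is an honest outline rather than a proof, and you correctly identify the two genuine gaps yourself. For the Dirichlet convexity, differentiating \ref{15-A-6-D} does give $\partial_\eta^2\lambda_{\D,n}=-2\varphi_n\,\partial_\eta\varphi_n$, but the sign of $\partial_\eta\varphi_n$ is not controlled by the argument you sketch: the inhomogeneous problem for $w=\partial_\eta\upsilon_{\D,n}$ produces, after integration by parts, a term $(\partial_\eta\lambda_{\D,n})\int\upsilon_{\D,n}w$ whose sign is unknown, and there is no obvious way to absorb it into a definite quadratic form without an additional identity. The asymptotic regimes $\eta\to\pm\infty$ are fine (your use of Remark~\ref{rem-15-A-7} and Theorem~\ref{thm-15-A-4} is correct), but the intermediate region is exactly where the conjecture has content. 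For the Neumann inflection, your function $\Theta_n$ is a reasonable object, but ``try to show $\Theta_n$ is strictly monotone'' is the whole problem: the term $\partial_\eta\upsilon_{\N,n}(\eta,\eta)/\upsilon_{\N,n}(\eta,\eta)$ has no a priori monotonicity, and the competition you describe near $\eta\asymp\sqrt{2n+1}$ is precisely what makes this a conjecture rather than a proposition.

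In short: your plan recovers what the appendix already establishes (existence of at least one Neumann inflection, correct signs at both ends, convexity at the Neumann minimum) but does not close either of the global statements, and the paper does not claim to either.
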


\newpage

\begin{figure}[h!]
\centering
\includegraphics[width=\linewidth]{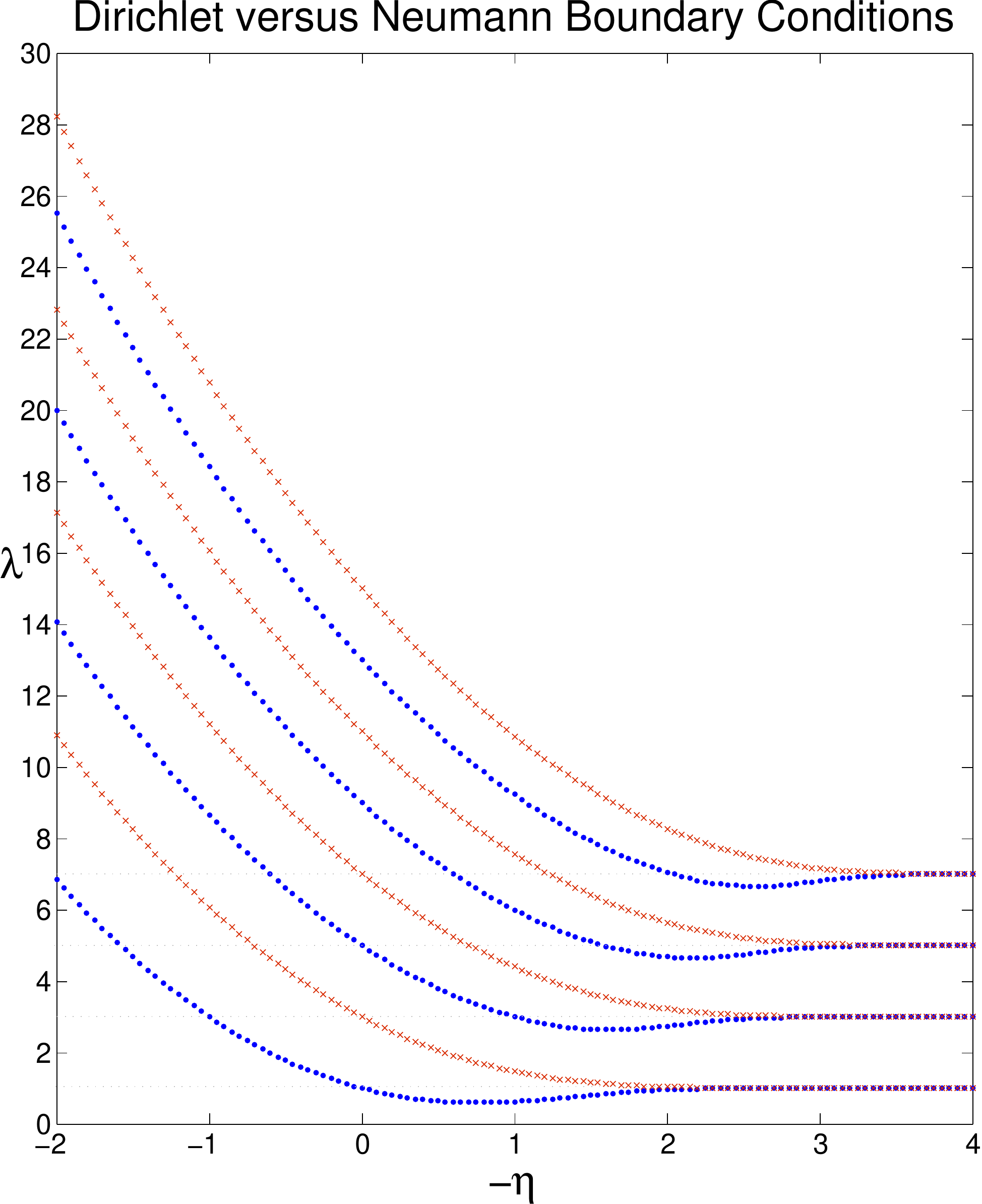}
\caption{\label{eigen-plot-fig} Plots of $\lambda_{\N,n}(\eta)$ and $\lambda_{\D,n}(\eta)$. Calculated by Matlab: Courtesy of Dr.~Marina Chugunova. Here $\eta$ is replaced by $\eta$}
\end{figure}

\end{subappendices}

\bibliographystyle{alpha}

\providecommand{\bysame}{\leavevmode\hbox to3em{\hrulefill}\thinspace}

\vglue .06truein

\begin{tabular}{rrl}
&{\hskip 200 pt} &Department of Mathematics,\cr
&&University of Toronto,\cr
&&40, St.George Str.,\cr
&&Toronto, Ontario M5S 2E4\cr
&&Canada\cr
&&ivrii@math.toronto.edu\cr
&&Fax: (416)978-4107\cr
\end{tabular}

\end{document}